%
%
%
\documentclass[12pt]{amsart}
\usepackage[margin=1in]{geometry}
\usepackage{amsmath, amssymb, amsthm,latexsym,overpic}
\usepackage[all]{xy}
\usepackage{graphicx}
\usepackage{enumerate,mathtools,comment}
\usepackage{enumitem}
\usepackage{color}
%
\usepackage[pdftex]{hyperref}
\makeatletter
\makeatother

\title[Quasi-visual approximations]{Quasi-visual approximations}%
\author{Mario Bonk}
\thanks{M.B.\ was partially supported by NSF grant DMS-2054987. M.H.\ was partially supported by the Marie Sk\l{}odowska-Curie Postdoctoral Fellowship under the EU's Horizon Europe Programme (Grant No.\ 101068362).}

\address{Department of Mathematics, University of California, 
Los Angeles, CA 90095, USA}
\email{mbonk@math.ucla.edu}

\author{Mikhail Hlushchanka}
\address{Korteweg-de Vries Instituut voor Wiskunde, Universiteit van Amsterdam,  1090 GE \newline Amsterdam, The Netherlands}
\email{mikhail.hlushchanka@gmail.com}

\author{Daniel Meyer}
\address{Department of Mathematical Sciences, University of Liverpool,
Mathematical Sciences Building,  Liverpool L69 7ZL,  United Kingdom}
\email{dmeyermail@gmail.com}

\date{\today}


\subjclass[2020]{Primary 30L10; Secondary 51F99}

\setenumerate{itemsep=3pt,topsep=3pt}
\setenumerate[1]{label=\upshape(\roman*)}


\SetLabelAlign{parright}{\smash{\parbox[t]{\labelwidth}{\raggedleft#1}}}



\newcommand\C{{\mathbb C}}
\newcommand\D{{\mathbb D}}
\newcommand\CDach{\widehat{{\mathbb C}}}

\newcommand\N{{\mathbb N}}

\newcommand\R{{\mathbb R}}

\newcommand\dist{\operatorname{dist}}
\newcommand\diam{\operatorname{diam}}
\newcommand\hdiam{\operatorname{hdiam}}
\newcommand\id{\operatorname{id}}

\renewcommand\:{\colon}
\newcommand\sub {\subset}
\newcommand\ra {\rightarrow}

\newcommand\Ga{\Gamma}

\newcommand\ga{\gamma}
\newcommand\la{\lambda}
\newcommand\eps{\epsilon}


\providecommand{\abs}[1]{\lvert#1\rvert}

\DeclarePairedDelimiter\ceil{\lceil}{\rceil}

%

\newcommand{\X} {\mathbf{X}}

\newcommand{\I} {\mathbf{I}}


\newcommand{\V} {\mathbf{V}}








\newcommand{\Jul}{\mathcal{J}}

\newcommand{\comb}{{\tt cv}}


\newcounter{main}

\newtheorem{theorem}{Theorem}[section]

\newtheorem{proposition}[theorem]{Proposition}
\newtheorem{cor}[theorem]{Corollary}

\newtheorem{lemma}[theorem]{Lemma}

\theoremstyle{definition}
\newtheorem{example}[theorem]{Example}
\newtheorem{definition}[theorem]{Definition}

\theoremstyle{remark}
\newtheorem*{claim}{Claim}
\newtheorem{remark}[theorem]{Remark}

\numberwithin{equation}{section}

\setcounter{tocdepth}{1}

\begin{document}

\begin{abstract}
We develop the foundations of the  theory of {\em quasi-visual approximations} of  bounded metric spaces. Roughly speaking, these are sequences of covers of a given space for which  the diameters of the sets in the covers  shrink to  zero  and for which relative metric quantities (such as ratios of diameters and distances) are uniformly controlled. This  framework has applications to questions in quasiconformal geometry.  In particular, quasi-visual approximations can be used to detect whether a given homeomorphism between two bounded metric spaces is a quasisymmetry. 

We also explore the 
connection to the theory of Gromov hyperbolic spaces via the {\em tile graph}  associated with  a quasi-visual approximation. 
As an application, we relate these ideas to  the dynamics of semi-hyperbolic rational maps.  More specifically, we show that the Julia set of a rational map admits a \emph{dynamical} quasi-visual approximation if and only if the map is semi-hyperbolic.  
\end{abstract}

\keywords{Visual approximations, quasi-visual approximations, combinatorially visual approximations, visual metrics, quasisymmetries,  tile graphs, Gromov hyperbolic spaces, semi-hyperbolic rational maps}

\maketitle

\tableofcontents

\section{Introduction}
\label{sec:intro}
In this paper, we are interested in certain aspects of  the geometry of metric spaces. For convenience, we will stipulate that the metric spaces under consideration are bounded, but one could
build up a similar theory as developed here also in greater generality at the cost of some technicalities. 

It is well known  that  the topology and geometry of a (bounded) metric  space $S$   can often be described by 
 a sequence $\{\X^n\}$ of typically finite covers $\X^n$ of $S$ that become finer as $n\to \infty$. In this case, a finite data set associated with each $\X^n$---such as the incidence relations among the sets in $\X^n$ and their metric properties (for example, diameters and mutual distances)---provides a coarse ``snapshot'' of the given space. We can then hope for higher and higher ``resolutions" of these snapshots as $n$ increases. An obvious advantage of this approach is that the relevant information becomes finitary: each cover $\X^n$ is encoded in a finite data set, and the entire sequence $\{\X^n\}$ corresponds to a countable data set.

 This idea is well known and  has proved useful in many contexts. 
 Depending on  the concrete setting, one can hope to find conditions on the incidence relations and the metric properties  of the sets in  the covers $\X^n$ that faithfully capture the relevant 
  features of $S$. 
  
   In the present paper, we focus on the {\em quasiconformal 
  geometry} of bounded metric spaces. Roughly speaking, this term refers to geometric properties that are of a robust scale-invariant nature. These properties are characterized by conditions that do not involve absolute distances, but rather relative distances, that is, ratios of distances. They appear naturally when one investigates the geometry of self-similar spaces, in particular fractals that arise in some dynamical settings such as limits sets of Kleinian groups, Julia sets of rational maps, or attractors of iterated function systems.
  
  Often one wants to describe and  characterize these spaces up to a natural equivalence given by homeomorphisms with good geometric control over relative 
  distances. The relevant  mappings in this context are {\em quasisymmetries}, which provide such a distortion control (for the definition, see Section~\ref{sec:quasisymmetries}; general background on quasisymmetries and related concepts can be found in [He01]). 

So one can say more precisely that the  quasiconformal geometry of a metric space 
is described by those  geometric properties that are preserved under quasisymmetries. In particular, 
we call two metric spaces $S$ and $T$ {\em quasisymmetrically equivalent} if there exists a quasisymmetric homeomorphism from $S$ onto $T$. Obviously, this notion of equivalence is stronger than topological equivalence, but weaker than, say, 
 the requirement that $S$ and $T$ are isometric. 

Now the question arises as to which conditions one should impose on a
 sequence $\{\X^n\}$ of finite covers  of $S$  so that the quasiconformal geometry of $S$  is faithfully represented. We hope to convince the reader that the relevant properties are condensed in the notion of a {\em quasi-visual approximation} $\{\X^n\}$ 
 of a bounded metric space $S$ (see Definition~\ref{def:qv_approx}). 
 
In the following subsections of this introduction, we will present the key 
 topics, concepts, and results of this paper in an informal way, trying to 
 explain the underlying ideas without going into  technical details. 
We will point out where  precise definitions and statements of theorems can be found in the body of the paper.

\subsection{Visual approximations} The notion of a quasi-visual approximation of a  metric space $S$ evolved from a stronger concept, namely that of a  {\em visual approximation}. To describe this in intuitive terms, we first fix some terminology. 

Suppose  $\{\X^n\}$ is a sequence of covers of a (bounded) metric space $S$.
 We call an element $X$ of the cover $\X^n$ a {\em tile of level $n$} or simply an {\em $n$-tile}. A tile $X$ represents a {\em location} in the given metric space 
 $S$ and its diameter $\diam(X)$ can be thought of as the {\em local scale} represented  by $X$.
 
  A natural condition that captures the idea that 
 the covers $\X^n$ become finer as $n\to \infty$ is to impose a uniform exponential decay of the size of  tiles with their level $n$; in other words,  to require 
 \begin{equation}\label{eq:diamabs} \diam(X)\asymp \Lambda^{-n} 
 \end{equation}
 for $X\in \X^n$ with some fixed parameter $\Lambda>1$ 
(for the precise meaning of the symbol $\asymp$ used here and the symbols  $\lesssim$ and $\gtrsim$  used later in this introduction, see Section~\ref{sec:notation}).
 In a similar spirit, one can require that combinatorial separation of tiles $X$ and $Y$ on the same level 
 $n$   translates to the metric separation 
 \begin{equation}\label{eq:combsep}
 \dist(X,Y)\gtrsim \Lambda^{-n}.
 \end{equation}
 
For the purpose of this introduction,  the reader should think of 
combinatorial separation simply as the condition $X\cap Y=\emptyset$. It turns out though that  working  with a stronger condition for combinatorial separation leads to a more satisfactory and complete theory. 
This stronger separation property involves a {\em width} parameter  $w\in \N_0$ (see Definition~\ref{def:visual}). 
It  makes the corresponding requirement \eqref{eq:combsep} weaker and more flexible.

 If we impose the two conditions \eqref{eq:diamabs} and \eqref{eq:combsep} on the sequence $\{\X^n\}$, then we are led to the concept of a {\em visual approximation} of a metric space $S$ (see Definition~\ref{def:visual} for the precise formulation based on the stronger separation condition). A  (bounded) metric $d$ on $S$  for which such 
 a visual approximation $\{\X^n\}$ of $(S,d)$ exists is called a  {\em visual metric} on $S$.
 
 We develop this theory in Section~\ref{sec:visu-appr}. One of our main results here is that {\em  a bounded metric space $S$ has a visual approximation if and only if it is uniformly perfect} (see Proposition~\ref{prop:vis_width1_ex}). The condition on $S$---namely, that it is uniformly perfect  (see Section~\ref{sec:prop-metric-spaces})---is  very natural from the viewpoint of quasiconformal geometry, as this condition is invariant under quasisymmetries and is satisfied in many interesting settings.

\subsection{Quasi-visual approximations}
 
The notion of a visual approximation, together with the associated concept of a visual metric, has proved useful in the study of dynamical systems (for example, in the theory of expanding Thurston maps; see \cite{BM}). However, it is not entirely in the spirit of quasiconformal geometry as the local scales and the metric separation of tiles are controlled by {\em absolute} distances.

To develop a notion of an  approximation of a metric space adapted to its quasiconformal geometry, one should weaken the conditions so that only relative distances are involved. 
This is exactly the idea of a quasi-visual approximation $\{\X^n\}$ 
 of a bounded metric space $S$  as introduced in Definition~\ref{def:qv_approx}.
 
 It consists of four conditions that can  be described as follows.
 Two of these conditions (\ref{item:qv_approx1} and \ref{item:qv_approx3} in 
 Definition~\ref{def:qv_approx}) essentially say that the local 
 scale $\diam(X)$ represented by an $n$-tile $X$ is more or  less well-defined.
Namely, if a $k$-tile $Y$ is in the same location as  $X$ (that is, $X\cap Y\ne \emptyset$) and  has about the same level (that is, $|k-n|\lesssim 1$), then $\diam(X)\asymp \diam(Y)$. 

We also require  uniform  exponential decay of the diameters of $k$-tiles 
$Y$ in the same location as $X$ measured {\em relative} to the local scale, that is,
\[
\diam(Y)\lesssim \rho^{k-n}\diam(X)
\]
for  $k\ge n$ with a uniform parameter $\rho \in (0,1)$ (in \ref{item:qv_approx4} of Def\-ni\-tion~\ref{def:qv_approx} this is formulated in a different, but equivalent way; see Lemma~\ref{lem:sub_shrink}). 
Finally, combinatorial separation of two tiles $X$ and $Y$ on the same level 
translates to metric separation, but again relative to the local scale:
\[
\dist(X,Y)\gtrsim \diam(X)
\]
(condition~\ref{item:qv_approx2} in Definition~\ref{def:qv_approx} expresses a weaker version of this requirement again involving  a width parameter).

One of the main reasons why we think that quasi-visual approximations are the right concept to faithfully capture the quasiconformal geometry of a metric 
space is that these approximations are preserved under quasisymmetries and can actually detect whether a homeomorphism is a quasisymmetry.

To formulate this precisely, let $\varphi\: S\ra T$ be a bijection between 
bounded metric spaces $S$ and $T$,  the sequence $\{\X^n\}$ be a quasi-visual approximation of $S$, and $\{\mathbf{Y}^n\}$ be the image of $\{\X^n\}$ under $\varphi$, that is, \[
\mathbf{Y}^n\coloneqq \{ \varphi(X):X\in \X^n\} \text{ for } n\in \N_0.
\]
Then
{\em $\varphi$ is a quasisymmetry if and only if  $\{\mathbf{Y}^n\}$ is a quasi-visual 
approximation of $T$} (see Corollary~\ref{cor:qv-qs}).

\subsection{From metrics to combinatorics and back} 

Once one has a quasi-visual approximation $\{\X^n\}$ of a metric space 
$(S,d)$, 
 its metric information is entirely encoded in the local scales (that is, the diameters 
of tiles) and the combinatorial incidence relations among tiles. To make this precise, we introduce 
a {\em proximity function $m$} that  quantifies the combinatorial separation 
of two distinct points $x$ and $y$ in $S$. Roughly speaking, $m=m(x,y)$ is the largest 
level such that $x$ and $y$ are in the ``same location", meaning that there are $m$-tiles $X$ and $Y$ containing $x$ and $y$, respectively, that are not yet combinatorially separated ($X\cap Y\neq \emptyset$); see Definition~\ref{def:mxy}, which again involves a width parameter that we ignore here for simplicity. At this critical level $m=m(x,y)$, 
the distance $d(x,y)$ is comparable to the local scale $\diam(X)$ of any $m$-tile $X$ with $x\in X$ (see Lemma~\ref{lem:qv_metric}).  

The proximity function $m$ can be defined for any sequence $\{\X^n\}$ of covers 
of a set $S$. If $S$ is a bounded metric space and $\{\X^n\}$
is a quasi-visual approximation of $S$,  then $m$ has  certain characteristic properties. We turn this around and call a sequence $\{\X^n\}$ of covers of a set $S$ 
for which the proximity function $m$  has these properties a {\em combinatorially visual approximation} of $S$ (see Definition~\ref{def:comb_visual}).  It is now very satisfactory that 
for such an approximation  one can always find a metric $d$ on $S$ such that 
 $\{\X^n\}$ becomes a  quasi-visual approximation of $(S,d)$. Actually, with the right choice of $d$  the sequence  $\{\X^n\}$ is even a visual approximation of
 $(S,d)$ (see Theorem~\ref{thm:_comb_qv} for the precise formulation).

\subsection{Tile graphs and Gromov hyperbolic geometry}\label{subsec:tile graph intro}
 
 It will  not be a  surprise to  experts that the theory we develop here is also related to the theory of Gromov hyperbolic spaces  (for general background on Gromov hyperbolic spaces, see \cite{Gr,GH, BH99, BS}). 
 We explore this in Sections~\ref{sec:tile-graph}--\ref{sec:from-grom-hyperb}. The starting point is the {\em tile graph} $\Gamma=
 \Gamma(\{\X^n\})$ associated with a sequence $\{\X^n\}$ of covers  of a given set $S$. By definition, the vertex set of $\Gamma$ consist of all tiles on all levels, and we connect two vertices as represented by tiles $X\in \X^n$ and $Y\in \X^k$ whenever $X\cap Y\ne \emptyset$ and $|n-k|\le 1$.  
 
 We  show that {\em if $\{\X^n\}$ is  a combinatorially visual approximation of a set $S$, then  the associated tile graph $\Gamma$ is Gromov hyperbolic} (see Theorem~\ref{thm:Gromov_hyp}). The proof is based on the close relation of the {\em Gromov product} on the tile graph (see \eqref{eq:def_Gromov}) to the proximity function $m$ discussed earlier (see \eqref{eq:defmXY} and Proposition~\ref{prop:mXY_XY}) 
 In particular, $\Gamma$ is Gromov hyperbolic when $\{\X^n\}$  is a quasi-visual 
 approximation of a metric space $(S,d)$. In this case, 
 $(S,d)$ can be recovered from the associated tile graph $\Gamma$ under the additional assumption that $(S,d)$ is complete. Indeed,
 we will prove that then  $(S,d)$ is  quasisymmetrically equivalent  to the  {\em boundary at infinity $\partial_\infty \Gamma$} of $\Gamma$  equipped with any  visual metric $d_\infty$  (in the sense of Gromov hyperbolic spaces)  via a natural  identification  map $\Phi\: S\ra \partial_\infty \Gamma$   (see 
 \eqref{eq:defnatid} and Proposition~\ref{prop:qv_visual_Gromov_hyp}).  
 
 Finally, one can ask  about the reverse implication in this context.
 Namely, let  $\Gamma=
 \Gamma(\{\X^n\})$ be the tile graph associated with  a sequence $\{\X^n\}$ of covers of a
 metric space $(S,d)$, and suppose $\Gamma$ is Gromov hyperbolic. Can we then conclude  that $\{\X^n\}$ is a quasi-visual or even a visual approximation of $(S,d)$? 

By itself, the Gromov hyperbolicity of $\Gamma$ is not enough to draw any conclusions here. However, if we assume in addition that the natural identification map
 $\Phi\: (S,d)\ra (\partial_\infty \Gamma, d_\infty)$  is nice enough (namely, that it is a quasisymmetry or a snowflake equivalence), then one obtains a version of the desired conclusion:    
 not for the original sequence $\{\X^n\}$ of covers, but for some associated sequence $\{\V^n\}$
of
covers obtained by taking suitable {\em neighborhood clusters} of the original tiles in $\{\X^n\}$  (see \eqref{eq:cluster} and Theorem~\ref{thm:Gromov_implies_qv}).      

\subsection{Quasi-visual approximations of Julia sets}
Fractal spaces arising in various contexts often  admit natural decompositions on different scales  generating   a sequence of covers. It is then a key question  whether these covers actually provide a (quasi-)visual approximation of the given fractal space. For some recursively constructed standard fractals, such as the Cantor ternary set, the Koch snowflake, or the Sierpi\'nski carpet and gasket, the required conditions can easily be verified. However,  in other settings this  may not be so straightforward. To illustrate this point in the present paper, we chose to focus on a specific dynamical setting:  Julia sets of semihyperbolic rational maps. 

Since the Julia set $\Jul(g)$ of a rational map $g$ is always uniformly perfect (see \cite{Hi} and \cite{MdR}),  the existence 
of a (quasi-)visual approximation $\{\X^n\}$  of $\Jul(g)$ is guaranteed by one of our general results (see Proposition~\ref{prop:vis_width1_ex}). It is clear, however, that in the presence of dynamics one wants to impose stronger conditions that tie the iteration of the map and the sequence $\{\X^n\}$ together. Therefore, it is natural to require that the quasi-visual approximation $\{\X^n\}$  
is {\em dynamical} in the sense that the map $g$  sends tiles to tiles with an appropriate shift of the levels (see 
Section~\ref{subsec:dynqvisual} for the relevant 
definition in a general setting).

We will see that such {\em a
dynamical quasi-visual approximation exists
for the Julia set $\Jul(g)$ of a rational map $g$  if and only if $g$ is  semi-hyperbolic} 
(see Corollary~\ref{cor:charsemi}); it follows from Theorems~\ref{thm:semi_hyp_admits_approx} and~\ref{thm:Adconverse} dealing with the two implications of this equivalence).

Our result is closely related to 
work of Ha\"{i}ssinsky and Pilgrim \cite[Theorem~4.2.3]{HP09}, who gave a characterization of semi-hyperbolic rational maps in their framework of {\em metric CXC systems}. We will comment more on the relation to their work in Remark~\ref{rem:relHP}. 

\subsection{Further discussion} The main point of this paper is to build up the foundations of our subject. Accordingly, we have not included more applications,  but in the future we intend to apply the framework of quasi-visual approximations  to the study of the geometry of fractal spaces, dynamical systems, and other areas. To give the reader a glimpse of what such investigations might entail, we explored quasi-visual approximations in the setting of Julia sets of semihyperbolic rational maps, but we expect quasi-visual approximations to naturally arise in many other contexts such as geometric group theory, self-similar groups, iterated function systems, and spaces generated by graph-substitution rules.

Although we will not pursue this here, we anticipate that the framework of quasi-visual approximations will be particularly  useful in the study of the {\em conformal dimension} of a  metric 
space $(S,d)$ (for the definition of this concept and general background, see \cite{MT10}).
There, one is faced with the problem of constructing 
new metrics $\varrho$ on $S$ that are quasisymmetrically equivalent to 
$d$ and so that $(S,\varrho)$ has close to minimal Hausdorff dimension 
over all such metrics. Under suitable conditions, such a metric can be obtained by appropriately redefining local scales. The notion of a quasi-visual approximation 
then makes the problem of verifying the quasisymmetric equivalence between $d$ and $\varrho$ conceptually straightforward: it suffices to show that a given quasi-visual approximation $\{\X^n\}$ of $(S,d)$ remains a quasi-visual approximation for $(S,\varrho)$.

\subsection{Previous work}  Visual and quasi-visual approximations were introduced by these names in \cite{BM22}, but there a separation condition without a width parameter was used. The definitions in \cite{BM22} correspond to ours with width $w=0$. Introducing the width parameter is a small, but crucial advance that leads to a more well-rounded theory. For example, while it is 
rather easy to prove that every uniformly perfect metric space admits a visual approximation of width $w=1$ (see Proposition~\ref{prop:vis_width1_ex} and Remark~\ref{rem:w1}), a corresponding statement for width $w=0$ is much harder to prove and requires additional conditions  on the space (see Corollary~\ref{cor:visual_ex}). 

Visual metrics (again with $w=0$) were introduced in the setting
of \emph{expanding Thurston maps} in \cite[Chapter~8]{BM}. Their distinctive properties formulated  in \cite[Proposition~8.4]{BM} led  to the definition used in the present paper.  The characterization of quasisymmetries via quasi-visual
approximations appears (for width $w=0$) already in
\cite[Proposition~2.7]{BM22}. This was motivated by the
desire to formalize arguments that had appeared in  previous  work
(see \cite[Theorem~5.1]{Me02},  
\cite[Theorem~18.1~(ii)]{BM}, and \cite[Theorem~1.2]{BM20}).

As we mentioned earlier, the idea of encoding the geometry of a (bounded) metric space $S$ in a sequence $\{\X^n\}$  of its covers is not new and has been employed  many times before.  In this context, it is very natural 
to package the sequence $\{\X^n\}$ into a single object $\Gamma=\Gamma(\{\X^n\})$---the tile graph mentioned in Section~\ref{subsec:tile graph intro}. In the literature, versions of $\Gamma$ are  often referred to as the {\em hyperbolic filling} of $S$. We will briefly highlight some important work on this subject without attempting  an exhaustive overview.

The work \cite{BP03} by Bourdon and Pajot has been very influential in this  area. They showed (using different terminology) that the hyperbolic filling
$\Gamma$ of a (doubling and uniformly perfect) bounded metric space $S$  is always Gromov hyperbolic, and they  
used the interplay between $\Gamma$ and $S$ to prove  deep results 
on the analysis of metric spaces. More specifically, they identified the $\ell^p$-cohomology of $\Gamma$ with a Besov space on $S$ and applied this correspondence 
to obtain information on the conformal dimension of $S$. 

Carrasco \cite{Ca13} applied the concept of hyperbolic fillings and 
the geometry of Gromov hyperbolic spaces to study the conformal dimension 
of (doubling and uniformly perfect) bounded metric spaces. He obtained a characterization of conformal dimension in terms of the asymptotic 
behavior  of discrete moduli of annuli. His construction 
of the relevant hyperbolic filling involves numerical parameters that make 
the verification of his conditions somewhat cumbersome. 

This may have been  the motivation for Kigami to recast the whole theory in his own terms \cite{Ki20}. 
In his work, the basic idea is to  approximate a given metric space 
by a sequence of partitions leading to a tree-like structure. By assigning weights to the elements of these partitions, one can then construct new metrics and prove theorems in the same spirit as in \cite{Ca13}. 

Motivated by the dynamics of Kleinian groups and rational maps (and, in particular, by the conformal elevator principle), Ha\"{i}ssinsky and Pilgrim introduced \emph{coarse expanding
  dynamical systems (CXC systems)} in \cite{HP09}. In this setting, they
associate a graph with a (dynamical) sequence of covers of the 
underlying space in 
essentially the same way as our tile graph is  defined  in
Section~\ref{sec:tile-graph}. They prove that this  graph 
is Gromov hyperbolic and use  visual metrics (in the sense of Gromov hyperbolic spaces) on the respective boundary at infinity  to
study the corresponding dynamical system. As we already mentioned above, our
characterization of semi-hyperbolic rational maps was inspired 
by their analogous  characterization of these  maps in the setting of
CXC systems.

\subsection{Structure of the paper}

The paper is organized as follows. In Section~\ref{sec:background} we set up notation and review
some basic definitions that are used throughout this work. Visual approximations are introduced and studied in Section~\ref{sec:visu-appr},  while quasi-visual approximations make their appearance in Section~\ref{sec:quasi-visu-appr}.
We then introduce the proximity function  in Section~\ref{sec:dist-comb} and  briefly discuss dynamical 
quasi-visual approximations in Section~\ref{subsec:dynqvisual}

The crucial relation between quasisymmetries and quasi-visual approximations 
(see Theorem~\ref{thm:qv-qs}) is the topic of Section~\ref{sec:quasi-visu-appr-1}. In Section~\ref{sec:comb-visu-appr} we further explore the connection  between the properties of the proximity function and quasi-visual approximations: we introduce combinatorially visual approximations and establish Theorem~\ref{thm:_comb_qv} relating  the different approximation concepts studied in this paper. In  Sections~\ref{sec:tile-graph}--\ref{sec:from-grom-hyperb} we investigate the close relation between  quasi-visual approximations and the theory of Gromov hyperbolic spaces.

In Section~\ref{sec:semihyp} we show
that the Julia set of a rational map $g$ admits a dynamical
quasi-visual approximation if and only if $g$ is semi-hyperbolic
(see Corollary~\ref{cor:charsemi} which follows from Theorems~\ref{thm:semi_hyp_admits_approx} and~\ref{thm:Adconverse}). The argument is based on some 
auxiliary results, namely a technical distortion estimate 
(Lemma~\ref{lem:dist1}) and some facts about normal families
(Lemmas~\ref{lem:Zalc} and~\ref{lem:GBd}).
In order not to distract from the main narrative in  
 Section~\ref{sec:semihyp}, we delegated the proofs of these facts to an appendix. 

\smallskip
{\bf Acknowledgments.} This work was started while the authors were in
residence at the Mathematical Sciences
Research Institute in Berkeley, California, during the Spring semester of 2022. The authors would also like to thank Zhiqiang Li for raising a question that led to Theorem~\ref{thm:_comb_qv}.

\section{Background}
\label{sec:background}

\subsection{Notation}
\label{sec:notation}

We denote by $\N=\{1, 2, \dots \}$ the set of natural numbers and by
$\N_0= \N \cup \{0\}$ the set of natural numbers including $0$. We write $\C$ for the set of complex numbers, and use the notation $\D \coloneqq \{z\in \C :|z|< 1\}$ for the open unit disk in $\C$,
and $\CDach\coloneqq \C \cup \{\infty\}$ for the Riemann sphere.

The cardinality of a set $X$ is denoted by $\#X$ and the identity map
on $X$ by $\id_X$. If $f\: X \to Y$ is a map between two sets and $W\subset X$, then $f|_W$ stands for the restriction of $f$ to $W$.

We will often consider quantities that agree up to a
multiplicative constant. For the most part, we are
not interested in their precise values. For this reason, we often
hide such constants in the notation.
More precisely, we call two non-negative quantities $a$ and $b$
\emph{(multiplicatively) comparable} if there is a constant $C\geq
1$ (possibly depending on some ambient parameters $\alpha$, $\beta$,  \dots) such that
\begin{equation*}
  \frac{1}{C} a \leq b \leq C a.
\end{equation*}
In this case, we write
\begin{equation*}
  a \asymp b.
\end{equation*}
The constant $C$ is referred to as the \emph{comparability constant}, and it is denoted by $C(\asymp)$ or by $C(\asymp)=C(\alpha,\, \beta, \dots)$ if we want to emphasize its dependence on some ambient parameters
$\alpha$, $\beta$, \dots\,.  Similarly, we
write
\begin{equation*}
  a\lesssim b, \quad\text{or }\quad b \gtrsim a
\end{equation*}
if there is a
constant $C>0$ such that $a \leq C b$ and refer to $C$ as
$C(\lesssim)$ or $C(\gtrsim)$, respectively.

Let $(S,d)$ be a metric space. For $x\in S$ and $r>0$, we denote by 
\[B_d(x, r) \coloneqq \{y\in S : d(x, y) < r\}  \text{ and }  \overline{B}_d(x, r) \coloneqq \{y\in S : d(x, y) \leq r\}\] the open and closed balls of radius $r$ centered at $x$, respectively. If $X, Y\subset S$, we
write $\diam_d(X)$ for the diameter of $X$, and
\[\dist_d(X,Y) \coloneqq \inf \{d(x,y) : x\in X, \, y\in Y\}\] for the distance between $X$ and $Y$. We suppress the subscript $d$ in our notation for $B_d(x,r)$, etc., if the
metric is clear from the context. 

\subsection{Properties of metric spaces}
\label{sec:prop-metric-spaces}
Let $(S,d)$ be a metric space. We call $(S,d)$ \emph{uniformly perfect}, if
there is a constant $0<\lambda<1$ (called the \emph{uniform perfectness constant} of $(S,d)$) such that the following statement is true: for all $x\in S$ and  $0<r\leq \diam(S)$ we have 
\[
B(x,r)\setminus \overline{B(x,\lambda r)}\ne \emptyset.\]
Note that $\lambda r \leq \diam(B(x,r)) \leq 2r$ in this case. Every connected metric space  is uniformly perfect.

The space $(S,d)$ is called \emph{doubling} if there is a constant $N\in
\N$ (called the \emph{doubling constant} of $(S,d)$) such that
every open ball $B\subset S$ of radius $r>0$ can be covered by  $N$ (or fewer) open balls of radius $r/2$. By induction, this implies that $B$ can be covered by $N^k$ open balls of radius $2^{-k}r$ for all $k\in \N_0$.

We say that a set $V\subset S$ is  \emph{$\delta$-separated} for a $\delta >0$ if
$d(x,y)\geq \delta$ for all distinct $x,y\in V$. Zorn's lemma implies that every metric space contains a \emph{maximal} $\delta$-separated subset (for any fixed 
$\delta >0$).

The doubling
property can be expressed in terms of the size of separated sets,
as the following well-known lemma shows.

\begin{lemma}
  \label{lem:doubling_separated}
  A metric space $(S,d)$ is doubling if and only if for every
  $0<\lambda<1$ there is a constant $K= K(\lambda)$ such that each open
  ball $B\subset S$ of radius $r>0$ contains at most $K$ points that are
  $\lambda r$-separated.
\end{lemma}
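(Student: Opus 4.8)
The statement is a standard characterization of the doubling property via separated sets, so the plan is to prove both implications directly.

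\emph{Doubling implies the separated-set bound.} Suppose $(S,d)$ has doubling constant $N$, fix $0<\lambda<1$, and let $B=B(x,r)$. Choose $k\in\N$ with $2^{-k}<\lambda/2$, so that $k$ depends only on $\lambda$. By iterating the doubling property $k$ times, $B$ can be covered by at most $N^k$ balls of radius $2^{-k}r$. If $V\subset B$ is $\lambda r$-separated, then no two points of $V$ can lie in the same ball of radius $2^{-k}r$, since such a ball has diameter at most $2\cdot 2^{-k}r<\lambda r$. Hence $\#V\le N^k$, and we may take $K(\lambda)=N^k$ with $k=k(\lambda)$ chosen as above.

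\emph{The separated-set bound implies doubling.} Conversely, assume the bound holds and fix $\lambda=1/4$, giving a constant $K=K(1/4)$. Let $B=B(x,r)$ be an arbitrary ball of radius $r>0$; we must cover it by boundedly many balls of radius $r/2$. Let $V\subset B$ be a \emph{maximal} $(r/4)$-separated subset of $B$ (which exists by Zorn's lemma). By maximality, every point of $B$ lies within distance $r/4<r/2$ of some point of $V$, so $B\subset\bigcup_{v\in V}B(v,r/2)$. Since $V$ is an $(r/4)$-separated subset of the ball $B=B(x,r)$, the hypothesis gives $\#V\le K$. Thus $B$ is covered by at most $K$ balls of radius $r/2$, which is exactly the doubling property with doubling constant $N=K$.

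\emph{Main obstacle.} There is no real obstacle here; the only mild care needed is bookkeeping with the constants — making sure the radius chosen in the separating argument (some dyadic fraction of $r$ beating $\lambda r$) is small enough, and that in the reverse direction one applies the hypothesis with a fixed value of $\lambda$ (such as $\lambda=1/4$) for which a maximal separated net simultaneously $r/2$-covers the ball. The two directions are genuinely elementary pigeonhole-style arguments.
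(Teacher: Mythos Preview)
Your proof is correct and follows essentially the same approach as the paper's: iterate doubling to cover the ball by small balls each containing at most one separated point, and in the converse direction take a maximal separated net that both covers the ball and has bounded cardinality. The only cosmetic difference is that the paper uses $\lambda=1/2$ (a maximal $r/2$-separated net already yields an $r/2$-cover), whereas you use $\lambda=1/4$; both work, yours just gives a slightly worse constant.
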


This is a  standard fact, but we include a proof for the convenience of the reader.

\begin{proof}
  Assume first that $(S,d)$ is doubling with the doubling constant $N\in \N$. Let $B\subset S$ be an open ball
  of radius $r>0$ and let $V\subset B$ be a $\lambda r$-separated
  set. By the assumption, there is a constant $n=n(N, \lambda)$ such that $B$ can be covered by $n$ open balls $B_1,\dots, B_n$ of diameter $<\lambda
  r$. Since each $B_j$ can contain at most one point from $V$, we conclude that $\# V\leq n$. It follows that $(S,d)$ satisfies the
  condition in the statement with $K=n$.  

  We now prove the converse. Suppose $(S,d)$ satisfies the condition in the
  statement of the lemma. Let $B\subset S$ be an open ball of radius $r>0$ and
  $V\subset B$ be a maximal $r/2$-separated subset of $B$. By
  assumption, we have  $\#V \leq K=K(1/2)$. At the same time, the maximality of $V$ implies that the balls $B(x, r/2)$, $x\in V$,
  cover $B$. Therefore,  $(S,d)$ is doubling with the doubling constant $N=K$. 
\end{proof}

\subsection{Quasi-metrics}
\label{sec:quasi-metrics}
When constructing a metric on a set $S$, it is often easier to first define a distance function with weaker properties, which can then be promoted to a metric.

\begin{definition}
  \label{def:qmetric}
  Let $S$ be a set. A function $q\colon S\times S \to [0,\infty)$ is called
  a \emph{quasi-metric}  on $S$ if there exists  a constant  $K\geq 1$ such that the
  following conditions are satisfied for all $x,y,z\in S$:
  \begin{enumerate}
  \smallskip
  \item
    \label{item:qmetric1}
    $q(x,y) = 0$ if and only if  $x=y$;
  \smallskip
  \item
    \label{item:qmetric2}
    $q(x,y) = q(y,x)$;
  \smallskip  
  \item
    \label{item:qmetric3}
    $q(x,y) \leq K \max\{q(x,z), q(z,y)\}$. 
  \end{enumerate}
 \end{definition}
  In this case, $q$  is called a \emph{$K$-quasi-metric} on $S$,
  and we refer to the pair $(S,q)$ as a \emph{$K$-quasi-metric space} or simply a \emph{quasi-metric space}. 

Clearly, every metric on a set $S$ is a $K$-quasi-metric with $K=2$. Conversely, if $q$ is a $K$-quasi-metric on $S$ with $K\leq 2$, we can find  a metric $d$ on $S$ that is comparable to $q$.  

\begin{theorem}[{\cite[Theorem~1.2]{Sch06}}]
  \label{thm:qmetric_metric}
  Let $q$ be a $K$-quasi-metric on a set $S$ with $1\le K\leq 2$. Then
  there is a metric $d$ on $S$ that satisfies
  \begin{equation*}
    \frac{1}{2K} q \leq d \leq q. 
  \end{equation*}
\end{theorem}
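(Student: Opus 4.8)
The plan is to produce the metric $d$ explicitly by the standard chaining construction and then verify the two inequalities. First I would set
\[
d(x,y) \coloneqq \inf\left\{ \sum_{i=1}^{n} q(x_{i-1}, x_i) : n \in \N,\ x_0 = x,\ x_n = y,\ x_i \in S \right\},
\]
the infimum over all finite chains from $x$ to $y$. It is immediate that $d$ is symmetric, nonnegative, and satisfies the ordinary triangle inequality (concatenation of chains), and that $d \le q$ by using the trivial one-step chain. The real content is the lower bound $d \ge \tfrac{1}{2K} q$, which in particular forces $d(x,y) > 0$ when $x \ne y$, so that $d$ is a genuine metric.

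The key step is the chain estimate: for every chain $x_0, x_1, \dots, x_n$ one has
\[
\sum_{i=1}^{n} q(x_{i-1}, x_i) \ \ge\ \frac{1}{2K}\, q(x_0, x_n).
\]
I would prove this by induction on $n$. The case $n = 1$ is clear. For the inductive step, let $S_k = \sum_{i=1}^{k} q(x_{i-1},x_i)$ be the partial sums and let $m$ be the largest index with $S_m \le \tfrac12 S_n$ (so $S_{m+1} > \tfrac12 S_n$, hence also $S_n - S_{m+1} \le \tfrac12 S_n$). Applying the inductive hypothesis to the subchains $x_0,\dots,x_m$ and $x_{m+1},\dots,x_n$ gives $q(x_0,x_m) \le 2K\cdot S_m \le K S_n$ and $q(x_{m+1},x_n) \le 2K(S_n - S_{m+1}) \le K S_n$, while the single edge gives $q(x_m,x_{m+1}) \le S_{m+1} \le S_n$. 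Now two applications of the quasi-triangle inequality \ref{item:qmetric3} yield
\[
q(x_0,x_n) \le K\max\{ q(x_0,x_m),\ K\max\{q(x_m,x_{m+1}),\ q(x_{m+1},x_n)\} \} \le K \cdot K \cdot (K S_n).
\]
This gives a bound of the shape $q(x_0,x_n) \le K^3 S_n$, which is the correct form but with the wrong constant; to land exactly on $\tfrac{1}{2K}$ I would instead argue more carefully, choosing $m$ as the largest index with $S_m \le \tfrac{1}{2}S_n$ and exploiting that $S_m$, the middle edge $q(x_m,x_{m+1})$, and $S_n - S_{m+1}$ each individually do not exceed $\tfrac12 S_n$. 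Plugging these three bounds (each $\le \tfrac12 S_n$, hence after the inductive hypothesis each $\le 2K\cdot\tfrac12 S_n = K S_n$ except the middle edge which is $\le \tfrac12 S_n$) into a single well-organized application of the ultrametric-type inequality, and using $K \le 2$ to absorb the loss, produces $q(x_0,x_n) \le 2K\, S_n$. Taking the infimum over chains then gives $q(x,y) \le 2K\, d(x,y)$, i.e. $d \ge \tfrac{1}{2K} q$.

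Finally I would assemble the pieces: $d$ satisfies the triangle inequality by construction, is symmetric by \ref{item:qmetric2}, and $d(x,y) = 0 \iff x = y$ follows from $\tfrac{1}{2K} q \le d \le q$ together with \ref{item:qmetric1}; hence $d$ is a metric, and the displayed comparison $\tfrac{1}{2K} q \le d \le q$ is exactly what was to be shown. The main obstacle is getting the constant in the chain lemma exactly right: a naive three-fold use of \ref{item:qmetric3} loses a factor $K^3$ rather than the claimed $2K$, so the induction must be set up so that the quasi-triangle inequality is effectively applied only "once" to the dominant edge while the flanking pieces, being at most half the total weight, are controlled by the inductive hypothesis with room to spare — this is precisely where the hypothesis $K \le 2$ is used, and where I would follow the argument of Schroeder \cite{Sch06}.
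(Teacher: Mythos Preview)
The paper does not supply its own proof of this statement: Theorem~\ref{thm:qmetric_metric} is quoted verbatim from \cite[Theorem~1.2]{Sch06} and used as a black box, so there is no in-paper argument to compare against. Your outline---defining $d$ as the chain infimum and proving the chain estimate $q(x_0,x_n)\le 2K\sum_i q(x_{i-1},x_i)$ by strong induction with a midpoint split---is exactly Schroeder's approach, and you correctly locate the only nontrivial point, namely obtaining the constant $2K$ rather than $K^3$.

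That said, your inductive step as written does not actually close: after bounding $q(x_0,x_m)\le KS_n$, $q(x_m,x_{m+1})\le S_n$, and $q(x_{m+1},x_n)\le KS_n$, two nested applications of \ref{item:qmetric3} still give $K^2\max\{KS_n,S_n\}=K^3 S_n$, and the vague appeal to ``a single well-organized application'' together with $K\le 2$ does not explain how the extra factor of $K$ disappears. Since you explicitly defer to \cite{Sch06} for this point, your proposal is really a correct identification of the strategy and its crux rather than a complete proof; to finish, you would need to reproduce Schroeder's sharper bookkeeping at the splitting step.
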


\subsection{Classes of maps between metric spaces}
\label{sec:quasisymmetries}
Let $(S_1,d_1)$ and $(S_2,d_2)$ be two 
metric spaces, and $\phi\colon S_1 \to S_2$ 
be a bijection. We say that $\phi$ is a 
{\em bi-Lipschitz map} if there exists a constant $L\ge 1$ such that 
\[
\frac{1}{L}d_1(x,y) \leq d_2(\phi(x),\phi(y)) \leq 
L d_1(x,y) 
\]
for all $x,y\in S_1$.
We say that $\phi$ is a 
\emph{snowflake equivalence} if there exist $\alpha>0$ and $C\geq 1$ such that
\[
\frac{1}{C}d_1(x,y)^\alpha \leq d_2(\phi(x),\phi(y)) \leq 
C d_1(x,y)^\alpha 
\]
for all $x,y\in S_1$.  We call the map $\phi$ a 
\emph{quasisymmetry}  if there exists a homeomorphism $\eta\colon [0,\infty) \to [0,\infty)$ (which plays the role of a  ``distortion
function")  such that
\[
\frac{d_2(\phi(x), \phi(y))}{d_2(\phi(x), \phi(z))} \leq \eta\left( \frac{d_1(x,y)}{d_1(x,z)}\right)
\]
for all $x,y,z\in S_1$ with $x\neq z$. When we want to emphasize the distortion function $\eta$, we speak of an \emph{$\eta$-quasisymmetry}.

Each of these three classes of maps is preserved under compositions or taking inverse maps. For example, the composition of two bi-Lipschitz maps is bi-Lipschitz and the inverse  of a quasisymmetry is also a quasisymmetry. 

Clearly, every bi-Lipschitz map is a snowflake equivalence, and every snowflake equivalence is a quasisymmetry. It is also easy to see that every quasisymmetry (and hence every snowflake equivalence and every bi-Lipschitz map) is a homeomorphism. 

These types of maps also lead to notions of equivalence for metrics on a given set $S$. For example, we call two metrics $d_1$ and $d_2$ on $S$ {\em quasisymmetrically equivalent} if the identity map $\id_S\: (S, d_1)\ra (S, d_2)$ is a quasisymmetry. Bi-Lipschitz and snowflake equivalence of metrics on $S$ are defined in a similar way. 

We record some  well-known facts about quasisymmetries that we will use in the sequel.

\begin{lemma}[{\cite[Propositions~10.8 and 10.10, and Exercise~11.2]{He}}]\label{lem:qs_properties}
    Let $\phi\colon (S_1,d_1) \to (S_2,d_2)$ be a quasisymmetry. Then the following statements are true: 
    \begin{enumerate}
 \smallskip
        \item If $(S_1,d_1)$ is bounded, then $(S_2,d_2)$ is bounded.
\smallskip
        \item If $(S_1,d_1)$ is complete, then $(S_2,d_2)$ is complete.
\smallskip        
        \item If $(S_1,d_1)$ is uniformly perfect, then $(S_2,d_2)$ is uniformly perfect. 
    \end{enumerate}
\end{lemma}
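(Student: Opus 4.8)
The plan is to obtain all three assertions from the defining inequality of the $\eta$-quasisymmetry $\phi$ by elementary manipulations, using two standard facts: since $\eta$ is a homeomorphism of $[0,\infty)$ it is automatically strictly increasing with $\eta(0)=0$; and the inverse map $\phi^{-1}\colon (S_2,d_2)\to(S_1,d_1)$ is again a quasisymmetry, with some distortion function $\eta'$ of the same type. One may assume $\#S_1\ge 2$, as otherwise everything is trivial; then $\#S_2\ge 2$ as well.

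For~(i) I would fix distinct $z,w\in S_1$, set $a:=d_1(z,w)>0$, and apply the quasisymmetry inequality with base point $z$ and denominator point $w$: for every $x\in S_1$,
\[
d_2(\phi(z),\phi(x))\le \eta\!\left(\frac{d_1(z,x)}{a}\right)d_2(\phi(z),\phi(w))\le \eta\!\left(\frac{\diam_{d_1}(S_1)}{a}\right)d_2(\phi(z),\phi(w)).
\]
As $(S_1,d_1)$ is bounded the right-hand side is a finite constant independent of $x$, so $S_2=\phi(S_1)$ lies in one ball about $\phi(z)$ and $(S_2,d_2)$ is bounded. For~(ii), let $(y_n)$ be a Cauchy sequence in $S_2$ and put $x_n:=\phi^{-1}(y_n)$. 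A Cauchy sequence with a constant subsequence converges, so after passing to a subsequence we may assume the $y_n$ are pairwise distinct; the real numbers $d_2(y_1,y_n)$ then converge to some $L\ge 0$, and if $L=0$ we get $y_n\to y_1\in S_2$ and are done, so assume $L>0$. Applying the quasisymmetry inequality for $\phi^{-1}$ with base point $y_1$, the ratios $d_1(x_1,x_n)/d_1(x_1,x_m)$ stay bounded above and below once $n,m$ are large (because $d_2(y_1,y_n)/d_2(y_1,y_m)\to 1$), so $d_1(x_1,x_n)\le R$ for all large $n$ and a suitable constant $R$. Then the quasisymmetry inequality for $\phi^{-1}$ with base point $y_n$, denominator point $y_1$, and numerator point $y_m$ gives, for large $n,m$,
\[
d_1(x_n,x_m)\le d_1(x_n,x_1)\,\eta'\!\left(\frac{d_2(y_n,y_m)}{d_2(y_n,y_1)}\right)\le R\,\eta'\!\left(\frac{2\,d_2(y_n,y_m)}{L}\right),
\]
which tends to $0$ as $n,m\to\infty$ because $d_2(y_n,y_m)\to 0$ and $\eta'(0)=0$. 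Hence $(x_n)$ is Cauchy, so it converges in $S_1$ by completeness, and $y_n=\phi(x_n)$ converges in $S_2$ by continuity of $\phi$.

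For~(iii), let $\lambda_1\in(0,1)$ be a uniform perfectness constant of $(S_1,d_1)$. Given $x_2\in S_2$ and $0<s_2\le\diam_{d_2}(S_2)$, I would choose $w_2\in S_2$ with $d_2(x_2,w_2)\gtrsim s_2$ (possible since $\diam_{d_2}(S_2)\ge s_2$), and set $x_1:=\phi^{-1}(x_2)$, $w_1:=\phi^{-1}(w_2)$, $t_1:=d_1(x_1,w_1)\in(0,\diam_{d_1}(S_1)]$. Iterating uniform perfectness of $(S_1,d_1)$ at the center $x_1$ with the radii $\lambda_1^{k}t_1$, $k\in\N_0$ (all of them admissible), produces points $u_k\in S_1$ with $\lambda_1^{k+1}t_1<d_1(x_1,u_k)<\lambda_1^{k}t_1$. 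Writing $a_k:=d_2(x_2,\phi(u_k))>0$, the quasisymmetry inequality based at $x_1$ --- comparing $\phi(u_k)$ with $\phi(w_1)=w_2$, comparing consecutive terms $\phi(u_{k-1}),\phi(u_k)$, and comparing $\phi(u_0)$ with $w_2$ --- yields the three estimates
\[
a_k\le \eta(\lambda_1^{k})\,d_2(x_2,w_2),\qquad a_{k-1}\le \eta(\lambda_1^{-2})\,a_k,\qquad a_0\gtrsim s_2.
\]
The first forces $a_k\to 0$ as $k\to\infty$; the second shows $(a_k)$ never drops by more than the fixed factor $C:=\eta(\lambda_1^{-2})$ between consecutive indices, and moreover $C>1$ (otherwise $(a_k)$ would be non-decreasing yet tend to $0$, which is impossible since each $a_k>0$); the third pins down the starting scale. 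Combining the three, one checks that either $a_0\in[\lambda_2 s_2,s_2]$ already, or there is a least $k$ with $a_k\le s_2$, and then $a_k>s_2/C$; in both cases some $a_k$ lies in $[\lambda_2 s_2,s_2]$ for a constant $\lambda_2\in(0,1)$ depending only on $\lambda_1$ and $\eta$. After a harmless shrinking of $\lambda_2$ to pass to the open-annulus formulation used in the definition, the point $\phi(u_k)\in S_2$ witnesses uniform perfectness of $(S_2,d_2)$ at $(x_2,s_2)$.

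I expect that the only genuine obstacle is~(iii). The difficulty is that one cannot simply transport a single well-placed point from $S_1$ to $S_2$: a priori there need not be any point of $S_2$ at the prescribed scale $s_2$ near $x_2$ --- that is exactly what has to be shown --- and a quasisymmetry distorts scales in a non-multiplicative fashion, so a point at a known scale in $S_1$ lands at an uncontrolled scale in $S_2$. The chain-of-scales device is what resolves this: uniform perfectness of $S_1$ manufactures an entire geometric ladder of points near $x_1$ realizing all small scales, and the bounded-ratio property of the $a_k$ forces the image ladder to sweep coarsely through all small scales near $x_2$, hence to meet the target annulus. All of this is classical and could alternatively be quoted from \cite{He}.
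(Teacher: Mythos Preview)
The paper does not prove this lemma at all: it is stated with a citation to Heinonen's book \cite[Propositions~10.8 and 10.10, and Exercise~11.2]{He} and left at that. Your proposal, by contrast, supplies full self-contained arguments for all three parts.

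Your arguments are correct. Part~(i) is the standard one-line computation. For part~(ii), your reduction to the case $L>0$ and the two-step estimate (first bounding $d_1(x_1,x_n)$, then bounding $d_1(x_n,x_m)$) is clean and correct; the passage to a subsequence of pairwise distinct terms is justified since a Cauchy sequence converges once any subsequence does. For part~(iii), the ``ladder of scales'' argument is exactly the right idea: the three displayed estimates follow from the quasisymmetry inequality as you indicate, and the intermediate-value-type reasoning that some $a_k$ lands in the target range is sound (your side remark that $C=\eta(\lambda_1^{-2})>1$ can be justified via the standard fact $\eta(1)\ge 1$, obtained by taking $y=z\ne x$ in the quasisymmetry inequality). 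The closing adjustment to the open-annulus formulation is indeed harmless.

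So your write-up goes beyond what the paper does, which is simply to quote the result. If the goal is to match the paper, a one-line citation suffices; if the goal is a self-contained treatment, your argument is fine as written.
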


\begin{lemma}[{\cite[Proposition 10.8]{He}, see also \cite[Lemma~2.8]{BM22}}]
  \label{lem:qs_diam}
  Assume $\id_S\colon (S,d_1)\to (S,d_2)$ is an $\eta$-quasisymmetry. Let
  $X\subset S$ be a subset of $S$, and $x,y\in X$ be points satisfying
  $d_1(x,y) \asymp \diam_1(X)$ with  a constant
  $C_1=C(\asymp)$. Then
  \begin{equation*}
    \diam_2(X) \asymp d_2(x,y),
  \end{equation*}
  where $C(\asymp)=C(C_1,\eta)$. 
\end{lemma}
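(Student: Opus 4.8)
The lower bound in the claim is immediate: since $x,y\in X$, we trivially have $d_2(x,y)\le\diam_2(X)$. So the whole point is the upper bound $\diam_2(X)\le C\,d_2(x,y)$ with $C=C(C_1,\eta)$, and I would obtain this by controlling each distance $d_2(x,z)$, $z\in X$, in terms of $d_2(x,y)$ and then invoking the triangle inequality. First I would dispose of the degenerate situation: if $d_1(x,y)=0$, then $x=y$ and the hypothesis $d_1(x,y)\asymp\diam_1(X)$ forces $\diam_1(X)=0$, so $X=\{x\}$ and both $\diam_2(X)$ and $d_2(x,y)$ vanish. Hence I may assume $d_1(x,y)>0$. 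I would also record the elementary observation that a homeomorphism $\eta\colon[0,\infty)\to[0,\infty)$ is automatically strictly increasing with $\eta(0)=0$ (a decreasing continuous function on $[0,\infty)$ cannot have unbounded image).

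The key step is the following estimate. Fix $z\in X$. Since $d_1(x,z)\le\diam_1(X)\le C_1\,d_1(x,y)$, applying the $\eta$-quasisymmetry inequality to the triple $x$, $z$, $y$ (legitimate because $x\ne y$) and using the monotonicity of $\eta$ gives
\[
  \frac{d_2(x,z)}{d_2(x,y)}\le\eta\!\left(\frac{d_1(x,z)}{d_1(x,y)}\right)\le\eta(C_1),
\]
that is, $d_2(x,z)\le\eta(C_1)\,d_2(x,y)$ for every $z\in X$. In other words, $X$ is contained in the closed ball $\overline{B}_2\bigl(x,\eta(C_1)\,d_2(x,y)\bigr)$.

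Finally, for arbitrary $u,v\in X$ the triangle inequality together with the last estimate yields $d_2(u,v)\le d_2(u,x)+d_2(x,v)\le 2\eta(C_1)\,d_2(x,y)$; taking the supremum over $u,v\in X$ gives $\diam_2(X)\le 2\eta(C_1)\,d_2(x,y)$. Combined with the trivial lower bound, this proves $\diam_2(X)\asymp d_2(x,y)$ with comparability constant $\max\{1,2\eta(C_1)\}$, which depends only on $C_1$ and $\eta$, as required. There is really no serious obstacle here: the only points requiring a little care are the degenerate case and the (automatic) monotonicity of $\eta$. As an alternative one could simply quote \cite[Proposition~10.8]{He} applied to the pair $\{x,y\}\subset X$, but the direct argument above is short enough to reproduce.
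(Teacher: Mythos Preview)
Your proof is correct and is the standard argument. Note, however, that the paper does not actually give a proof of this lemma: it is stated with citations to \cite[Proposition~10.8]{He} and \cite[Lemma~2.8]{BM22} and no proof is provided, so there is nothing in the paper to compare your argument against.
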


\begin{cor}
  \label{cor:qmetric_snow}
  Every quasi-metric space $(S,q)$ is snowflake-equivalent to a
  metric space $(S,d)$. 
\end{cor}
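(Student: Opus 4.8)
The plan is to reduce everything to Theorem~\ref{thm:qmetric_metric}, which already produces a metric comparable to a quasi-metric, but only under the restriction that the relaxation constant be at most~$2$. So the first step I would take is to observe that raising a quasi-metric to a power produces a quasi-metric with the constant raised to the same power: if $q$ is a $K$-quasi-metric on $S$ and $\eps>0$, then $q^\eps$ satisfies conditions \ref{item:qmetric1} and \ref{item:qmetric2} trivially, and applying the increasing function $t\mapsto t^\eps$ to \ref{item:qmetric3} gives
\[
q(x,y)^\eps \le \bigl(K\max\{q(x,z),q(z,y)\}\bigr)^\eps = K^\eps\max\{q(x,z)^\eps,\, q(z,y)^\eps\}
\]
for all $x,y,z\in S$; hence $q^\eps$ is a $K^\eps$-quasi-metric.

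Next I would choose $\eps\in(0,1]$ with $K^\eps\le 2$, for instance $\eps=\min\{1,\,(\log 2)/(\log K)\}$, with the convention that $\eps=1$ when $K=1$. Then $q^\eps$ is a $K'$-quasi-metric with $1\le K'\coloneqq K^\eps\le 2$, so Theorem~\ref{thm:qmetric_metric} applies to $q^\eps$ and yields a metric $d$ on $S$ with $\tfrac{1}{2K'}\,q^\eps\le d\le q^\eps$. Finally, putting $C\coloneqq 2K'$ and $\alpha\coloneqq\eps$, this reads $\tfrac1C\,q(x,y)^\alpha\le d(x,y)\le C\,q(x,y)^\alpha$ for all $x,y\in S$, which is precisely the statement that $\id_S\co(S,q)\to(S,d)$ is a snowflake equivalence onto the metric space $(S,d)$ (with the evident interpretation of a snowflake equivalence having a quasi-metric space as its source).

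I do not expect any genuine obstacle here: the corollary is a direct consequence of Theorem~\ref{thm:qmetric_metric} once one notes that powers of quasi-metrics remain quasi-metrics with the constant raised to the corresponding power. The only point requiring a moment's care is the degenerate case $K=1$, in which $q$ already satisfies the ultrametric inequality and is therefore a metric; there one may simply take $d=q$ and $\alpha=1$, and this case is in any event subsumed by the choice of $\eps$ above.
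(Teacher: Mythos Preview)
Your proposal is correct and follows essentially the same approach as the paper: raise $q$ to a small power $\alpha$ so that $K^\alpha\le 2$, then apply Theorem~\ref{thm:qmetric_metric} to obtain a metric comparable to $q^\alpha$. The paper's version is slightly terser, but the argument is identical.
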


\begin{proof}
  Let $(S,q)$ be a $K$-quasi-metric space. Note that for
  $x,y,z\in S$ and $\alpha>0$ we have
  \begin{equation*}
    q^\alpha(x,y) \leq K^\alpha \max\{q^\alpha(x,z), q^\alpha(z,y)\}.
  \end{equation*}
  Choosing $\alpha>0$ sufficiently small such that $K^\alpha\leq
  2$, we see from Theorem~\ref{thm:qmetric_metric} that
  $q^\alpha \asymp d$ for some metric $d$ on $S$. 
\end{proof}

\section{Visual approximations}
\label{sec:visu-appr}

Many classical fractal sets (such as the standard Cantor set, the Koch
snow\-flake, and the
Sierpi\'{n}ski carpet and gasket) have ``standard decompositions''
into sets of size $\Lambda^{-n}$ for $n\in \N_0$, where  $\Lambda>1$ is a suitable parameter. In this section, we abstract the properties  of such decompositions into  the notion of a  \emph{visual approximation}. Along with controlling the sizes of the elements in these approximations, we will also 
impose a metric separation condition (as discussed in the introduction).

Let $S$ be a set and $\{\X^n\}_{n\in\N_0}$ be a sequence of covers of $S$.   For simplicity, we will usually just write $\{\X^n\}$ instead of
$\{\X^n\}_{n\in\N_0}$ with the index set $\N_0$ for $n$ 
understood. The covers $\X^n$, $n\in \N_0$, can be arbitrary and we allow infinite covers.

Each set $X\in \X^n$ for $n\in \N_0$ is called a \emph{tile of level $n$}, or an
\emph{$n$-tile}; we call $X$ a \emph{tile} if its level is
irrelevant.  Note that even when $S$ is endowed with a metric, there are no topological requirements on
tiles, though in applications  tiles will typically be either 
all compact or all open.

Given $n\in \N_0$ and two $n$-tiles $X,Y\in \X^n$, a \emph{chain of $n$-tiles from $X$ to $Y$} is a finite sequence $X_0=X,\,X_1,\,\dots,\,X_k=Y$
of $n$-tiles such that $X_j \cap X_{j+1}\neq \emptyset$ for all
$j=0,\dots,k-1$. We call $k$ the \emph{length} of the chain (so it consists of $k+1$ $n$-tiles).

The \emph{neighborhood of a tile $X\in \X^n$ of width $w\in \N_0$}, denoted by $U_w(X)$, is
defined  as the 
set 
\begin{align}
  \label{eq:def_UwX^n}
  U_{w}(X) \coloneqq \bigcup\{Y\in \X^n: \text{there } &\text{exists a chain of $n$-tiles}\\
  &\text{from $X$ to $Y$ of length $\le w$}\}.\notag
                \end{align}
Clearly, we have
\begin{equation}\label{eq:nbhds-are-nested}
  X=U_0(X) \subset U_1(X) \subset \dots 
\end{equation}
for each $X\in \X^n$. Moreover, if $U_w(X) \cap U_w(Y) \neq \emptyset$ for $X, Y\in \X^n$, then we can find a chain 
\begin{equation}
X_0=X,\, X_1,\,\dots,\, X_w,\, Y_w,\, \dots,\, Y_1,\,Y_0
  =Y
  \end{equation}
of $n$-tiles from $X$ to $Y$ of length $2w+1$ (note that we can make the length equal to $2w+1$ here by listing a tile multiple times).

\begin{definition}[Visual approximations]
  \label{def:visual} 
  Let $(S,d)$ be a bounded metric space, $\Lambda>1$, and $w\in \N_0$.
  A \emph{visual
    approximation of width $w$ with (visual) parameter $\Lambda$ for $(S,d)$} is a
  sequence  
  $\{\X^n\}_{n\in\N_0}$ of covers  of $S$ by some of its subsets, where $\X^0=\{S\}$,
  such that the following conditions are satisfied: for all $n\in \N_0$ and 
   $X,Y\in \X^n$,  we have 
  \begin{enumerate}
 \smallskip
  \item 
    \label{item:visual1}
    $\diam(X) \asymp \Lambda^{-n}$, 
    \smallskip
  \item 
    \label{item:visual2}
    $\dist(X, Y) \gtrsim \Lambda^{-n}$
    whenever
    $U_w(X) \cap U_w(Y) = \emptyset$.
  \end{enumerate}  

  \smallskip\noindent
  Here we require  that the implicit constants $C(\asymp)$ and $C(\gtrsim)$ are
  independent of $n$, $X$, and~$Y$.  
\end{definition}

For brevity, we will frequently omit the reference to the width $w$ or the visual parameter $\Lambda$ 
when we mention  visual approximations. 
For the width
$w=0$, the second condition above simplifies to the following:
\begin{enumerate}[label=(\roman*')]
  \setcounter{enumi}{1}

  \smallskip
\item
  \label{item:visual_width0}
  $\dist(X,Y) \gtrsim \Lambda^{-n}$
  whenever  
  $X\cap Y=\emptyset$
  for $X, Y\in \X^n$.
\end{enumerate}

\smallskip\noindent
Since we require $\X^0=\{S\}$,  
 condition~\ref{item:visual1} for  $n=0$ simply says that 
 $(S,d)$ is bounded and has positive diameter (or equivalently, consists of  more than one point), while condition~\ref{item:visual2}
for  $n=0$ is vacuous.
Moreover,  \eqref{eq:nbhds-are-nested} implies that a visual approximation of width
$w\in \N_0$ is automatically a visual approximation of width $w'\in \N_0$ for all  $w'\geq w$.

We can also reverse the point of view and focus on the metric instead 
of the sequence~$\{\X^n\}$. 
\begin{definition}[Visual metrics]
\label{def:vis_metric}
Let $S$ be a set and $\{\X^n\}$ be a sequence of covers of $S$ such that 
$\X^0=\{S\}$. We call a metric $\varrho$ on $S$ a 
\emph{visual metric for $\{\X^n\}$} if there exist parameters   $w\in \N_0$ and $\Lambda>1$ such that $\{\X^n\}$ is a visual approximation of width $w$ with visual parameter~$\Lambda$ for $(S,\varrho)$,  as in 
Definition~\ref{def:visual}. 
\end{definition}
Note that in this case  $(S,\varrho)$ is bounded and $S$ consists of more than one point. We call $w\in \N_0$ the {\em width parameter} and  $\Lambda>1$ the  {\em visual parameter}   of the visual metric $\varrho$.

Condition~\ref{item:visual1} and condition~\ref{item:visual2} with width $w=0$ from Definition~\ref{def:visual} were first introduced 
in \cite{BM} in the context  of \emph{expanding Thurston maps} (see \cite[Proposition~8.4]{BM}). There, the name {\em visual metric}
was also used, although this term was orginally coined for certain metrics on the boundary at infinity of Gromov hyperbolic spaces. As we will explain,
visual metrics  as in Definition~\ref{def:vis_metric}
and visual metrics for Gromov hyperbolic spaces are closely related (see  Proposition~\ref{prop:vis_Gromov_hyp}). This connection motivated our terminology.

\begin{remark} 
 In all of our main results for visual and quasi-visual approximations (to be defined in Section~\ref{sec:quasi-visu-appr}), such as
 Corollary~\ref{cor:visual_ex},
    Theorem~\ref{thm:Gromov_implies_qv}, and
    Theorem~\ref{thm:semi_hyp_admits_approx},  we have $w\in \{0,1\}$ for the width parameter $w$. We will also see momentarily that if a metric space admits a visual approximation of some width $w\in \N_0$, then it also admits a visual approximation of width $1$ (see Remark~\ref{rem:w1}). 
    Therefore,  it is not so clear at the moment  whether 
    it is really necessary  to consider widths $w\geq
    2$. Nevertheless, we chose to present the theory in this greater generality in order to have more flexibility for potential   applications in the future. 
  \end{remark}

\subsection{Existence of visual approximations} We first address the question when a given metric space has a visual
approximation.

\begin{proposition}
  \label{prop:vis_width1_ex}
  Let $(S,d)$ be a bounded metric space.
  Then $(S,d)$ admits a visual approximation   if and only if $(S,d)$ is uniformly perfect.
\end{proposition}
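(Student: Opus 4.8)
The plan is to prove both implications separately, with the forward direction being straightforward and the reverse direction requiring an explicit construction of the covers.

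For the \emph{forward implication}, suppose $\{\X^n\}$ is a visual approximation of width $w$ with visual parameter $\Lambda$ for $(S,d)$. I would fix $x\in S$ and $0<r\le\diam(S)$, and choose $n\in\N_0$ so that $\Lambda^{-n}$ is comparable to $r$ (with constants depending only on $\Lambda$ and the comparability constants in Definition~\ref{def:visual}). Pick an $n$-tile $X$ with $x\in X$. On the one hand, by condition~\ref{item:visual1}, $\diam(X)\asymp\Lambda^{-n}\asymp r$, so $X$ contains a point $y$ with $d(x,y)$ bounded below by a fixed fraction of $r$; on the other hand, $\diam(X)\le 2r$ after adjusting $n$, so $y\in B(x,r)$. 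This produces the annulus point required for uniform perfectness, with $\lambda$ depending only on the structural constants. A small amount of care is needed to handle levels $n$ near $0$ and to ensure the chosen $n$ is a natural number, but this is routine. (Alternatively, one can quote that uniform perfectness is preserved under quasisymmetry once one knows the reverse direction produces a visual metric, but the direct argument is cleaner.)

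For the \emph{reverse implication}, assume $(S,d)$ is uniformly perfect with constant $\lambda$. I would construct a visual approximation of width $w=1$ directly. Fix $\Lambda>1$ large (depending on $\lambda$), and for each $n\in\N_0$ let $V^n\subset S$ be a maximal $\Lambda^{-n}$-separated subset of $S$; set $\X^n\coloneqq\{\overline{B}(v,\Lambda^{-n}):v\in V^n\}$ (and $\X^0=\{S\}$, which is consistent since $\diam(S)$ is comparable to $\Lambda^0$ after rescaling the metric if necessary). Maximality gives that $\X^n$ covers $S$. Condition~\ref{item:visual1} follows immediately: $\diam(\overline{B}(v,\Lambda^{-n}))\le 2\Lambda^{-n}$, and the lower bound $\diam\gtrsim\Lambda^{-n}$ is exactly where uniform perfectness enters—inside $B(v,\Lambda^{-n})$ there is a point at distance at least $\lambda\Lambda^{-n}$ from $v$. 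For condition~\ref{item:visual2} with $w=1$: if $U_1(X)\cap U_1(Y)=\emptyset$ for $X=\overline{B}(u,\Lambda^{-n})$, $Y=\overline{B}(v,\Lambda^{-n})$, I would show that the separation of the \emph{centers} $u,v$ is bounded below by a fixed multiple of $\Lambda^{-n}$; since any point of $S$ lies within $\Lambda^{-n}$ of some center (by maximality), a short covering/triangle-inequality argument shows that if $d(u,v)$ were too small then some tile would chain $X$ to $Y$ within two steps, contradicting $U_1(X)\cap U_1(Y)=\emptyset$. This yields $\dist(X,Y)\ge d(u,v)-2\Lambda^{-n}\gtrsim\Lambda^{-n}$, provided $\Lambda$ was chosen large enough that the ``buffer'' $2\Lambda^{-n}$ is small relative to the guaranteed center separation.

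The main obstacle is the separation condition~\ref{item:visual2} in the reverse direction: one must quantify precisely how $U_1(X)\cap U_1(Y)=\emptyset$ forces the centers apart, and then choose $\Lambda$ large enough that the radii of the tiles (which are also of order $\Lambda^{-n}$, not smaller) do not eat up this gap. This is exactly why width $w=1$ (rather than $w=0$) makes the argument work cleanly—with $w=0$ one would need the tiles to be genuinely smaller than the mesh of the net, forcing a more delicate construction (cf.\ the discussion around Corollary~\ref{cor:visual_ex}). I would also record that, having built the approximation for one value of $\Lambda$, passing to other widths or noting the width-$1$ normalization is immediate from \eqref{eq:nbhds-are-nested}.
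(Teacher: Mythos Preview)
Your overall strategy matches the paper's: for the forward direction you use the same diameter estimate on an $n$-tile containing $x$; for the reverse direction you build covers out of balls centered at a maximal $\Lambda^{-n}$-separated net and aim for width $w=1$. That is exactly what the paper does.

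There is, however, a genuine gap in your separation argument for~\ref{item:visual2}. You take tiles $\overline{B}(v,\Lambda^{-n})$ with radius equal to the net separation, and then assert that $U_1(X)\cap U_1(Y)=\emptyset$ forces the centers $u,v$ to be far enough apart that $\dist(X,Y)\ge d(u,v)-2\Lambda^{-n}\gtrsim\Lambda^{-n}$, \emph{provided $\Lambda$ is large enough}. But both the ``buffer'' $2\Lambda^{-n}$ and whatever lower bound on $d(u,v)$ you extract from the chain argument are multiples of $\Lambda^{-n}$; their ratio is independent of $\Lambda$, so enlarging $\Lambda$ buys you nothing. Concretely, with radius equal to separation the triangle-inequality chain only gives $d(u,v)>2\Lambda^{-n}$, hence $\dist(X,Y)>0$ --- not $\gtrsim\Lambda^{-n}$.

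The paper avoids this by taking balls of radius $2\Lambda^{-n}$ (twice the separation). Then, if $\dist(X,Y)<\tfrac12\Lambda^{-n}$, one picks $x_0\in X$, $y_0\in Y$ with $d(x_0,y_0)<\tfrac12\Lambda^{-n}$, finds a net point $x'$ with $d(x_0,x')<\Lambda^{-n}$, and checks that the single tile $X'=B(x',2\Lambda^{-n})$ contains both $x_0$ and $y_0$; hence $X'\subset U_1(X)\cap U_1(Y)$, a contradiction. With this choice of radius the construction works for \emph{every} $\Lambda>1$ (see Remark~\ref{rem:w1}), not only large ones. So your plan is right; you just need the radius to exceed the separation (any factor $>1$ suffices), and you should drop the claim that $\Lambda$ needs to be large.
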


\begin{proof} ``$\Rightarrow$''  Suppose that $(S,d)$ admits a visual approximation $\{\X^n\}$. Let $\Lambda>1$ be the  visual parameter
of $\{\X^n\}$. Then there exists a constant  $C\geq 1$ such  that 
  \begin{equation*}
    \frac{1}{C} \Lambda^{-n} \leq \diam(X^n) \leq C \Lambda^{-n}
  \end{equation*}
  for all $n\in \N_0$ and $X^n \in \X^n$. 

  Now let $x\in S$ and $0<r\leq \diam(S)$ be  arbitrary,   and  $n=n(r)\in \N_0$ be the
  smallest number such that $C\Lambda^{-n}<r$. Then
  $C\Lambda^{-n+1}\geq r$. We can choose $X^n\in\X^n$ with $x\in X^n$. Then 
  we have $\diam(X^n) \leq C \Lambda^{-n} <r$ and so 
  $X^n\subset B(x,r)$. On the other hand, since $\diam(X^n) \geq
  \frac{1}{C} \Lambda^{-n}$, there exists  a point $y\in X^n$
  with
  \begin{equation*}
    d(x,y)
    >
    \frac{1}{3}
    \diam(X^n)
    \ge
    \frac{1}{3C} \Lambda^{-n}
    \geq
    \frac{1}{3C^2\Lambda} r.  
  \end{equation*}
  This shows that $(S,d)$ is uniformly perfect with constant
  $\lambda= \frac{1}{3C^2\Lambda}$.

  \smallskip
  ``$\Leftarrow$'' Suppose that $(S,d)$ is uniformly perfect with constant 
  $\la\in (0,1)$. We fix a parameter $\Lambda>1$. 
  
  For $n\in \N$, let $S^n\subset S$ be a maximal
  $\Lambda^{-n}$-separated set.  
  We set $\X^0 \coloneqq \{S\}$ and 
  \begin{equation*}
    \X^n\coloneqq \{B(x, 2 \Lambda^{-n}) : x\in S^n\} 
  \end{equation*}
  for $n\in \N$. Clearly, each $\X^n$ is a cover of~$S$.  We want to show  that the sequence $\{\X^n\}$ provides  a  visual approximation for $(S,d)$ of width $w=1$ with parameter $\Lambda$. 

 For $n\in \N$ and $X\in \X^n$ we have $X=B(x, 2 \Lambda^{-n})$ for some 
 $x\in S^n$. We conclude that 
  \begin{align*}
    \diam(X) \asymp \Lambda^{-n},
  \end{align*}
 where 
  $C(\asymp)= C(\lambda)$ depends only on the uniform perfectness constant 
  $\lambda$. Since $(S,d)$ is bounded, it follows that 
  condition~\ref{item:visual1} in
  Definition~\ref{def:visual} holds. It remains to show
  condition~\ref{item:visual2}. 

  \smallskip
  \emph{Claim.} For all $n\in \N$ and  $X,Y\in \X^n$
  we have the implication:
  \begin{align*}
  U_1(X) \cap U_1(Y) = \emptyset \ 
    \Rightarrow \
    \dist(X, Y) \geq \tfrac12 \Lambda^{-n}.
  \end{align*}

\smallskip
 We argue by contradiction and assume that $U_1(X) \cap U_1(Y) = \emptyset$, but  there
  are points $x_0\in X$ and $y_0\in Y$ with  $d(x_0,y_0) < \tfrac12 \Lambda^{-n}$.  
  By the choice of $S^n$, the balls $B(s, \Lambda^{-n})$, $s\in S^n$, form a cover of 
  $S$. Thus, we can  find $x'\in S^n$ with $x_0\in B(x',
  \Lambda^{-n})$. Then  $X' \coloneqq B(x', 2 \Lambda^{-n}) \in \X^n$ and $x_0\in 
  X\cap X'$. We conclude that $X' \subset U_1(X)$. 

  On the other hand, we have
  \begin{align*}
    d(x', y_0)
    \leq
    d(x', x_0) + d(x_0,y_0)
    < \Lambda^{-n} +
    \tfrac12 \Lambda^{-n}  
    < 2\Lambda^{-n}. 
  \end{align*}
  It follows that  $y_0 \in X' \cap Y$ and so 
  $X'\subset U_1(Y)$. Therefore,  $$U_1(X) \cap
  U_1(Y) \neq \emptyset,$$ in contradiction to our
  assumption. This proves the claim.

  \smallskip
 It follows that condition~\ref{item:visual2} in
  Definition~\ref{def:visual} is
  satisfied as well. Thus, $\{\X^n\}$ is a visual approximation of width $1$ with parameter $\Lambda$ for $(S,d)$. This completes the proof of ``${\Leftarrow}$". 
\end{proof}

\begin{remark} \label{rem:w1} The previous proof showed that if a metric space $(S,d)$ is bounded and uniformly perfect, then $(S,d)$ admits a visual approximation 
of width $w=1$ for each visual parameter $\Lambda>1$. It follows that if an arbitrary metric space $(S,d)$ has a visual approximation of some width $w\in \N_0$, then it also admits a visual approximation of width $1$.
    \end{remark}

\subsection{Visual approximations of doubling metric spaces}
\label{sec:visu-appr-doubl}

If our given bounded metric space $(S,d)$ is uniformly perfect and doubling, then there always exists
a visual approximation of width $w=0$. 
We record this fact for reference only and will not use it in the remainder of the paper. We will  deduce it from a more technical statement.  

\begin{proposition}[Visual approximations from separated sets]
  \label{prop:vis_max_sep}
  Suppose $(S,d)$ is a 
  bounded, doubling, and uniformly perfect metric space. 
  Let
  $\Lambda>1$  and  $S^n$ be a maximal $\Lambda^{-n}$-separated subset of $S$ for each $n\in \N$.

  Then there are  numbers $r_{s,n}\in [1,2)$ for  $s\in S^n$ and $n\in
  \N$ such that if we define $\X^0\coloneqq \{S\}$
and 
\[
\X^n \coloneqq \{B(s,r_{s,n} \Lambda^{-n}) : s\in S^n\}
\] for  $n\in \N$,
  then the sequence $\{\X^n\}$ is a visual approximation of width $w=0$ with parameter $\Lambda$ for $(S,d)$.
\end{proposition}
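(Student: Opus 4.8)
\emph{The approach.} The plan is to construct the radii $r_{s,n}$ separately for each level $n\in\N$ by a ``pigeonhole on radii'' argument: the radii are assigned one point at a time, each time avoiding a small forbidden set of values in $[1,2)$, the point being that this forbidden set is always short enough to leave room. Fix $n\in\N$. Since $(S,d)$ is bounded and doubling, the $\Lambda^{-n}$-separated set $S^n$ is finite; enumerate it as $s_1,\dots,s_m$. Let $K=K(1/5)\geq 1$ be the constant from Lemma~\ref{lem:doubling_separated}, so that each ball of radius $5\Lambda^{-n}$ contains at most $K$ points of $S^n$; in particular every $s_i$ has at most $K$ \emph{neighbors}, i.e.\ points $s_j\in S^n$ with $d(s_i,s_j)<5\Lambda^{-n}$. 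Put $\epsilon_n\coloneqq\Lambda^{-n}/(2K)$. I would first note that non-neighbor pairs never cause trouble: if $d(s_i,s_j)\geq 5\Lambda^{-n}$, then for any radii $a_i,a_j<2\Lambda^{-n}$ one has $\dist(B(s_i,a_i),B(s_j,a_j))\geq d(s_i,s_j)-a_i-a_j>\Lambda^{-n}\geq\epsilon_n$. So only the (at most $K$) neighbor pairs around each point have to be controlled.

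\emph{The forbidden set and the recursion.} Here is the estimate on which everything hinges. Let $s=s_i$ and $s'=s_j$ be neighbors with $j<i$, so that $a'\coloneqq r_{s_j,n}\Lambda^{-n}$ is already fixed, and set $\alpha\coloneqq\dist\bigl(s,B(s',a')\bigr)$; crucially, $\alpha$ does not depend on the radius about to be assigned to $s$. I claim that if $r\in[1,2)$, with $a\coloneqq r\Lambda^{-n}$, satisfies $B(s,a)\cap B(s',a')=\emptyset$ while $\dist(B(s,a),B(s',a'))<\epsilon_n$, then necessarily $r\in\bigl(\alpha\Lambda^{n}-\tfrac{1}{2K},\,\alpha\Lambda^{n}\bigr]$. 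Indeed, disjointness of $B(s,a)$ and $B(s',a')$ is precisely the condition $a\leq\alpha$; and if $\dist(B(s,a),B(s',a'))<\epsilon_n$ there are points $x\in B(s,a)$ and $y\in B(s',a')$ with $d(x,y)<\epsilon_n$, whence $\alpha\leq d(s,y)\leq d(s,x)+d(x,y)<a+\epsilon_n$, so $a>\alpha-\epsilon_n$; dividing by $\Lambda^{-n}$ and recalling $\epsilon_n\Lambda^{n}=\tfrac{1}{2K}$ gives the claim. Thus the values of $r$ forbidden by a single neighbor lie in an interval of length $\tfrac{1}{2K}$. Now run the recursion on $i$: having chosen $r_{s_j,n}\in[1,2)$ for all $j<i$, the set of $r\in[1,2)$ forbidden by the at most $K$ neighbors of $s_i$ among $s_1,\dots,s_{i-1}$ has total length at most $K\cdot\tfrac{1}{2K}=\tfrac12<1$, so one can choose $r_{s_i,n}\in[1,2)$ outside it. Set $\X^0\coloneqq\{S\}$ and $\X^n\coloneqq\{B(s,r_{s,n}\Lambda^{-n}):s\in S^n\}$. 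Then the width-$0$ form of condition~\ref{item:visual2} holds with $C(\gtrsim)=2K$, uniformly in $n$: for a disjoint neighbor pair the choice of radii forces $\dist\geq\epsilon_n$ by the claim, and disjoint non-neighbor pairs were handled above.

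\emph{Routine verification.} It remains to check that $\{\X^n\}$ is a cover sequence satisfying condition~\ref{item:visual1}. Each $\X^n$ covers $S$: by maximality of $S^n$ the balls $B(s,\Lambda^{-n})$, $s\in S^n$, already cover $S$, and $r_{s,n}\geq 1$ only enlarges them. For condition~\ref{item:visual1}, the upper bound $\diam(B(s,r_{s,n}\Lambda^{-n}))\leq 2r_{s,n}\Lambda^{-n}<4\Lambda^{-n}$ is immediate; for the lower bound, when $\Lambda^{-n}\leq\diam(S)$ uniform perfectness (with constant $\mu$) supplies a point $z$ with $\mu\Lambda^{-n}<d(s,z)<\Lambda^{-n}$, so $\diam(B(s,r_{s,n}\Lambda^{-n}))\geq d(s,z)>\mu\Lambda^{-n}$, while the finitely many levels with $\Lambda^{-n}>\diam(S)$, together with $n=0$, contribute only finitely many cases and are absorbed into $C(\asymp)$.

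\emph{The main obstacle.} The crux is the forbidden-set estimate: one has to extract the \emph{right} necessary condition from ``disjoint but close''. Phrasing it through the witness point $y\in B(s',a')$ and the scale $\alpha=\dist(s,B(s',a'))$---which, unlike everything else, does not move as the radius of $s$ varies---is what confines the forbidden $r$-values to a single interval of length $\tfrac{1}{2K}$; the naive route via the other witness point $x\in B(s,a)$ bounds $r$ only to an interval of length comparable to $\diam(B(s',a'))\,\Lambda^{n}\approx 4$, which is far too long for the count to close. Arranging the doubling constant $K=K(1/5)$ to enter on both sides of the ledger---``at most $K$ neighbors'' and ``forbidden length $\tfrac{1}{2K}$''---is precisely what makes the pigeonhole work.
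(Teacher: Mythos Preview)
Your proof is correct and follows the same core idea as the paper's---the ``radius-wiggling'' argument due to Mackay (the paper's Lemma~\ref{lem:reldist}): use doubling to bound the number of nearby already-assigned balls, then pigeonhole on the interval $[1,2)$ to find a radius avoiding the small forbidden set each of them creates. The paper organises the count via an intermediate coloring step (partitioning $S^n$ into $N$ classes that are $10$-separated and processing class by class), whereas you skip the coloring and bound the number of competing neighbors directly from Lemma~\ref{lem:doubling_separated}; your formulation of the forbidden interval through $\alpha=\dist(s,B(s',a'))$ is exactly the paper's quantity $d(y)$, so the two arguments are really the same pigeonhole viewed from slightly different angles. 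Your presentation is a touch more streamlined; the paper's coloring buys nothing essential here beyond a convenient bookkeeping device.
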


Since $(S,d)$ is bounded and doubling,   Lemma~\ref{lem:doubling_separated} implies  that each set $S^n$, $n\in \N$, in the previous
proposition is finite. The following fact is an immediate consequence. 
\begin{cor}
  \label{cor:visual_ex}
  Let $(S,d)$ be a bounded metric space. 
  Suppose $S$ is doubling and uniformly
  perfect. Then for every $\Lambda>1$ there exists a visual
  approximation $\{\X^n\}$ of 
  width $w=0$ with parameter $\Lambda$ for $(S,d)$, in which each cover $\X^n$ is finite.
\end{cor}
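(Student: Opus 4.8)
\textbf{Proof plan for Proposition~\ref{prop:vis_max_sep}.}
The plan is to fix the separated sets $S^n$ and then choose the radii $r_{s,n}\in[1,2)$ one level at a time so that the boundary spheres of distinct balls never come too close; this "generic radius" trick converts a potential failure of the metric separation condition~\ref{item:visual2'} into a pigeonhole argument that works because $S^n$ is finite (by Lemma~\ref{lem:doubling_separated}, using that $(S,d)$ is bounded and doubling). Condition~\ref{item:visual1} will be essentially automatic: since $S^n$ is $\Lambda^{-n}$-separated and maximal, the balls $B(s,r_{s,n}\Lambda^{-n})$ have diameter $\leq 4\Lambda^{-n}$, and uniform perfectness (together with $r_{s,n}\geq 1$) forces the lower bound $\diam(B(s,r_{s,n}\Lambda^{-n}))\gtrsim\Lambda^{-n}$ with a comparability constant depending only on $\lambda$, exactly as in the proof of Proposition~\ref{prop:vis_width1_ex}. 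The covering property holds because $S^n$ is maximal, so the balls $B(s,\Lambda^{-n})\subset B(s,r_{s,n}\Lambda^{-n})$ already cover $S$.

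The heart of the argument is arranging condition~\ref{item:visual_width0}: there should be a constant $c>0$ (independent of $n$) such that for $X=B(s,r_{s,n}\Lambda^{-n})$ and $Y=B(t,r_{t,n}\Lambda^{-n})$ with $s\neq t$ in $S^n$, either $X\cap Y\neq\emptyset$ or $\dist(X,Y)\geq c\Lambda^{-n}$. Here is the key observation: $\dist(X,Y)>0$ iff $r_{s,n}\Lambda^{-n}+r_{t,n}\Lambda^{-n}<d(s,t)$, and in that case $\dist(X,Y)=d(s,t)-(r_{s,n}+r_{t,n})\Lambda^{-n}$. So the bad scenario is that $(r_{s,n}+r_{t,n})\Lambda^{-n}$ lies just barely below $d(s,t)$. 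For each fixed $n$, let $D_{s,t}\coloneqq d(s,t)\Lambda^{n}\in[1,\infty)$ and note there are only finitely many pairs $(s,t)$ with $s,t\in S^n$; moreover, by the doubling property the number of $t\in S^n$ with $D_{s,t}\leq R$ is bounded by a constant $K(R)$ depending only on $R$ and the doubling constant, and pairs with $D_{s,t}$ large are harmless (the separation is automatic). So it suffices to control the finitely many pairs with, say, $D_{s,t}\leq 8$. I would choose the $r_{s,n}$ by a greedy/inductive procedure over $s\in S^n$ (using that $S^n$ is finite): having chosen $r_{t,n}$ for earlier $t$, pick $r_{s,n}\in[1,2)$ avoiding, for each relevant earlier $t$, the forbidden interval $(D_{s,t}-r_{t,n}-\tfrac14,\,D_{s,t}-r_{t,n})$ of length $\tfrac14$; since the number of such $t$ is at most $K(8)$ and $[1,2)$ has length $1$, as long as $K(8)\cdot\tfrac14<1$ — which may fail — one instead subdivides $[1,2)$ more cleverly, or, better, one uses a width-$w=0$ argument phrased as: there exists $r_{s,n}\in[1,2)$ such that $d(s,t)\notin(r_{s,n}\Lambda^{-n}+r_{t,n}\Lambda^{-n},\,r_{s,n}\Lambda^{-n}+r_{t,n}\Lambda^{-n}+\tfrac{c}{2}\Lambda^{-n}]$ for all earlier $t$, with $c$ chosen small enough (depending on $K(8)$, hence on the doubling constant) that the total forbidden length is less than $1$.

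Let me restate the selection step cleanly, since that is the one requiring care: set $M\coloneqq K(8)+1$ (a bound on the number of $t\in S^n$ with $d(s,t)<8\Lambda^{-n}$, coming from Lemma~\ref{lem:doubling_separated}), and put $c\coloneqq 1/(4M)$. Enumerate $S^n=\{s_1,\dots,s_{N_n}\}$. Inductively, when choosing $r_{s_i,n}$, for each $j<i$ with $d(s_i,s_j)<8\Lambda^{-n}$ the constraint "$\dist(B(s_i,r_{s_i,n}\Lambda^{-n}),B(s_j,r_{s_j,n}\Lambda^{-n}))\notin(0,c\Lambda^{-n})$" forbids $r_{s_i,n}$ from an interval of length $c$ inside $[1,2)$; since there are at most $M$ such $j$, the forbidden set has length at most $Mc=\tfrac14<1$, so a valid $r_{s_i,n}\in[1,2)$ exists. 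For pairs with $d(s_i,s_j)\geq 8\Lambda^{-n}$, the separation $\dist(X,Y)\geq d(s_i,s_j)-4\Lambda^{-n}\geq 4\Lambda^{-n}\geq c\Lambda^{-n}$ is free. By symmetry of the resulting condition in $(i,j)$, we get $\dist(X,Y)\geq c\Lambda^{-n}$ for every pair of disjoint $n$-tiles, with $c$ depending only on the doubling constant. Combined with the diameter estimate, this shows $\{\X^n\}$ is a visual approximation of width $0$ with parameter $\Lambda$. The main obstacle is precisely pinning down that the number of "nearby" centers $t\in S^n$ admits a bound independent of $n$ and of the center $s$ — but this is exactly the content of Lemma~\ref{lem:doubling_separated} applied to the ball $B(s,8\Lambda^{-n})$ and the $\Lambda^{-n}$-separated set $S^n$, so there is no real difficulty, only bookkeeping.
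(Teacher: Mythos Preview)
Your overall strategy matches the paper's: fix maximal $\Lambda^{-n}$-separated sets $S^n$, use doubling to bound the number of nearby centers, and greedily choose radii in $[1,2)$ avoiding a controlled amount of forbidden parameter values. The paper organizes the greedy step via a coloring of $S^n$ into $N$ classes with same-class points $10$-separated, but your direct enumeration would work just as well.

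There is, however, a genuine gap in your ``key observation''. You assert that $\dist(X,Y)>0$ iff $r_{s,n}+r_{t,n}<d(s,t)\Lambda^{n}$ and that in this case $\dist(X,Y)=d(s,t)-(r_{s,n}+r_{t,n})\Lambda^{-n}$. Neither statement holds in a general metric space: in a totally disconnected space one can have $r_{s,n}+r_{t,n}>d(s,t)\Lambda^n$ with the balls still disjoint, and even when the balls are disjoint the equality for $\dist$ requires something like a geodesic structure (in general only the inequality $\dist(X,Y)\geq d(s,t)-(r_{s,n}+r_{t,n})\Lambda^{-n}$ is available). Your subsequent claim that the forbidden set of radii is an interval of length $c$ happens to be \emph{true}, but not for the reason you give.

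The fix---which is exactly what the paper does in Lemma~\ref{lem:reldist}---is to work instead with the critical radius
\[
d(j)\coloneqq \inf\{r>0 : B(s_i, r\Lambda^{-n})\cap B(s_j, r_{s_j,n}\Lambda^{-n})\neq\emptyset\}.
\]
For $r<d(j)$ the balls are disjoint, for $r>d(j)$ they meet, and for $r\le d(j)$ one has the one-sided bound $\dist(B(s_i,r\Lambda^{-n}),B_j)\geq (d(j)-r)\Lambda^{-n}$ by the triangle inequality. Hence the bad set $\{r\in[1,2): B(s_i,r\Lambda^{-n})\cap B_j=\emptyset \text{ and } \dist<c\Lambda^{-n}\}$ is contained in $(d(j)-c,\,d(j)]$, which has length $c$. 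With this correction your pigeonhole argument goes through exactly as written.
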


The proof of Proposition~\ref{prop:vis_max_sep}  is based on the following fact.

\begin{lemma}
  \label{lem:reldist} 
  Let $\delta>0$ and $V$ be a maximal $\delta$-separated set in a doubling metric
  space $(S,d)$. Then  there exist numbers  $r_x\in [1,2)$ for $x\in V$ 
  such that the following condition is true: 
  
  If we define 
  $B_x\coloneqq B(x, r_x\delta)$ for $x\in V$, then 
  for all $x,y\in V$ we have
  \begin{equation}
    \label{eq:sepimpl}
    \text{either }
    B_x\cap B_y \neq \emptyset
    \;\text{ or }
    \dist (B_x, B_y) \ge C\delta,
  \end{equation} 
  where $C>0$ is a constant depending only on the doubling constant of~$(S,d)$. 
\end{lemma}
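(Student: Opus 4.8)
The idea is a pigeonhole argument on a discrete set of possible radii. Fix an integer $N \geq 2$ large (to be chosen depending only on the doubling constant of $(S,d)$) and, for each $x \in V$, consider the $N$ candidate radii $r_x^{(j)} \coloneqq 1 + j/N$ for $j = 0, 1, \dots, N-1$, all lying in $[1,2)$. The plan is to show that for $N$ large enough one can select, for each $x \in V$, an index $j(x)$ so that the resulting balls $B_x \coloneqq B(x, r_x^{(j(x))}\delta)$ satisfy \eqref{eq:sepimpl}; in fact one can even take a single common value $j(x) \equiv j$ for all $x$, which simplifies the bookkeeping. The key geometric observation is that whether $B(x,r\delta)$ and $B(y,s\delta)$ are disjoint or intersect is governed by the comparison of $(r+s)\delta$ with $d(x,y)$: if $d(x,y) > (r+s)\delta$ they are disjoint with $\dist \geq d(x,y) - (r+s)\delta$, and if $d(x,y) < (r+s)\delta$ they intersect. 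The ``bad'' situation—where $\dist(B_x,B_y)$ is positive but tiny—can only occur when $d(x,y)$ is within $\delta/N$ of $(r_x+r_y)\delta$, i.e. $d(x,y)/\delta$ falls in a thin annular window of width $O(1/N)$ around the threshold.

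First I would make this quantitative: if we use the common radius $r\delta$ with $r = 1 + j/N$ for all points, the bad event for a pair $\{x,y\}$ is that $d(x,y)/\delta \in [2r, 2r + 1)$ approximately—more precisely, that $d(x,y)$ lies in an interval of length $\delta/N$ just above $2r\delta$. Since $V$ is $\delta$-separated and $(S,d)$ is doubling, for a fixed $x$ the number of $y \in V$ with $d(x,y) < 3\delta$ (say) is bounded by a constant $M$ depending only on the doubling constant (this is Lemma~\ref{lem:doubling_separated}, applied to the ball $B(x, 3\delta)$ with separation $\delta$). Hence for fixed $x$ there are at most $M$ pairs $\{x,y\}$ that could ever be ``borderline'' for any choice of $r \in [1,2)$, and each such pair is borderline for at most one value of $j \in \{0,\dots,N-1\}$ (since the windows $[2r,2r+\text{width}]$ for distinct $j$ are disjoint once $N$ is large relative to the window width).

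This is not yet enough for a single common $j$, because different points $x$ might force different choices. The cleaner route is therefore to choose the radii $r_x$ one point at a time, or—better—to observe that we only need to avoid, for each $x$, the (at most $M$) forbidden indices coming from its neighbors; choosing $N = M+1$ and picking for each $x$ an index $j(x)$ avoiding all forbidden indices of its $\leq M$ near-neighbors works, provided we check consistency, i.e. that the choice at $x$ does not retroactively create a bad pair with some $y$ whose index was already fixed. To handle this cleanly I would instead argue: enumerate $V$ (or use Zorn/transfinite recursion if $V$ is infinite), and at step $x$ note that the radius $r_x$ must avoid only finitely many ``bad windows'' determined by the already-chosen radii $r_y$ of the $\leq M$ neighbors $y$ of $x$ with $d(x,y) < 3\delta$; each such $y$ forbids at most one subinterval of $[1,2)$ of length $\sim 1/N$, so with $N > M$ there is always an admissible choice. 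Then for any resulting pair with $d(x,y) \geq 3\delta$ we automatically have $\dist(B_x,B_y) \geq d(x,y) - 4\delta \geq 0$... wait, that is not bounded below by $C\delta$—so I must instead pick the threshold so that pairs with $d(x,y) \geq 4\delta$ have $\dist \geq 2\delta$ by crude estimates, and handle pairs with $\delta \leq d(x,y) < 4\delta$ by the window-avoidance, which guarantees $|d(x,y) - (r_x+r_y)\delta| \geq c\delta/N$ in the disjoint case, hence $\dist(B_x,B_y) \geq c\delta/N =: C\delta$ with $C = C(N) = C(\text{doubling constant})$.

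The main obstacle I anticipate is precisely this interdependence of the choices: ensuring that the finitely many constraints on $r_x$ coming from neighbors are simultaneously satisfiable and do not cascade. The recursive/greedy selection resolves it because doubling bounds the in-degree of the ``near'' relation uniformly by $M$, so each step faces at most $M$ forbidden windows in $[1,2)$ and $N = M+1$ windows of width $1/N < 1/M$ cannot cover $[1,2)$ (which has length $1$). Once that combinatorial point is secured, the metric estimate \eqref{eq:sepimpl} with $C = c/(M+1)$ is a routine triangle-inequality computation, and the constant depends only on the doubling constant of $(S,d)$ as required.
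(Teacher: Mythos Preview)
Your greedy/pigeonhole strategy is the right idea and is in the same spirit as the paper's proof, but there is a genuine error in your metric analysis that invalidates the ``bad window'' bookkeeping as written.

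The problematic step is your ``key geometric observation'': you assert that if $d(x,y) < (r+s)\delta$ then $B(x,r\delta)$ and $B(y,s\delta)$ intersect. This is true in a length (or geodesic) space, but it is \emph{false} in a general metric space, which is all the lemma assumes. For a trivial counterexample take $S=\{x,y\}$ with $d(x,y)=1.5$, $\delta=1$, $r=s=1$: then $d(x,y)<2=(r+s)\delta$, yet $B(x,1)=\{x\}$ and $B(y,1)=\{y\}$ are disjoint. More generally, in a non-length space the function $r_x\mapsto \dist(B(x,r_x\delta),B_y)$ can jump, and the set of ``bad'' radii (balls disjoint but $\dist$ tiny) is \emph{not} governed by the single number $d(x,y)$. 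Consequently your claim that the bad event for a pair $\{x,y\}$ is confined to a window of width $O(1/N)$ in $d(x,y)/\delta$ has no justification, and the pigeonhole count built on it does not go through.

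The fix---and this is exactly what the paper does---is to replace $d(x,y)-r_y\delta$ by the intrinsic threshold
\[
d(y)\coloneqq \inf\{r>0: B(x,r)\cap B_y\neq\emptyset\}=\dist(x,B_y)/\delta.
\]
For this quantity one has the clean dichotomy: if $r_x>d(y)$ the balls intersect, while if $r_x\le d(y)$ then $\dist(B(x,r_x\delta),B_y)\ge (d(y)-r_x)\delta$ by the triangle inequality. Now the bad set of radii for a fixed neighbor $y$ really is an interval of controlled length, and doubling bounds the number of $y$ with $d(y)\in[1,2)$ (such $y$ satisfy $d(x,y)<4\delta$). With this correction your greedy selection works and is essentially the paper's argument; the paper organizes the induction slightly differently by first coloring $V$ into $10\delta$-separated classes and processing one color at a time, but this is a matter of presentation rather than substance.
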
 
This is essentially \cite[Lemma~3.1]{Ma08}, but we include  a proof  for the convenience of the reader.

\begin{proof}  Without loss of generality, we may assume that $\delta=1$. 

  \smallskip
  
  \emph{Claim.} 
  There exists $N\in \N$, depending only  on the doubling constant
  of $(S,d)$, such that we can split $V$ into $N$ pairwise disjoint 
  (possibly empty)
  sets $A_1, \dots, A_N$ satisfying the following condition:
  \[
  \text{if $x,y\in A_i$ for
  some $i\in\{1,\dots, N\}$ and $x\ne y$, then $d(x,y)\ge 10$.}
  \]
  
On a more intuitive level, this means that we can ``color'' the points in $V$
  by at most $N$ distinct colors such that distinct points of the same
  color are $10$-separated.

To prove the claim, first note that since $V$ is a $1$-separated set,
  we can choose $N\in \N$ only depending 
  on the doubling constant of $(S,d)$ such that each ball $B(v, 10)$ with $v\in V$ contains at most $N$ elements in $V$ (see Lemma~\ref{lem:doubling_separated}).

We now  consecutively choose a maximal $10$-separated set 
$A_1$ in $V$, 
   a maximal $10$-separated set $A_2$ in $V\setminus A_1$, a maximal $10$-separated set $A_3$ in $V\setminus (A_1\cup A_2)$, etc.
Continuing in this way, we obtain pairwise disjoint $10$-separated subsets $A_1$, $A_2$, $A_3$,
\dots of $V$.

Here $A_{N+1}=\emptyset$. Indeed, if $A_{N+1}\ne 
\emptyset$, then we can find an element $v\in A_{N+1}\sub V$. For each 
$k=1, \dots, N$, the element $v$  is not contained  
in the $10$-separated set $A_k$ that is  maximal in 
\[V\setminus (A_1\cup \dots \cup A_{k-1})\supset \{v\}.\] It follows that there exists $a_k\in A_k$ such that 
$d(v, a_k)<10$. Then  $v, a_1, \dots, a_N$ are $N+1$  distinct elements in 
$V$ all contained in the ball $B(v, 10)$. This is impossible by the
choice of $N$. We conclude that $A_{N+1}=\emptyset$ and so 
$V=A_1\cup \dots \cup A_N$. The claim follows.

  \smallskip

  Suppose $A_1,\dots, A_N\subset V$ are chosen according
  to the claim above. We now show that \eqref{eq:sepimpl} holds with
  $C=1/(2N)$ and appropriate numbers $r_x\in [1,2) $ for $x\in V$.

  Set $V_n\coloneqq A_1\cup\dots \cup A_n$ for  
  $n= 1,\dots, N$. Then we have  $A_1=V_1 \subset V_2 \subset \dots \subset V_N=V$. The numbers $r_x\in [1,2)$ for $x\in V$ will now be chosen
  inductively so that \eqref{eq:sepimpl} holds for all
  $x,y\in V_n$.

  We set $r_x \coloneqq 1$ for $x\in V_1 = A_1$. 
  If $x,y\in V_1=A_1$ are distinct, then 
  $d(x,y)\geq 10$, and so
  \begin{equation*}
    \dist(B_x,B_y) \geq 8 > \frac{1}{2N}=C.
  \end{equation*}
  It follows that \eqref{eq:sepimpl} holds for all $x,y\in V_1$, establishing the base case 
  $n=1$.

 Now assume that $n\in \N$, $1\le n\le N$,  and 
  that $r_x\in [1,2)$ has been chosen  for each $x\in
  V_n$ so that \eqref{eq:sepimpl} holds for all
  $x,y\in V_n$ (with $C=1/(2N)$).

 If $n=N$, 
 the proof is complete; so suppose that  $n<N$ and let 
 $x\in A_{n+1}=V_{n+1} \setminus V_n$ be arbitrary (if 
 $A_{n+1}=\emptyset$, and so $V_n=V$, there is nothing left to do).  
 For  $y\in V_n$, we now define
  \begin{equation*}
    d(y) \coloneqq \inf\{r>0 : B(x,r) \cap B_y \neq \emptyset\}.
  \end{equation*}
  Then  $B(x,r)\cap B_y = \emptyset$ for $0<r< d(y)$ and $B(x,r)
  \cap B_y \neq \emptyset$ for  $r> d(y)$ (note that the first case cannot occur when $d(y)=0$).
  
Let $W_x$ be the (possibly empty) set of all $y\in
  V_n$ with $d(y) \in [1,2)$. Then for each
  $y\in W_x$ we have $d(x,y) \leq 2 + r_y < 4$, which implies that 
  $d(y,y') < 8$
  for  $y,y'\in W_x$. 
   
  It follows 
  that the points in $W_x$ must all lie in distinct sets among $A_1, \dots, A_{n}$, and so 
   $M:=\#W_x \leq n \leq N-1$. Let
  us label the points in $W_x$ as $y_1, \dots , y_M$ (possibly this is an empty list) so that
  \begin{equation*}
    1\eqqcolon d_0 \leq d_1 \leq \dots \leq d_M < d_{M+1}\coloneqq 2,
  \end{equation*}
  where $d_j \coloneqq d(y_j)$ for $j=1,\dots,M$. 
  It follows that there is an index $k\in \{0,\dots, M\}$
  such that $d_{k+1} - d_k \geq 1/N$.

  We now define $r_x\coloneqq \frac{1}{2}(d_k + d_{k+1})\in [1,2)$. Note that
  \begin{equation*}
    r_x -d_k
    = d_{k+1}-r_x=
    \frac{1}{2}(d_{k+1}-d_k)
    \geq
    \frac{1}{2N} = C. 
  \end{equation*}
  Moreover, for each $y\in V_n$ we have: \[\text{either } \ d(y) \leq
  d_k< r_x\    \text { or } \   d(y)\geq d_{k+1}>r_x.\]
  Then 
  \begin{equation*}
    B(x,r_x) \cap B_y\neq \emptyset
  \end{equation*}
  in the former case, and 
  \begin{equation*}
    \dist(B(x,r_x), B_y)\geq d_{k+1}-r_x\geq C
  \end{equation*}
  in the latter case. 
  
  With this choice 
 of $r_x\in [1,2)$ for each $x\in A_{n+1}$, 
 condition~\eqref{eq:sepimpl} is satisfied for all $x\in
  A_{n+1}=V_{n+1}\setminus V_n$ and $y\in V_n$.

  By the induction hypothesis, \eqref{eq:sepimpl} also holds for all
  $x,y\in V_n$. It remains to verify \eqref{eq:sepimpl} for distinct points  $x,y\in A_{n+1}$. In this case, $d(x,y)\geq 10$ and so 
  \begin{equation*}
    \dist(B_x, B_y) \geq 10 - r_x - r_y > 6 > \frac{1}{2N}=C.
  \end{equation*}
  It follows that \eqref{eq:sepimpl} holds for all $x,y\in
  V_n\cup A_{n+1}=V_{n+1}$. This  completes the inductive step and the proof of the statement. 
\end{proof} 

The proof of Proposition~\ref{prop:vis_max_sep} is  now routine.

\begin{proof}[Proof of Proposition~\ref{prop:vis_max_sep}]
 In  the given setup,  we may assume 
without loss of generality that $\diam(S)=1$.

  According to Lemma~\ref{lem:reldist}, there are numbers $r_{s,n}\in [1,2)$ for $n\in \N$ and $s\in S^n$ such that the balls $B_s\coloneq B(s, r_{s,n}\Lambda^{-n} )$ satisfy the following condition for $x,y\in S^n$:
    \begin{equation*}
    \text{either }\;
    B_x\cap B_y \neq \emptyset
    \text{ or }
    \dist (B_x, B_y) \ge C\Lambda^{-n}.
  \end{equation*}
Here  $C>0$ is a constant only depending on the doubling constant 
 of $S$ (and not on $n$). 
 
 As in the statement, let 
$\X^0\coloneqq \{S\}$ and 
  \begin{equation*}
    \X^n \coloneqq \{B(s,r_{s,n}\Lambda^{-n}) : s\in S^n\} 
  \end{equation*}
  for $n\in \N$.
  
  Clearly, each $\X^n$ is a covering of $S$. Moreover, each $\X^n$ satisfies condition~\ref{item:visual_width0}
  in Definition~\ref{def:visual} with $C(\gtrsim)=C$. Finally, since $\diam(S)=1$ and $(S,d)$ is uniformly perfect, we have that
  \[\lambda
  \Lambda^{-n} \leq \diam(X) \leq 4 \Lambda^{-n}\]
  for each $n\in \N_0$ and $X\in \X^n$,
  where $\lambda$ is the uniform perfectness constant. It
  follows that property~\ref{item:visual1} in
  Definition~\ref{def:visual} holds as well. Thus $\{\X^n\}$ is a visual approximation of width $0$ with visual parameter $\Lambda$ for $(S,d)$. 
\end{proof}

In Proposition~\ref{prop:vis_max_sep} the balls in each cover  $\X^n$, $n\in \N$,  are open, but we may instead take the corresponding closed balls and 
consider $\overline{\X}^0 \coloneqq \{S\}$ and 
 \[\overline{\X}^n \coloneqq \{\overline{B}(s,r_{s,n} \Lambda^{-n}) :
s\in S^n\}\] for $n\in \N$.  Then the sequence $\{\overline{\X}^n\}$ is also a visual
approximation for $(S,d)$; indeed, the implicit constants in
Definition~\ref{def:visual} for $\{\overline{\X}^n\}$ are the same as
those for $\{\X^n\}$.

We now show by an explicit example that visual approximations of width $w=0$ 
may exist for metric spaces that are not doubling.  In other words,  being
doubling is not a necessary condition for the existence of visual
approximations. 

\begin{example}
  \label{ex:non_doubl_unif_perf}
  Our space $S$ will be the ``boundary at infinity"  of  an infinite rooted tree where every vertex on the $n$-th level (with $n\in\N_0$)
  has exactly $n+2$ ``children".
  To make this precise, we define $S$ to be  the set of all infinite sequences $\{x_n\}_{n\in \N_0}$
  with 
  \begin{equation*}
    x_n \in \{0,1,\dots, n\}
  \end{equation*}
  for $n\in \N_0$.

  Given two such sequences  $x=\{x_n\}_{n\in \N_0}$ and $y=\{y_n\}_{n\in \N_0}$, we set
  \begin{equation*}
    m(x,y) \coloneqq \left\{\begin{array}{cl}\min\{n\in \N : x_n \neq y_n\}& \text{if $x\ne y$,}\\
    \infty & \text{if $x=y$.}
    \end{array} \right.
  \end{equation*}
   We then define
  \begin{equation*}
    d(x,y) \coloneqq 2^{-m(x,y)},
  \end{equation*}
  where it is understood that $d(x,x) = 2^{-\infty} = 0$. It is immediate that $d$
  is a metric on $S$; in fact, it is an \emph{ultrametric}, meaning that \[d(x,y)\leq \max\{d(x,z), d(z,y)\}\] for all $x,y,z\in S$.

  The \emph{cylinder sets of level $n\in \N_0$} are the subsets of $S$ of the form
  \begin{equation*}
    [x_0x_1\dots x_n]
    \coloneqq
    \big\{y=\{y_k\}_{k\in \N_0} \in S : y_0=x_0, \, y_1 = x_1, \,\dots, \,y_n =x_n\big\} 
  \end{equation*}
  with $x_i\in \{0, 1,\dots, i\}$ for $i=0, \dots, n$. For each $n\in\N_0$, we let
  $\X^n$ be the set of all such cylinder sets of  level
  $n$; in particular, we have $\X^0=\{S\}$. It is straightforward to check that $\{\X^n\}$ forms a
  visual approximation of width $w=0$ with parameter $\Lambda= 2$ for $(S,d)$.

  However, the space $(S,d)$ is not doubling. Indeed, each
  cylinder set $[x_0x_1\dots x_n]$ of level $n\in \N_0$  is also an open ball of
  radius $2^{-n}$ in $S$. On the other hand, for each 
  $k=0, \dots ,n+1$, this cylinder set
   contains the  point $y_k$ given by the sequence
  \[x_0,\, x_1,\, \dots,\, x_n,\, k,\, 0, \, 0,\, \dots \,. \]
  Since the $n+2$ points $y_0, \dots, y_{n+1}\in [x_0x_1\dots x_n]$ are  $2^{-n-1}$-separated, the space $(S,d)$ cannot be doubling according to Lemma~\ref{lem:doubling_separated}. 
\end{example}

\section{Quasi-visual approximations}
\label{sec:quasi-visu-appr}

In general, visual approximations are not preserved under quasisymmetries, because this concept of an approximation is too rigid.   To remedy this, we introduce the more 
flexible notion of a {\em quasi-visual approximation}. Recall the definition of the neighborhood $U_w(X)$ of a tile $X$ of width $w\in \N_0$ from
\eqref{eq:def_UwX^n}. 

\begin{definition}[Quasi-visual approximations]
  \label{def:qv_approx}
  Let $(S,d)$ be a bounded metric space containing  more than one point. 
  A \emph{quasi-visual
  approximation of width $w\in \N_0$} for $(S,d)$ is a sequence
  $\{\X^n\}_{n\in\N_0}$ of covers $\X^n$ of $S$ by some of its subsets, where $\X^0=\{S\}$ and  the following conditions hold  for all $n\in \N_0$ with implicit constants independent of~$n$ and independent of the tiles $X$ and $Y$ that appear in these conditions: 
    \begin{enumerate}
\smallskip
  \item 
    \label{item:qv_approx1}
       $\diam(X) \asymp \diam(Y)$
  for all  $X,Y\in \X^n$ with $X\cap Y\ne \emptyset$.
  
  \smallskip
  \item 
    \label{item:qv_approx2}
      $\dist(X,Y) \gtrsim \diam(X)$
      for all  $X,Y\in \X^n$ with 
      $U_w(X)\cap U_w(Y)=\emptyset$.
      
      \smallskip
     \item 
    \label{item:qv_approx3}
     $\diam(X)\asymp \diam(Y)$
for all $X\in \X^n$, $Y\in \X^{n+1}$ with 
      $X\cap Y\ne \emptyset$.

      \smallskip
  \item 
    \label{item:qv_approx4}
    For some constants $k_0\in \N$ and $\lambda \in (0,1)$ independent of $n$, we have $ \diam(Y) \leq \lambda \diam(X)$
    for all $X\in \X^n$ and $Y\in \X^{n+k_0}$ 
     with $X\cap Y\neq \emptyset$.   \end{enumerate}
\end{definition}

For the width $w=0$, the second condition above reduces to the
following:
\begin{enumerate}[label=(\roman*')]
  \setcounter{enumi}{1}

  \smallskip
\item
  \label{item:qvisual_width0}
  $\dist(X,Y) \gtrsim \diam(X)$
  for all   $X, Y\in \X^n$ with 
  $X\cap Y=\emptyset$.
\end{enumerate}

\smallskip\noindent 
In this form, quasi-visual approximations were  introduced in \cite[Definition~2.1]{BM22}; note though that in contrast to 
this previous work, we do not assume that the covers $\X^n$ are finite.

Similarly to visual approximations, we
sometimes change our point of view and focus on the metric.

\begin{definition}[Quasi-visual metrics]
\label{def:qvis_metric}
Let $\{\X^n\}$ be a sequence of covers of a set $S$. A metric $d$ on $S$ is 
called \emph{quasi-visual of width $w\in \N_0$ for $\{\X^n\}$} if $0<\diam_d(S)<\infty$ and  $\{\X^n\}$ is 
a quasi-visual approximation of width $w$ for $(S,d)$, that is, the conditions 
in Definition~\ref{def:qv_approx} are satisfied for the sequence $\{\X^n\}$ with respect to 
$d$. 
\end{definition}

In the following, whenever we talk about a metric space equipped with a quasi-visual approximation, we automatically assume that $S$ is bounded and has more than one point. Moreover, for brevity the reference to the width $w$ will often be omitted when discussing quasi-visual approximations or metrics.

The following three statements  
(Lemmas~\ref{lem:sub_shrink} and~\ref{lem:visual-is-also-quasi}  and  Corollary~\ref{cor:qv_tiles_shrink})  have very close analogs in  \cite{BM22}. We will still provide full
 details for the sake of completeness. 
 
First, we show that condition~\ref{item:qv_approx4} in Definition~\ref{def:qv_approx}  can be expressed in a slightly
different form.

\begin{lemma}
  \label{lem:sub_shrink}
  Let $\{\X^n\}$ be a sequence of covers  of a bounded metric space  $S$ satisfying condition~\ref{item:qv_approx3} in Definition~\ref{def:qv_approx}. Then
  $\{\X^n\}$ satisfies \ref{item:qv_approx4} if and only 
  if there
  exist  constants $C\ge 1$ and $\rho \in (0,1)$ such that
  \begin{gather}
    \label{eq:qv_approx4p}
    \tag{iv'} 
  \diam(Y) \leq C \rho^k \diam(X)
  \end{gather}
  for all $n\in \N_0$, $k\in \N$,  $X\in \X^n$, and $Y\in\X^{n+k}$ with
  $X\cap Y\neq \emptyset$. 
  \end{lemma}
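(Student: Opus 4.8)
The plan is to prove the two implications separately, with the forward direction being essentially immediate and the reverse direction requiring an iteration/telescoping argument along a chain of tiles. First I would observe that the implication \eqref{eq:qv_approx4p} $\Rightarrow$ \ref{item:qv_approx4} is trivial: given \eqref{eq:qv_approx4p} with constants $C\ge 1$ and $\rho\in(0,1)$, simply choose $k_0\in\N$ large enough that $C\rho^{k_0}<1$, and set $\lambda\coloneqq C\rho^{k_0}\in(0,1)$; then for $X\in\X^n$ and $Y\in\X^{n+k_0}$ with $X\cap Y\ne\emptyset$ we get $\diam(Y)\le C\rho^{k_0}\diam(X)=\lambda\diam(X)$, which is exactly \ref{item:qv_approx4}.

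For the harder direction, \ref{item:qv_approx4} $\Rightarrow$ \eqref{eq:qv_approx4p}, the key idea is to bridge an arbitrary level gap $k$ by combining ``big'' jumps of size $k_0$ (controlled by \ref{item:qv_approx4}) with ``small'' jumps of size at most $k_0$ (controlled by \ref{item:qv_approx3}). Write $k = q k_0 + r$ with $q\in\N_0$ and $0\le r<k_0$. Given $X\in\X^n$ and $Y\in\X^{n+k}$ with $X\cap Y\ne\emptyset$, pick any point $z\in X\cap Y$ and, for each intermediate level $n+jk_0$ with $j=0,\dots,q$, choose a tile $Z_j\in\X^{n+jk_0}$ containing $z$ (possible since each $\X^m$ covers $S$); likewise let $Z_{q+1}\coloneqq Y\in\X^{n+k}$. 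Then consecutive tiles $Z_j$ and $Z_{j+1}$ share the point $z$, so $Z_j\cap Z_{j+1}\ne\emptyset$. Applying \ref{item:qv_approx4} to the $q$ pairs $(Z_{j-1},Z_j)$ for $j=1,\dots,q$ gives $\diam(Z_q)\le\lambda^q\diam(Z_0)\le\lambda^q\diam(X)$, after noting $\diam(Z_0)\asymp\diam(X)$ since $Z_0,X\in\X^n$ both contain $z$ (condition \ref{item:qv_approx1}). It remains to pass from $Z_q$ (level $n+qk_0$) to $Y$ (level $n+qk_0+r$) across a gap of $r<k_0$ levels.

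To handle that last short gap, I would iterate condition \ref{item:qv_approx3}: inserting tiles on each intermediate level $n+qk_0+i$ for $i=0,\dots,r$ (again all containing $z$), \ref{item:qv_approx3} applied $r$ times yields $\diam(Y)\le A^r\diam(Z_q)$ where $A\coloneqq C(\asymp)$ is the comparability constant from \ref{item:qv_approx3}; since $r<k_0$ this gives $\diam(Y)\le A^{k_0}\diam(Z_q)\le A^{k_0}\lambda^q\diam(X)$. Finally I convert the exponent $q=\lfloor k/k_0\rfloor$ back to $k$: since $q>k/k_0-1$ and $\lambda<1$ we have $\lambda^q<\lambda^{-1}(\lambda^{1/k_0})^k$, so setting $\rho\coloneqq\lambda^{1/k_0}\in(0,1)$ and $C\coloneqq A^{k_0}\lambda^{-1}\cdot C'$ (where $C'$ absorbs the comparability constant from \ref{item:qv_approx1} used at the first step), we obtain $\diam(Y)\le C\rho^k\diam(X)$ for all admissible $n,k,X,Y$, as desired. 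The main obstacle — really just a bookkeeping point rather than a genuine difficulty — is tracking the three distinct comparability constants (from \ref{item:qv_approx1}, \ref{item:qv_approx3}, and the decay factor $\lambda$) through the chain and making sure the final constant $C$ depends only on these and on $k_0$, not on $n$ or $k$; the existence of the bridging tiles $Z_j$ is free because the $\X^m$ are covers.
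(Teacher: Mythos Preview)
Your proposal is correct and follows essentially the same telescoping/chain argument as the paper (write $k=qk_0+r$, bridge the $q$ blocks of size $k_0$ via \ref{item:qv_approx4}, bridge the leftover $r<k_0$ levels via \ref{item:qv_approx3}, set $\rho=\lambda^{1/k_0}$). One small correction: the lemma assumes only condition~\ref{item:qv_approx3}, not condition~\ref{item:qv_approx1}, so your appeal to \ref{item:qv_approx1} to compare $\diam(Z_0)$ with $\diam(X)$ is illegitimate as stated; simply take $Z_0\coloneqq X$ from the outset (since $z\in X$), which is exactly what the paper does, and the extra constant $C'$ disappears.
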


\begin{proof}
  Assume first that condition~\ref{item:qv_approx4} in Definition~\ref{def:qv_approx} holds with  
   $k_0\in \N$ and $\lambda\in (0,1)$. Let  $n\in \N_0$, 
   $k\in \N$, 
    $X\in \X^n$, and  $Y\in\X^{n+k}$ with
  $X\cap Y\neq \emptyset$ be arbitrary. 
  We pick a point $x\in X\cap Y$.  For each $i=n, \dots, n+k$, we then choose a tile 
  $Y^i\in \X^i$  with $x\in Y^i$ so that $Y^n=X$ and $Y^{n+k}=Y$.  We also write $k$  in the form $k=jk_0 + \ell$ with  $j\in \N_0$ and
  $\ell\in \{0,\dots,k_0-1\}$. 
   Finally,  let $C_1=C(\asymp)\ge 1$ be  the constant in condition~\ref{item:qv_approx3} of
  Definition~\ref{def:qv_approx}. Then by 
 repeated applications of  \ref{item:qv_approx3} and  \ref{item:qv_approx4} 
   we  see 
  that
     \begin{align*}
    \diam(Y)
    &=\diam(Y^{n+k})=
    \diam(Y^{n+jk_0 +\ell}) 
    \leq 
    C_1^{\ell}\diam(Y^{n+jk_0})
    \\
    &\leq
    C_1^{\ell}\lambda^j\diam(Y^{n})     
    \le  
    C_1^{k_0}(\lambda^{1/k_0})^{jk_0}\diam(X). 
   \end{align*}  
   If we define $\rho \coloneqq\lambda^{1/k_0}\in (0,1)$ 
      and $C\coloneqq C_1^{k_0}/\lambda\ge 1$, then 
\[C_1^{k_0}= C\lambda =C\rho^{k_0}\le C\rho^{\ell}, \] 
and so
       \begin{align*}
    \diam(Y)
  &\le  C_1^{k_0} \rho^{\,jk_0}\diam(X)
    \le C\rho^{\,jk_0 + \ell} \diam(X)
    \\
    &= C\rho^k \diam(X),
   \end{align*}  
  as desired. 
  
  To show the reverse implication, assume that
  \eqref{eq:qv_approx4p} holds for constants $C>0$ and 
  $\rho\in (0,1)$. We can then choose $k_0\in \N$ sufficiently large
  such that $\lambda\coloneqq C\rho^{k_0}<1$. With
  these choices,  condition~\ref{item:qv_approx4} 
  in Definition~\ref{def:qv_approx} is clearly satisfied.
\end{proof}

A very similar argument shows that  condition~\ref{item:qv_approx3} in Definition~\ref{def:qv_approx} implies an inequality opposite to   \eqref{eq:qv_approx4p}. Namely, there exists a constant $\tau\in (0,1) $
such that 
\begin{equation}
  \label{eq:qv_approx8} \diam(Y)\ge \tau^k \diam(X)
\end{equation} for all $n\in \N_0$, $k\in \N$, $X\in \X^n$,
and $Y\in\X^{n+k}$ with $X\cap Y\neq \emptyset$. Note that
\eqref{eq:qv_approx4p} and \eqref{eq:qv_approx8} imply that
\begin{equation}
  \label{eq:taurhoin} 0<\tau\leq \rho<1.
\end{equation}
When combining the estimates in \eqref{eq:qv_approx4p} and
\eqref{eq:qv_approx8}, it is useful to consider \[\nu\coloneqq
\frac{\log(1/\rho)}{\log(1/\tau)}=\frac{\log(\rho)}{\log(\tau)}.\] We have $\nu\in (0,1]$ by
\eqref{eq:taurhoin} and
\begin{equation}
  \label{eq:def_nu}
  \rho = \tau^\nu. 
\end{equation}

\begin{remark}\label{rem:X^0auto} 
Let $\{\X^n\}_{n\in \N}$ be  a sequence of covers of a bounded metric space $(S,d)$ with positive diameter. Suppose  $\{\X^n\}_{n\in \N}$
satisfies  conditions 
\ref{item:qv_approx1}--\ref{item:qv_approx4} in Definition~\ref{def:qv_approx} for all $n\in \N$ with suitable constants. If $\X^1$ is a finite cover and consists of sets of positive diameters, then we can 
add $\X^0=\{S\}$ to the sequence and $\{\X^n\}_{n\in \N_0}$ becomes a 
quasi-visual approximation of $(S,d)$ (with suitably adjusted constants). 

Indeed, in Definition~\ref{def:qv_approx}
condition~\ref{item:qv_approx1} is then also (trivially) true for $n=0$,  while condition~\ref{item:qv_approx2} is vacuous for $n=0$. Our assumptions imply that $\diam(X)\asymp\diam(S)$
for all $X\in \X^1$. This yields condition~\ref{item:qv_approx3}
for all $n\in \N_0$ with a suitably adjusted constant $C(\asymp)$. 

Finally, to see that 
condition~\ref{item:qv_approx4} is true, one verifies  the equivalent 
condition~\eqref{eq:qv_approx4p}. It holds with suitable constants $C$ and $\rho$ for all $n\in \N$ by our assumptions on the original sequence  $\{\X^n\}_{n\in \N}$. Condition~\eqref{eq:qv_approx4p} is then also valid for all $n\in \N_0$ if we replace the original constant $C$ there with $C/\rho$. 
\end{remark}

Lemma~\ref{lem:sub_shrink} implies that in a quasi-visual approximation
the diameters of tiles tend to $0$ uniformly with their level.

\begin{cor}
  \label{cor:qv_tiles_shrink}
  Let $\{\X^n\}$ be a quasi-visual approximation of a bounded
  metric space $S$. Then
  \begin{equation*}
    \sup\{\diam(X) : X\in \X^n\} \to 0
    \text{ as } n\to \infty. 
  \end{equation*}
\end{cor}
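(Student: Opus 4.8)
The plan is to bound the diameter of an arbitrary $n$-tile in terms of the (fixed) diameter of $S=U_0$-tile at level $0$, using the uniform shrinking estimate \eqref{eq:qv_approx4p} from Lemma~\ref{lem:sub_shrink}. The key observation is that every tile is chained back to the unique $0$-tile $S$ along a nested column of tiles with nonempty intersections.

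First I would recall that by Lemma~\ref{lem:sub_shrink}, since a quasi-visual approximation satisfies condition~\ref{item:qv_approx3} (and hence~\eqref{eq:qv_approx4p}), there exist constants $C\ge 1$ and $\rho\in(0,1)$ such that $\diam(Y)\le C\rho^k\diam(X)$ whenever $X\in\X^m$, $Y\in\X^{m+k}$, and $X\cap Y\ne\emptyset$. Now fix $n\in\N$ and let $X\in\X^n$ be arbitrary. Pick any point $x\in X$; since $S\in\X^0$ and $x\in S$, we have $S\cap X\ne\emptyset$, so applying the estimate with $X$ replaced by $S$ (at level $0$), $Y$ replaced by $X$, and $k=n$ gives
\[
\diam(X)\le C\rho^n\diam(S).
\]
Taking the supremum over $X\in\X^n$ yields
\[
\sup\{\diam(X):X\in\X^n\}\le C\rho^n\diam(S)\to 0\quad\text{as }n\to\infty,
\]
since $\rho\in(0,1)$ and $\diam(S)<\infty$ by boundedness. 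This completes the proof.

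There is essentially no obstacle here: the only subtlety is making sure one can legitimately invoke \eqref{eq:qv_approx4p} with the base tile being $S$ itself, which is fine because $\X^0=\{S\}$ is part of the definition of a quasi-visual approximation and every point of $S$ lies in $S$, so the required nonempty intersection $S\cap X\ne\emptyset$ is automatic. One could alternatively avoid even referencing level $0$ by chaining through an arbitrary intermediate tile, but routing directly through $S$ is cleanest. The statement is thus an immediate corollary of Lemma~\ref{lem:sub_shrink}, exactly as the text announces.
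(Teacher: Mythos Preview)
Your proof is correct and follows essentially the same approach as the paper: apply the exponential decay estimate \eqref{eq:qv_approx4p} from Lemma~\ref{lem:sub_shrink} with the unique $0$-tile $X^0=S$ as the base, observing that $S\cap X\ne\emptyset$ automatically for any tile $X$.
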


\begin{proof}
  Indeed, by Lemma~\ref{lem:sub_shrink} we know that there exist
  constants $C>0$ and $\rho\in (0,1)$ such that 
  \begin{equation*}
    \diam(X^n) \leq C \rho^n \diam(X^0)
  \end{equation*}
  for all  $n\in \N$ and $X^n\in \X^n$. Here $X^0=S$ is the
  only $0$-tile. The statement follows. 
\end{proof}

We now show  that quasi-visual approximations generalize visual approximations. 
 
\begin{lemma}[Visual approximations are quasi-visual]
  \label{lem:visual-is-also-quasi} Let $(S,d)$ be a 
metric space.
 Then every visual
  approximation $\{\X^n\}$  of width $w\in \N_0$ for  $(S,d)$   is also a quasi-visual approximation  of width $w$
 for $(S,d)$. 
\end{lemma}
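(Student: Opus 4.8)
The plan is to verify the four conditions in Definition~\ref{def:qv_approx} one by one, using the two defining properties \ref{item:visual1} and \ref{item:visual2} of a visual approximation of width $w$ with some visual parameter $\Lambda>1$. Throughout, let $C_1 = C(\asymp)$ denote the comparability constant in \ref{item:visual1}, so that $\frac{1}{C_1}\Lambda^{-n}\le \diam(X)\le C_1\Lambda^{-n}$ for all $n\in\N_0$ and $X\in\X^n$.

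First I would handle conditions \ref{item:qv_approx1} and \ref{item:qv_approx3} together, since they are essentially immediate: if $X,Y$ are tiles of the same level $n$ (regardless of whether they meet), then $\diam(X)\asymp \Lambda^{-n}\asymp \diam(Y)$ by \ref{item:visual1}, with comparability constant $C_1^2$; in particular this holds when $X\cap Y\ne\emptyset$, giving \ref{item:qv_approx1}. Likewise, if $X\in\X^n$ and $Y\in\X^{n+1}$ with $X\cap Y\ne\emptyset$, then $\diam(X)\asymp\Lambda^{-n}$ and $\diam(Y)\asymp\Lambda^{-n-1} = \Lambda^{-1}\cdot\Lambda^{-n}$, so $\diam(X)\asymp\diam(Y)$ with comparability constant $C_1^2\Lambda$; this is \ref{item:qv_approx3}. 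Note neither of these uses the incidence hypothesis at all — they follow purely from the uniform two-sided diameter bound — which is exactly why visual approximations are the rigid special case.

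Next, for condition \ref{item:qv_approx4}: given $X\in\X^n$ and $Y\in\X^{n+k_0}$ with $X\cap Y\ne\emptyset$, we have $\diam(Y)\le C_1\Lambda^{-n-k_0} = C_1\Lambda^{-k_0}\Lambda^{-n}$ and $\diam(X)\ge \frac{1}{C_1}\Lambda^{-n}$, hence $\diam(Y)\le C_1^2\Lambda^{-k_0}\diam(X)$. So I would simply choose $k_0\in\N$ large enough that $\lambda := C_1^2\Lambda^{-k_0}<1$ (possible since $\Lambda>1$), which is independent of $n$, and then \ref{item:qv_approx4} holds with this $k_0$ and $\lambda$. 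Finally, for \ref{item:qv_approx2}: suppose $X,Y\in\X^n$ with $U_w(X)\cap U_w(Y)=\emptyset$. Then \ref{item:visual2} gives $\dist(X,Y)\ge c\Lambda^{-n}$ where $c = 1/C(\gtrsim)$, while \ref{item:visual1} gives $\diam(X)\le C_1\Lambda^{-n}$, so $\dist(X,Y)\ge \frac{c}{C_1}\diam(X)$, which is \ref{item:qv_approx2} with $C(\gtrsim) = C_1/c$. Since all the constants produced are independent of $n$, $X$, and $Y$, the sequence $\{\X^n\}$ is a quasi-visual approximation of width $w$ for $(S,d)$.

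There is no real obstacle here — the lemma is genuinely routine, amounting to the observation that the absolute comparability $\diam(X)\asymp\Lambda^{-n}$ is strictly stronger than the relative comparabilities demanded by Definition~\ref{def:qv_approx}, and that the geometric decay $\Lambda^{-n}$ automatically furnishes the shrinking in \ref{item:qv_approx4}. The only point requiring the slightest care is making sure $\X^0 = \{S\}$ matches in both definitions (it does, by hypothesis) and that we have verified boundedness and $\#S>1$, both of which are already part of the hypothesis that $\{\X^n\}$ is a visual approximation of $(S,d)$ (since \ref{item:visual1} at $n=0$ forces $0<\diam(S)<\infty$).
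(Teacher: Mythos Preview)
Your proof is correct and follows essentially the same approach as the paper's: both verify conditions \ref{item:qv_approx1}--\ref{item:qv_approx4} directly from the two-sided bound $\diam(X)\asymp\Lambda^{-n}$ and the separation property \ref{item:visual2}. The only cosmetic difference is that the paper verifies \ref{item:qv_approx4} via the equivalent condition \eqref{eq:qv_approx4p} of Lemma~\ref{lem:sub_shrink}, whereas you choose $k_0$ directly so that $C_1^2\Lambda^{-k_0}<1$; both amount to the same computation.
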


\begin{proof}
  Let $\{\X^n\}$ be a visual approximation of width $w\in \N_0$ and with  visual parameter $\Lambda>1$ for the metric
  space $(S,d)$, that is, $\{\X^n\}$ satisfies the conditions in
  Definition~\ref{def:visual}. 
  In particular, the space $(S,d)$ is bounded and has more than one point. 
  We have to verify conditions 
\ref{item:qv_approx1}--\ref{item:qv_approx4} in
Definition~\ref{def:qv_approx}.  In the following, all implicit
constants are independent of the tiles under consideration and their levels. 

Note first that each $\X^n$ is a cover of $S$ and that $\X^0= \{S\}$. 
Fix $n\in \N_0$,
and consider arbitrary tiles $X,Y\in \X^n$. Then
  \begin{equation*}
    \diam(X)
    \asymp
    \Lambda^{-n}
    \asymp
    \diam(Y),
  \end{equation*}
  by Definition~\ref{def:visual}~\ref{item:visual1}. Thus
  condition~\ref{item:qv_approx1} in
  Definition~\ref{def:qv_approx} is satisfied.

  If $U_w(X)\cap U_w(Y)=\emptyset$, then by 
  Definition~\ref{def:visual}~\ref{item:visual2} we have
  \begin{equation*}
    \dist (X,Y)
    \gtrsim
    \Lambda^{-n}
    \asymp
    \diam (X), 
  \end{equation*} 
 and so condition~\ref{item:qv_approx2} in
 Definition~\ref{def:qv_approx} is satisfied.

  Let $Z\in \X^{n+1}$ be
  arbitrary. Then
  \begin{equation*}
    \diam(Z)
    \asymp
    \Lambda^{-n-1}
    \asymp
    \Lambda^{-1} \diam(X),
  \end{equation*}
  by Definition~\ref{def:visual}~\ref{item:visual1}. Condition~\ref{item:qv_approx3}
  in 
  Definition~\ref{def:qv_approx} immediately follows (with a constant depending on $\Lambda$ in addition to the other ambient parameters).

  Similarly, if $Z\in \X^{n+k}$ for some $k\in \N$, then
  \begin{equation*}
    \diam(Z)
    \asymp
    \Lambda^{-(n+k)}
    \asymp
    \Lambda^{-k} \diam(X), 
  \end{equation*}
  again by Definition~\ref{def:visual}~\ref{item:visual1}.
 This implies that condition~\eqref{eq:qv_approx4p} in Lemma~\ref{lem:sub_shrink} is true, and thus condition~\ref{item:qv_approx4} in 
  Definition~\ref{def:qv_approx} holds as well.  

The statement follows.
\end{proof}

 The following lemma compares tile neighborhoods with metric balls.
\begin{lemma}
  \label{lem:U2w1_qround}
  Let $\{\X^n\}$ be a quasi-visual approximation of width $w\in
  \N_0$ for  a bounded metric space $(S,d)$. Then there are
  constants $r_0, R_0>0$ such that for each $X\in \X^n$ with 
  $n\in \N_0$ and each $x\in X$ we have 
  \begin{equation*}
    B(x, r_0 \diam(X))
    \subset
    U_{2w+1}(X)
    \subset
    B(x, R_0 \diam(X)),
  \end{equation*}
\end{lemma}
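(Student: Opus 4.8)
The plan is to establish the two inclusions separately. For the right-hand inclusion $U_{2w+1}(X) \subset B(x, R_0\diam(X))$, I would argue along a chain of $n$-tiles. If $y \in U_{2w+1}(X)$, then $y \in Z$ for some $Z \in \X^n$ connected to $X$ by a chain $X = X_0, X_1, \dots, X_k = Z$ of $n$-tiles with $k \le 2w+1$ and $X_j \cap X_{j+1} \ne \emptyset$. Using condition~\ref{item:qv_approx1} in Definition~\ref{def:qv_approx} repeatedly, each consecutive overlapping pair has comparable diameters, so $\diam(X_j) \asymp \diam(X)$ with a comparability constant depending only on $w$ and $C(\asymp)$ (it is raised to at most the $(2w+1)$-st power). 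Then $d(x,y) \le \sum_{j=0}^{k}\diam(X_j)$ together with the triangle inequality (picking a point in each overlap $X_j \cap X_{j+1}$) gives $d(x,y) \lesssim (2w+2)\diam(X)$, which yields $R_0$.

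For the left-hand inclusion $B(x, r_0\diam(X)) \subset U_{2w+1}(X)$, I would argue by contradiction using the separation condition~\ref{item:qv_approx2}. Suppose $y \in B(x, r_0\diam(X))$ but $y \notin U_{2w+1}(X)$. Since $\{\X^n\}$ is a cover, pick $Y \in \X^n$ with $y \in Y$. The key claim is that $U_w(X) \cap U_w(Y) = \emptyset$: if these two width-$w$ neighborhoods met, one could concatenate a chain from $X$ to a tile in $U_w(X)$, a tile in $U_w(Y)$ with which it shares a point, and a chain back to $Y$, producing a chain of $n$-tiles from $X$ to $Y$ of length at most $2w+1$ — contradicting $y \notin U_{2w+1}(X)$. (This is exactly the concatenation remark made right after \eqref{eq:nbhds-are-nested} in the excerpt.) Hence by~\ref{item:qv_approx2}, $\dist(X,Y) \gtrsim \diam(X)$, so $d(x,y) \ge \dist(X,Y) \gtrsim \diam(X)$. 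Choosing $r_0$ strictly smaller than $C(\gtrsim)$ then forces $y \in U_{2w+1}(X)$, the desired contradiction.

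The main obstacle is the bookkeeping in the left-hand inclusion: one has to be careful that "not in $U_{2w+1}(X)$" genuinely rules out $U_w(X) \cap U_w(Y) \ne \emptyset$. The subtlety is that a chain realizing the intersection of the width-$w$ neighborhoods need not have $X$ and $Y$ as its first and last terms — one must splice a chain $X \to X'$ (length $\le w$, with $X' \in U_w(X)$), then note $X' \cap Y'' \ne \emptyset$ for some $Y'' \in U_w(Y)$, then a chain $Y'' \to Y$ (length $\le w$). Reversing the second chain and inserting the overlap step gives total length $\le w + 1 + w = 2w+1$, so indeed $Y \subset U_{2w+1}(X)$ and in particular $y \in U_{2w+1}(X)$. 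Once this combinatorial point is pinned down, everything else is the routine chaining estimate above, and the constants $r_0, R_0$ depend only on $w$ and the implicit constants in Definition~\ref{def:qv_approx}, as required (note the width $w=0$ case is already covered, with~\ref{item:qvisual_width0} playing the role of~\ref{item:qv_approx2}).
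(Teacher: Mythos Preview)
Your proposal is correct and follows essentially the same approach as the paper: the outer inclusion via chaining and condition~\ref{item:qv_approx1}, and the inner inclusion via the contrapositive using condition~\ref{item:qv_approx2} after noting that $y\notin U_{2w+1}(X)$ forces $U_w(X)\cap U_w(Y)=\emptyset$. The paper is slightly terser (it records the identity $U_{2w+1}(X)=\bigcup\{Y\in\X^n: U_w(X)\cap U_w(Y)\ne\emptyset\}$ before the proof and then just cites it), whereas you spell out the chain-splicing explicitly, but the arguments are the same.
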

Here $U_{2w+1}(X)$ is defined as in \eqref{eq:def_UwX^n}. Note
that 
\begin{equation*}
  U_{2w+1}(X)
  =
  \bigcup\{Y\in \X^n : U_w(X) \cap U_w(Y) \neq
  \emptyset\}.   
\end{equation*}
The lemma asserts that the sets $U_{2w+1}(X)$ are {\em quasi-balls}
with uniform constants (meaning that each $U_{2w+1}(X)$ lies between two concentric  balls with radii whose ratio is  uniformly bounded). 

\begin{proof} In the given setup, suppose  
 $Y\in \X^n$ is  a tile that contributes to the union $U_{2w+1}(X)$. Then there is a chain
  of $n$-tiles from $X$ to $Y$ of length $\le 2w+1$. All $n$-tiles in
  this chain have diameter $\asymp \diam(X)$ as follows from 
  condition \ref{item:qv_approx1} in  Definition~\ref{def:qv_approx}. This implies that 
$U_{2w+1}(X) \subset B(x,R_0 \diam(X))$ for a
 uniform  constant $R_0>0$.

 Now consider a point $z\in S \setminus U_{2w+1}(X)$, and choose  $Z\in
  \X^n$ with $z\in Z$. It follows that $U_w(X) \cap U_w(Z) =
  \emptyset$, and so condition \ref{item:qv_approx2} in 
  Definition~\ref{def:qv_approx} implies that
  \begin{equation*}
    d(x,z)
    \geq
    \dist(X,Z)
    \gtrsim
    \diam(X).
  \end{equation*}
  This means that there is a uniform  constant $r_0>0$ such that $d(x,z)
  \geq r_0 \diam(X)$. We conclude   that $B(x,r_0\diam(X)) \subset
  U_{2w+1}(X)$, and the statement follows. 
  \end{proof}

We close this section with an auxiliary result that will be used later in Section~\ref{subsec:dynqvisual}.

\begin{lemma}
  \label{lem:metricball}
  Let $\{\X^n\}$ be a quasi-visual approximation of a bounded
  metric space $(S, d)$. Let $R>0$, $n\in \N_0$, $X,Y\in \X^n$,
  and $x\in X$ be arbitrary, and suppose that
  \begin{equation*}
    Y\cap B(x, R\diam(X))\ne \emptyset.
  \end{equation*}
  Then $\diam (X)\asymp \diam(Y)$ with a constant
  $C(\asymp)=C(R)$ of comparability that depends on~$R$, but is
  independent of $n$, $X$, $Y$, and $x$.
\end{lemma}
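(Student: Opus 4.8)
The idea is to chain $X$ to $Y$ through a sequence of tiles that all lie close to $x$, and then apply condition~\ref{item:qv_approx1} of Definition~\ref{def:qv_approx} along the chain; the only subtlety is controlling the \emph{number} of tiles in this chain independently of $n$, $X$, $Y$. First I would note that by Lemma~\ref{lem:U2w1_qround} there are uniform constants $r_0, R_0>0$ such that $B(x, r_0\diam(X))\subset U_{2w+1}(X)\subset B(x, R_0\diam(X))$. Pick a point $y\in Y\cap B(x, R\diam(X))$. The plan is to produce a bounded-length chain of $n$-tiles from $X$ to $Y$ inside the ball $B(x, (R+1)\diam(X))$, say; the finiteness of the length will come from the fact that covers of a bounded metric space by such quasi-balls cannot be too deep — but here we do not have a doubling assumption, so we must argue more carefully.

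The key step is the following: I would show that $U_{2w+1}(X)$ and $U_{2w+1}(Y)$ either are comparable in size via a \emph{single} overlap, or else one produces a contradiction. More precisely, consider the tile $Y$. If $\diam(Y)$ were much larger than $\diam(X)$ — larger than $(R_0/r_0)\diam(X)$ up to a controlled factor — then applying Lemma~\ref{lem:U2w1_qround} to $Y$ at the point $y$ gives $B(y, r_0\diam(Y))\subset U_{2w+1}(Y)\subset B(y, R_0\diam(Y))$; but $d(x,y)<R\diam(X)$, so $x\in B(y, r_0\diam(Y))$ once $\diam(Y) > (R/r_0)\diam(X)$, forcing $x\in U_{2w+1}(Y)$, hence there is an $n$-tile $Z\ni x$ with $Z\subset U_{2w+1}(Y)$, i.e.\ $U_w(Z)\cap U_w(Y)\neq\emptyset$. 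Since also $Z\cap X\ni x$ we get, using the chain of length $2w+1$ interpretation, a chain from $X$ to $Y$ of length $\le 2w+2$; then condition~\ref{item:qv_approx1} applied $\le 2w+2$ times yields $\diam(X)\asymp\diam(Y)$ with $C(\asymp)=C_1^{2w+2}$ where $C_1=C(\ref{item:qv_approx1})$, contradicting the assumed size gap. So in fact $\diam(Y)\le (R/r_0)\diam(X)$ automatically, giving one of the two desired inequalities with a constant depending only on $R$ and $r_0$.

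For the reverse inequality $\diam(X)\lesssim\diam(Y)$, I would run the symmetric argument: suppose $\diam(X)$ is much larger than $\diam(Y)$. Apply Lemma~\ref{lem:U2w1_qround} to $X$ at $x$: $B(x, r_0\diam(X))\subset U_{2w+1}(X)$. Since $d(x,y)<R\diam(X)$ is \emph{not} automatically $<r_0\diam(X)$, this direction needs a small extra move: shrink the scale. Because $X\cap Y'\neq\emptyset$ is not given, instead walk out from $x$ in steps. Alternatively — and this is cleaner — pick a tile $Y'\in\X^n$ with $y\in Y'$; then $\diam(Y')\asymp\diam(Y)$ by condition~\ref{item:qv_approx1} (as $y\in Y\cap Y'$, constant $C_1$), so it suffices to compare $\diam(X)$ and $\diam(Y')$. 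If $\diam(X)> (R/r_0)\diam(X)$ is vacuous, so instead: if $\diam(Y')< (r_0/R)\cdot\frac{1}{C'}\diam(X)$ for a suitable constant, then... hmm, the cleanest route is to apply the \emph{first} half of the argument with the roles reversed, using the symmetric hypothesis "$X\cap B(y, R'\diam(Y'))\neq\emptyset$" for an appropriate $R'=R'(R,r_0,R_0)$: indeed $d(y,x)<R\diam(X)$, and if $\diam(X)\lesssim\diam(Y')$ fails then $\diam(X)$ is huge compared to $\diam(Y')$, so $R\diam(X)$ may be large, but we only need $x\in$ some controlled neighborhood — here one uses that $x\in X$ and $\diam(X)$ large would place $X$ far from $y$ unless $X$ reaches near $y$; since $x\in X$ and $d(x,y)<R\diam(X)$, the point $y$ lies within $R\diam(X)$ of $X$, so $\dist(X, Y')\le R\diam(X)$, but condition~\ref{item:qv_approx2} would give $\dist(X,Y')\gtrsim\diam(X)$ unless $U_w(X)\cap U_w(Y')\neq\emptyset$ — and that's fine for small $R$. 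The honest fix: the hypothesis already bounds $\dist(X,Y)\le R\diam(X)$, hence if $U_w(X)\cap U_w(Y)=\emptyset$ then condition~\ref{item:qv_approx2} forces $R\diam(X)\ge\dist(X,Y)\gtrsim\diam(X)$, which merely constrains $R$ — no contradiction. So instead: if $\dist(X,Y)\le R\diam(X)$ and $R$ is arbitrary, we cannot conclude $U_w(X)\cap U_w(Y)\neq\emptyset$ directly; but we can conclude it \emph{after passing to a finer level}, which is exactly what condition~\ref{item:qv_approx4} is for.

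\textbf{The main obstacle} is precisely this reverse inequality when $R$ is large: there is no a priori bound on how many tiles of level $n$ separate $X$ from $Y$ within the ball, since $S$ need not be doubling. The resolution I expect to use: fix $k_0$ and $\lambda\in(0,1)$ from condition~\ref{item:qv_approx4}, and relate the $n$-tile $X$ to $(n-j)$-tiles for $j$ chosen so that $\lambda^{-j/k_0}\gtrsim R$; any tile $X'\in\X^{n-j}$ with $X\cap X'\neq\emptyset$ satisfies $\diam(X')\asymp\lambda^{-j/k_0}\diam(X)\gtrsim R\diam(X)$ by~\eqref{eq:qv_approx4p} and~\eqref{eq:qv_approx8}, so $B(x, R\diam(X))\subset B(x, r_0'\diam(X'))\subset U_{2w+1}(X')$ at level $n-j$ (taking $j$ large enough, using Lemma~\ref{lem:U2w1_qround} at level $n-j$). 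Then $y\in Y\cap U_{2w+1}(X')$, so picking an $(n-j)$-tile $Y'\ni y$ we get a bounded-length chain at level $n-j$ from $X'$ to $Y'$, hence $\diam(X')\asymp\diam(Y')$ with a constant $C_1^{2w+2}$. Finally $\diam(X)\asymp\lambda^{j/k_0}\diam(X')$ and, climbing back down from $Y'$ to $Y$ along tiles containing $y$, $\diam(Y')\asymp\diam(Y)$ up to a factor controlled by $j$ (via~\eqref{eq:qv_approx4p}, \eqref{eq:qv_approx8}, and that $Y,Y'$ share the point $y$). Since $j=j(R,\lambda,k_0,r_0,R_0,C_1)$ is independent of $n,X,Y,x$, all accumulated constants depend only on $R$ and the ambient parameters of the approximation, giving $\diam(X)\asymp\diam(Y)$ with $C(\asymp)=C(R)$ as claimed. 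I would present the argument in this last form — choose a coarse level $n-j$ at which the ball $B(x,R\diam(X))$ sits inside a single quasi-ball $U_{2w+1}(X')$ — since it handles both inequalities uniformly and sidesteps the non-doubling issue entirely.
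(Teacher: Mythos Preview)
Your final approach---pass to a coarser level $n-j$ (with $j=j(R)$ chosen via \eqref{eq:qv_approx4p} so that $B(x,R\diam(X))\subset U_{2w+1}(X')$ for an $(n-j)$-tile $X'\ni x$), compare $X'$ and an $(n-j)$-tile $Y'\ni y$ through condition~\ref{item:qv_approx1}, then climb back to level $n$ via condition~\ref{item:qv_approx3}---is exactly the paper's argument (with $j$ playing the role of the paper's $n_0$). The only point you have not written out is the trivial case $n<j$, which the paper handles separately by noting that then $\diam(X)\asymp\diam(S)\asymp\diam(Y)$ via condition~\ref{item:qv_approx3}.
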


\begin{proof}
  Let $w\in \N_0$ be the width of the quasi-visual approximation
  $\{\X^n\}$ and $r_0>0$ be the constant from
  Lemma~\ref{lem:U2w1_qround} such that 
  \begin{equation}\label{eq: ball_nbhd}
   B(x, r_0 \diam(Z))
    \subset
    U_{2w+1}(Z)
  \end{equation}
  for all $n\in \N_0$, $Z\in \X^n$, and $x\in Z$. By condition~\eqref{eq:qv_approx4p} in
  Lemma~\ref{lem:sub_shrink} we can find $n_0=n_0(R)\in \N$
  depending on $R>0$ such that for all $n\in\N_0$, $Z^n\in \X^n$, and $Z^{n+n_0}\in
  \X^{n+n_0}$ with $Z^n\cap Z^{n+n_0}\neq \emptyset$ we have
  \begin{equation}\label{eq: n0_tiles}
    R\diam(Z^{n+n_0})<r_0\diam (Z^n).
  \end{equation}

After this preparation, let $n\in \N_0$,  $X,Y\in \X^n$, and  $x\in X$  be  as in the
statement. Suppose first that $n< n_0$. Then by
condition~\ref{item:qv_approx3} of Definition~\ref{def:qv_approx}
we have  
\[
\diam(X)\asymp \diam(S) \asymp \diam (Y) 
\]
with comparability constants that depend on $n_0$ (and hence on
$R$), but not on $X$ and $Y$.  

So we may assume that $n \geq n_0$. In this case, choosing a tile 
$X'\in \X^{n-n_0}$ with $x\in X'$, we obtain from
\eqref{eq: ball_nbhd} and \eqref{eq: n0_tiles} that
\begin{equation*}
  B(x,R \diam(X))
  \subset
  B(x, r_0\diam(X'))
  \subset
  U_{2w+1}(X').
\end{equation*}
Since, by our assumptions, $Y$ meets the ball $B(x, R\diam(X))$,  we conclude that there exists a point $y\in Y\cap
U_{2w+1}(X')$.

Choosing now a tile $Y'\in \X^{n-n_0}$ with $y\in
Y'$, we have $Y'\cap U_{2w+1}(X')\ne \emptyset$. By
condition~\ref{item:qv_approx1} in
Definition~\ref{def:qv_approx}, this implies
\begin{equation*}
  \diam(X')\asymp \diam (Y')
\end{equation*}
with a uniform constant $C(\asymp)$.
At the same time, using condition~\ref{item:qv_approx3} of
Definition~\ref{def:qv_approx}, we obtain
\begin{equation*}
  \diam(X)\asymp\diam(X')
  \ \text{ and } \
  \diam (Y') \asymp\diam(Y)
\end{equation*}
with comparability constants that depend on $n_0$ (and
hence on $R$), but not on $X$ and~$Y$.  
The statement follows.
\end{proof}

\section{Distances and combinatorics}
\label{sec:dist-comb}

Let $\{\X^n\}$ be a quasi-visual approximation of width $w\in \N_0$ for a metric space $(S,d)$. 
The covers $\X^n$ provide a
discrete approximation of $S$ that becomes finer as $n$ increases. We
introduce a combinatorial quantity (depending on the width $w$) that records
the level at which this approximation allows us to distinguish
two given points for the first time.

\begin{definition}
  \label{def:mxy} 
  Let $\{\X^n\}$ be a sequence of covers of a set
  $S$. Given $w\in \N_0$, the \emph{$U_w$-proximity function} \[m_w\colon S\times S\to \N_0 \cup \{\infty\}\] 
  (with respect to $\{\X^n\}$) is defined as follows. For $x,y\in S $ we set   
  \begin{align*} 
    m_w(x,y)
    \coloneqq 
    \sup\{n\in \N_0: {}&\text{there exist $X,Y\in \X^n$}
      \text{ with $x\in X$, $y \in Y$,}
    \\ 
    &
    \text{and $U_w(X)\cap U_w(Y)\ne \emptyset$}\}.
  \end{align*} 
\end{definition}

Of course, 
the number $m_w(x,y)$ 
depends on $\{\X^n\}$ and $w$. We will frequently suppress the lower index from the notation if the width $w$ is fixed in the context.
Clearly, \[m_w(x,y) =
m_w(y,x) \text{ for all $x,y\in S$, }
\] and $m_w(x,y)$ is  non-decreasing in $w$. Furthermore, for $n\in \N_0$ and $X^n\in \X^n$, we have
\begin{equation}
  \label{eq:mxy_Xn}
  m_w(x,y) \geq n \text{ for all } x,y\in X^n. 
\end{equation}

\begin{remark}
There are other natural combinatorial quantities associated with a sequence $\{\X^n\}$ of covers of a set $S$ that can be used to measure the proximity of points in $S$. In particular, we may consider the following quantity for  two points $x,y\in S$:
\begin{align*}
m_w'(x,y)
\coloneqq
\inf\{n\in \N_0 : {}&\text{there exist $X,Y\in \X^n$}
\text{ with $x\in X$, $y \in Y$, }
\\
&
\text{and $U_w(X)\cap U_w(Y)= \emptyset$}\},
\end{align*}
where $m_w'(x,y)=\infty$ if the set over which the infimum is taken happens to be empty. One can easily verify that 
$ m'_w(x,y)\le m_w(x,y)+1$ for all $x,y\in S$.
\end{remark}

The $U_w$-proximity function $m_w$ is defined purely in combinatorial terms, and $S$ is not required to carry a metric. In the rest of the section, we fix the width $w$ and discuss the properties of this function in a metric setting.

\begin{lemma}
  \label{lem:mxy_qv}
  Let  $\{\X^n\}$ be  a quasi-visual approximation of width $w\in \N_0$ for a bounded metric space $(S,d)$. Then for $x,y\in S$ we have
  \begin{align*}
    m_w(x,y) = \infty
             \; \text{ if and only if } \;
             x=y.
  \end{align*}
\end{lemma}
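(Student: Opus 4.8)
The plan is to prove the two implications separately, with the forward direction being routine and the reverse direction being the one requiring the structural conditions of Definition~\ref{def:qv_approx}.

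First I would handle the easy implication: if $x = y$, then for every $n \in \N_0$ I can pick any tile $X \in \X^n$ with $x \in X$ (such a tile exists since $\X^n$ covers $S$), and take $Y = X$. Then $x \in X$, $y = x \in X = Y$, and $U_w(X) \cap U_w(Y) = U_w(X) \supset X \ne \emptyset$. Hence $n$ is in the set over which the supremum defining $m_w(x,y)$ is taken, for every $n$, so $m_w(x,y) = \infty$.

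For the converse, I would argue by contraposition: suppose $x \ne y$ and show $m_w(x,y) < \infty$. Set $\delta \coloneqq d(x,y) > 0$. By Corollary~\ref{cor:qv_tiles_shrink} (or directly by condition~\eqref{eq:qv_approx4p} of Lemma~\ref{lem:sub_shrink} applied with the single $0$-tile $S$), there exist constants $C > 0$ and $\rho \in (0,1)$ with $\diam(X) \le C\rho^n \diam(S)$ for all $X \in \X^n$; in particular $\sup\{\diam(X) : X \in \X^n\} \to 0$ as $n \to \infty$. Now suppose, for contradiction, that $m_w(x,y) = \infty$. Then for arbitrarily large $n$ there exist $X, Y \in \X^n$ with $x \in X$, $y \in Y$, and $U_w(X) \cap U_w(Y) \ne \emptyset$. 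The key geometric input is Lemma~\ref{lem:U2w1_qround}: from $U_w(X) \cap U_w(Y) \ne \emptyset$ one obtains a chain of $n$-tiles from $X$ to $Y$ of length $2w+1$ (as noted after \eqref{eq:def_UwX^n}), hence $Y \subset U_{2w+1}(X) \subset B(x, R_0 \diam(X))$, so $d(x,y) \le R_0 \diam(X)$. Since $\diam(X) \le C\rho^n \diam(S) \to 0$, this forces $d(x,y) = 0$, i.e.\ $x = y$, contradicting our assumption. Therefore $m_w(x,y) < \infty$.

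The main (and really the only) obstacle is making sure the passage from $U_w(X) \cap U_w(Y) \ne \emptyset$ to the diameter bound $d(x,y) \lesssim \diam(X)$ is clean; this is exactly packaged by Lemma~\ref{lem:U2w1_qround} together with the chain-concatenation remark following \eqref{eq:def_UwX^n}, so once those are invoked the argument is immediate. Everything else is bookkeeping: the forward direction is trivial, and the combinatorial definition of $m_w$ combined with the uniform shrinking of tile diameters (Corollary~\ref{cor:qv_tiles_shrink}) does the rest.
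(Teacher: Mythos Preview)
Your proof is correct and follows essentially the same approach as the paper: both directions are handled as you describe, with the reverse direction resting on Corollary~\ref{cor:qv_tiles_shrink}. The paper's proof is terser and leaves the chain argument (your invocation of Lemma~\ref{lem:U2w1_qround}) implicit, but the underlying reasoning is the same.
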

\begin{proof}
  Clearly,  $m_w(x,x) = \infty$ for all $x\in S$. To see the other implication, recall from
Corollary~\ref{cor:qv_tiles_shrink} that 
  \begin{equation*}
    \lim_{n\to\infty} \sup_{X\in \X^n} \diam(X)=0.
  \end{equation*}
This implies that for distinct $x,y\in S$ we have $m_w(x,y)\in
  \N_0$. 
\end{proof}

The $U_w$-proximity function allows us to control distances.

\begin{lemma}
  \label{lem:qv_metric}
  Let $\{\X^n\}$ be a quasi-visual approximation of width $w\in \N_0$ for a
 bounded  metric space $(S,d)$. Then there is a constant $C(\asymp)$ such
  that for all distinct points $x,y\in S$ we have
  \begin{equation*}
    d(x,y) \asymp \diam(X^m),
  \end{equation*}
  where $m=m_w(x,y)$ and $X^m\in \X^m$ is an arbitrary $m$-tile
  that contains~$x$.
\end{lemma}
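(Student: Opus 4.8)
The plan is to estimate $d(x,y)$ from above and below separately, using the combinatorial level $m = m_w(x,y)$ together with the geometric information packaged in Lemma~\ref{lem:U2w1_qround}. Write $m = m_w(x,y) \in \N_0$ (this is finite since $x \neq y$, by Lemma~\ref{lem:mxy_qv}), and fix an arbitrary $X^m \in \X^m$ with $x \in X^m$. By condition~\ref{item:qv_approx1} in Definition~\ref{def:qv_approx}, the diameter of any $m$-tile meeting $X^m$ is comparable to $\diam(X^m)$ with a uniform constant, and more generally all $m$-tiles appearing in $U_{2w+1}(X^m)$ have diameter $\asymp \diam(X^m)$; so it does not matter which such $X^m$ we pick, and we are free to replace $X^m$ by any tile comparable to it. Let $r_0, R_0 > 0$ be the constants from Lemma~\ref{lem:U2w1_qround}, so that $B(x, r_0 \diam(X^m)) \subset U_{2w+1}(X^m) \subset B(x, R_0 \diam(X^m))$.

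\emph{Upper bound.} By the definition of $m = m_w(x,y)$, there exist $X \in \X^m$ and $Y \in \X^m$ with $x \in X$, $y \in Y$, and $U_w(X) \cap U_w(Y) \neq \emptyset$. Then $Y \subset U_{2w+1}(X)$, hence $y \in U_{2w+1}(X) \subset B(x', R_0 \diam(X))$ for any $x' \in X$; taking $x' = x$ gives $d(x,y) \le R_0 \diam(X)$. Since $X$ meets $X^m$ (both contain a point of the chain, or more simply: one can take $X^m = X$, as $X^m$ was arbitrary), condition~\ref{item:qv_approx1} yields $\diam(X) \asymp \diam(X^m)$, and therefore $d(x,y) \lesssim \diam(X^m)$.

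\emph{Lower bound.} This is the step I expect to require a little care. By the definition of $m$ as a supremum, for the level $m+1$ there do \emph{not} exist tiles $X' \in \X^{m+1}$, $Y' \in \X^{m+1}$ with $x \in X'$, $y \in Y'$, and $U_w(X') \cap U_w(Y') \neq \emptyset$; equivalently, for \emph{every} choice of such $X', Y'$ we have $U_w(X') \cap U_w(Y') = \emptyset$. Fix $X' \in \X^{m+1}$ with $x \in X'$ and $Y' \in \X^{m+1}$ with $y \in Y'$. Then $U_w(X') \cap U_w(Y') = \emptyset$, so $y \notin U_{2w+1}(X')$, and hence by Lemma~\ref{lem:U2w1_qround} we get $d(x,y) \ge r_0 \diam(X')$. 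Now $X' \in \X^{m+1}$ meets $X^m \in \X^m$ at the point $x$, so condition~\eqref{eq:qv_approx8} (the lower bound complementary to \eqref{eq:qv_approx4p}, valid under condition~\ref{item:qv_approx3}) gives $\diam(X') \ge \tau \diam(X^m)$ for the uniform constant $\tau \in (0,1)$. Combining, $d(x,y) \ge r_0 \tau \diam(X^m)$, i.e.\ $d(x,y) \gtrsim \diam(X^m)$.

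Putting the two estimates together yields $d(x,y) \asymp \diam(X^m)$ with a comparability constant depending only on $r_0$, $R_0$, $\tau$, and the constants in conditions \ref{item:qv_approx1} and \ref{item:qv_approx3} — all of which are independent of $x$, $y$, and $m$. The only subtlety worth double-checking is the edge case $m = 0$: then $X^0 = S$, the upper bound is immediate since $d(x,y) \le \diam(S)$, and the lower bound proceeds exactly as above using a $1$-tile $X' \ni x$, noting that $\diam(X') \gtrsim \diam(S)$ by condition~\ref{item:qv_approx3}; so no separate argument is needed. The main obstacle, such as it is, is correctly unwinding the definitions of the supremum at levels $m$ and $m+1$ and matching them to the two-sided inclusion in Lemma~\ref{lem:U2w1_qround}; once that bookkeeping is set up, the estimates are routine.
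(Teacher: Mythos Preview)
Your proof is correct and follows essentially the same two-step strategy as the paper: an upper bound from the existence of a good chain at level $m$, and a lower bound from the forced separation at level $m+1$. The only cosmetic difference is that you route both estimates through Lemma~\ref{lem:U2w1_qround} (the ball inclusions for $U_{2w+1}$), whereas the paper invokes conditions~\ref{item:qv_approx1}, \ref{item:qv_approx2}, \ref{item:qv_approx3} of Definition~\ref{def:qv_approx} directly; your packaging is perfectly valid since that lemma precedes this one. One small point of presentation: in the lower bound, the conclusion $y \notin U_{2w+1}(X')$ genuinely needs the ``for every choice of $Y'$'' fact you stated (not just the single fixed $Y'$), since $U_{2w+1}(X') = \bigcup\{Z \in \X^{m+1} : U_w(X') \cap U_w(Z) \neq \emptyset\}$---you have this right, but the sentence ordering slightly obscures it.
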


 Note that $m\in \N_0$ by Lemma~\ref{lem:mxy_qv} and so the statement makes sense. 
\begin{proof} Let $x,y\in S$ be   distinct, and 
  $m=m_w(x,y)\in \N_0$.  Then there are $m$-tiles $X,Y\in \X^m$ with $x\in X$, $y\in Y$,
  and $U_w(X)\cap U_w(Y)\neq \emptyset$. 
  The latter statement  in turn implies  that there is a
  chain of $m$-tiles 
  \[X_0=X,\, X_1,\,\dots,\, X_w,\, Y_w,\, \dots, Y_1,\, Y_0
  =Y
  \] from $X$ to $Y$ of length $2w+1$.

  Now let  $X^m\in \X^m$ be an arbitrary $m$-tile containing $x$.
  Then 
  \begin{align*}
    d(x,y)
    &\leq
    \diam(X_0) +\dots + \diam(X_w) + \diam(Y_w) + \dots +
      \diam(Y_0)
      \\
    &\lesssim
    \diam(X)
    \asymp
    \diam (X^m),
  \end{align*}
  where we used condition~\ref{item:qv_approx1} in 
  Definition~\ref{def:qv_approx} repeatedly.

  On the other hand, we can find $(m+1)$-tiles $X^{m+1},Y^{m+1}\in \X^{m+1}$ with $x\in X^{m+1}$ and
  $y\in Y^{m+1}$. The definition of $m_w(x,y)$ now implies that $U_w(X^{m+1})
  \cap U_w(Y^{m+1}) = \emptyset$. This in turn gives
  \begin{equation*}
    d(x,y)\geq \dist(X^{m+1},Y^{m+1}) \gtrsim \diam(X^{m+1}) \asymp \diam(X^m),
  \end{equation*}
 where we used conditions \ref{item:qv_approx2} and \ref{item:qv_approx3} in 
  Definition~\ref{def:qv_approx}. The statement follows.
\end{proof}

The following corollary is an immediate consequence of the previous lemma  and Definition~\ref{def:visual}.

\begin{cor}
  \label{cor:dxy_mxy}
    Let $\{\X^n\}$ be a visual approximation of width $w\in \N_0$ with visual parameter $\Lambda>1$ for a
    bounded  metric space $(S,d)$. Then
    there is a constant $C(\asymp)$ such 
  that for all distinct points $x,y\in S$ we have
  \begin{equation*}
    d(x,y) \asymp \Lambda^{-m}, 
  \end{equation*}
  where $m=m_w(x,y)$.
\end{cor}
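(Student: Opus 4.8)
The plan is to derive this corollary directly from Lemma~\ref{lem:qv_metric} together with the defining property~\ref{item:visual1} of a visual approximation. First I would invoke Lemma~\ref{lem:visual-is-also-quasi}, which guarantees that a visual approximation of width $w$ is in particular a quasi-visual approximation of width $w$, so that all the hypotheses of Lemma~\ref{lem:qv_metric} are available for the sequence $\{\X^n\}$. This also ensures that the $U_w$-proximity function $m_w$ is well defined and that $m=m_w(x,y)\in\N_0$ for distinct $x,y\in S$ (by Lemma~\ref{lem:mxy_qv}), so the quantity $\Lambda^{-m}$ in the statement makes sense.

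Next I would fix distinct points $x,y\in S$, set $m=m_w(x,y)$, and pick an arbitrary $m$-tile $X^m\in\X^m$ containing $x$ (such a tile exists since $\X^m$ is a cover). By Lemma~\ref{lem:qv_metric} there is a constant $C_1=C(\asymp)$, independent of $x$ and $y$, with
\begin{equation*}
  \frac{1}{C_1}\diam(X^m)\le d(x,y)\le C_1\diam(X^m).
\end{equation*}
On the other hand, condition~\ref{item:visual1} in Definition~\ref{def:visual} (applied on level $m$ to the tile $X^m$) gives a constant $C_2=C(\asymp)$, again independent of $m$ and $X^m$, with
\begin{equation*}
  \frac{1}{C_2}\Lambda^{-m}\le\diam(X^m)\le C_2\Lambda^{-m}.
\end{equation*}
Chaining these two comparisons yields $d(x,y)\asymp\Lambda^{-m}$ with comparability constant $C(\asymp)=C_1C_2$, which depends only on the implicit constants of the visual approximation and not on the chosen points or the tile $X^m$. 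Since the right-hand side $\Lambda^{-m}$ no longer refers to any particular tile, the choice of $X^m$ is immaterial, which also re-proves in this special case the fact (implicit in Lemma~\ref{lem:qv_metric}) that $\diam(X^m)$ is comparable for all $m$-tiles containing $x$.

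This argument is entirely routine; there is no real obstacle. The only minor point to be careful about is bookkeeping of the constants: one must make sure that $C_1$ comes from Lemma~\ref{lem:qv_metric} (hence depends on $w$, $\Lambda$, and the implicit constants of the approximation) and that $C_2$ comes from~\ref{item:visual1}, and that neither depends on $n=m$, on the points $x,y$, or on the tile $X^m$ — all of which is already built into the cited statements. A final sentence would remark that this corollary specializes Lemma~\ref{lem:qv_metric} to the visual setting, where the local scale $\diam(X^m)$ of an $m$-tile is itself comparable to the absolute scale $\Lambda^{-m}$, so that the proximity function $m_w$ directly controls the metric up to the change of variables $t\mapsto\Lambda^{-t}$.
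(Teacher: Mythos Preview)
Your proof is correct and follows exactly the approach indicated in the paper, which states that the corollary is an immediate consequence of Lemma~\ref{lem:qv_metric} and Definition~\ref{def:visual}. Your explicit invocation of Lemma~\ref{lem:visual-is-also-quasi} (to ensure Lemma~\ref{lem:qv_metric} applies) and of Lemma~\ref{lem:mxy_qv} (for $m\in\N_0$) just spells out details the paper leaves implicit.
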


We now show that snowflake equivalences are characterized by their
preservation of visual approximations.

\begin{proposition}
  \label{prop:visual_metric_approximation}
  Let $\{\X^n\}$ be a visual approximation of width $w\in \N_0$
  for a bounded metric
  space $(S,d)$. Suppose $d'$ is another metric on $S$. 
  Then the following statement is true: 
  \begin{align*}
    &\text{$\{\X^n\}$ is a visual approximation of width $w$ for
    $(S,d')$}\\
 \Longleftrightarrow\quad   &\text{$\id_S\colon (S,d) \to (S,d')$ is a snowflake equivalence.}
  \end{align*}
\end{proposition}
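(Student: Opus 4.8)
The plan is to prove both implications using Corollary~\ref{cor:dxy_mxy}, which gives a purely combinatorial handle on distances. The key observation is that the $U_w$-proximity function $m_w$ associated with the sequence $\{\X^n\}$ depends only on the combinatorics of the covers and the width $w$, not on any metric on $S$. So if $\{\X^n\}$ is a visual approximation of width $w$ and visual parameter $\Lambda$ for $(S,d)$, then Corollary~\ref{cor:dxy_mxy} yields
\[
d(x,y)\asymp \Lambda^{-m_w(x,y)}
\]
for all distinct $x,y\in S$, with a comparability constant independent of the points.

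\medskip\noindent\emph{``$\Rightarrow$''.} Suppose $\{\X^n\}$ is also a visual approximation of width $w$ for $(S,d')$, say with visual parameter $\Lambda'>1$. Applying Corollary~\ref{cor:dxy_mxy} to both metrics, we get
\[
d(x,y)\asymp \Lambda^{-m_w(x,y)}\quad\text{and}\quad d'(x,y)\asymp (\Lambda')^{-m_w(x,y)}
\]
for all distinct $x,y\in S$. Writing $m=m_w(x,y)$, we have $\Lambda^{-m}=\big((\Lambda')^{-m}\big)^{\alpha}$ with $\alpha\coloneqq \log\Lambda/\log\Lambda'>0$, and combining the two comparabilities gives $d(x,y)\asymp d'(x,y)^{\alpha}$ for all distinct $x,y$ (and trivially for $x=y$). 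Hence $\id_S\colon (S,d)\to(S,d')$ is a snowflake equivalence with exponent $\alpha$.

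\medskip\noindent\emph{``$\Leftarrow$''.} Suppose $\id_S\colon (S,d)\to (S,d')$ is a snowflake equivalence, so $d'(x,y)\asymp d(x,y)^\alpha$ for some $\alpha>0$ and all $x,y\in S$. I would verify directly that $\{\X^n\}$ satisfies conditions \ref{item:visual1} and \ref{item:visual2} of Definition~\ref{def:visual} for $(S,d')$ with visual parameter $\Lambda'\coloneqq \Lambda^{1/\alpha}$. For \ref{item:visual1}: a snowflake equivalence distorts diameters of a set $X$ in a controlled way, since for any $X$ and any $x,y\in X$ one has $d'(x,y)\asymp d(x,y)^\alpha\le \diam_d(X)^\alpha$, giving $\diam_{d'}(X)\lesssim \diam_d(X)^\alpha$, while applying this with points nearly realizing $\diam_d(X)$ gives the reverse bound; thus $\diam_{d'}(X)\asymp \diam_d(X)^\alpha\asymp (\Lambda^{-n})^\alpha=(\Lambda')^{-n}$ for $X\in\X^n$. (One can also cite Lemma~\ref{lem:qs_diam}, since a snowflake equivalence is a quasisymmetry, together with the fact that for $X\in\X^n$ the diameter is realized up to a constant by two points—this holds because $X\subset U_{2w+1}(X)$ and condition~\ref{item:visual1} controls all tiles, or more simply since any two points of $X$ are within $\asymp\Lambda^{-n}$; one should be a little careful here and I would include the short direct argument rather than relying on existence of near-extremal points.) For \ref{item:visual2}: if $U_w(X)\cap U_w(Y)=\emptyset$ for $X,Y\in\X^n$, then $\dist_d(X,Y)\gtrsim \Lambda^{-n}$, so for any $x\in X$, $y\in Y$ we get $d'(x,y)\asymp d(x,y)^\alpha\ge \dist_d(X,Y)^\alpha\gtrsim (\Lambda^{-n})^\alpha=(\Lambda')^{-n}$, hence $\dist_{d'}(X,Y)\gtrsim (\Lambda')^{-n}$. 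This shows $\{\X^n\}$ is a visual approximation of width $w$ with visual parameter $\Lambda'$ for $(S,d')$.

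\medskip
The main point requiring care is the diameter estimate in the ``$\Leftarrow$'' direction: I expect the subtlety to be that the diameter of a subset need not be realized by a pair of points (the space need not be compact), so the comparability $\diam_{d'}(X)\asymp \diam_d(X)^\alpha$ should be argued via suprema rather than maxima. The monotone function $t\mapsto t^\alpha$ makes this routine: $\diam_{d'}(X)=\sup_{x,y\in X}d'(x,y)\asymp \sup_{x,y\in X}d(x,y)^\alpha=\big(\sup_{x,y\in X}d(x,y)\big)^\alpha=\diam_d(X)^\alpha$, where the comparability constant passes through the supremum because it is independent of the pair. Everything else is a direct transcription through the power law $t\mapsto t^\alpha$, and the two visual parameters are related by $\Lambda'=\Lambda^{1/\alpha}$ throughout.
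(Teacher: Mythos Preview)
Your proof is correct and follows essentially the same approach as the paper: the ``$\Rightarrow$'' direction via Corollary~\ref{cor:dxy_mxy} and the ``$\Leftarrow$'' direction by directly pushing the power law $t\mapsto t^\alpha$ through the diameter and distance conditions. One small slip: with your convention $d'(x,y)\asymp d(x,y)^\alpha$, the new visual parameter should be $\Lambda'=\Lambda^{\alpha}$, not $\Lambda^{1/\alpha}$ --- your own computation $(\Lambda^{-n})^\alpha=(\Lambda')^{-n}$ already forces this.
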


Recall that the latter condition means that for some $\alpha>0$ we have 
$d'(x,y) \asymp
d(x,y)^\alpha$ for all $x,y\in S$. The
proof will show that the visual parameters $\Lambda,
\Lambda'>1$ of the visual approximation $\{\X^n\}$ for $(S,d)$ and $(S,d')$, respectively,  
then satisfy $\Lambda' = \Lambda^\alpha$. In particular, the identity map $\id_S \colon (S,d) \to
(S,d')$ is bi-Lipschitz if and only if $\Lambda = \Lambda'$. 

\begin{proof}
  Let $\Lambda>1$ be the visual parameter of the visual approximation
  $\{\X^n\}$ of $(S,d)$.
  
  ``$\Leftarrow$''
  Assume first that $\id_S\colon (S,d) \to (S,d')$ is a snowflake
  equivalence, meaning that there exists $\alpha>0$ such that
  $d'(x,y) \asymp d(x,y)^\alpha$ for all $x,y\in S$. Set
  $\Lambda'\coloneqq \Lambda^\alpha>1$, and let $n\in\N_0$ be arbitrary. 
  Then for all $X^n\in \X^n$ we have
  \[\diam_{d'} (X^n)
    \asymp
    (\diam_d(X^n))^\alpha
    \asymp
    (\Lambda^{-n})^\alpha
   =
      (\Lambda')^{-n},\]
   by condition~\ref{item:visual1} in Definition~\ref{def:visual}. Moreover, by condition~\ref{item:visual2},  for all $X^n, Y^n\in \X^n$ with $U_w(X^n)\cap U_w(Y^n)=\emptyset$
   we have
   \[\dist_{d'}(X^n,Y^n)
      \asymp
      (\dist_d(X^n,Y^n))^\alpha
      \gtrsim
      (\Lambda^{-n})^\alpha
      =
      (\Lambda')^{-n}.
  \]
  Hence
  $\{\X^n\}$ is a visual approximation of width $w$
  with visual parameter $\Lambda'$ for $(S,d')$ according to
  Definition~\ref{def:visual}.

  \smallskip
  ``$\Rightarrow$''
  Conversely, suppose $\{\X^n\}$ is a visual approximation of width $w$ with parameter
  $\Lambda'>1$ for $(S,d')$. Set
  \[\alpha\coloneqq \log(\Lambda')/\log(\Lambda)>0.
  \] 
 Then $\Lambda'= \Lambda^\alpha$.
  
  The $U_w$-proximity function $m=m_w$ from
  Definition~\ref{def:mxy} depends only on $\{\X^n\}$
  and $w$ and is
  independent of the metrics $d$ and $d'$. Then  Corollary~\ref{cor:dxy_mxy} implies that for all distinct points $x,y\in S$ we have
  \begin{align*}
    d'(x,y)
    \asymp
    (\Lambda')^{-m(x,y)}
    =
    (\Lambda^\alpha)^{-m(x,y)}
    =
    (\Lambda^{-m(x,y)})^\alpha
    \asymp
    d(x,y)^\alpha. 
  \end{align*}
  Thus $\id_S\colon (S,d) \to (S,d')$ is a snowflake equivalence, as
  desired. 
\end{proof}

\section{Dynamical quasi-visual approximations}\label{subsec:dynqvisual} 
In the theory of (discrete-time) dynamical systems, one considers self-maps $g\colon S\to S$ on a space $S$, which typically carries a metric~$d$. In order for our  notion of a (quasi-)visual approximation for the metric space $(S,d)$ to be useful in such settings, it is natural to require that it respects the induced dynamics. We will formulate the precise condition momentarily.

In the following, let $(S,d)$ be a bounded metric space with a self-map $g\colon S\to S$ (we do not assume any regularity properties of $g$). We denote by $g^n$ for $n\in \N_0$ the $n$-th iterate of~$g$, where $g^0\coloneq \id_S$. 

We  call a sequence $\{\X^n\}_{n\in\N}$ of covers of $S$ {\em dynamical} if it satisfies  
\begin{equation}\label{eq:dyngXgen_1}
 g(X^{n+1})\in \X^n \;\text{ for all $n\in \N$ and $X^{n+1}\in 
 \X^{n+1}$.}
\end{equation}
On a more intuitive level, this means that the map $g$ shifts the level of $n$-tiles by $1$ for all $n\geq 2$. This condition immediately generalizes to the following one:
\begin{equation}\label{eq:dyngXgen_k}
 g^k(X^{n+k})\in \X^n \;\text{ for all $n\in \N$, $k\in \N_0$, and $X^{n+k}\in 
 \X^{n+k}$.}
\end{equation}

In particular, a (quasi-)visual approximation $\{\X^n\}_{n\in\N_0}$ of $(S,d)$ is called {\em dynamical} if the subsequence $\{\X^n\}_{n\in\N}$ satisfies \eqref{eq:dyngXgen_1}. Since for these approximations we require $\X^0=\{S\}$ by  definition, we do not impose  any dynamical restrictions on $\X^0$.

In Section~\ref{sec:semihyp}, we will discuss in detail a particular instance of such a situation (namely, when $g$ is a rational map on the Riemann sphere and $(S,d)$ is its Julia set equipped with the spherical metric).  In this section, however, we will only derive some distortion properties in the general case. 

Suppose $\{\X^n\}$ is a dynamical quasi-visual approximation of width $w\in \N_0$ for $(S,d)$. By the dynamical property~\eqref{eq:dyngXgen_1}, intersecting $n$-tiles are mapped to intersecting $(n-1)$-tiles by $g$ for all $n\geq 2$. Consequently, \[g\big(U_w(X)\big)\subset U_w\big(g(X)\big)\] for all tiles $X\in \X^n$, $n\in \N$. It follows immediately that
the $U_w$-proximity function $m_w$ with respect to $\{\X^n\}$ satisfies 
\[
m_w(g(x), g(y))\ge m_w(x,y)-1 
\]
for all $x, y\in S$. 
Iterating this inequality, we obtain 
\begin{equation}\label{eq:dynmw}
 m_w(g^n(x), g^n(y))\ge m_w(x,y)-n   
\end{equation}
for all $n\in \N$ and $x,y\in S$.

Inequality~\eqref{eq:dynmw} leads to a metric 
distortion property for the iterates of $g$. 
In order to formulate it, we fix constants $\tau, \rho\in (0,1)$
for $\{\X^n\}$ as  in  \eqref{eq:qv_approx8} and  in
condition~\eqref{eq:qv_approx4p} of Lemma~\ref{lem:sub_shrink},
respectively. Again, we set $\nu = \log(1/\rho)/\log(1/\tau)\in
(0,1]$ so that $\rho= \tau^\nu$ as in~\eqref{eq:def_nu}.

\begin{lemma}
  \label{lem:dgxy}
  For each $R>0$ there exists 
  a constant $C=C(R)\ge 1$ such that 
  \begin{equation}\label{eq:equicont}
d(g^n(x), g^n(y))\le C \cdot \biggl(\frac{d(x,y)}{\diam(Z^{n+1})}
\biggr)^{\nu},
\end{equation}
whenever $n\in \N$, $Z^{n+1}\in \X^{n+1}$, $z_0\in Z^{n+1}$, and 
$x,y\in B(z_0, R \diam(Z^{n+1}))$.
\end{lemma}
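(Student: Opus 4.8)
The strategy is to translate the combinatorial control coming from the dynamical property into a metric estimate via the two-sided comparisons of distances to tile diameters. Fix $n\in\N$, a tile $Z^{n+1}\in\X^{n+1}$, a point $z_0\in Z^{n+1}$, and $x,y\in B(z_0,R\diam(Z^{n+1}))$. If $x=y$ there is nothing to prove, so assume $x\neq y$, and set $m\coloneqq m_w(x,y)\in\N_0$ (finite by Lemma~\ref{lem:mxy_qv}). The first step is to show that $m\ge n+1$ up to an additive constant depending only on $R$: since $x,y$ both lie in $B(z_0,R\diam(Z^{n+1}))$, Lemma~\ref{lem:metricball} applied to $Z^{n+1}$ and to $(n+1)$-tiles containing $x$ and $y$ shows that such tiles have diameter $\asymp\diam(Z^{n+1})$; and more usefully, the argument in the proof of Lemma~\ref{lem:metricball} (passing to level $n+1-n_0(R)$ via~\eqref{eq: ball_nbhd} and~\eqref{eq: n0_tiles}) produces a tile $W\in\X^{(n+1)-n_0(R)}$ whose neighborhood $U_{2w+1}(W)$ contains both $x$ and $y$, which forces $U_w(X)\cap U_w(Y)\neq\emptyset$ for suitable tiles $X,Y\in\X^{(n+1)-n_0(R)}$ containing $x,y$, hence
\[
m_w(x,y)\ \ge\ n+1-n_0(R).
\]

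\textbf{Main estimate.} Now apply the dynamical inequality~\eqref{eq:dynmw}: for the given $n$,
\[
m_w(g^n(x),g^n(y))\ \ge\ m_w(x,y)-n\ \ge\ 1-n_0(R).
\]
If $g^n(x)=g^n(y)$ the left side of~\eqref{eq:equicont} is zero and we are done, so assume they are distinct and write $m'\coloneqq m_w(g^n(x),g^n(y))\in\N_0$; thus $m'\ge\max\{0,\,m-n\}$ and in particular $m'\ge m-n$. By Lemma~\ref{lem:qv_metric} applied in the target,
\[
d(g^n(x),g^n(y))\ \asymp\ \diam(V^{m'})
\]
for any $m'$-tile $V^{m'}$ containing $g^n(x)$. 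The plan is to bound $\diam(V^{m'})$ from above using~\eqref{eq:qv_approx4p} from $S=\X^0$ (or from any fixed early level), giving $\diam(V^{m'})\lesssim\rho^{\,m'}$, and since $m'\ge m-n$ and $\rho\in(0,1)$ this yields $d(g^n(x),g^n(y))\lesssim\rho^{\,m-n}=\tau^{\nu(m-n)}$. Here it is convenient to note $m'$ could exceed $m-n$, but monotonicity of $\rho^{(\cdot)}$ lets us replace $m'$ by the smaller quantity $m-n$ at the cost of absorbing a constant; one must be slightly careful when $m-n<0$, in which case $\rho^{m-n}\ge 1$ and the bound is trivial after enlarging $C$.

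\textbf{Relating $\tau^{m}$ to the ratio $d(x,y)/\diam(Z^{n+1})$.} It remains to convert $\tau^{\nu(m-n)}$ into the right-hand side of~\eqref{eq:equicont}. By Lemma~\ref{lem:qv_metric} in the source, $d(x,y)\asymp\diam(X^m)$ for any $m$-tile $X^m$ containing $x$; and since $m\ge n+1-n_0(R)$, we may pick a chain of tiles from level $n+1$ down to level $m$ containing $x$ and use~\eqref{eq:qv_approx8} (the lower bound $\diam(X^m)\ge\tau^{\,m-(n+1)}\diam(X^{n+1})$) together with $\diam(X^{n+1})\asymp\diam(Z^{n+1})$ (from Lemma~\ref{lem:metricball}, since $X^{n+1}$ meets $B(z_0,R\diam(Z^{n+1}))$). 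This gives
\[
\frac{d(x,y)}{\diam(Z^{n+1})}\ \gtrsim\ \tau^{\,m-n-1}\ \asymp\ \tau^{\,m-n},
\]
hence $\tau^{\,m-n}\lesssim d(x,y)/\diam(Z^{n+1})$, and therefore
\[
\tau^{\nu(m-n)}\ =\ \bigl(\tau^{\,m-n}\bigr)^{\nu}\ \lesssim\ \left(\frac{d(x,y)}{\diam(Z^{n+1})}\right)^{\!\nu}.
\]
Combining with the bound $d(g^n(x),g^n(y))\lesssim\tau^{\nu(m-n)}$ from the previous paragraph gives~\eqref{eq:equicont} with a constant $C$ depending only on $R$ (through $n_0(R)$ and the comparability constants of Lemmas~\ref{lem:qv_metric} and~\ref{lem:metricball}) and on the ambient parameters of the quasi-visual approximation. \emph{The main obstacle} I anticipate is the bookkeeping around the two-sided passage through Lemma~\ref{lem:metricball}: one needs $m\ge n+1-n_0(R)$ to get started, and one must make sure the exponent that finally appears is genuinely $\nu$ (not $1$), which is exactly what forces the use of the lower bound~\eqref{eq:qv_approx8} with base $\tau$ on the source side and the upper bound~\eqref{eq:qv_approx4p} with base $\rho=\tau^\nu$ on the target side; matching the two correctly is where the constant $\nu$ is produced.
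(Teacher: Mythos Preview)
Your argument is essentially correct and follows the same route as the paper: compare $d(x,y)$ to $\diam(X^m)$ via Lemma~\ref{lem:qv_metric}, use~\eqref{eq:qv_approx8} on the source side, push the level through~\eqref{eq:dynmw}, bound the target via~\eqref{eq:qv_approx4p}, and swap $\diam(X^{n+1})$ for $\diam(Z^{n+1})$ with Lemma~\ref{lem:metricball} at the end.

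There is one redundancy and one small gap. The preliminary step---opening up the proof of Lemma~\ref{lem:metricball} to extract $m\ge n+1-n_0(R)$---is unnecessary. The paper simply splits into $m>n+1$ and $m\le n+1$: in the second case $d(x,y)/\diam(X^{n+1})\asymp\diam(X^m)/\diam(X^{n+1})\gtrsim(1/\rho)^{\,n+1-m}\ge 1$ by~\eqref{eq:qv_approx4p}, while $d(g^n(x),g^n(y))\le\diam(S)\asymp 1$, so the estimate is immediate. Your attempt to merge the cases does not quite succeed, because the inequality $\diam(X^m)\ge\tau^{\,m-(n+1)}\diam(X^{n+1})$ you quote from~\eqref{eq:qv_approx8} is only valid for $m\ge n+1$; when $m<n+1$ the exponent is negative and~\eqref{eq:qv_approx8} goes the other way. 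For the residual range $n+1-n_0(R)\le m\le n$ you therefore still need the trivial argument above, so the case split is not actually avoided and the preliminary lower bound on $m$ buys you nothing.
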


Note that \eqref{eq:equicont} implies, in particular, that the map $g$ is continuous on $S$. 

\begin{proof} Let $n$, $Z^{n+1}$, $z_0$, $x$, and $y$ be as in the statement, and define $m\coloneqq m_w(x,y)$. We may assume that $g^n(x)\ne g^n(y)$, which in turn implies $x\ne y$.  Then $m\in \N_0$ by Lemma~\ref{lem:mxy_qv}. We choose tiles $X^{n+1}\in \X^{n+1}$ and $X^m\in \X^{m}$ with $x\in X^{n+1}\cap X^{m}$ (if $m=n+1$, we have $X^m=X^{n+1}$). 

Suppose first that $m> n+1$.
Then by 
Lemma~\ref{lem:qv_metric} and \eqref{eq:qv_approx8} we have 
\begin{equation}\label{eq:xydist}
  d(x,y)\asymp \diam(X^m)\geq \tau^{m-n-1} \diam(X^{n+1}).    
\end{equation}
We consider now the image points $g^n(x)$ and $g^n(y)$.  By \eqref{eq:dynmw}, we have \[k\coloneqq m_w(g^n(x), g^n(y)) \ge m-n.\]  
We choose a tile $Y^{k}\in \X^{k}$ with $g^n(x)\in Y^{k}$. 
Lemma~\ref{lem:qv_metric} and  \eqref{eq:qv_approx4p} imply that 
\[
d(g^n(x), g^n(y))\asymp \diam(Y^{k})\lesssim 
\rho^{k}\diam(S)\lesssim \rho^{m-n}.
\]
 Putting this together with \eqref{eq:def_nu} and \eqref{eq:xydist}, we see that 
\begin{equation}\label{eq:distortion1}
    d(g^n(x), g^n(y))\lesssim \rho^{m-n} =\tau^{\nu(m-n)}\lesssim
\biggl(\frac{d(x,y)}{\diam(X^{n+1})}\biggr)^{\nu}. 
\end{equation}

Suppose now that $m\leq n+1$. Using again Lemma~\ref{lem:qv_metric} and  \eqref{eq:qv_approx4p} we obtain
\[\frac{d(x,y)}{ \diam(X^{n+1})}\asymp \frac{\diam(X^m)}{ \diam(X^{n+1})}\gtrsim (1/\rho)^{n+1-m} \geq 1,\]
while 
\[d(g^n(x), g^n(y))\le \diam(S)\asymp 1.\] 
It follows that the estimate~\eqref{eq:distortion1} still holds in this case (possibly after adjusting the uniform multiplicative constant). 

We have shown that for a uniform constant $C_0\ge 1$ (independent of the items specified at the beginning of the proof) we have 
\[
d(g^n(x), g^n(y))\le C_0\cdot \biggl(\frac{d(x,y)}{\diam(X^{n+1})}\biggr)^\nu.
\]

It remains to replace $\diam(X^{n+1})$ with $\diam(Z^{n+1})$ in the above inequality at the expense of a multiplicative constant that may depend on $R$. By the standing assumption, we have
\[x\in X^{n+1}\cap B(z_0, R\diam(Z^{n+1})),\]
that is,
the tile $X^{n+1}$ meets the ball $B(z_0, R\diam(Z^{n+1}))$. 
By Lemma~\ref{lem:metricball} we then have  $\diam(Z^{n+1})\asymp \diam(X^{n+1})$ with $C(\asymp)=C(R)$. The statement follows. \end{proof}

\section{Quasi-visual approximations and quasisymmetries}
\label{sec:quasi-visu-appr-1}

One of our principal reasons to consider quasi-visual
approximations is that they are closely related to
quasisymmetries. The following statement is the main result in
this respect.

\begin{theorem}
  \label{thm:qv-qs} 
  Let $(S,d_1)$ be a bounded metric space with a quasi-visual
  approximation $\{\X^n\}$ of width $w\in \N_0$. Suppose $d_2$ is
  another metric on $S$ such that $(S,d_2)$ is bounded. Then 
    $\id_S\colon (S,d_1) \to (S,d_2)$ is a quasisymmetry
   if and only if
    $\{\X^n\}$ is a quasi-visual approximation of $(S,d_2)$.
\end{theorem}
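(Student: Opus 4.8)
The plan is to reduce everything to the metric description of $d_1$ provided by Lemma~\ref{lem:qv_metric}: for distinct $x,y\in S$ one has $d_1(x,y)\asymp \diam_{d_1}(X^m)$, where $m=m_w(x,y)$ is the $U_w$-proximity function of $\{\X^n\}$ (which is a purely combinatorial object, independent of the metric) and $X^m$ is any $m$-tile containing $x$. The forward direction ``$\id_S$ is a quasisymmetry $\Rightarrow$ $\{\X^n\}$ is quasi-visual for $d_2$'' should be the easier one. The key observation is that each of the four conditions in Definition~\ref{def:qv_approx} can be rephrased so that only \emph{ratios} of diameters of tiles (in conditions~\ref{item:qv_approx1}, \ref{item:qv_approx3}, \ref{item:qv_approx4}) and ratios of distance to diameter (in condition~\ref{item:qv_approx2}) appear. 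A quasisymmetry controls precisely such relative quantities: for \ref{item:qv_approx1} and \ref{item:qv_approx3} I would pick points $a,b\in X$ and $a',b'\in Y$ realizing the diameters of $X$ and $Y$ up to a fixed factor, use the common intersection point together with the quasisymmetry inequality (and Lemma~\ref{lem:qs_diam}) to transfer $\diam_{d_1}(X)\asymp\diam_{d_1}(Y)$ to the $d_2$-side; for \ref{item:qv_approx4} the same argument gives $\diam_{d_2}(Y)\le \eta(\lambda)\cdot(\text{stuff})$, but since $\eta(\lambda)$ need not be $<1$ I would instead invoke the equivalent form \eqref{eq:qv_approx4p} together with the fact that $\id_S$ and its inverse are both quasisymmetries to get genuine exponential decay after passing to a larger $k_0$; for \ref{item:qv_approx2}, separation $\dist_{d_1}(X,Y)\gtrsim\diam_{d_1}(X)$ transfers because a quasisymmetry comparing the triple (a point of $X$, a point of $Y$ realizing the distance, a point of $X$ realizing the diameter) keeps the ratio bounded away from zero. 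Boundedness of $(S,d_2)$ is given, so there is nothing to check there.

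For the reverse direction ``$\{\X^n\}$ is quasi-visual for both $d_1$ and $d_2$ $\Rightarrow$ $\id_S$ is a quasisymmetry'', the strategy is to exploit that the \emph{same} combinatorial data $\{\X^n\}$ and the \emph{same} width $w$ (hence the same function $m_w$) describes both metrics via Lemma~\ref{lem:qv_metric}. Given three distinct points $x,y,z\in S$, set $m=m_w(x,y)$ and $p=m_w(x,z)$; then
\[
\frac{d_2(x,y)}{d_2(x,z)}\asymp\frac{\diam_{d_2}(X^m)}{\diam_{d_2}(X^p)}
\quad\text{and}\quad
\frac{d_1(x,y)}{d_1(x,z)}\asymp\frac{\diam_{d_1}(X^m)}{\diam_{d_1}(X^p)},
\]
for appropriate tiles $X^m,X^p$ containing $x$. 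The problem is thus to bound the left-hand ratio of $d_2$-diameters in terms of the right-hand ratio of $d_1$-diameters, i.e.\ to produce a distortion function $\eta$. Here I would use the two-sided exponential control on diameters of nested tiles: \eqref{eq:qv_approx4p} and \eqref{eq:qv_approx8} for $d_1$ give $\tau_1^{p-m}\lesssim \diam_{d_1}(X^p)/\diam_{d_1}(X^m)\lesssim\rho_1^{p-m}$ when $p\ge m$ (and symmetrically when $p\le m$, using a chain of tiles through $x$), and analogously for $d_2$. Combining these, the ratio $d_1(x,y)/d_1(x,z)$ determines $p-m$ up to an additive constant (on a logarithmic scale), and then $d_2(x,y)/d_2(x,z)$ is controlled by that same integer up to multiplicative constants — this produces a power-type distortion function $\eta(t)\asymp \max\{t^{\alpha},t^{\beta}\}$ for suitable exponents depending on the parameters $\tau_i,\rho_i$. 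One has to be a little careful when $m$ or $p$ is small (bounded by the constant implicit in condition~\ref{item:qv_approx3}), but in that regime all relevant diameters are comparable to $\diam(S)$ in both metrics and the estimate is trivial.

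The main obstacle I anticipate is the reverse direction, specifically making the passage ``$d_1$-ratio controls the integer $p-m$, which controls the $d_2$-ratio'' uniform and clean — one must handle the three orderings of $m,p$ relative to each other and relative to the index where a common tile through $x$ first splits them, and one must track the chains of intersecting tiles through $x$ needed to apply \eqref{eq:qv_approx4p} and \eqref{eq:qv_approx8} in both directions. A secondary technical point is that $m_w(x,y)$ as defined uses tiles containing $x$ and tiles containing $y$ with touching $w$-neighborhoods, so to compare $\diam_{d_i}(X^m)$ for $x\in X^m$ with $\diam_{d_i}(X^p)$ for $x\in X^p$ one should fix a single nested-in-location chain of tiles $X^0\supset\!\!\!\!\!\!\!\phantom{.}$ (in the sense of containing $x$) $X^1,\dots$ through $x$ — or rather a sequence of tiles $X^n\ni x$ — and note that consecutive ones intersect at $x$, so conditions \ref{item:qv_approx3}, \eqref{eq:qv_approx4p}, \eqref{eq:qv_approx8} apply along it; this is exactly the mechanism already used in the proof of Lemma~\ref{lem:sub_shrink}, so I would mirror that argument.
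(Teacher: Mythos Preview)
Your proposal is correct and mirrors the paper's proof closely: the forward direction is exactly Lemma~\ref{lem:qv_qs_inv} (verify each axiom via the quasisymmetry and Lemma~\ref{lem:qs_diam}, handling \ref{item:qv_approx4} by iterating to a larger $k_0$ so that $\eta(3\lambda^{\ell_0})<1$), and the reverse direction is Lemma~\ref{lem:qv_qs} (use $m_w$, Lemma~\ref{lem:qv_metric}, and the two-sided bounds \eqref{eq:qv_approx4p}, \eqref{eq:qv_approx8} along a chain of tiles through $x$ to produce a power-type $\eta$). One point you gloss over: in the reverse direction the hypothesis only says $\{\X^n\}$ is quasi-visual for $(S,d_2)$ of \emph{some} width $w'$, so before invoking ``the same $m_w$'' you must pass to width $\max\{w,w'\}$ as the paper does; once that is done, your anticipated ``three orderings'' and ``splitting index'' worries evaporate---only the two cases $m\le p$ and $m>p$ are needed, and no separate treatment of small $m,p$ is required.
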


An analogous statement has already appeared in \cite{BM22} (which goes
back to the proof of \cite[Lemma~18.10]{BM} as well as
\cite[Theorem~4.2]{Me02}). 
Since our definition of a quasi-visual approximation here is more general, and in order to make the exposition self-contained, we provide a full proof.

In this section, we henceforth assume the hypotheses of Theorem~\ref{thm:qv-qs}---namely, that $(S,d_1)$ is a bounded metric space with
a quasi-visual approximation $\{\X^n\}$ of width $w\in \N_0$, and that
$d_2$ is another metric on $S$ such that $(S,d_2)$ is bounded. 

Metric notions with respect to $d_i$ will be denoted with a
subscript $i$ for $i=1,2$. For example, the diameter in $(S,d_1)$
is denoted by $\diam_1$, the diameter in $(S,d_2)$ by $\diam_2$,
and so on.

We first show the ``$\Rightarrow$'' implication in
Theorem~\ref{thm:qv-qs}. 

\begin{lemma}
  \label{lem:qv_qs_inv}
  Suppose $\id_S\colon(S,d_1) \to (S,d_2)$ is a quasisymmetry. Then
  $\{\X^n\}$ is also a quasi-visual approximation  of
  width $w$ for $(S,d_2)$. 
\end{lemma}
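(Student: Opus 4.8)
The plan is to verify each of the four conditions in Definition~\ref{def:qv_approx} for $(S,d_2)$ directly, using that the quasisymmetry $\id_S\colon (S,d_1)\to(S,d_2)$ preserves ratios of distances with distortion controlled by the homeomorphism $\eta$, and crucially that it preserves the underlying combinatorics $U_w(X)$ (since these are defined purely in terms of the sets $\X^n$ and intersections, with no reference to any metric). So $\X^0=\{S\}$ is unchanged, and the combinatorial hypotheses ``$X\cap Y\ne\emptyset$'' and ``$U_w(X)\cap U_w(Y)=\emptyset$'' appearing in the conditions are metric-independent.

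For conditions \ref{item:qv_approx1} and \ref{item:qv_approx3}, which assert $\diam(X)\asymp\diam(Y)$ for intersecting tiles $X,Y$ on the same level or on consecutive levels, the key tool is Lemma~\ref{lem:qs_diam}: if I pick points realizing (up to a fixed factor) the $d_1$-diameter of a set, their $d_2$-distance is comparable to the $d_2$-diameter of that set. So first I would fix, for each tile $X$, points $x_X,x_X'\in X$ with $d_1(x_X,x_X')\asymp\diam_1(X)$; then for intersecting $X,Y$ pick a common point $p\in X\cap Y$ and, using the $\eta$-quasisymmetry with center $p$ (or rather comparing via a chain of ratios through $p$), transfer the comparability $\diam_1(X)\asymp\diam_1(Y)$ to $\diam_2(X)\asymp\diam_2(Y)$. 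Condition \ref{item:qv_approx4} follows the same way: fix $k_0$ and $\lambda$ for $d_1$, and for $X\in\X^n$, $Y\in\X^{n+k_0}$ with $X\cap Y\ne\emptyset$ one has $\diam_1(Y)\le\lambda\diam_1(X)$; since $\diam_2(Y)/\diam_2(X)$ is controlled (via $\eta$ and Lemma~\ref{lem:qs_diam}) by a function of $\diam_1(Y)/\diam_1(X)\le\lambda<1$, after possibly enlarging $k_0$ using the equivalent form \eqref{eq:qv_approx4p} (Lemma~\ref{lem:sub_shrink}) one gets a new constant $\lambda'\in(0,1)$. Here I would use that iterating $k_0$-steps gives $\diam_1(Y)\le C\rho^k\diam_1(X)$, apply the quasisymmetry to each ratio, and then re-extract a ratio below $1$ by taking $k$ large.

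For condition~\ref{item:qv_approx2}, the separation condition: suppose $U_w(X)\cap U_w(Y)=\emptyset$ for $X,Y\in\X^n$. From the $d_1$-approximation, $\dist_1(X,Y)\gtrsim\diam_1(X)$. Take points $a\in X$, $b\in Y$ with $d_1(a,b)$ close to $\dist_1(X,Y)$, and points $x_X,x_X'\in X$ realizing $\diam_1(X)$. Then $d_1(a,b)\gtrsim d_1(x_X,x_X')$, so applying the quasisymmetry (with an appropriate choice of base point among $a,x_X,x_X'$ and using the triangle inequality / the standard trick that a quasisymmetry with $d_1(x,y)\gtrsim d_1(x,z)$ gives $d_2(x,y)\gtrsim d_2(x,z)$) yields $d_2(a,b)\gtrsim\diam_2(X)$, and then $\dist_2(X,Y)$ is comparable to $d_2(a,b)$ up to the additive ``play'' inside $X$ and $Y$, which is itself bounded by $\diam_2(X)$. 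This last point requires a tiny bit of care, since $\dist_2(X,Y)$ could a priori be much smaller than $d_2(a,b)$; but $|d_2(a',b')-d_2(a,b)|\le\diam_2(X)+\diam_2(Y)\asymp\diam_2(X)$ for any $a'\in X$, $b'\in Y$ (using \ref{item:qv_approx1} already established to get $\diam_2(Y)\asymp\diam_2(X)$ — note $X$ and $Y$ need not intersect here, so one instead argues that either $\diam_2(Y)\lesssim\diam_2(X)$ directly or passes through a tile meeting the relevant ball via Lemma~\ref{lem:metricball} applied to $d_2$ — actually cleanest is: we only need the lower bound $\dist_2(X,Y)\gtrsim\diam_2(X)$, and if $\dist_2(X,Y)\ge\frac12 d_2(a,b)$ we are done, while if not, then $\diam_2(X)+\diam_2(Y)\ge\frac12 d_2(a,b)\gtrsim\diam_2(X)$ forces $\diam_2(Y)\gtrsim\diam_2(X)$, which combined with $\dist_2(X,Y)\ge$ something still gives the bound after a short case analysis).

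The main obstacle I expect is precisely this bookkeeping in condition~\ref{item:qv_approx2}: quasisymmetries control ratios of distances emanating from a common point, but $\dist_2(X,Y)$ is an infimum over \emph{all} pairs of points, so one must carefully choose near-optimal points and use the diameter comparabilities (already proven for $d_2$) to absorb the difference between ``the distance between two specific points'' and ``the distance between the sets.'' The standard resolution is to note that for any $a'\in X$, $a\in X$ one has $d_1(a,a')\le\diam_1(X)\lesssim d_1(a,b)$, whence by the quasisymmetry $d_2(a,a')\lesssim d_2(a,b)$, so $d_2(a',b)\ge d_2(a,b)-d_2(a,a')\gtrsim d_2(a,b)$ once the implicit constant is small enough — and then iterate on the $Y$-side. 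Beyond this, everything is a routine application of the quasisymmetry inequality, Lemma~\ref{lem:qs_diam}, and Lemma~\ref{lem:sub_shrink}.
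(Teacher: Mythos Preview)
Your overall strategy matches the paper's: verify conditions \ref{item:qv_approx1}--\ref{item:qv_approx4} directly using Lemma~\ref{lem:qs_diam} and the fact that the combinatorics $U_w(X)$ are metric-independent. Your treatment of \ref{item:qv_approx1}, \ref{item:qv_approx3}, and \ref{item:qv_approx4} is essentially the paper's.

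There is a genuine (though easily repaired) gap in your handling of condition~\ref{item:qv_approx2}. By choosing $a\in X$, $b\in Y$ to be \emph{near-optimal for $d_1$}, you create the artificial problem of relating $d_2(a,b)$ to $\dist_2(X,Y)$, and your proposed subtraction ``$d_2(a',b)\ge d_2(a,b)-d_2(a,a')\gtrsim d_2(a,b)$ once the implicit constant is small enough'' does not go through: from $d_2(a,a')\lesssim d_2(a,b)$ you only get $d_2(a,a')\le C\,d_2(a,b)$ with $C$ determined by $\eta$ and the $d_1$-constants, and there is no reason $C<1$. The case analysis you sketch afterwards (and the appeal to Lemma~\ref{lem:metricball}, which would be circular since it presupposes the $d_2$-approximation) does not close this either.

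The paper sidesteps the issue entirely by taking \emph{arbitrary} $x\in X$, $y\in Y$ from the outset. For any such pair one still has $d_1(x,y)\ge\dist_1(X,Y)\gtrsim\diam_1(X)$. Now pick $x'\in X$ with $d_1(x,x')\asymp\diam_1(X)$; then $d_1(x,x')\lesssim d_1(x,y)$, so the quasisymmetry with base point $x$ gives $d_2(x,x')\lesssim d_2(x,y)$, and Lemma~\ref{lem:qs_diam} gives $\diam_2(X)\asymp d_2(x,x')$. Hence $\diam_2(X)\lesssim d_2(x,y)$ for \emph{all} $x\in X$, $y\in Y$, and taking the infimum yields $\diam_2(X)\lesssim\dist_2(X,Y)$ directly---no subtraction or case analysis needed.
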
 

\begin{proof}
  We have to verify that $\{\X^n\}$ satisfies  conditions
  \ref{item:qv_approx1}--\ref{item:qv_approx4} in
  Definition~\ref{def:qv_approx} for $(S,d_2)$ with the same width $w$.
  
  \smallskip
  \ref{item:qv_approx1}
  Let $n\in \N_0$ and $X,Y\in \X^n$ with $X\cap Y\ne \emptyset $ be arbitrary. Then there are points $z\in X\cap Y$, 
  $x\in X$,  and $y\in Y$   with $d_1(x,z)\asymp \diam_1(X)$
  and $d_1(y,z) \asymp \diam_1(Y)$, where $C(\asymp)=3$. Since
  condition~\ref{item:qv_approx1} of 
  Definition~\ref{def:qv_approx} is satisfied for $d_1$, we know
  that
  \begin{equation*}
    d_1(x,z) \asymp \diam_1(X) \asymp \diam_1(Y)\asymp d_1(y,z).
  \end{equation*}
  The quasisymmetric equivalence of $d_1$ and $d_2$ together with
  Lemma~\ref{lem:qs_diam} now implies  that
  \begin{equation*}
    \diam_2(X)\asymp d_2(x,z) \asymp d_2(y,z) \asymp \diam_2(Y).
  \end{equation*}
  So $\{\X^n\}$ satisfies \ref{item:qv_approx1} in 
  Definition~\ref{def:qv_approx}
  for $(S,d_2)$. 

  \smallskip
  \ref{item:qv_approx3}
  The verification of this condition  is very similar to the one  for
  \ref{item:qv_approx1}, and so we will skip the details.  

  \smallskip 
  \ref{item:qv_approx2}
  Let $n\in \N_0$ and $X,Y\in \X^n$  with $U_w(X)\cap U_w(Y)=
  \emptyset$, as well as $x\in X$ and $y\in Y$ be arbitrary. 
  We can choose  $x'\in X$ with
  $d_1(x,x') \asymp \diam_1(X)$, where $C(\asymp)=3$. Since
  \ref{item:qv_approx2} holds for $d_1$, we know that
  \begin{equation*}
    d_1(x,x') 
    \leq 
    \diam_1(X) 
    \lesssim
    \dist_1(X,Y) 
    \leq d_1(x,y).
  \end{equation*}
  The quasisymmetric equivalence of $d_1$ and $d_2$ together with Lemma~\ref{lem:qs_diam} now implies
  \begin{equation*}
    \diam_2(X) \asymp d_2(x,x') \lesssim d_2(x,y).
  \end{equation*}
  Taking the infimum over all $x\in X$ and $y\in Y$, we obtain
  $$\diam_2(X) \lesssim \dist_2(X,Y).$$ This  is condition~\ref{item:qv_approx2} for $d_2$.  

  \smallskip
  \ref{item:qv_approx4}
  Let $\lambda\in (0,1)$ and $k_0\in \N$ be  constants as in 
  condition~\ref{item:qv_approx4} of
  Definition~\ref{def:qv_approx}
    for  $(S,d_1)$.  Suppose $n\in \N_0$, $\ell\in \N$, $X\in \X^n$ and $Y\in \X^{n+\ell k_0}$ with $X \cap Y
    \ne \emptyset$ are arbitrary.
    Then we can find points  $x\in  X \cap Y $,  
    $x'\in X$ with $\diam_1(X)\le 3 d_1(x,x')$, and  
   $y\in Y$ with $\diam_2(Y)\le 3 d_2(x,y)$.
    
  Choosing intermediate tiles of levels $n+k_0, \dots, n+(\ell-1)k_0$ containing $x$  and applying \ref{item:qv_approx4} in 
  Definition~\ref{def:qv_approx} repeatedly for $(S,d_1)$, we obtain
  \[
    d_1(x,y) \leq \diam_1(Y)
    \le \la^\ell \diam_1(X)\le 3 \la^\ell d_1(x,x').
    \]
Now  $\id_S\colon (S,d_1) \to (S,d_2)$ is an $\eta$-quasisymmetry for some homeomorphism $\eta\:[0,\infty)\ra [0,\infty)$. Then it follows that 
\[
\frac13 \diam_2(Y)\le d_2(x,y)\le \eta(3 \la^\ell)d_2(x,x')\le \eta(3 \la^\ell)
\diam_2(X), 
\]
and so
\[
\diam_2(Y)\le 3 \eta(3 \la^\ell)\diam_2(X).
\]
  Since $\eta$ is a
  homeomorphism on $[0,\infty)$ and $\lambda\in (0,1)$, there exists $\ell_0\in \N$ such that
  $\widetilde{\lambda}\coloneqq 2 \eta (3\la^{\ell_0}) <1$. With this choice of $\widetilde{\lambda}\in (0,1)$  and  $\widetilde{k}_0 \coloneqq \ell_0 k_0\in \N$,  it  follows 
  that
 condition~\ref{item:qv_approx4} in 
 Definition~\ref{def:qv_approx} holds for
  $(S,d_2)$. 
\end{proof}

We now prove the ``$\Leftarrow$'' implication in
Theorem~\ref{thm:qv-qs}. 

\begin{lemma}
  \label{lem:qv_qs}
  Assume that $\{\X^n\}$ is a quasi-visual
  approximation of $(S,d_2)$ of width $w'\in \N_0$. Then the
  identity map $\id_S\colon(S,d_1) \to (S,d_2)$ is a quasisymmetry.
\end{lemma}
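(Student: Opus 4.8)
The plan is to route everything through the combinatorial proximity function and to use Lemma~\ref{lem:qv_metric} to convert the metric ratios on both sides into ratios of tile diameters, which are in turn governed by the level difference of the relevant tiles. First I would observe that condition~\ref{item:qv_approx2} in Definition~\ref{def:qv_approx} only becomes weaker when the width is increased (since $U_w(X)\subset U_{w''}(X)$ for $w''\ge w$), while conditions \ref{item:qv_approx1}, \ref{item:qv_approx3}, and \ref{item:qv_approx4} do not involve the width at all; hence a quasi-visual approximation of width $w$ is automatically one of every width $w''\ge w$. Setting $w''\coloneqq\max\{w,w'\}$, I may therefore assume that $\{\X^n\}$ is a quasi-visual approximation of width $w''$ for both $(S,d_1)$ and $(S,d_2)$. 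Let $m\coloneqq m_{w''}$ be the associated $U_{w''}$-proximity function; by Lemma~\ref{lem:mxy_qv} it is finite off the diagonal, and by Lemma~\ref{lem:qv_metric} there are uniform constants such that
\[
  d_i(x,y)\asymp\diam_i\bigl(X^{m(x,y)}\bigr)\qquad(i=1,2)
\]
for all distinct $x,y\in S$, where $X^{m(x,y)}$ is any $m(x,y)$-tile containing $x$. I also fix, for $i=1,2$, constants $C_i\ge 1$ and $0<\tau_i\le\rho_i<1$ as in \eqref{eq:qv_approx8} and \eqref{eq:qv_approx4p} so that $C_i^{-1}\tau_i^{k}\diam_i(X)\le\diam_i(Y)\le C_i\rho_i^{k}\diam_i(X)$ whenever $X\in\X^n$, $Y\in\X^{n+k}$, $k\ge 0$, and $X\cap Y\ne\emptyset$ (for $k=0$ this is condition~\ref{item:qv_approx1}).

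Next, fix $x,y,z\in S$ with $x\notin\{y,z\}$ (the case $y=x$ is trivial), put $m_1\coloneqq m(x,y)$, $m_2\coloneqq m(x,z)$, $k\coloneqq|m_1-m_2|$, and pick an $m_1$-tile $X_1\ni x$ and an $m_2$-tile $X_2\ni x$, so that $x\in X_1\cap X_2$. Applying the diameter estimate above to the pair $X_1,X_2$ (listing the tile of higher level second) and combining with the displayed comparisons, I obtain a uniform $B\ge 1$ such that, writing $t\coloneqq d_1(x,y)/d_1(x,z)$,
\[
  B^{-1}\tau_i^{k}\le\frac{d_i(x,y)}{d_i(x,z)}\le B\rho_i^{k}\ \ (m_1\ge m_2),\qquad
  B^{-1}\rho_i^{-k}\le\frac{d_i(x,y)}{d_i(x,z)}\le B\tau_i^{-k}\ \ (m_1<m_2)
\]
for $i=1,2$. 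From the $i=1$ inequalities: if $m_1\ge m_2$ then $t\le B$, and $t<1/B$ forces $k\ge\log(1/(Bt))/\log(1/\tau_1)$; if $m_1<m_2$ then automatically $t>1/B$ and $k\le\log(Bt)/\log(1/\rho_1)$. Feeding these bounds on $k$ into the $i=2$ inequalities and abbreviating $\beta\coloneqq\log(1/\rho_2)/\log(1/\tau_1)>0$ and $\gamma\coloneqq\log(1/\tau_2)/\log(1/\rho_1)>0$, a short computation yields
\[
  \frac{d_2(x,y)}{d_2(x,z)}\le B^{1+\beta}t^{\beta}\ \ (t\le 1/B),\qquad
  \frac{d_2(x,y)}{d_2(x,z)}\le B^{1+\gamma}t^{\gamma}\ \ (t\ge 1/B),
\]
where for $t\le 1/B$ only the case $m_1\ge m_2$ can occur, so the first bound applies.

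Finally, I would define $\eta\colon[0,\infty)\to[0,\infty)$ by $\eta(t)\coloneqq B^{1+\beta}t^{\beta}$ for $0\le t\le 1/B$ and $\eta(t)\coloneqq B^{1+\gamma}t^{\gamma}$ for $t\ge 1/B$. The two formulas agree at $t=1/B$ (with common value $B$), so $\eta$ is a well-defined, continuous, strictly increasing homeomorphism of $[0,\infty)$, and the two displayed bounds say precisely that $d_2(x,y)/d_2(x,z)\le\eta\bigl(d_1(x,y)/d_1(x,z)\bigr)$ for all admissible triples $x,y,z$. Hence $\id_S\colon(S,d_1)\to(S,d_2)$ is an $\eta$-quasisymmetry, which proves the lemma and, together with Lemma~\ref{lem:qv_qs_inv}, completes the proof of Theorem~\ref{thm:qv-qs}. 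The only real obstacle is bookkeeping: one has to organize the two-sided estimates over the cases $m_1\ge m_2$ and $m_1<m_2$ so that they match up with the ranges $t\le 1/B$ and $t\ge 1/B$ and merge into a single homeomorphism $\eta$ (in general $\beta\ne\gamma$, so $\eta$ genuinely has two power regimes). The underlying mechanism—proximity function $\leftrightarrow$ tile diameters $\leftrightarrow$ level difference—is robust and causes no difficulty.
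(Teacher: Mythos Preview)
Your proof is correct and follows essentially the same route as the paper's: reduce to a common width, use Lemma~\ref{lem:qv_metric} to replace both metric ratios by ratios of tile diameters at the proximity levels, then convert via \eqref{eq:qv_approx4p} and \eqref{eq:qv_approx8} into exponential bounds in the level difference $k=|m(x,y)-m(x,z)|$, and split into the two cases $m(x,y)\ge m(x,z)$ and $m(x,y)<m(x,z)$ to produce a power-type distortion function. The only differences are cosmetic: the paper takes common constants $\rho,\tau$ for both metrics and packages the result as $\eta(t)=K\max\{t^{\nu},t^{1/\nu}\}$ with $\nu=\log(1/\rho)/\log(1/\tau)$, whereas you keep separate constants $\rho_i,\tau_i$ and build $\eta$ piecewise with exponents $\beta=\log(1/\rho_2)/\log(1/\tau_1)$ and $\gamma=\log(1/\tau_2)/\log(1/\rho_1)$.
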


\begin{proof}
Our hypotheses imply that  $\{\X^n\}$ is a quasi-visual approximation of width $\max\{w,w'\}$ for both $(S, d_1)$ and $(S, d_2)$. Hence, without loss of generality, we may assume that $w'=w$. 

  We need to find a homeomorphism $\eta\colon [0,\infty) \ra
  [0,\infty)$ such that for all  $t>0$ and $x,y,z\in S$ the following implication holds:
  \begin{equation} \label{eq:qsimpl}
     d_1(x,y) \leq t d_1(x,z)
    \Rightarrow
    d_2(x,y) \leq \eta(t) d_2(x,z).
  \end{equation}
  
 So let $t>0$ and $x,y,z\in S$ with $d_1(x,y) \leq t d_1(x,z)$
  be arbitrary. We may assume that $x\neq y$. Then also $x\ne z$.
  
  In the following, we denote by $X^k$ $k$-tiles that
  contain the point $x$ for various $k\in\N_0$.
 
  By condition~\eqref{eq:qv_approx4p} in
  Lemma~\ref{lem:sub_shrink} and by \eqref{eq:qv_approx8} 
  there are constants $\rho,\tau \in (0,1)$ and $C\ge 1$ independent
  of $t,x,y,z$ such that
  \begin{equation}
    \label{eq:quantver}
    \diam_i(X^{k+\ell}) 
    \le
    C \rho^\ell \diam_i(X^{k})
 \end{equation}
 and 
   \begin{equation}
    \label{eq:quantvertau}
    \diam_i(X^{k+\ell}) 
    \ge
     \tau^\ell \diam_i(X^{k})
 \end{equation} for all $k,\ell\in \N_0$ and $i\in \{1,2\}$. 
 Note that $\ell=0$ is allowed here, because in this case
 \eqref{eq:quantver} and  \eqref{eq:quantvertau} are
 trivial.

Now let $m\coloneqq m_w(x,y)\in \N_0$ and
  $n \coloneqq m_w(x,z)\in \N_0$, where $m_w$ is the $U_w$-proximity function with respect to $\{\X^n\}$ and width $w=w'$ as  in Definition~\ref{def:mxy}. 
    By Lemma~\ref{lem:qv_metric} there exists a constant 
  $C(\asymp)>0$ independent of $x,y,z$ such that
  \begin{equation}\label{eq:Kimpl}
d_i(x,y) \asymp   \diam_i(X^m)  \text { and } 
d_i(x,z) \asymp   \diam_i(X^n) 
  \end{equation}
  for $i=1,2$.  This means that by  enlarging the original constant $C$ in \eqref{eq:quantver} if necessary (and thus avoiding introducing new constants) we may assume that 
   \begin{equation}\label{eq:Kimpl1}
\frac 1Cd_i(x,y) \le \diam_i(X^m)\le C  d_i(x,y) 
\end{equation}
and
\begin{equation}\label{eq:Kimpl2}
  \frac 1Cd_i(x,z)\le   \diam_i(X^n)\le C  d_i(x,z)
\end{equation}
for $i=1,2$. Together with our assumption $d_1(x,y) \leq t d_1(x,z)$,  this implies in
particular that
\begin{equation}
  \label{eq:diamXmXn}
  \diam_1(X^m) \leq t C^2  \diam_1(X^n).
\end{equation}

  The idea of the proof is now to translate the previous inequalities 
  for the metric $d_1$ to  a relation between  $m$, $n$, and  $t$. Using  this for the metric $d_2$, we will obtain  a bound for  $\diam_2(X^m)$ in terms of  $\diam_2(X^n)$. This will lead to the desired estimate for $d_2(x,y)/d_2(x,z)$ by \eqref{eq:Kimpl1} and   \eqref{eq:Kimpl2}. We consider two cases, where we will use the constant  $\nu = \log(1/\rho)/\log(1/\tau)\in
 (0,1]$ with $\rho= \tau^\nu$(see \eqref{eq:def_nu}).

  \smallskip
  \emph{Case 1:} $m\leq n$. Then by
  \eqref{eq:diamXmXn} and \eqref{eq:quantver} we
  have \begin{align*} \diam_1(X^m) &\leq tC^2 \diam_1(X^n) = tC^2
    \diam_1(X^{m+(n-m)})
         \\
                                   &\leq tC^3\rho^{n-m}
                                     \diam_1(X^m),
  \end{align*}
  and so  $t C^3\rho^{n-m}\geq 1$.  Now    \eqref{eq:def_nu} shows that
  \begin{equation}
     \label{eq:tau_est}
  \tau^{m-n}= \rho^{(m-n)/\nu} \leq (tC^3)^{1/\nu}=C^{3/\nu}\, t^{1/\nu}.
   \end{equation}
  It follows that
    \begin{align*}
      d_2(x,y)& \le C \diam_2(X^m)
      &&    \text{by \eqref{eq:Kimpl1}}\\
              &\leq C \tau^{-(n-m)}\diam_2(X^{m+(n-m)})
      &&\text{by \eqref{eq:quantvertau}}\\
              &=C \tau^{m-n}\diam_2(X^{n})
      && \\
              &\leq C^{1+3/\nu} \,t^{1/\nu} \diam_2(X^n)
      &&\text{by \eqref{eq:tau_est}} \\
              &\le C^{2+3/\nu} \,t^{1/\nu} d_2(x,z)
      &&\text{by \eqref{eq:Kimpl2}}\\
              &= \eta_1(t) d_2(x,z), &&
     \end{align*}
where we set $\eta_1(t)\coloneqq C_1 \,t^{1/\nu}$ with $C_1 = C^{2+3/\nu}$. Clearly $\eta_1\:[0,\infty)\ra [0,\infty)$ is a
homeomorphism. 
  
   \smallskip
  \emph{Case 2:} $m> n$. 
 Then  by  \eqref{eq:diamXmXn} and  \eqref{eq:quantvertau} we have 
   \begin{align*}
   tC^2 \diam_1(X^n)  &\geq 
    \diam_1(X^m) = \diam_1(X^{n+(m-n)}) \\
    &\geq 
   \tau^{m-n} \diam_1(X^n), 
  \end{align*}
  and so $tC^2\ge \tau^{m-n}$.  Now again \eqref{eq:def_nu}
 shows that 
\begin{equation} \label{eq:rho_est}
    \rho^{m-n}
    =
    \tau^{(m-n)\nu}
    \leq
    C^{2\nu} t^{\nu}.     
\end{equation} It follows  that 
 \begin{align*}
   d_2(x,y)& \le C \diam_2(X^m) =C  \diam_2(X^{n+(m-n)})
   &&\text{by \eqref{eq:Kimpl1}}
   \\
           &\leq  C^2\rho^{m-n}\diam_2(X^{n})
   &&\text{by  \eqref{eq:quantver}}
   \\
           &\le  C^{2+ 2\nu} \,t^{\nu} \diam_2(X^n)
   && \text{by \eqref{eq:rho_est}}\\
           &\le C^{3+2\nu} t^{\nu} d_2(x,z)
   && \text{by  \eqref{eq:Kimpl2}}\\
           &=\eta_2(t) d_2(x,z), &&
 \end{align*}
where we set $\eta_2(t)\coloneqq C_2 \,t^{\nu}$ with $C_2 =
C^{3+2\nu}$. Clearly, $\eta_2\:[0,\infty) \ra [0,\infty)$ is a
homeomorphism.

We now set
\begin{equation}
  \label{eq:eta_powerqs}
  \eta(t)
  \coloneq
  K \max\{t^\nu, t^{1/\nu}\}
  \geq
  \max\{\eta_1(t), \eta_2(t)\}
\end{equation}
for $t\ge 0$, where $K= \max\{C_1, C_2\}\geq 1$.  Then
$\eta\:[0,\infty)\ra [0, \infty)$ 
is a homeomorphism satisfying \eqref{eq:qsimpl}.
The statement follows.
\end{proof}

Quasisymmetries with a distortion function as in
\eqref{eq:eta_powerqs} are called \emph{power
  quasisymmetries}. Our proof shows that $\id_S\colon (S,d_1) \to
(S,d_2)$ in Lemma~\ref{lem:qv_qs} is in fact of this type. 
This is not a  surprise and follows from the general fact that quasisymmetries between uniformly perfect metric spaces are power quasisymmetries 
(see \cite[Theorem~11.3]{He} and the related Corollary~\ref{cor:qv-are-uni-perfect} below).

\begin{proof}[Proof of Theorem~\ref{thm:qv-qs}]
  The statement follows immediately from Lemmas~\ref{lem:qv_qs_inv} and~\ref{lem:qv_qs}. 
\end{proof}

To provide a reference for future applications,
we rephrase Theorem~\ref{thm:qv-qs} in the following more general, but essentially equivalent form.

\begin{cor}\label{cor:qv-qs}
 Let $\varphi\: S\ra T$ be a bijection between 
bounded metric spaces $(S,d)$ and $(T,\varrho)$,  the sequence $\{\X^n\}$ be a quasi-visual approximation of $(S,d)$, and $\{\mathbf{Y}^n\}$ be the image of $\{\X^n\}$ under $\varphi$, that is, \[
\mathbf{Y}^n\coloneqq \{ \varphi(X):X\in \X^n\} \text{ for } n\in \N_0.
\]
Then $\varphi$ is a quasisymmetry if and only if  $\{\mathbf{Y}^n\}$ is a quasi-visual 
approximation of $(T,\varrho)$.   
\end{cor}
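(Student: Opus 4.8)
The plan is to reduce the statement to Theorem~\ref{thm:qv-qs} by transporting the metric $\varrho$ from $T$ to $S$ along $\varphi$. I would define $d'\colon S\times S\to [0,\infty)$ by
\[
d'(x,y)\coloneqq \varrho(\varphi(x),\varphi(y))\quad\text{for } x,y\in S.
\]
Since $\varphi$ is a bijection, $d'$ is a metric on $S$ with $0<\diam_{d'}(S)=\diam_\varrho(T)<\infty$, so $(S,d')$ is bounded, and $\varphi\colon (S,d')\to (T,\varrho)$ is an isometry by construction. The first observation I would record is that the defining conditions \ref{item:qv_approx1}--\ref{item:qv_approx4} of Definition~\ref{def:qv_approx}, together with the width-$w$ tile neighborhoods $U_w(\,\cdot\,)$ from \eqref{eq:def_UwX^n}, are formulated purely in terms of set-theoretic intersections of tiles and the values of the ambient metric on diameters and mutual distances of subsets; all of these are preserved by the isometry $\varphi$, which carries each tile $X\in \X^n$ onto $\varphi(X)\in \mathbf{Y}^n$. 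Hence, for every $w\in \N_0$, the sequence $\{\X^n\}$ is a quasi-visual approximation of width $w$ for $(S,d')$ if and only if $\{\mathbf{Y}^n\}$ is one of width $w$ for $(T,\varrho)$.

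Next I would relate the quasisymmetry property of $\varphi$ to that of the identity. Writing $\varphi=\varphi\circ \id_S$ with $\id_S\colon (S,d)\to (S,d')$ and the second factor regarded as the isometry $\varphi\colon (S,d')\to (T,\varrho)$, and using that the class of quasisymmetries is closed under composition and under inversion, one sees that $\varphi\colon (S,d)\to (T,\varrho)$ is a quasisymmetry if and only if $\id_S\colon (S,d)\to (S,d')$ is a quasisymmetry.

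Finally I would invoke Theorem~\ref{thm:qv-qs} with $d_1=d$ and $d_2=d'$: since $\{\X^n\}$ is a quasi-visual approximation of $(S,d)$ and $(S,d')$ is bounded, $\id_S\colon (S,d)\to (S,d')$ is a quasisymmetry if and only if $\{\X^n\}$ is a quasi-visual approximation of $(S,d')$. Chaining the three equivalences gives the claim. There is no serious obstacle here; the one point deserving (routine) care is the verification that the combinatorial and metric data underlying a quasi-visual approximation---tile intersections, diameters, distances, and the width-$w$ neighborhoods---transfer verbatim along the isometry $\varphi$, which also shows that the width parameter is preserved.
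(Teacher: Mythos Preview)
Your proposal is correct and follows essentially the same route as the paper: define the pullback metric $d'=\varrho\circ(\varphi\times\varphi)$ on $S$, observe that $\varphi\colon(S,d')\to(T,\varrho)$ is an isometry so that $\{\X^n\}$ is quasi-visual for $(S,d')$ iff $\{\mathbf{Y}^n\}$ is for $(T,\varrho)$, and then chain this with Theorem~\ref{thm:qv-qs}. The paper's proof is the same argument, stated a bit more tersely.
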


\begin{proof} We define a new metric $d'$ on $S$ by setting
\[
d'(x,y)\coloneqq \varrho(\varphi(x), \varphi(y))
\]
for $x,y\in S$. Then the map $\varphi$ is an isometry 
between $(S,d')$ and $(T,\varrho)$.

This and Theorem~\ref{thm:qv-qs}  imply that $\varphi\:(S,d)\ra (T, \varrho)$ is a quasisymmetry if and only if 
the identity map $\id_S\: (S,d)\ra (S,d')$ is a quasisymmetry 
if and only if $\{\X^n\}$ a  quasi-visual approximation of
$(S,d')$ if and only if the image 
$\{\mathbf{Y}^n\}$ of $\{\X^n\}$ under $\varphi$ (as defined in the statement) is a  quasi-visual approximation of  $(T,\varrho)$. The statement follows. 
\end{proof}

The map $\id_S \colon (S,d_1) \to (S,d_2)$ in Theorem~\ref{thm:qv-qs} and the map
$\varphi\colon (S,d)\ra (T,\varrho)$  in  Corollary~\ref{cor:qv-qs}
are again power quasisymmetries.

\section{Combinatorics of quasi-visual approximations}
\label{sec:comb-visu-appr}
In this section we address three closely related questions. To formulate them, suppose
$\{\X^n\}$ is a sequence of covers of a set $S$.
\begin{enumerate}[label=(\arabic*)]
\smallskip
\item Let $S$ be equipped with a metric $d$.  Suppose $\{\X^n\}$ is a 
  (quasi-) visual approximation of width $w\in \N_0$ for $(S,d)$ and 
  $m_w$ is the $U_w$-proximity function with respect to $\{\X^n\}$ as in 
  Definition~\ref{def:mxy}. Which properties does $m_w$ have?
\smallskip
\item
  Is it possible to define a metric on $S$ that is (quasi-)visual for
  the cover sequence $\{\X^n\}$?   
\smallskip
\item
  Let $S$ be equipped with a metric $d$ and
suppose  
$\{\X^n\}$ is a quasi-visual approximation for $(S,d)$. Is there a
(different) metric $\varrho$ on $S$ such that $\{\X^n\}$ is
a visual approximation for $(S,\varrho)$?  
\end{enumerate}

To answer these questions, we introduce a combinatorial version of (quasi-)visual approximations of a metric space.

\begin{definition}[Combinatorially visual approximations]
  \label{def:comb_visual}
  Let $w\in \N_0$ and  $\{\X^n\}$ be a sequence of coverings of a set $S$ where
  $\X^0= \{S\}$. Suppose $m_w$ is the 
$U_w$-proximity function on $S$ with respect to
  $\{\X^n\}$ as in Definition~\ref{def:mxy}. We say that
  $\{\X^n\}$ is a \emph{combinatorially visual approximation  of width $w$} for $S$ if there exists a constant $C\geq 0$
  such that the 
  following conditions are true: 
  \begin{enumerate}
  \smallskip
  \item
    \label{item:comb_visuali}
    $m_w(x,y)\in \N_0$ whenever $x,y\in S$ are distinct.  
  \smallskip
  \item
    \label{item:comb_visualii}
    For all $n\in \N_0$ and $X^n\in \X^n$  there exist $x,y\in X^n$ such that 
    \[
      m_w(x,y)  \leq n + C.  
  \]

  \smallskip
  \item
    \label{item:comb_visualiii}
    Let $n\in \N_0$,  $X^n, Y^n\in \X^n$, and suppose that 
    \[
    U_w(X^n) \cap U_w(Y^n) = \emptyset 
    \](see \eqref{eq:def_UwX^n}).
    Then
  \[
      m_w(x,y) \leq n +C
      \text{ for all $x\in X^n$, $y\in Y^n$.}
    \]
    
    \smallskip
  \item
    \label{item:comb_visualiv}
    For all $x,y,z\in S$, we have  
  \begin{equation*}
    m_w(x,y) \geq \min\{m_w(x,z), m_w(z,y)\} - C.
  \end{equation*}
  \end{enumerate}
\end{definition}

Note that in condition~\ref{item:comb_visualii} we always have $m_w(x,y)\ge n$ whenever $x,y\in X^n$ by \eqref{eq:mxy_Xn}. Condition~\ref{item:comb_visualii} is closely related to \ref{item:visual1} in
Definition~\ref{def:visual}, while condition~\ref{item:comb_visualiii} above
corresponds to \ref{item:visual2} in
Definition~\ref{def:visual}. The quantity $m_w(x,y)$ may in many
ways be thought of as a \emph{Gromov product} of the points $x,y\in S$. If the space $S$ satisfies condition~\ref{item:comb_visualiv}, where the $U_w$-proximity function $m_w$ is replaced
by the Gromov product, then it is called \emph{Gromov hyperbolic}; see \cite{BS,GH} and Section~\ref{sec:tile-graph} below.

The following theorem answers the questions from the beginning of this section.

\begin{theorem}
  \label{thm:_comb_qv} 
  Let $\{\X^n\}$ be a sequence of covers of a set $S$ and
  $w\in \N_0$. Then
  the following are equivalent:
  \begin{enumerate}
  \smallskip
  \item
    \label{item:thm_comb_qv2}
    There is a metric $\varrho$ on $S$ such that $(S,\varrho)$ is bounded and $\{\X^n\}$ is a
    visual approximation of width $w$ for $(S,\varrho)$.
    \smallskip
  \item
    \label{item:thm_comb_qv1}
    There is a metric $d$ on $S$ such that $(S,d)$ is bounded and $\{\X^n\}$ is a
    quasi-visual approximation of width $w$ for $(S,d)$.
    \smallskip  
  \item
    \label{item:thm_comb_qv3}
    $\{\X^n\}$ is a combinatorially visual approximation of width $w$ for $S$ as in
    Definition~\ref{def:comb_visual}.  
  \end{enumerate}
\end{theorem}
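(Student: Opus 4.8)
The plan is to establish the three implications (i)$\Rightarrow$(ii)$\Rightarrow$(iii)$\Rightarrow$(i), which together give the cyclic equivalence. The first implication is immediate: by Lemma~\ref{lem:visual-is-also-quasi}, a visual approximation of width $w$ is automatically a quasi-visual approximation of the same width, and boundedness is part of the hypothesis. The second implication (ii)$\Rightarrow$(iii) is essentially a matter of collecting properties of the $U_w$-proximity function that have already been proved (or nearly proved) earlier in the paper. Indeed, condition~\ref{item:comb_visuali} of Definition~\ref{def:comb_visual} is exactly Lemma~\ref{lem:mxy_qv}. For \ref{item:comb_visualii}, given $X^n\in \X^n$ we use condition~\ref{item:qv_approx1} to find two points $x,y\in X^n$ with $d(x,y)\asymp \diam(X^n)$ (or note that if $\diam(X^n)=0$ there is nothing to prove), and then Lemma~\ref{lem:qv_metric} forces $\diam(X^{m})\asymp d(x,y)\asymp \diam(X^n)$ for $m=m_w(x,y)$; combining this with the two-sided exponential bounds \eqref{eq:qv_approx4p} and \eqref{eq:qv_approx8} (applied along a chain of tiles through $x$) yields $m\le n+C$ with $C$ depending only on the ambient constants. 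Condition~\ref{item:comb_visualiii} is handled the same way, now starting from condition~\ref{item:qv_approx2}: if $U_w(X^n)\cap U_w(Y^n)=\emptyset$ then $d(x,y)\ge \dist(X^n,Y^n)\gtrsim \diam(X^n)\asymp \diam(X^m)$ for $x\in X^n$, $y\in Y^n$, $m=m_w(x,y)$, and again the exponential comparison of diameters at levels $n$ and $m$ (through $x$) gives $m\le n+C$. Finally, condition~\ref{item:comb_visualiv}, the ultrametric-type inequality for $m_w$, follows from Lemma~\ref{lem:qv_metric} together with the ordinary triangle inequality for $d$: writing $m=m_w(x,y)$, $k=m_w(x,z)$, $\ell=m_w(z,y)$, we have $\diam(X^m)\asymp d(x,y)\le d(x,z)+d(z,y)\asymp \diam(X^k)+\diam(W^\ell)\lesssim \max\{\diam(X^k),\diam(W^\ell)\}$ with $W^\ell\ni z$, and then \eqref{eq:qv_approx4p}/\eqref{eq:qv_approx8} convert the diameter comparison into the desired bound $m\ge \min\{k,\ell\}-C$.

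\textbf{The main step: (iii)$\Rightarrow$(i).} This is the substantial part of the proof, where one must construct a metric $\varrho$ on $S$ out of the purely combinatorial data $\{\X^n\}$ and verify that $\{\X^n\}$ is a \emph{visual} (not merely quasi-visual) approximation for it. The natural candidate is a ``visual metric'' built from the proximity function: fix any parameter $\Lambda>1$ and define a distance function $q$ on $S$ by $q(x,y)\coloneqq \Lambda^{-m_w(x,y)}$ for $x\ne y$ and $q(x,x)\coloneqq 0$. Symmetry is clear, and $q(x,y)=0$ iff $x=y$ is exactly condition~\ref{item:comb_visuali}. The key point is that condition~\ref{item:comb_visualiv} is precisely the statement that $q$ satisfies a quasi-ultrametric inequality: $m_w(x,y)\ge \min\{m_w(x,z),m_w(z,y)\}-C$ translates into $q(x,y)\le \Lambda^{C}\max\{q(x,z),q(z,y)\}$, so $q$ is a $K$-quasi-metric with $K=\Lambda^C$. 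Now one applies the standard machinery recalled in Section~\ref{sec:quasi-metrics}: after replacing $q$ by $q^\alpha$ for $\alpha>0$ small enough that $K^\alpha=\Lambda^{\alpha C}\le 2$ (note $q^\alpha$ is again a quasi-metric, with $m_w$ scaled by $\alpha$, i.e.\ it corresponds to replacing $\Lambda$ by $\Lambda^\alpha$), Theorem~\ref{thm:qmetric_metric} produces an honest metric $\varrho$ with $\varrho\asymp q^\alpha$, i.e.\ $\varrho(x,y)\asymp (\Lambda^{\alpha})^{-m_w(x,y)}$. Writing $\Lambda'\coloneqq \Lambda^\alpha>1$, we have $\varrho(x,y)\asymp (\Lambda')^{-m_w(x,y)}$ for all distinct $x,y\in S$.

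\textbf{Verifying the visual-approximation axioms for $(S,\varrho)$.} It remains to check conditions~\ref{item:visual1} and~\ref{item:visual2} of Definition~\ref{def:visual} for $\{\X^n\}$ with visual parameter $\Lambda'$, using only the comparison $\varrho(x,y)\asymp (\Lambda')^{-m_w(x,y)}$ and the combinatorial axioms~\ref{item:comb_visualii} and~\ref{item:comb_visualiii}. First, boundedness: since $\X^0=\{S\}$, axiom~\ref{item:comb_visualii} applied with $n=0$ gives $x,y\in S$ with $m_w(x,y)\le C$, hence $\diam_\varrho(S)\gtrsim (\Lambda')^{-C}>0$; conversely $m_w(x,y)\ge 0$ always, so $\varrho\le \Lambda'$-ish and $S$ is bounded. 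For the diameter bound~\ref{item:visual1} for $X^n\in\X^n$: the lower bound $\diam_\varrho(X^n)\gtrsim (\Lambda')^{-n}$ comes from axiom~\ref{item:comb_visualii}, which supplies $x,y\in X^n$ with $m_w(x,y)\le n+C$, so $\varrho(x,y)\gtrsim (\Lambda')^{-n-C}$; the upper bound $\diam_\varrho(X^n)\lesssim (\Lambda')^{-n}$ comes from \eqref{eq:mxy_Xn}: any $x,y\in X^n$ satisfy $m_w(x,y)\ge n$, hence $\varrho(x,y)\lesssim (\Lambda')^{-n}$. For the separation condition~\ref{item:visual2}: if $U_w(X^n)\cap U_w(Y^n)=\emptyset$, then axiom~\ref{item:comb_visualiii} gives $m_w(x,y)\le n+C$ for all $x\in X^n$, $y\in Y^n$, so $\varrho(x,y)\gtrsim (\Lambda')^{-n-C}$ for all such $x,y$, and taking the infimum yields $\dist_\varrho(X^n,Y^n)\gtrsim (\Lambda')^{-n}$. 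Thus $\{\X^n\}$ is a visual approximation of width $w$ with parameter $\Lambda'$ for the bounded metric space $(S,\varrho)$, completing the cycle. The one point requiring a little care throughout is that $m_w$ is a function of $\{\X^n\}$ and $w$ alone and does not change when we pass between the metrics $d$, $q^\alpha$, and $\varrho$ — this is what makes the whole argument go through — together with keeping track of the fact that raising $q$ to the power $\alpha$ corresponds, at the level of $m_w$, simply to rescaling the visual parameter.
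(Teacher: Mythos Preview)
Your proof is correct and follows essentially the same route as the paper: (i)$\Rightarrow$(ii) via Lemma~\ref{lem:visual-is-also-quasi}, (ii)$\Rightarrow$(iii) by combining Lemmas~\ref{lem:mxy_qv} and~\ref{lem:qv_metric} with the exponential diameter bounds (the paper packages this conversion as Lemma~\ref{lem:tiles_level_diam} and carries it out in Proposition~\ref{prop:qv_comb}), and (iii)$\Rightarrow$(i) via the quasi-metric $q=\Lambda^{-m_w}$ and Theorem~\ref{thm:qmetric_metric} (the paper's Lemma~\ref{lem:qL_qmetric} and Proposition~\ref{prop:comb_qv_2_visual}, where $\Lambda$ is simply chosen small from the start rather than powered down afterwards). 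One small imprecision worth flagging in your sketch of condition~\ref{item:comb_visualiv}: the bounds \eqref{eq:qv_approx4p}/\eqref{eq:qv_approx8} require the two tiles being compared to share a point, so after writing $d(x,y)\lesssim \max\{d(x,z),d(z,y)\}$ you should (as the paper does) assume without loss of generality that the maximum is $d(x,z)$ and then compare $X^m\ni x$ with $X^k\ni x$, rather than trying to compare $X^m\ni x$ directly with $W^\ell\ni z$.
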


The implication \ref{item:thm_comb_qv2}$\,\Rightarrow\,$\ref{item:thm_comb_qv1} 
was shown in Lemma~\ref{lem:visual-is-also-quasi}. 
We will prove the implications \ref{item:thm_comb_qv1}$\,\Rightarrow\,$\ref{item:thm_comb_qv3} and 
\ref{item:thm_comb_qv3}$\,\Rightarrow\,$\ref{item:thm_comb_qv2} in Sections~\ref{sec:diam-levels-dist} and 
\ref{sec:from-quasi-visual}, respectively. We also note that the metrics $\varrho$ and $d$ on $S$ from 
\ref{item:thm_comb_qv2} and \ref{item:thm_comb_qv1} are quasisymmetrically equivalent (as follows from Theorem~\ref{thm:qv-qs} and Lemma~\ref{lem:visual-is-also-quasi}). Before we turn to the proofs, we record the following corollary; compare Proposition~\ref{prop:vis_width1_ex}. 

\begin{cor}\label{cor:qv-are-uni-perfect}
A bounded metric space $(S,d)$ admits a quasi-visual approximation if and only if it is uniformly perfect.
\end{cor}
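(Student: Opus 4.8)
The plan is to deduce Corollary~\ref{cor:qv-are-uni-perfect} from the results already assembled, namely Proposition~\ref{prop:vis_width1_ex}, Lemma~\ref{lem:visual-is-also-quasi}, Theorem~\ref{thm:qv-qs}, and Lemma~\ref{lem:qs_properties}~(iii). The equivalence splits into two implications, and the ``only if'' direction is the substantive one.

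\textbf{Easy direction.} Suppose $(S,d)$ is uniformly perfect. By Proposition~\ref{prop:vis_width1_ex}, $(S,d)$ admits a visual approximation (of width $w=1$, for any visual parameter $\Lambda>1$). By Lemma~\ref{lem:visual-is-also-quasi}, every visual approximation is a quasi-visual approximation. Hence $(S,d)$ admits a quasi-visual approximation, as desired.

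\textbf{Main direction.} Now suppose $(S,d)$ admits a quasi-visual approximation $\{\X^n\}$ of some width $w\in\N_0$. We want to conclude that $(S,d)$ is uniformly perfect. The strategy is to produce a second metric $d'$ on $S$, quasisymmetrically equivalent to $d$, that is manifestly uniformly perfect, and then transfer the property back along the quasisymmetry using Lemma~\ref{lem:qs_properties}~(iii). Concretely, since $\{\X^n\}$ is a quasi-visual approximation of $(S,d)$, it is in particular a combinatorially visual approximation of $S$ by the implication \ref{item:thm_comb_qv1}$\,\Rightarrow\,$\ref{item:thm_comb_qv3} of Theorem~\ref{thm:_comb_qv}; then by \ref{item:thm_comb_qv3}$\,\Rightarrow\,$\ref{item:thm_comb_qv2} there is a metric $\varrho$ on $S$ for which $(S,\varrho)$ is bounded and $\{\X^n\}$ is a \emph{visual} approximation of width $w$ for $(S,\varrho)$. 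By Proposition~\ref{prop:vis_width1_ex} (the ``$\Rightarrow$'' direction, whose proof only uses the exponential control \eqref{eq:diamabs} on tile diameters), $(S,\varrho)$ is uniformly perfect. On the other hand, since $\{\X^n\}$ is a quasi-visual approximation for both $(S,d)$ and $(S,\varrho)$, Theorem~\ref{thm:qv-qs} shows that $\id_S\colon (S,d)\to(S,\varrho)$ is a quasisymmetry; equivalently, $\id_S\colon(S,\varrho)\to(S,d)$ is a quasisymmetry (inverses of quasisymmetries are quasisymmetries). Applying Lemma~\ref{lem:qs_properties}~(iii) to this map, we conclude that $(S,d)$ is uniformly perfect.

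\textbf{Potential obstacle.} The only real subtlety is bookkeeping: Theorem~\ref{thm:_comb_qv} has not yet been proved in full at this point in the paper (only the implication \ref{item:thm_comb_qv2}$\,\Rightarrow\,$\ref{item:thm_comb_qv1} has been established), so strictly speaking this corollary is being stated in anticipation of Sections~\ref{sec:diam-levels-dist} and~\ref{sec:from-quasi-visual}. An alternative, self-contained argument for the main direction that avoids Theorem~\ref{thm:_comb_qv} would be to argue directly: given the quasi-visual approximation $\{\X^n\}$ of $(S,d)$, use Corollary~\ref{cor:qv_tiles_shrink} to see that tile diameters shrink, and for a point $x\in S$ and radius $0<r\le\diam(S)$ pick a tile $X\in\X^n$ with $x\in X$ at the smallest level $n$ for which $\diam(X)$ drops below $r$ along a chosen chain of tiles through $x$; conditions \ref{item:qv_approx1}, \ref{item:qv_approx3}, and \eqref{eq:qv_approx8} then bound $\diam(X)$ from below by a fixed multiple of $r$, producing a point in $B(x,r)$ at definite distance from $x$. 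This direct route essentially re-runs the ``$\Rightarrow$'' half of the proof of Proposition~\ref{prop:vis_width1_ex} with $\Lambda^{-n}$ replaced by tile diameters; either way, the content is light and the comparison constants only depend on the ambient parameters of the quasi-visual approximation.
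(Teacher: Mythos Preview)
Your proof is correct and follows essentially the same route as the paper: the easy direction via Proposition~\ref{prop:vis_width1_ex} and Lemma~\ref{lem:visual-is-also-quasi}, and the main direction by invoking Theorem~\ref{thm:_comb_qv} to get a visual metric $\varrho$, observing $(S,\varrho)$ is uniformly perfect by Proposition~\ref{prop:vis_width1_ex}, and then transporting this back to $(S,d)$ via the quasisymmetry from Theorem~\ref{thm:qv-qs} and Lemma~\ref{lem:qs_properties}. Your remark about the forward reference to Theorem~\ref{thm:_comb_qv} is accurate but harmless, since the corollary is placed immediately after that theorem and its proof is completed in the same section; your sketched direct alternative is also valid but not needed.
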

\begin{proof} 
  Necessity follows directly from Proposition~\ref{prop:vis_width1_ex} and 
  Lemma~\ref{lem:visual-is-also-quasi}. To show  sufficiency, suppose $(S,d)$ has a quasi-visual approximation $\{\X^n\}$. By   Theorem~\ref{thm:_comb_qv}, there is a metric $\varrho$ on $S$ such that $(S, \varrho)$ is bounded and $\{\X^n\}$ is a visual approximation of $(S,\varrho)$. But then $\{\X^n\}$ is also a quasi-visual approximation for $(S,\varrho)$ by Lemma~\ref{lem:visual-is-also-quasi}, and thus $\id_S\colon (S,\varrho)\to (S,d)$ is a quasisymmetry by Theorem~\ref{thm:qv-qs}. Since $(S,\varrho)$ is uniformly perfect by Proposition~\ref{prop:vis_width1_ex} and uniform perfectness is preserved under quasisymmetries (see Lemma~\ref{lem:qs_properties}), we conclude that $(S,d)$ is uniformly perfect as well. 
\end{proof}

\subsection{From quasi-visual to combinatorially visual}
\label{sec:diam-levels-dist}

Here we will show the implication \ref{item:thm_comb_qv1}$\,\Rightarrow\,$\ref{item:thm_comb_qv3} of
Theorem~\ref{thm:_comb_qv}. First, we need some preparation.
Throughout this subsection, $\{\X^n\}$ is a quasi-visual
approximation of (fixed) width $w\in\N_0$ of a bounded metric
space $(S,d)$.

\begin{lemma}
  \label{lem:tiles_level_diam} Let $k,n\in \N_0$, $X^k \in
  \X^k$, $X^n\in \X^n$, and 
  $X^k \cap X^n \ne \emptyset$.  Assume that
  \begin{equation*}
    \diam(X^n) \lesssim \diam(X^k).
  \end{equation*}
  Then there is a constant $C\geq 0$, depending on $C(\lesssim)$
  and the constants from Definition~\ref{def:qv_approx},
  such that 
  \begin{equation*}
    n \geq k - C.
  \end{equation*}
\end{lemma}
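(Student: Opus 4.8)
The plan is to contradict the uniform exponential decay of tile diameters along intersecting chains, expressed by \eqref{eq:qv_approx4p} in Lemma~\ref{lem:sub_shrink}. Recall that this estimate provides constants $C_0\ge 1$ and $\rho\in(0,1)$ such that $\diam(Y)\le C_0\rho^{\,j}\diam(Z)$ whenever $Z\in\X^i$, $Y\in\X^{i+j}$, and $Z\cap Y\ne\emptyset$. The hypothesis of the lemma gives a point $x\in X^k\cap X^n$ and a constant $C_1=C(\lesssim)$ with $\diam(X^n)\le C_1\diam(X^k)$; we want to bound $k-n$ from above.

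First I would dispose of the easy case $n\ge k$, where the conclusion $n\ge k-C$ holds trivially with $C=0$ (indeed with any $C\ge 0$). So assume $n<k$, write $j\coloneqq k-n\ge 1$, and apply \eqref{eq:qv_approx4p} with $Z=X^n\in\X^n$ and $Y=X^k\in\X^{n+j}$, using that $X^n\cap X^k\ne\emptyset$: this yields $\diam(X^k)\le C_0\rho^{\,j}\diam(X^n)$. Combining with $\diam(X^n)\le C_1\diam(X^k)$ gives $\diam(X^k)\le C_0C_1\rho^{\,j}\diam(X^k)$. Here I must note that tiles in a quasi-visual approximation have positive diameter — this follows since by Corollary~\ref{cor:qv_tiles_shrink} and condition~\ref{item:qv_approx3} iterated one has $\diam(X^k)\ge\tau^{\,k}\diam(S)>0$ via \eqref{eq:qv_approx8}, and $\diam(S)>0$ by assumption — so we may cancel $\diam(X^k)$ to obtain $1\le C_0C_1\rho^{\,j}$, i.e. $\rho^{\,j}\ge 1/(C_0C_1)$. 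Since $0<\rho<1$, taking logarithms gives $j\le \log(C_0C_1)/\log(1/\rho)$.

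Thus setting
\[
C\coloneqq \max\left\{0,\ \frac{\log(C_0 C_1)}{\log(1/\rho)}\right\},
\]
which depends only on $C(\lesssim)=C_1$ and the constants $C_0$, $\rho$ coming from Definition~\ref{def:qv_approx} (via Lemma~\ref{lem:sub_shrink}), we get $k-n=j\le C$ in the case $n<k$, and hence $n\ge k-C$ in all cases. I do not anticipate a genuine obstacle here; the only point requiring a word of care is the legitimacy of cancelling $\diam(X^k)$, i.e.\ the fact that tiles have strictly positive diameter, which is why I flagged it above. One could alternatively sidestep this by observing that if $\diam(X^k)=0$ the inequality $\diam(X^n)\le C_1\diam(X^k)$ forces $\diam(X^n)=0$, and then repeatedly applying condition~\ref{item:qv_approx3} upward from $X^k$ to a $0$-tile shows $\diam(S)=0$, contradicting the standing assumption that $S$ has more than one point.
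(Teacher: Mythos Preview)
Your proof is correct and follows essentially the same route as the paper: both reduce to the case $n<k$, apply the exponential decay estimate \eqref{eq:qv_approx4p} to get $\diam(X^k)\le C_0\rho^{k-n}\diam(X^n)$, combine with the hypothesis, and take logarithms. Your extra care in justifying $\diam(X^k)>0$ before cancelling is a detail the paper leaves implicit.
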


\begin{proof}
  If $n\geq k$, there is nothing to prove; so we may  assume  that $n< k$
  and set $\ell \coloneqq k -n \in \N$. 
 Let $\rho\in (0,1)$ and $C_1$ be the constants from
  Lemma~\ref{lem:sub_shrink}, and let $C_2= C(\lesssim)$ be the
  constant from the hypotheses of the lemma (i.e., $\diam(X^n) \leq C_2
  \diam(X^k)$). Then
   \begin{align*}
    \diam(X^k)
    =
    \diam(X^{n+\ell})
    \leq
    C_1\rho^\ell \diam(X^n)
    \leq
    C_1 C_2 \rho^\ell \diam(X^k). 
  \end{align*}
  It follows that $C_1C_2 \rho^\ell \geq 1$, or equivalently
  \begin{equation*}
    \ell \leq  C\coloneqq\frac{\log(C_1 C_2)}{\log(1/\rho)}\in [0,\infty).
  \end{equation*}
  Thus
    $n\geq k - C$,
  and the statement follows.   
\end{proof}

\begin{proposition}
  \label{prop:qv_comb}
  Let $\{\X^n\}$ be a quasi-visual approximation of width $w\in\N_0$ for a bounded
  metric space $(S,d)$. Then $\{\X^n\}$ is a
  combinatorially visual approximation of width $w$ for $S$.  
\end{proposition}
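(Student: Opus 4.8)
The plan is to verify the four conditions \ref{item:comb_visuali}--\ref{item:comb_visualiv} in Definition~\ref{def:comb_visual} for the $U_w$-proximity function $m_w$ associated with the quasi-visual approximation $\{\X^n\}$. Condition \ref{item:comb_visuali}---that $m_w(x,y)\in\N_0$ for distinct $x,y$---is exactly the content of Lemma~\ref{lem:mxy_qv}, so there is nothing to do there. The work is concentrated in the remaining three conditions, and the unifying tool will be Lemma~\ref{lem:qv_metric}, which says that $d(x,y)\asymp\diam(X^{m_w(x,y)})$ for any $m_w(x,y)$-tile $X^{m_w(x,y)}$ containing $x$, together with Lemma~\ref{lem:tiles_level_diam}, which converts a one-sided diameter comparison between nested intersecting tiles into a lower bound on the level.

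For condition \ref{item:comb_visualii}, fix $n\in\N_0$ and $X^n\in\X^n$. If $X^n$ consists of a single point the statement is vacuous (or one takes $x=y$), so assume $\diam(X^n)>0$ and pick $x,y\in X^n$ with $d(x,y)\ge\tfrac12\diam(X^n)$. Let $m\coloneqq m_w(x,y)$ and choose any $m$-tile $X^m$ with $x\in X^m$; by \eqref{eq:mxy_Xn} we have $m\ge n$, so $X^m$ and $X^n$ intersect (both contain $x$). By Lemma~\ref{lem:qv_metric}, $\diam(X^m)\asymp d(x,y)\asymp\diam(X^n)$, where the second comparability uses condition~\ref{item:qv_approx1} in Definition~\ref{def:qv_approx} applied to $X^n$ (which has diameter comparable to $d(x,y)$ by the choice of $x,y$). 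Actually one only needs $\diam(X^n)\lesssim\diam(X^m)$, which is immediate from $d(x,y)\lesssim\diam(X^m)$ and $d(x,y)\ge\tfrac12\diam(X^n)$. Now apply Lemma~\ref{lem:tiles_level_diam} with the roles $k=n$ (the coarser level) and the finer level $m$: since $\diam(X^n)\lesssim\diam(X^m)$ and $X^n\cap X^m\ne\emptyset$, we get $m\ge n-C$, i.e.\ $m\le n+C'$ is the wrong direction---so instead I would run Lemma~\ref{lem:tiles_level_diam} the other way, with the finer tile $X^m$ playing the role of ``$X^k$'' only if $\diam(X^m)\lesssim\diam(X^n)$, which is exactly what Lemma~\ref{lem:qv_metric} gives us ($\diam(X^m)\asymp d(x,y)\le\diam(X^n)$). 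Thus Lemma~\ref{lem:tiles_level_diam} yields $n\ge m-C$, i.e.\ $m_w(x,y)=m\le n+C$, as required.

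For condition \ref{item:comb_visualiii}, fix $n\in\N_0$, tiles $X^n,Y^n\in\X^n$ with $U_w(X^n)\cap U_w(Y^n)=\emptyset$, and points $x\in X^n$, $y\in Y^n$; in particular $x\ne y$. By condition~\ref{item:qv_approx2} of Definition~\ref{def:qv_approx}, $d(x,y)\ge\dist(X^n,Y^n)\gtrsim\diam(X^n)$. Let $m\coloneqq m_w(x,y)$ and pick an $m$-tile $X^m$ with $x\in X^m$; then Lemma~\ref{lem:qv_metric} gives $\diam(X^m)\asymp d(x,y)\gtrsim\diam(X^n)$, hence $\diam(X^n)\lesssim\diam(X^m)$. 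Again $X^n$ and $X^m$ intersect provided $m\ge n$ (if $m<n$ there is nothing to prove), so Lemma~\ref{lem:tiles_level_diam} applied with the coarser tile $X^n$ in the role of the finer one---more precisely, since $\diam(X^n)\lesssim\diam(X^m)$ and they intersect, the lemma gives $n\ge m-C$, i.e.\ $m_w(x,y)\le n+C$. Finally, condition \ref{item:comb_visualiv} is a ``Gromov inequality'' for $m_w$: given $x,y,z\in S$, set $m\coloneqq\min\{m_w(x,z),m_w(z,y)\}$, choose $m$-tiles $X,Z,Z',Y$ with $x\in X$, $z\in Z\cap Z'$, $y\in Y$ and $U_w(X)\cap U_w(Z)\ne\emptyset$, $U_w(Z')\cap U_w(Y)\ne\emptyset$ (possible since $m\le m_w(x,z)$ and $m\le m_w(z,y)$), and use Lemma~\ref{lem:qv_metric} plus the triangle inequality: $d(x,y)\le d(x,z)+d(z,y)\lesssim\diam(X)$ since all the tiles involved are at level $m$ and are chained to $X$ through $z$, so by repeated use of condition~\ref{item:qv_approx1} their diameters are all $\asymp\diam(X)$. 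Then, letting $k\coloneqq m_w(x,y)$ and choosing a $k$-tile $X^k$ with $x\in X^k$, Lemma~\ref{lem:qv_metric} gives $\diam(X^k)\asymp d(x,y)\lesssim\diam(X)\asymp\diam(X^m_x)$ for the $m$-tile $X$ containing $x$; since $X^k$ and $X$ both contain $x$ and $\diam(X^k)\lesssim\diam(X)$, Lemma~\ref{lem:tiles_level_diam} (with $X$ the coarser tile at level $m$) yields $k\ge m-C$, which is exactly $m_w(x,y)\ge\min\{m_w(x,z),m_w(z,y)\}-C$.

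The main obstacle is bookkeeping: one must make sure that the direction of each diameter comparison matches the hypothesis of Lemma~\ref{lem:tiles_level_diam}, and that a single constant $C$ (depending only on $w$, on $C(\asymp)$ from Lemma~\ref{lem:qv_metric}, and on the constants in Definition~\ref{def:qv_approx}) works simultaneously in all four conditions---this is harmless since there are finitely many of them and one takes the maximum. There is also the minor point, in conditions \ref{item:comb_visualii}--\ref{item:comb_visualiii}, that one must separately dispose of the degenerate possibility $m_w(x,y)<n$, in which case the desired bound $m_w(x,y)\le n+C$ holds trivially.
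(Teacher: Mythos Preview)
Your argument for conditions~\ref{item:comb_visuali}--\ref{item:comb_visualiii} follows the paper's, and the self-correcting detour in \ref{item:comb_visualii} does land in the right place: put your $X^m$ in the role of the lemma's $X^k$, use $\diam(X^n)\le 2d(x,y)\asymp\diam(X^m)$, and read off $n\ge m-C$. (Incidentally, $X^n$ and $X^m$ always intersect since both contain $x$; the case split ``$m\ge n$ vs.\ $m<n$'' in \ref{item:comb_visualiii} is unnecessary.)

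For condition~\ref{item:comb_visualiv} there is a genuine gap. You set $m=\min\{m_w(x,z),m_w(z,y)\}$ and assert that level-$m$ tiles with the $U_w$-intersection property exist for \emph{both} pairs $(x,z)$ and $(z,y)$, justifying this by ``possible since $m\le m_w(x,z)$ and $m\le m_w(z,y)$.'' But Definition~\ref{def:mxy} is a supremum; it only guarantees such tiles at level \emph{equal} to $m_w$, not at every smaller level, and nothing in the quasi-visual axioms supplies a downward-closure statement (Lemma~\ref{lem:U2w1_qround} gives $B(x,r_0\diam(X))\subset U_{2w+1}(X)$, but a bound $d(x,z)\lesssim\diam(X)$ with an uncontrolled constant does not force $z$ into $U_{2w+1}(X)$). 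So for whichever pair has $m_w$ strictly larger than $m$, your chain at level $m$ is not supplied.

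The repair is short and is what the paper does: do not descend to a common level. Work at the native levels $n\coloneqq m_w(x,z)$ and $k\coloneqq m_w(z,y)$, pick $X^n\ni x$ and $Y^k\ni y$, and apply Lemma~\ref{lem:qv_metric} to each summand of the triangle inequality:
\[
\diam\bigl(X^{m_w(x,y)}\bigr)\asymp d(x,y)\le d(x,z)+d(z,y)\lesssim\diam(X^n)+\diam(Y^k).
\]
Assume without loss of generality $\diam(X^n)\ge\diam(Y^k)$; since both $X^{m_w(x,y)}$ and $X^n$ contain $x$, Lemma~\ref{lem:tiles_level_diam} yields $m_w(x,y)\ge n-C\ge\min\{n,k\}-C$. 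You should also dispose separately of the case where $x,y,z$ are not all distinct, where the inequality holds with $C=0$ by Lemma~\ref{lem:mxy_qv}.
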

This means that the $U_w$-proximity function $m$, defined with respect to the sequence $\{\X^n\}$, 
satisfies the conditions in Definition~\ref{def:comb_visual}. 

\begin{proof} First note that $\X^0=\{S\}$ by the assumption in   Definition~\ref{def:qv_approx}.  We will now verify the four conditions 
  \ref{item:comb_visuali}--\ref{item:comb_visualiv}
  in Definition~\ref{def:comb_visual} for the function $m$.
  
  \smallskip
\ref{item:comb_visuali} This was shown in Lemma~\ref{lem:mxy_qv}. 

 \smallskip
\ref{item:comb_visualii} To see this, let $n\in \N_0$ and $X^n \in \X^n$ be
  arbitrary.  We can choose $x,y\in X^n$ with $d(x,y)
  \geq \frac{1}{2} \diam(X^n)$. Then $x\ne y$ and so $m\coloneq m(x,y)\in \N_0$ by \ref{item:comb_visuali}. We can find a tile $X^{m}\in \X^m$ with $x\in X^m$. Then by 
  Lemma~\ref{lem:qv_metric} we have 
  \begin{align*}
    \diam(X^{m}) \asymp d(x,y) \geq \frac{1}{2} \diam(X^n), 
  \end{align*}
  and so Lemma~\ref{lem:tiles_level_diam} implies there is a uniform constant
  $C_1\geq0$ such that
  \begin{equation*}
    m=m(x,y) \leq n+ C_1. 
  \end{equation*}
  Condition~\ref{item:comb_visualii} follows.
    
  \smallskip
  \ref{item:comb_visualiii} 
  Let $n\in \N_0$,  $X^n, Y^n\in \X^n$ be tiles such that 
    $U_w(X^n) \cap U_w(Y^n) = \emptyset$, and $x\in X^n$, $y\in Y^n$. Then $x\ne y$ and so again $m\coloneq m(x,y)\in \N_0$. We choose a tile $X^{m}\in \X^m$ with $x\in X^m$.
  Then using Lemma~\ref{lem:qv_metric} and condition~\ref{item:qv_approx2} in
  Definition~\ref{def:qv_approx}, we see that  
  \begin{align*}
    \diam(X^m)
    \asymp
    d(x,y)
    \geq
    \dist(X^n,Y^n)
    \gtrsim
    \diam(X^n).
  \end{align*}
  Thus it follows from Lemma~\ref{lem:tiles_level_diam} that
  \begin{equation*}
   m= m(x,y) \leq n+C_2
  \end{equation*}
  for a uniform constant $C_2\geq0$, proving
  \ref{item:comb_visualiii}. 

  \smallskip
  \ref{item:comb_visualiv} Let $x,y,z\in S$ be arbitrary. If these points are not all distinct, then the desired inequality holds with $C=0$ as follows  Lemma~\ref{lem:mxy_qv}.  
  So we may assume that the points $x$, $y$, $z$ are all  distinct.  Then 
  $m\coloneq m(x,y)$, $n\coloneq m(x,z)$, and $k\coloneq m(y,z)$  belong to $\N_0$. So we can find tiles $X^n\in \X^n$ and $Y^k\in \X^k$ with $x\in X^n$ and $y\in Y^k$.  
  
  Without loss of generality, we may assume that \[\diam(X^{n}) \geq
  \diam(Y^k).\] 
  We now choose a tile $X^m\in \X^m$ with $x\in X^m$.
  Using Lemma~\ref{lem:qv_metric} we see that 
  \begin{align*}
    \label{eq:XYZ1}
    \diam(X^{m}) &\asymp d(x,y)\le 
    d(x,z)+d(y,z)\\
    &\lesssim
    \diam(X^{n}) + \diam(Y^{k}) \leq 2\diam(X^n).
  \end{align*} 
  Lemma~\ref{lem:tiles_level_diam} then implies that for a uniform constant $C_3\geq0$ we have 
  \begin{align*}
    m(x,y)=m &\geq n-C_3 =m(x,z) - C_3\\
    & \geq \min\{m(x,z), m(z,y)\} - C_3.
  \end{align*}
Condition~\ref{item:comb_visualiv} follows. 

\smallskip
The previous considerations show  that the required conditions in Definition~\ref{def:comb_visual} hold for the $U_w$-proximity function $m$  simultaneously with   the  constant $C\coloneqq \max\{C_1, C_2, C_3\}$.
\end{proof}

\subsection{From combinatorially visual to visual}
\label{sec:from-quasi-visual}

We now prove the implication \ref{item:thm_comb_qv3}$\,\Rightarrow\,$\ref{item:thm_comb_qv2} in
Theorem~\ref{thm:_comb_qv}.

Throughout this subsection, $\{\X^n\}$ is a combinatorially
visual approximation of width $w\in \N_0$ for a set $S$. 
The construction of a visual metric on $S$ follows a standard
procedure from the theory of Gromov hyperbolic spaces. 
Namely, we fix $\Lambda>1$ and define a function $q\colon S\times S\to [0,\infty)$ by setting 
\[
  q(x,y)\coloneqq\Lambda^{-m(x,y)}
\]
for $x,y\in S$,
where $m$ denotes  the $U_w$-proximity function on $S$ with respect to $\{\X^n\}$. Here we use the convention that $\Lambda^{-\infty}=0$. In particular, $q(x,y)=0$ if $x=y$. While $q$ will not be a metric on $S$ in general, it is bi-Lipschitz
equivalent to a metric when $\Lambda>1$ is sufficiently close to $1$.

\begin{lemma}
  \label{lem:qL_qmetric}
  The function $q$ as defined above is a quasi-metric on the set $S$ (see Definition~\ref{def:qmetric}). If
  $\Lambda >1$ is sufficiently small, then $q$ is bi-Lipschitz
  equivalent to a metric $\varrho$ on~$S$. 
\end{lemma}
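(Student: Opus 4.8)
The plan is to verify the three quasi-metric axioms for $q$ directly from the properties of the $U_w$-proximity function $m$, and then invoke the general theory (Theorem~\ref{thm:qmetric_metric}) to produce a comparable metric, which by a power-of-$\Lambda$ argument is actually \emph{bi-Lipschitz} equivalent to $q$ once $\Lambda$ is close enough to $1$.

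\textbf{Step 1: $q$ is a quasi-metric.} Symmetry $q(x,y)=q(y,x)$ is immediate since $m(x,y)=m(y,x)$, and $q(x,y)=0$ iff $m(x,y)=\infty$ iff $x=y$ by condition~\ref{item:comb_visuali} in Definition~\ref{def:comb_visual}. For the quasi-ultrametric inequality, let $x,y,z\in S$ and set $C\geq 0$ to be the constant from Definition~\ref{def:comb_visual}. Condition~\ref{item:comb_visualiv} gives
\[
m(x,y)\geq \min\{m(x,z),m(z,y)\}-C,
\]
and hence, raising $\Lambda>1$ to the negative of both sides,
\[
q(x,y)=\Lambda^{-m(x,y)}\leq \Lambda^{C}\cdot\Lambda^{-\min\{m(x,z),m(z,y)\}}
=\Lambda^{C}\max\{q(x,z),q(z,y)\}.
\]
Thus $q$ is a $K$-quasi-metric with $K=\Lambda^{C}$. (The degenerate cases where some of $x,y,z$ coincide are harmless: then one of the terms on the right is $0$ or the inequality is trivial.)

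\textbf{Step 2: passing to a bi-Lipschitz metric for small $\Lambda$.} Choose $\Lambda>1$ small enough that $K=\Lambda^{C}\leq 2$; this is possible since $\Lambda^{C}\to 1$ as $\Lambda\to 1^{+}$ (if $C=0$ we get $K=1$ directly). By Theorem~\ref{thm:qmetric_metric}, there is a metric $\varrho$ on $S$ with $\tfrac{1}{2K}q\leq\varrho\leq q$, so $\varrho\asymp q$ with comparability constant $2K\leq 4$. Since the estimate $\tfrac{1}{2K}q\le\varrho\le q$ is a genuine two-sided multiplicative bound with a \emph{uniform} constant, the identity map $\id_S\colon(S,q)\to(S,\varrho)$ distorts "distances" by at most the factor $2K$; this is precisely bi-Lipschitz equivalence in the sense used in Section~\ref{sec:quasisymmetries} (reading $q$ as a substitute for a metric). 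Hence $q$ is bi-Lipschitz equivalent to the metric $\varrho$, completing the proof.

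\textbf{Main obstacle.} There is no deep obstacle here: the only subtlety is bookkeeping around the additive constant $C$ in the combinatorial inequality, which becomes a \emph{multiplicative} constant $\Lambda^{C}$ for $q$, and ensuring $\Lambda$ is taken small enough that this constant falls below the threshold $2$ required to apply the Frink-type metrization result of Theorem~\ref{thm:qmetric_metric}. One should also note in passing that the resulting $\varrho$ is automatically bounded: since $\X^0=\{S\}$ forces $m(x,y)\geq 0$ for all $x,y\in S$ by \eqref{eq:mxy_Xn}, we have $q(x,y)\leq 1$, hence $\diam_\varrho(S)\leq 1<\infty$, which is the consistency check needed for the downstream application in Theorem~\ref{thm:_comb_qv}.
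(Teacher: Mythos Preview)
Your proof is correct and follows essentially the same approach as the paper: verify the quasi-metric axioms directly from the properties of $m$ in Definition~\ref{def:comb_visual}, obtaining $K=\Lambda^{C}$, and then choose $\Lambda>1$ close enough to $1$ that $\Lambda^{C}\le 2$ so Theorem~\ref{thm:qmetric_metric} applies. The additional remark on boundedness is not needed for the lemma itself but is harmless.
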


\begin{proof}
  Let $x,y,z\in S$ be arbitrary.
  Using condition~\ref{item:comb_visuali} in Definition~\ref{def:comb_visual},
  we see that $q(x,y) =0$ if and only
  if $x=y$. Clearly, we also have $q(x,y) =
  q(y,x)$, since the $U_w$-proximity function $m$ is symmetric. Finally, using condition~\ref{item:comb_visualiv} with the associated constant $C\ge0$ in Definition~\ref{def:comb_visual},
  we see that 
  \begin{align*}
    q(x,y)
    &=
    \Lambda^{-m(x,y)}
    \leq
      \Lambda^C \max\{\Lambda^{-m(x,z)},\, \Lambda^{-m(z,y)}\}
      \\
    &=
    \Lambda^C \max\{q(x,z),\, q(z,y)\}.
  \end{align*}
  It follows that $q$ is a $\Lambda^C$-quasi-metric on $S$.

  To show the second statement, let $\Lambda>1$ be sufficiently
  small such that $\Lambda^C\leq 2$. Theorem~\ref{thm:qmetric_metric} now implies that there is a metric $\varrho$ on $S$
  that is bi-Lipschitz equivalent to $q$. The proof is complete.
\end{proof}

\begin{proposition}
  \label{prop:comb_qv_2_visual}
  Let $\{\X^n\}$ be a combinatorially visual  approximation of width $w\in \N_0$ for a set $S$. Suppose $\varrho$ is a metric on $S$ provided by Lemma~\ref{lem:qL_qmetric} for some $\Lambda>1$.  Then $\{\X^n\}$ is a visual approximation of width $w$ for
  $(S,\varrho)$ with the visual parameter $\Lambda$. 
\end{proposition}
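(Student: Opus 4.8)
The plan is to reduce everything to the explicit quasi-metric $q(x,y)=\Lambda^{-m(x,y)}$ and then read the two conditions in Definition~\ref{def:visual} directly off the defining properties of a combinatorially visual approximation. By Lemma~\ref{lem:qL_qmetric} there is a constant $L\ge 1$ with $\tfrac1L q\le \varrho\le L q$ on $S\times S$; consequently $\diam_\varrho(A)\asymp \diam_q(A)$ and $\dist_\varrho(A,B)\asymp \dist_q(A,B)$ for all $A,B\subset S$, with comparability constant $L$. Here $\diam_q$ and $\dist_q$ denote the diameter and distance computed with $q$ in place of a metric, i.e. $\diam_q(A)=\sup\{q(x,y):x,y\in A\}$ and $\dist_q(A,B)=\inf\{q(x,y):x\in A,\ y\in B\}$. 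Thus it suffices to show that $\diam_q(X^n)\asymp \Lambda^{-n}$ for $X^n\in\X^n$, and that $\dist_q(X^n,Y^n)\gtrsim \Lambda^{-n}$ whenever $U_w(X^n)\cap U_w(Y^n)=\emptyset$, with constants independent of $n$ and of the tiles.

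For the first estimate, fix $n\in\N_0$ and $X^n\in\X^n$. For any $x,y\in X^n$ we have $m(x,y)\ge n$ by \eqref{eq:mxy_Xn}, hence $q(x,y)=\Lambda^{-m(x,y)}\le \Lambda^{-n}$ and so $\diam_q(X^n)\le \Lambda^{-n}$. For the reverse inequality, condition~\ref{item:comb_visualii} in Definition~\ref{def:comb_visual} supplies points $x,y\in X^n$ with $m(x,y)\le n+C$, whence $q(x,y)=\Lambda^{-m(x,y)}\ge \Lambda^{-C}\Lambda^{-n}$ and therefore $\diam_q(X^n)\ge \Lambda^{-C}\Lambda^{-n}$. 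Together these give $\diam_q(X^n)\asymp \Lambda^{-n}$ with comparability constant $\Lambda^{C}$. Applied with $n=0$ (where $X^0=S$) this also yields $0<\diam_\varrho(S)<\infty$, so $(S,\varrho)$ is bounded and has more than one point (the latter because $m(x,x)=\infty>C$, so the two points furnished by condition~\ref{item:comb_visualii} for $X^0=S$ must be distinct). Hence condition~\ref{item:visual1} of Definition~\ref{def:visual} holds, including the case $n=0$.

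For the second estimate, suppose $X^n,Y^n\in\X^n$ with $U_w(X^n)\cap U_w(Y^n)=\emptyset$. By condition~\ref{item:comb_visualiii} in Definition~\ref{def:comb_visual} we have $m(x,y)\le n+C$ for all $x\in X^n$ and $y\in Y^n$, so $q(x,y)=\Lambda^{-m(x,y)}\ge \Lambda^{-C}\Lambda^{-n}$; taking the infimum over such $x,y$ gives $\dist_q(X^n,Y^n)\ge \Lambda^{-C}\Lambda^{-n}$, i.e. $\dist_q(X^n,Y^n)\gtrsim \Lambda^{-n}$. Combining with the bi-Lipschitz comparison $\varrho\asymp q$ from the first paragraph, conditions~\ref{item:visual1} and~\ref{item:visual2} of Definition~\ref{def:visual} both hold for $(S,\varrho)$ with visual parameter $\Lambda$, as claimed.

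There is no serious obstacle here: the argument is a direct translation, and conditions~\ref{item:comb_visuali} and~\ref{item:comb_visualiv} are not even needed at this stage — condition~\ref{item:comb_visuali} only guarantees that $q$ separates points (so $\varrho$ is an honest metric), while condition~\ref{item:comb_visualiv} was already consumed in Lemma~\ref{lem:qL_qmetric} to make $q$ a quasi-metric bi-Lipschitz to $\varrho$. The only points requiring a little care are bookkeeping: checking that a bi-Lipschitz change of distance function preserves comparability and domination for diameters and distances, and handling the degenerate level $n=0$ (boundedness of $S$ and the fact that $S$ has more than one point), both of which are dealt with above.
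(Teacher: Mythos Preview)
Your proof is correct and follows essentially the same argument as the paper: both use $\varrho(x,y)\asymp\Lambda^{-m(x,y)}$, then read off $\diam(X^n)\asymp\Lambda^{-n}$ from \eqref{eq:mxy_Xn} and condition~\ref{item:comb_visualii}, and $\dist(X^n,Y^n)\gtrsim\Lambda^{-n}$ from condition~\ref{item:comb_visualiii}. Your explicit handling of the $n=0$ case and the remark on where conditions~\ref{item:comb_visuali} and~\ref{item:comb_visualiv} enter are nice additions but do not change the approach.
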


\begin{proof}
  Let $\varrho$ be a metric on $S$ provided by Lemma~\ref{lem:qL_qmetric} for some $\Lambda>1$. Then $\varrho(x,y)
    \asymp
    \Lambda^{-m(x,y)}$ for all $x,y\in S$, where $m$ is the $U_w$-proximity function with respect to the sequence $\{\X^n\}$. In the ensuing proof, metric concepts refer to this metric $\varrho$. 
  
  First note that $\X^0= \{S\}$ by the assumption in Definition~\ref{def:comb_visual}. Now let $n\in \N_0$ and $X^n\in \X^n$ be arbitrary. Using
  \eqref{eq:mxy_Xn}, we see that
  \begin{align*}
    \varrho(x,y)
    \asymp
    \Lambda^{-m(x,y)}
    \leq
    \Lambda^{-n}
  \end{align*}
  for all $x,y\in X^n$. Taking the supremum over all $x,y\in X^n$, we conclude that 
  \begin{align*}
    \diam(X^n) \lesssim \Lambda^{-n}.
  \end{align*} 
By condition~\ref{item:comb_visualii} in
  Definition~\ref{def:comb_visual}, there are $x,y\in X^n$
  with 
  \begin{align*}
    \varrho(x,y)
    \asymp
    \Lambda^{-m(x,y)}
    \geq \Lambda^{-C} \Lambda^{-n}. 
  \end{align*}
  This shows that
  \begin{align*}
    \diam(X^n) \gtrsim \Lambda^{-n}, 
  \end{align*}
  and it follows that $\diam(X^n) \asymp \Lambda^{-n}$, where $C(\asymp)$ is a uniform constant independent of $n\in \N_0$ and $X^n\in \X^n$. This establishes condition~\ref{item:visual1} in Definition~\ref{def:visual}.

  To see the second condition of visual approximations, 
let $n\in \N_0$ and  $X^n,
  Y^n\in \X^n$ be 
  $n$-tiles with $U_w(X^n) \cap U_w(Y^n)= \emptyset$. Then condition~\ref{item:comb_visualiii} in Definition~\ref{def:comb_visual} implies that 
  for all $x\in X^n$ and  $y\in Y^n$ we have
  \begin{align*}
    \varrho(x,y)
    \asymp
    \Lambda^{-m(x,y)}
    \geq
    \Lambda^{-C} \Lambda^{-n}.
  \end{align*} 
 Taking the infimum over all $x\in X^n$ and $y\in Y^n$, we obtain 
  \begin{align*}
    \dist(X^n, Y^n) \gtrsim \Lambda^{-n}, 
  \end{align*}
 and so condition~\ref{item:visual2} in
  Definition~\ref{def:visual} holds.

  This finishes the proof that $\{\X^n\}$ is a visual
  approximation of width $w$ for $(S,\varrho)$ with  visual parameter $\Lambda$. 
\end{proof}

Theorem~\ref{thm:_comb_qv} is now an immediate consequence of Lemma~\ref{lem:visual-is-also-quasi} and Propositions~\ref{prop:qv_comb} and~\ref{prop:comb_qv_2_visual}.

\subsection{From quasi-visual to visual} 
We end this section with several natural applications of Theorem~\ref{thm:_comb_qv} to visual metrics. The first one is a criterion when a quasi-visual metric
is in fact a visual metric for a given sequence of covers of a bounded metric space.

\begin{lemma}
  \label{lem:qvisual_visual}
  Let $\{\X^n\}$ be a quasi-visual approximation of width $w\in\N_0$ for a bounded
  metric space $(S,d)$. Then $\{\X^n\}$ is a visual approximation of width $w$ for
  $(S,d)$ with  visual parameter $\Lambda >1$ if and only if
  \begin{equation*}
    d(x,y) \asymp \Lambda^{-m(x,y)}
  \end{equation*}
  for all $x,y\in S$, where $m$ is the $U_w$-proximity function on $S$ with respect to $\{\X^n\}$.
\end{lemma}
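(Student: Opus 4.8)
The plan is to exploit Lemma~\ref{lem:qv_metric}, which already gives us that $d(x,y)\asymp\diam(X^m)$ where $m=m_w(x,y)$ and $X^m$ is any $m$-tile containing $x$. So the task reduces to relating $\diam(X^m)$ to $\Lambda^{-m}$. The point is that the condition ``$\{\X^n\}$ is a visual approximation with parameter $\Lambda$'' is essentially equivalent (in the presence of the quasi-visual hypotheses) to the statement ``$\diam(X^n)\asymp\Lambda^{-n}$ for all $n$-tiles $X^n$,'' i.e.\ condition~\ref{item:visual1} of Definition~\ref{def:visual}. Indeed, condition~\ref{item:visual2} is a formal consequence of \ref{item:visual1} together with condition~\ref{item:qv_approx2} of the quasi-visual approximation, since $\dist(X^n,Y^n)\gtrsim\diam(X^n)\asymp\Lambda^{-n}$ whenever $U_w(X^n)\cap U_w(Y^n)=\emptyset$. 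So both directions come down to the equivalence
\[
\Bigl(\diam(X^n)\asymp\Lambda^{-n}\text{ for all }n, X^n\in\X^n\Bigr)
\iff
\Bigl(d(x,y)\asymp\Lambda^{-m_w(x,y)}\text{ for all }x,y\in S\Bigr).
\]

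For the forward direction of this equivalence: assuming \ref{item:visual1}, Lemma~\ref{lem:qv_metric} gives $d(x,y)\asymp\diam(X^m)\asymp\Lambda^{-m}$ with $m=m_w(x,y)$, which is exactly the displayed comparability. (This is essentially already Corollary~\ref{cor:dxy_mxy}.) For the reverse direction: assume $d(x,y)\asymp\Lambda^{-m_w(x,y)}$ for all $x,y$. We must deduce $\diam(X^n)\asymp\Lambda^{-n}$. The upper bound $\diam(X^n)\lesssim\Lambda^{-n}$ follows since for $x,y\in X^n$ we have $m_w(x,y)\ge n$ by \eqref{eq:mxy_Xn}, hence $d(x,y)\asymp\Lambda^{-m_w(x,y)}\le C\Lambda^{-n}$; taking the supremum over $x,y\in X^n$ gives the bound. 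For the lower bound $\diam(X^n)\gtrsim\Lambda^{-n}$, I use that $\{\X^n\}$ is a quasi-visual, hence (by Proposition~\ref{prop:qv_comb}) a combinatorially visual approximation: condition~\ref{item:comb_visualii} of Definition~\ref{def:comb_visual} provides $x,y\in X^n$ with $m_w(x,y)\le n+C$, so $d(x,y)\asymp\Lambda^{-m_w(x,y)}\ge c\Lambda^{-n-C}=c'\Lambda^{-n}$, and therefore $\diam(X^n)\ge d(x,y)\gtrsim\Lambda^{-n}$. This gives \ref{item:visual1}.

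Putting it together: ($\Rightarrow$) if $\{\X^n\}$ is a visual approximation with parameter $\Lambda$, then \ref{item:visual1} holds, and the forward direction of the equivalence gives $d(x,y)\asymp\Lambda^{-m_w(x,y)}$. ($\Leftarrow$) if $d(x,y)\asymp\Lambda^{-m_w(x,y)}$ for all $x,y$, the reverse direction gives \ref{item:visual1}; condition~\ref{item:visual2} then follows from \ref{item:visual1} combined with condition~\ref{item:qv_approx2} of the given quasi-visual approximation as noted above (using also that $\X^0=\{S\}$ so the $n=0$ cases are trivial/vacuous); hence $\{\X^n\}$ is a visual approximation of width $w$ with parameter $\Lambda$ for $(S,d)$. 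I do not anticipate any serious obstacle here — the statement is essentially a bookkeeping exercise combining Lemma~\ref{lem:qv_metric}, \eqref{eq:mxy_Xn}, and condition~\ref{item:comb_visualii} (via Proposition~\ref{prop:qv_comb}); the only point requiring a little care is making sure that the deduction of \ref{item:visual2} from \ref{item:visual1} genuinely uses only data already available in a quasi-visual approximation, which it does since \ref{item:qv_approx2} is part of the hypothesis.
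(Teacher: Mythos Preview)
Your proof is correct and follows essentially the same approach as the paper: the forward direction is Corollary~\ref{cor:dxy_mxy}, and the reverse direction establishes~\ref{item:visual1} via \eqref{eq:mxy_Xn} for the upper bound and condition~\ref{item:comb_visualii} of the combinatorially visual approximation (supplied by Proposition~\ref{prop:qv_comb}) for the lower bound. One small difference worth noting: for condition~\ref{item:visual2} the paper invokes condition~\ref{item:comb_visualiii} of Definition~\ref{def:comb_visual} to bound $m(x,y)\le n+C$ and hence $\dist(X^n,Y^n)\gtrsim\Lambda^{-n}$ directly from the hypothesis $d\asymp\Lambda^{-m}$, whereas you derive it more economically from the quasi-visual condition~\ref{item:qv_approx2} combined with the already-established~\ref{item:visual1}; both routes are valid.
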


\begin{proof}
  The implication ``$\Rightarrow$'' follows directly from Corollary~\ref{cor:dxy_mxy}.

  \smallskip
  Conversely, assume that $\{\X^n\}$ is a quasi-visual approximation of width $w$ for $(S,d)$
  and $d(x,y)\asymp \Lambda^{-m(x,y)}$ for all $x,y\in S$ with a
  constant $\Lambda >1$. 
  By Theorem~\ref{thm:_comb_qv} we know
  that $\{\X^n\}$ is also a combinatorially visual approximation of width $w$ for $S$. We denote by  $C\geq0$ the uniform constant as in  Definition~\ref{def:comb_visual} for the $U_w$-proximity function $m$. 
  
  Let $n\in \N_0$ and $X^n\in \X^n$ be
  arbitrary. By \eqref{eq:mxy_Xn}, we know that $m(x,y)\geq n$ for all $x,y\in X^n$. At the same time, from condition~\ref{item:comb_visualii} in
  Definition~\ref{def:comb_visual} we know that $m(x,y)\leq n + C$ for some $x,y\in X^n$. It follows that
  \begin{align*}
    \diam(X^n) &= \sup\{d(x,y) : x,y \in X^n\} \\
    &\asymp \sup\{\Lambda^{-m(x,y)} : x,y \in X^n\} = \Lambda^{-\inf\{m(x,y) :\,  x,y \in X^n\}} \\ 
    &\asymp \Lambda^{-n},
  \end{align*}
  meaning that condition~\ref{item:visual1} in Definition~\ref{def:visual}
  is satisfied.

  Now let $n\in\N_0$ and $X,Y\in \X^n$ be tiles with $U_w(X^n) \cap U_w(Y^n)
  =\emptyset$. Then using condition~\ref{item:comb_visualiii} in
  Definition~\ref{def:comb_visual} 
  we obtain 
  \begin{align*}
    \dist(X,Y) &= \inf\{d(x,y): x\in X, \, y\in Y\} \\
    &\asymp \inf\{\Lambda^{-m(x,y)}: x\in X, \, y\in Y\} \gtrsim \Lambda^{-n},
  \end{align*}
  meaning that condition~\ref{item:visual2} in Definition~\ref{def:visual}
  is satisfied. Since $\X^0 = \{S\}$ by the assumption in Definition~\ref{def:qv_approx}, we conclude that $\{\X^n\}$ is a visual approximation of width $w$ for
  $(S,d)$ with the visual 
  parameter~$\Lambda$. 
  This shows the second implication,
 and  the proof of the lemma is complete.  
\end{proof}

Our second application is a characterization of visual
approximations (and metrics), which shows how the definition of a visual metric used in
\cite{BM} relates to the one used in this paper.

\begin{cor}
  \label{cor:visul_charac}
 Let $\{\X^n\}$ be a sequence of covers of a bounded
  metric space $(S,d)$, where $\X^0=\{S\}$. Then $\{\X^n\}$ is a visual approximation of width $w\in \N_0$ for
  $(S,d)$ with visual parameter $\Lambda >1$ if and only if $\{\X^n\}$ is a combinatorially visual approximation of width $w$ for $S$ and 
  \begin{equation*}
    d(x,y) \asymp \Lambda^{-m(x,y)}
  \end{equation*}
  for all $x,y\in S$, where $m$ is the $U_w$-proximity function on $S$ with respect to $\{\X^n\}$.
\end{cor}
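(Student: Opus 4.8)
The plan is to obtain both implications by assembling results already established in this section and its subsections, treating the two directions separately.

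For the forward implication, I would start from the assumption that $\{\X^n\}$ is a visual approximation of width $w$ with visual parameter $\Lambda$ for $(S,d)$. By Lemma~\ref{lem:visual-is-also-quasi}, $\{\X^n\}$ is then a quasi-visual approximation of width $w$ for $(S,d)$, and Proposition~\ref{prop:qv_comb} upgrades this to the assertion that $\{\X^n\}$ is a combinatorially visual approximation of width $w$ for $S$. It then remains only to record the comparison $d(x,y)\asymp\Lambda^{-m(x,y)}$: for distinct $x,y$ this is exactly Corollary~\ref{cor:dxy_mxy}, while for $x=y$ both sides vanish by the convention $\Lambda^{-\infty}=0$ together with $m(x,x)=\infty$ (Lemma~\ref{lem:mxy_qv}). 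Hence the estimate holds for all $x,y\in S$.

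For the converse, I would assume that $\{\X^n\}$ is a combinatorially visual approximation of width $w$ for $S$ and that $d(x,y)\asymp\Lambda^{-m(x,y)}$ for all $x,y\in S$, and verify conditions~\ref{item:visual1} and \ref{item:visual2} of Definition~\ref{def:visual} directly, with visual parameter $\Lambda$. This is precisely the content of Proposition~\ref{prop:comb_qv_2_visual}. The one point requiring attention is that Proposition~\ref{prop:comb_qv_2_visual} is phrased for the particular metric $\varrho$ furnished by Lemma~\ref{lem:qL_qmetric} (for $\Lambda$ sufficiently small), whereas here $d$ is an arbitrary metric comparable to $\Lambda^{-m}$ for a given $\Lambda>1$; however, inspecting the proof of Proposition~\ref{prop:comb_qv_2_visual} shows that the only properties of $\varrho$ it uses are that $\varrho$ is a metric on $S$ with $\varrho(x,y)\asymp\Lambda^{-m(x,y)}$ for all $x,y\in S$ and that $\X^0=\{S\}$, together with conditions \ref{item:comb_visualii} and \ref{item:comb_visualiii} of Definition~\ref{def:comb_visual}. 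Since $d$ enjoys all of these properties, the same argument applies verbatim with $d$ in place of $\varrho$ and yields that $\{\X^n\}$ is a visual approximation of width $w$ with visual parameter $\Lambda$ for $(S,d)$. (Boundedness of $(S,d)$ is part of the hypothesis; positive diameter, also needed for Definition~\ref{def:visual}, follows on applying condition~\ref{item:comb_visualii} with $n=0$ and $X^0=S$, which produces a pair of points at positive distance.)

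I do not anticipate a genuine obstacle: the corollary is essentially a repackaging of Theorem~\ref{thm:_comb_qv} and the ingredients of its proof, and the only care needed is the observation that Proposition~\ref{prop:comb_qv_2_visual} depends on $\varrho$ solely through the comparability $\varrho\asymp\Lambda^{-m}$, so it can be quoted directly for $d$. A slightly more indirect alternative for the converse would be to invoke Theorem~\ref{thm:_comb_qv} to obtain some visual (hence quasi-visual) metric $\varrho_0$ for $\{\X^n\}$, observe that $d$ is a snowflake image of $\varrho_0$ since both are comparable to powers of $\Lambda^{-m(x,y)}$, deduce via Theorem~\ref{thm:qv-qs} (or Lemma~\ref{lem:qv_qs_inv}) that $\{\X^n\}$ is a quasi-visual approximation of width $w$ for $(S,d)$, and finally apply Lemma~\ref{lem:qvisual_visual}; but the direct route above is cleaner, so that is the one I would present.
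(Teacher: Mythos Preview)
Your proposal is correct and follows essentially the same approach as the paper. For the forward direction you invoke Lemma~\ref{lem:visual-is-also-quasi}, Proposition~\ref{prop:qv_comb}, and Corollary~\ref{cor:dxy_mxy}, which is exactly what the paper does (it just packages the first two as Theorem~\ref{thm:_comb_qv}); for the converse you observe that the direct verification of Definition~\ref{def:visual} in Proposition~\ref{prop:comb_qv_2_visual} uses only $\varrho\asymp\Lambda^{-m}$ and the combinatorial conditions, which is precisely the argument the paper points to when it says ``in the same way as in the proof of necessity in Lemma~\ref{lem:qvisual_visual}.''
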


\begin{proof}
      The implication ``$\Rightarrow$'' follows immediately from Theorem~\ref{thm:_comb_qv} and Corollary~\ref{cor:dxy_mxy}. The other implication is established in the same way as in the proof of necessity in  Lemma~\ref{lem:qvisual_visual}.
\end{proof}

\section{The tile graph}
\label{sec:tile-graph}

Here we connect the setup from the previous sections to the
theory of Gromov hyperbolic spaces; for general background on this topic see \cite{Gr, GH, BS}. 

Suppose $\{\X^n\}$ is a sequence of covers of a set $S$ such that 
$\X^0=\{S\}$. We then let $$\X:=\bigsqcup_{n\in \N_0} \X^n$$  be the formal disjoint union of all $\X^n$.  
This means that two sets $X^n\in \X^n$ and $X^k\in \X^k$ for $n\neq
k$ are distinct elements of $\X$, 
even though they may be the same as subsets of $S$. 
We call each element $X\in \X$ a \emph{tile} and denote by $|X|$ its \emph{level}, that is, $|X|= n$ when $X$ represents a set in $\X^n$.

 We define the \emph{tile graph $\Gamma= \Gamma(\{\X^n\})$}
  associated with $\{\X^n\}$ as follows. 
 The vertices of $\Gamma$ are given by the elements 
of
 $\X$. Two distinct vertices of $\Gamma$, represented  by tiles $X,Y\in \X$, are connected by an edge if
and only if
\begin{equation}\label{eq:incidence}
  X\cap Y \neq \emptyset
  \text{ and }
  \big|\abs{X}- \abs{Y}\big|\leq 1.
\end{equation}
In other words, two distinct tiles are connected by an edge if they intersect
and their levels differ by at most $1$.

We denote by $\abs{X-Y}$ the combinatorial  distance between two vertices $X,Y\in \X$ in $\Gamma$. This defines a metric on  $\X$. In the following, when we speak of the tile graph $\Gamma$,  
we will often just  consider it as a metric space with $\X$ as the underlying set and the combinatorial distance as the metric,  
but sometimes we will also use the structure of $\Gamma$ as a graph by using the incidence relation for tiles given in \eqref{eq:incidence}.

The \emph{Gromov product}
on $\Gamma$ is then defined as
\begin{equation}
  \label{eq:def_Gromov}
  (X\cdot Y)
  \coloneqq
  \frac{1}{2}\big(\abs{X} + \abs{Y} - \abs{X-Y}\big) 
\end{equation}
for  $X,Y\in \X$. More precisely, this is the Gromov product with
respect to the base point $S$, that is, the (only) tile of level
$0$ in $\X$ (see \cite[Section~3]{BS00} and \cite[Corollary~1.1.B]{Gr}). 

The main result of this section is the following statement.

\begin{theorem}
  \label{thm:Gromov_hyp}  
Let  $\{\X^n\}$ be a combinatorially visual approximation of a set $S$. Then the tile graph $\Gamma=\Gamma(\{\X^n\})$ is \emph{Gromov hyperbolic}, that is, there
exists a constant $C\geq 0$ such that
  \begin{equation*}
    (X\cdot Y) \geq \min \{(X \cdot Z), (Z\cdot Y)\} -C
  \end{equation*}
  for all $X,Y,Z\in \X=\bigsqcup_{n\in\N_0} \X^n$.
\end{theorem}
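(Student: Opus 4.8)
The plan is to show that the Gromov product $(X\cdot Y)$ on the tile graph is comparable, up to a bounded additive error, to the $U_w$-proximity function $m_w$ evaluated at suitable points of $S$ lying in $X$ and $Y$; then the Gromov hyperbolicity of $\Gamma$ follows from condition~\ref{item:comb_visualiv} of Definition~\ref{def:comb_visual}. More precisely, I claim there is a constant $C_0\ge 0$, depending only on $w$ and the combinatorial visual constant $C$ from Definition~\ref{def:comb_visual}, such that for all $X\in\X^n$ and $Y\in\X^k$ one has
\begin{equation*}
  \bigl|\,(X\cdot Y) - \min\{m_w(x,y),\,n,\,k\}\,\bigr| \le C_0
\end{equation*}
for every choice of $x\in X$ and $y\in Y$ (the min with $n$ and $k$ is forced since $m_w(x,y)$ can be much larger than $\min\{n,k\}$ while $(X\cdot Y)\le\min\{n,k\}$). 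Granting this comparison, fix $X\in\X^n$, $Y\in\X^k$, $Z\in\X^\ell$ and points $x\in X$, $y\in Y$, $z\in Z$; then
\begin{align*}
  (X\cdot Y)
  &\ge \min\{m_w(x,y),n,k\} - C_0
   \ge \min\{m_w(x,z),\,m_w(z,y),\,n,\,k,\,\ell\} - C - C_0\\
  &\ge \min\bigl\{\min\{m_w(x,z),n,\ell\},\,\min\{m_w(z,y),\ell,k\}\bigr\} - C - C_0\\
  &\ge \min\{(X\cdot Z),\,(Z\cdot Y)\} - C - 2C_0,
\end{align*}
which is the desired inequality with constant $C+2C_0$.

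It remains to establish the comparison estimate, which is the technical heart of the argument. The upper bound on the combinatorial distance (hence lower bound on $(X\cdot Y)$) is the easier direction: writing $j=\min\{m_w(x,y),n,k\}$, by definition of $m_w$ there exist tiles $X'\in\X^j$, $Y'\in\X^j$ with $x\in X'$, $y\in Y'$ and $U_w(X')\cap U_w(Y')\ne\emptyset$; chaining a vertical path in $\Gamma$ from $X$ down to $X'$ (tiles of levels $n,n-1,\dots,j$ through $x$, consecutive ones intersecting and differing by one level, so adjacent in $\Gamma$), then across the chain of $j$-tiles realizing $U_w(X')\cap U_w(Y')\ne\emptyset$ (length at most $2w+1$), then up from $Y'$ to $Y$, gives $\abs{X-Y}\le (n-j)+(k-j)+(2w+1)$, i.e.\ $(X\cdot Y)\ge j-w-\tfrac12$. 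The lower bound on the combinatorial distance is where the combinatorially visual hypothesis really enters: given a geodesic $X=X_0,X_1,\dots,X_q=Y$ in $\Gamma$ of length $q=\abs{X-Y}$, with levels $n_i=\abs{X_i}$ satisfying $|n_{i+1}-n_i|\le 1$ and $X_i\cap X_{i+1}\ne\emptyset$, I would pick a tile $X_*$ of minimal level $n_*:=\min_i n_i$ along the geodesic and a point $p\in X_*$; by \ref{eq:mxy_Xn} we get $m_w(x,p)\ge n_*$ and $m_w(p,y)\ge n_*$ (since $x$ lies in a tile of level $n_*$ intersecting $X_*$ — strictly, one needs a uniformly bounded detour here, handled by \ref{item:comb_visualiv}), and then \ref{item:comb_visualiv} yields $m_w(x,y)\ge n_*-C$; combining with the standard hyperbolic-filling estimate $(X\cdot Y)\le n_* + O(1)$ (because the geodesic must descend from level $n$ to level $n_*$ and climb to level $k$, forcing $q\ge (n-n_*)+(k-n_*)$, hence $(X\cdot Y)\le n_*$) gives $(X\cdot Y)\le \min\{m_w(x,y),n,k\}+C+O(1)$.

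The step I expect to be the main obstacle is the lower bound on $\abs{X-Y}$ — i.e.\ ruling out "short" geodesics in $\Gamma$ between tiles that are far apart in the proximity function. The subtlety is that a geodesic in $\Gamma$ is a combinatorial object with no a priori monotonicity in the levels, so one cannot simply read off that it "goes down to level $n_*$ and back up"; one must argue that whatever the geodesic does, its minimal level $n_*$ controls $m_w(x,y)$ from below via \ref{item:comb_visualiv} and controls $(X\cdot Y)$ from above via the elementary inequality $\abs{X-Y}\ge |n-n_i|+|n_i-k| \ge (n-n_*)+(k-n_*)$ applied at the minimal-level vertex. I would isolate this as a lemma stating that the Gromov product $(X\cdot Y)$ in any tile graph equals $\min_i \abs{X_i}$ up to an additive error of at most, say, $1$, along any geodesic, which is a purely graph-theoretic fact about the level function, and then feed it into the proximity-function estimate above. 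Once that lemma is in place, the remaining bookkeeping — keeping track of the bounded detours needed to replace "$x$ lies in $X_i$" by "$x$ lies in some tile of level $n_i$ meeting $X_i$" — is routine and absorbed into the additive constant.
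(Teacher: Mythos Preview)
Your overall strategy---compare $(X\cdot Y)$ to $m_w$ up to bounded additive error and then invoke condition~\ref{item:comb_visualiv}---is the paper's approach as well: the paper defines $m(X,Y)\coloneqq\min_{x\in X,\,y\in Y}m_w(x,y)$, proves $\abs{(X\cdot Y)-m(X,Y)}\le C$ (Proposition~\ref{prop:mXY_XY}), and combines this with an easy extension of~\ref{item:comb_visualiv} to $m(X,Y)$ (Lemma~\ref{lem:mXY_Gromov}). Your chain argument for the upper bound on $\abs{X-Y}$ is also the paper's.

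There is, however, a real gap in your lower bound on $\abs{X-Y}$. You need $m_w(x,y)\ge n_*-O(1)$, and you argue that $m_w(x,p)\ge n_*$ for $p\in X_*$ since ``$x$ lies in a tile of level $n_*$ intersecting $X_*$''. This is not true in general: the level-$n_*$ tile containing $x$ need not be near $X_*$. Your fallback, ``a uniformly bounded detour handled by~\ref{item:comb_visualiv}'', does not work either: iterating~\ref{item:comb_visualiv} along the geodesic from $X$ to $X_*$ costs $C$ per step, and the number of steps (at least $n-n_*$) is unbounded. More generally, knowing only $m_w(p_{j-1},p_j)\ge n_*$ for points $p_j$ chosen in consecutive geodesic tiles gives, after $q$ applications of~\ref{item:comb_visualiv}, only $m_w(x,y)\ge n_*-qC$, which is useless. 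Your claimed two-sided lemma ``$(X\cdot Y)=n_*\pm O(1)$ is purely graph-theoretic'' is also only half right: $(X\cdot Y)\le n_*$ is indeed a graph fact (and is all you actually need on that side), but the reverse inequality fails for tile graphs not arising from combinatorially visual approximations, and in any case it does not bypass the missing estimate on $m_w(x,y)$.

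The paper closes this gap by passing through a visual metric. By Theorem~\ref{thm:_comb_qv} there is a metric $\varrho$ on $S$ for which $\{\X^n\}$ is visual with parameter $\Lambda$; thus $\varrho(x,y)\asymp\Lambda^{-m_w(x,y)}$ and $\diam_\varrho(X_i)\asymp\Lambda^{-n_i}$. Along a geodesic $X_0,\dots,X_q$ the triangle inequality gives
\[
\Lambda^{-m_w(x,y)}\asymp\varrho(x,y)\le\sum_{i=0}^q\diam_\varrho(X_i)\lesssim\sum_{i}\Lambda^{-n_i},
\]
and since $n_i\ge\max\{n-i,\,k-(q-i)\}$, splitting the sum near $i\approx (q-(k-n))/2$ yields two geometric series summing to $\asymp\Lambda^{-(n+k-q)/2}$. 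Hence $m_w(x,y)\ge\tfrac12(n+k-q)-O(1)=(X\cdot Y)-O(1)$. The point is that the metric triangle inequality replaces the lossy iteration of~\ref{item:comb_visualiv} by a single sum whose geometric decay absorbs the length of the path.
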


For the proof we need some preparation. 
For the remainder of  this section, we
assume that $\{\X^n\}$ is a combinatorially visual approximation of  width $w\in \N_0$  for a given  set $S$ and denote by $C_\comb$ the respective constant as in Definition~\ref{def:comb_visual}. 

It will be convenient to extend the proximity function
$m=m_w$ given in 
Definition~\ref{def:mxy} to $\X$. To this end, we define 
\begin{equation}
  \label{eq:defmXY}
  m(X,Y)
  \coloneqq
  \min\{m(x,y) : x\in X,\, y\in Y\}
\end{equation}
for
$X,Y\in \X$. 
Since $m(x,y)\in \N_0\cup\{\infty\}$, the minimum is indeed
attained. 

This new quantity satisfies an inequality analogous to the inequality \ref{item:comb_visualiv} in Definition~\ref{def:comb_visual}.

\begin{lemma}
  \label{lem:mXY_Gromov}
  For all $X,Y,Z\in \X$ we have
  \begin{equation*}
    m(X,Y) \geq \min\{m(X,Z), m(Z,Y)\} -C_\comb.
  \end{equation*}
\end{lemma}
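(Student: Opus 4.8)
The plan is to reduce the claimed inequality for $m(X,Y)$ to the defining property \ref{item:comb_visualiv} of the $U_w$-proximity function $m$ on points, which holds with the constant $C_\comb$. First I would fix $X,Y,Z\in \X$ and, using \eqref{eq:defmXY} together with the fact that the minimum there is attained, choose points $x\in X$ and $y\in Y$ realizing $m(X,Y)=m(x,y)$. The issue is that the point of $Z$ that is ``optimal'' for the pair $(x,y)$ need not be the same one that realizes $m(X,Z)$ or $m(Z,Y)$; so I would instead pick an arbitrary $z\in Z$ and apply the pointwise inequality
\begin{equation*}
  m(x,y)\ge \min\{m(x,z),m(z,y)\}-C_\comb.
\end{equation*}
By definition of $m(X,Z)$ and $m(Z,Y)$ as minima over the respective tiles, we have $m(x,z)\ge m(X,Z)$ and $m(z,y)\ge m(Z,Y)$, hence $\min\{m(x,z),m(z,y)\}\ge \min\{m(X,Z),m(Z,Y)\}$. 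Combining these gives
\begin{equation*}
  m(X,Y)=m(x,y)\ge \min\{m(X,Z),m(Z,Y)\}-C_\comb,
\end{equation*}
which is exactly the assertion.

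One subtlety to address is the case where some of the tiles coincide as subsets of $S$ or where $X\cap Z=\emptyset$ etc.; here nothing special happens, since \ref{item:comb_visualiv} is stated for all $x,y,z\in S$ (including non-distinct points, with the convention $m(x,x)=\infty$), and $Z$ is a cover element so in particular nonempty, guaranteeing a choice of $z\in Z$. Another point is the possibility that $m(x,z)$ or $m(z,y)$ equals $\infty$ (when $x=z$ or $z=y$); the inequalities above still hold with the usual conventions for $\min$ and for arithmetic in $\N_0\cup\{\infty\}$, so no separate treatment is needed.

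There is essentially no obstacle here: the lemma is a purely formal transfer of the ultrametric-type inequality from points to tiles via the ``$\min$ over the tile'' definition \eqref{eq:defmXY}, and the only thing to be careful about is the direction of the inequalities $m(x,z)\ge m(X,Z)$ and $m(z,y)\ge m(Z,Y)$, which go the right way precisely because $m(X,Z)$ and $m(Z,Y)$ are defined as minima. I would therefore present the proof in just a few lines as above.
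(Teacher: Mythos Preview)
Your proof is correct and follows essentially the same approach as the paper: both reduce to the pointwise inequality~\ref{item:comb_visualiv} and then use that $m(X,Z)$ and $m(Z,Y)$ are defined as minima, so any particular values $m(x,z)$, $m(z,y)$ dominate them. The only cosmetic difference is that the paper keeps the full minimum over $x\in X$, $y\in Y$ throughout and minimizes over $z\in Z$ at the end, whereas you first select a minimizer $(x,y)$ for $m(X,Y)$ and then pick an arbitrary $z\in Z$; the logic is identical.
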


\begin{proof}
  Let $X,Y,Z\in \X$ be arbitrary. Then, using \ref{item:comb_visualiv} of Definition~\ref{def:comb_visual}, we see that for all $z\in Z$,
  \begin{align*}
    m(X,Y)
    &=
    \min\{ m(x,y) : x\in X,\, y\in Y\}
    \\
    &\geq
      \min\{ \min\{m(x,z),\, m(z,y)\} - C_\comb : x\in X,\, y\in Y\}
    \\  
    &=
      \min\{\min_{x\in X}m(x,z),\, \min_{y\in Y}m(z,y)\} -C_\comb.
   \end{align*}
   Minimizing the above expression over $z\in Z$, we conclude that 
   \begin{align*}
    m(X,Y)
    &\geq
   \min\{m(X,Z), m(Z,Y)\} -C_\comb.\qedhere
  \end{align*}
\end{proof}

We will show that the quantity $m(X,Y)$ and the Gromov product $(X\cdot Y)$ are the same up to a uniformly bounded additive
error term. 

\begin{proposition}
  \label{prop:mXY_XY}
  There is a constant $C\geq 0$ such that
  \begin{equation*}
    m(X,Y) - C
    \leq
    (X\cdot Y)
    \leq
    m(X,Y) + C
  \end{equation*}
  for all $X,Y\in \X$. 
\end{proposition}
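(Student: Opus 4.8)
The plan is to establish the two bounds $(X\cdot Y)\ge m(X,Y)-C$ and $(X\cdot Y)\le m(X,Y)+C$ separately, after two preliminaries. First, since every edge of $\Gamma$ changes the level by at most $1$, the level of a tile equals its distance to the basepoint: $\abs{Z}=\abs{S-Z}$ for all $Z\in\X$. Hence, writing $m=m(X,Y)$, the assertion is equivalent to
\begin{equation*}
  \abs{X}+\abs{Y}-2m-2C\ \le\ \abs{X-Y}\ \le\ \abs{X}+\abs{Y}-2m+2C .
\end{equation*}
Second, I claim that $m(X,Y)\le\min\{\abs{X},\abs{Y}\}+2C_\comb$ for all $X,Y\in\X$. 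Indeed, by \ref{item:comb_visualii} of Definition~\ref{def:comb_visual} there are $a,b\in X$ with $m(a,b)\le\abs{X}+C_\comb$ (in particular $a\ne b$, so no tile is a singleton, and therefore every pair $(X,Y)$ contains a pair of distinct points and $m(X,Y)\in\N_0$ by \ref{item:comb_visuali} and \eqref{eq:defmXY}); for any $y\in Y$, condition~\ref{item:comb_visualiv} applied to $a,b,y$ gives $\min\{m(a,y),m(y,b)\}\le m(a,b)+C_\comb\le\abs{X}+2C_\comb$, and since $a,b\in X$ and $y\in Y$ this yields $m(X,Y)\le\abs{X}+2C_\comb$; the bound with $\abs{Y}$ follows by symmetry. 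In particular $\abs{\,\abs{X}-m\,}\le(\abs{X}-m)+4C_\comb$ and likewise for $\abs{Y}$.

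For the first inequality $(X\cdot Y)\ge m(X,Y)-C$ I would construct a short path in $\Gamma$ from $X$ to $Y$. Choose $x\in X$ and $y\in Y$ with $m(x,y)=m$; by Definition~\ref{def:mxy} there are $m$-tiles $X',Y'\in\X^m$ with $x\in X'$, $y\in Y'$ and $U_w(X')\cap U_w(Y')\ne\emptyset$. Selecting for each intermediate level a tile containing $x$, with endpoints $X$ and $X'$, yields a path in $\Gamma$ from $X$ to $X'$ of length at most $\abs{\,\abs{X}-m\,}+1$; symmetrically there is a path from $Y$ to $Y'$ of length at most $\abs{\,\abs{Y}-m\,}+1$; and since $U_w(X')\cap U_w(Y')\ne\emptyset$ there is a chain of $m$-tiles from $X'$ to $Y'$ of length $2w+1$, hence a path in $\Gamma$. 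Concatenating these and invoking the preliminary bound,
\begin{equation*}
  \abs{X-Y}\ \le\ \abs{\,\abs{X}-m\,}+\abs{\,\abs{Y}-m\,}+2w+3\ \le\ (\abs{X}-m)+(\abs{Y}-m)+8C_\comb+2w+3 ,
\end{equation*}
which is the first inequality with $C\coloneqq 4C_\comb+w+2$.

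For the second inequality $(X\cdot Y)\le m(X,Y)+C$ I would argue via a geodesic. Let $X=X_0,X_1,\dots,X_\ell=Y$ be a geodesic in $\Gamma$, so $\ell=\abs{X-Y}$, and let $q\coloneqq\min_j\abs{X_j}$, attained at a tile $X_{j^*}$. Since $X_{j^*}$ lies on the geodesic, $\ell=\abs{X-X_{j^*}}+\abs{X_{j^*}-Y}\ge(\abs{X}-q)+(\abs{Y}-q)$, so that $(X\cdot Y)\le q$; thus it suffices to produce a geodesic with $q\le m(X,Y)+C$, i.e.\ to rule out that along a geodesic every tile sits at a level far above $m(X,Y)$. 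If $q>m(X,Y)$, then $\abs{X_j}\ge m(X,Y)+1$ for all $j$, so by \eqref{eq:mxy_Xn} any two points lying in a common tile $X_j$ have proximity at least $m(X,Y)+1$; choosing connecting points $r_j\in X_{j-1}\cap X_j$ together with $r_0\coloneqq x$ and $r_{\ell+1}\coloneqq y$ for a pair $x\in X$, $y\in Y$ realizing $m(X,Y)$, and propagating these estimates through the sequence $r_0,\dots,r_{\ell+1}$ via condition~\ref{item:comb_visualiv} (i.e.\ Lemma~\ref{lem:mXY_Gromov} at the level of points) produces a lower bound for $m(x,y)$ that must be reconciled with $m(x,y)=m(X,Y)<q$. \emph{The main obstacle is to carry this out without incurring an error that grows with $\ell=\abs{X-Y}$}: naive chaining along $r_0,\dots,r_{\ell+1}$ costs an additive $\ell\,C_\comb$, and a dyadic bisection still costs $C_\comb\lceil\log_2\ell\rceil$, neither of which is bounded. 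The resolution must exploit the hierarchical structure of $\Gamma$ — that levels coincide with distance to the basepoint, and that along a geodesic the level sequence first essentially descends to $q$ and then essentially ascends — so that the reconciliation takes place only across the boundedly many scales between $q$ and $m(X,Y)$ rather than across all $\ell$ steps; together with conditions~\ref{item:comb_visualii} and \ref{item:comb_visualiii}, which tie the proximity function to the actual levels of tiles, this forces $q\le m(X,Y)+C'$ for a constant $C'$ depending only on $C_\comb$ and $w$. Taking $C$ to be the larger of the two constants obtained then yields the proposition.
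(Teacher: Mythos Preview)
Your first inequality $(X\cdot Y)\ge m(X,Y)-C$ is fine and matches the paper's argument, and your preliminary bound $m(X,Y)\le\min\{\abs{X},\abs{Y}\}+2C_\comb$ is a clean way to handle both cases at once.

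The second inequality $(X\cdot Y)\le m(X,Y)+C$ has a genuine gap that you yourself identify: chaining condition~\ref{item:comb_visualiv} along the geodesic incurs an additive error proportional to its length (or $\log$ of its length), and your proposed resolution via ``the hierarchical structure of~$\Gamma$'' is a hope, not an argument. The claim that along a geodesic the level sequence ``first essentially descends to~$q$ and then essentially ascends'' is precisely a thin-triangle/tree-like property of~$\Gamma$ --- but that is what Gromov hyperbolicity would give you, and Gromov hyperbolicity is exactly what Proposition~\ref{prop:mXY_XY} is used to prove. So appealing to it here is circular, and there is no evident purely combinatorial replacement using only Definition~\ref{def:comb_visual}.

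The paper's resolution is to step outside the combinatorics: by Theorem~\ref{thm:_comb_qv} (already established), a combinatorially visual approximation admits a \emph{visual metric}~$\varrho$ with parameter~$\Lambda>1$, so that $\diam_\varrho(Z)\asymp\Lambda^{-\abs{Z}}$ and $\varrho(x_0,y_0)\asymp\Lambda^{-m}$. Along a geodesic $X=X_0,\dots,X_N=Y$ one then has $\diam_\varrho(X_j)\lesssim\Lambda^{-\abs{X}+j}$ and $\diam_\varrho(X_{N-j})\lesssim\Lambda^{-\abs{Y}+j}$, and summing these two geometric series at a well-chosen split point~$N'$ bounds $\varrho(x_0,y_0)$ from above by $\Lambda^{-\abs{X}+N'}+\Lambda^{-\abs{Y}+N-N'}$; comparing with $\Lambda^{-m}$ yields $N\ge\abs{X}+\abs{Y}-2m-2C'$. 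The exponential decay is exactly what replaces your length-$\ell$ error with a bounded one.
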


Given this proposition, the proof of Theorem~\ref{thm:Gromov_hyp} is immediate. 
\begin{proof}[Proof of Theorem~\ref{thm:Gromov_hyp}]
  Let $C_1\geq 0$ be the constant from
  Proposition~\ref{prop:mXY_XY}. Then, by Lemma~\ref{lem:mXY_Gromov}, we obtain  
  \begin{align*}
    (X\cdot Y)
    &\geq
    m(X,Y) -C_1
    \geq
    \min\{m(X,Z),\, m(Z,Y)\} - C_1-C_\comb
    \\
    &\geq
      \min\{(X\cdot Z) -C_1,\, (Z\cdot Y) - C_1\} -C_1-C_\comb
      \\
    &=
    \min\{(X\cdot Z),\, (Z \cdot Y)\} -2C_1 - C_\comb
  \end{align*}
  for all $X,Y,Z\in \X$. This shows the statement with the constant $C=2C_1+C_\comb$.
\end{proof}

It remains to prove Proposition~\ref{prop:mXY_XY}.

\begin{proof}[Proof of Proposition~\ref{prop:mXY_XY}] 
If we unravel the definitions, we see that in order to establish the statement, it is enough to find a constant $C\ge0$ such that 
  \begin{align}\label{eq:levgr}
       \abs{X}+ \abs{Y}-2m(X,Y) -C
    &\leq
    \abs{X-Y}\\
    &\leq
    \abs{X}+ \abs{Y}-2m(X,Y) +C\notag
  \end{align}
for  $X,Y\in \X$.

Now let $X,Y\in \X$ be arbitrary and define
\begin{equation*}
  n\coloneqq \abs{X},\, 
  k\coloneqq\abs{Y},
  \text{ and }m\coloneqq m(X,Y).
\end{equation*}
Without loss of generality, we may
assume that $n\leq k$. Let the minimum that determines $m(X,Y)$ be attained for $x_0\in X$ and $y_0\in Y$,  that is,  
\begin{align*}
  m(x_0, y_0) = m(X,Y). 
\end{align*}
Then there are $m$-tiles $X^m$ and $Y^m$ with $x_0 \in X^m$ and  $y_0
\in Y^m$ such that $U_w(X^m) \cap U_w(Y^m) \neq \emptyset$, where $w$ is the width of
the combinatorially visual approximation $\{\X^n\}$. This
in turn means that there is a chain of $m$-tiles
\begin{equation}
  \label{eq:chainXmYm}
  Z_0 = X^m,\, Z_1,\dots, Z_w,\, Z'_w, \dots, Z'_1,\, Z'_0 = Y^m
\end{equation}
 of length $2w+1$ from $X^m$ to $Y^m$.  

 Now the following upper bound for $m$ is valid.  

\smallskip
{\em Claim.} We have $m\le n+3C_\comb$.

\smallskip
Indeed, by \ref{item:comb_visualii} in Definition~\ref{def:comb_visual}
   there are points $x',x''\in X$ with $m(x',x'') \leq n+ C_\comb$. Thus using
   \ref{item:comb_visualiv} we obtain
   \begin{align*}
     n+C_\comb
     \geq
     m(x',x'')
     \geq
     \min\{m(x',x_0), m(x_0,x'')\} -C_\comb.
   \end{align*}
  Here we may assume that the last minimum is attained for $m(x',x_0)$.
  Then $m(x',x_0)\le n+2C_\comb$.
  Now by the choice of $x_0$ and $y_0$ we have 
  $m(x',y_0)\ge m(x_0, y_0)=m$. So 
  applying  \ref{item:comb_visualiv} in Definition~\ref{def:comb_visual} once more, we see that 
   \begin{align*}
     n+ 2C_\comb &\geq m(x_0, x')\\
     &\geq
     \min\{m(x_0,y_0), m(y_0,x')\} -C_\comb
     =
     m-C_\comb.
   \end{align*}
 The claim follows.

\smallskip
In order to establish \eqref{eq:levgr}, we will now consider two 
cases for $m\in \N_0$
depending on its relation to $n$: $n\le m$ and $m\leq n$.

\smallskip
{\em Case 1: $n\leq m$.}

\smallskip
Recall that we also have $|X|=n\le k=|Y|$. Then for 
 the lower bound in \eqref{eq:levgr} we observe that 
  \begin{align*}
    \abs{X-Y}
    &\geq
    k-n
    =
    n+ k -2n
    \geq
      n+ k -2m
    \\
    &=
    \abs{X} + \abs{Y} -2m(X,Y).
  \end{align*}

   To see the other inequality, assume for the moment that $m\le k$. Then there is a path in
  $\Gamma$ joining $X$ and $Y$ given as the concatenation of the following three paths:
  \begin{align*}
    &X=X^n, X^{n+1},\dots, X^{n+(m-n)} = X^m;
    \\
    &\text{the $m$-chain from $X^m$ to $Y^m$ in \eqref{eq:chainXmYm}};
    \\
    &Y^m, Y^{m+1}, \dots,
    Y^{m+(k-m)} = Y^k=Y.
  \end{align*}
  Here each $X^j$ is a $j$-tile with $x_0\in X^j$, and $Y^j$ is a $j$-tile
  with $y_0\in Y^j$. If $k\le m$ we replace the third path with a similar path of the form 
  \[
  Y^m, Y^{m-1}, \dots,
    Y^{m-(m-k)} = Y^k=Y.
  \]
  In both cases, this  concatenated path has length
  \begin{align*}
    N
    &=
    m-n +2w +1 + |k-m|&\\
    &\le
        m-n +2w +1 + |k-n|+|n-m| & \\
    &\le
      3C_\comb+2w+1+k-n+3C_\comb
      && \text{(by the Claim)}\\
    &=
    n+k -2n+ 6C_\comb +2w +1 &\\
    &\le n+k -2m+ 12C_\comb +2w +1 &&  \text{(by the Claim)}.
  \end{align*}
  It follows that
  \begin{align*}
    \abs{X-Y}\leq N \leq \abs{X} + \abs{Y} -2m(X,Y) +12C_\comb + 2w +1.
  \end{align*}
  
  These considerations show that in Case~1 inequality   \eqref{eq:levgr} is true with the uniform constant $C =12C_\comb+  2w
  +1$.

\smallskip
{\em Case 2:} $m\le n$. 

\smallskip

This  case is more involved.  First note that there is a path joining $X$ and $Y$ in
  $\Gamma$ given as the concatenation of the following three paths: 
  \begin{align*}
    &X=X^n, X^{n-1},\dots, X^{n-(n-m)} = X^m;  \quad 
    \\
    &\text{the $m$-chain from $X^m$ to $Y^m$ in \eqref{eq:chainXmYm}};
    \\
    &Y^m, Y^{m+1}, \dots,
    Y^{m+(k-m)} = Y^k=Y.
  \end{align*}
  Here again each $X^j$ is a $j$-tile with $x_0\in X^j$, and $Y^j$ is a $j$-tile
  with $y_0\in Y^j$.
 
  The concatenated path
  has length
  \begin{align*}
    \widetilde N
    =
    n-m+2w +1 + k-m
    =
    n+k -2m +2w +1,
  \end{align*}
  and so
  \begin{align*}
    \abs{X-Y}\leq \widetilde N = \abs{X} + \abs{Y} -2m(X,Y)+2w +1.
  \end{align*}
  This gives an upper bound for $|X-Y|$ as in 
\eqref{eq:levgr} with the uniform constant $C=2w +1$.

  \smallskip
  To see the other inequality, we first note that by Theorem~\ref{thm:_comb_qv}, we can
  equip $S$ with a visual metric $\varrho$ for $\{\X^n\}$. Let
  $\Lambda>1$ be the visual parameter of $\varrho$. Then we have 
  $\diam(Z) \asymp \Lambda^{-\abs{Z}}$ for every $Z\in \X$ (see
  Definition~\ref{def:visual}~\ref{item:visual1}). Here and in the following diameters are
  understood to be with respect to the metric $\varrho$. 

Now  let $X=X_0, X_1, \dots, X_N=Y$ be
  a path in $\Gamma$ that joins $X$ and $Y$ and has  minimal length $N=
  \abs{X-Y}$.
   Since the levels of consecutive tiles in the path
  may differ by at most $1$, we have
  \[
    \abs{X_j} \geq \abs{X} -j = n-j
                \text{ and }
             \abs{X_{N-j}} \geq \abs{Y} -j = k-j, 
\]
and so 
\[
    \diam(X_j) \lesssim \Lambda^{-n+j}
                 \text{ and }
                 \diam(X_{N-j}) \lesssim \Lambda^{-k+j}
\]
  for $j=0,\dots, N$. Moreover, we have 
  $\varrho(x_0, y_0) \asymp \Lambda^{-m}$, by
  Lemma~\ref{lem:qvisual_visual}. It now follows that for arbitrary $N'\in \{0, \dots, N\}$ we have 
  \begin{align*}
    \Lambda^{-m}\asymp
    \varrho(x_0, y_0)
    &\leq
      \diam(X_0\cup \dots \cup X_N) 
      \\
    &\leq
    \diam(X_0\cup \dots \cup X_{N'})
    + \diam(X_{N'+1}  \cup \dots \cup X_N) 
    \\
    &\leq
      \sum_{j=0}^{N'}\diam(X_j)
      + \sum_{j=0}^{N-N'-1} \diam(X_{N-j})
    \\
    &\lesssim
    \sum_{j=0}^{N'}\Lambda^{-n+j}+
    \sum_{j=0}^{N-N'-1}\Lambda^{-k+j}
    \lesssim \Lambda^{-n +N'} +
    \Lambda^{-k+N-N'}.
  \end{align*}
To obtain an essentially optimal inequality here, we choose $N'= \lfloor N_0/2\rfloor$, where $N_0= N-
  (k-n)$. With this choice of $N'$ it follows that 
  \[
  \Lambda^{-m} \lesssim \Lambda^{-n +N'} +
    \Lambda^{-k+N-N'} \asymp \Lambda^{-n+N_0/2}. 
      \]
  This implies that  there is a uniform constant $C'\geq 0$ such that
  \[
    m \geq n - \tfrac12 N_0 -C' = n -\tfrac{1}{2}(N-(k-n)) -C',
   \]
  and so 
\begin{align*}
   \abs{X-Y}&= N \geq n+k - 2m -2C'\\
    &=
    \abs{X} + \abs{Y} -2m(X,Y) -2C'. 
\end{align*}
This is the desired lower bound in \eqref{eq:levgr} and completes the argument in Case~2. 

\smallskip 
Since our two cases exhaust all possibilities,  
the proof of Proposition~\ref{prop:mXY_XY} is complete. 
\end{proof}
Theorem~\ref{thm:Gromov_hyp} now follows.

\section{Identifying the boundary at infinity with the space}
\label{sec:ident-bound-at}

In this section, we consider a quasi-visual approximation $\{\X^n\}$ of a bounded metric space $(S,d)$. By Theorem~\ref{thm:_comb_qv}, we know that $\{\X^n\}$ also provides a combinatorially visual approximation for $S$.  
According to
Theorem~\ref{thm:Gromov_hyp} this in turn implies that the associated tile graph $\Gamma=
\Gamma(\{\X^n\})$ is Gromov hyperbolic.    Hence, $\Gamma$ has a boundary at infinity
$\partial_\infty \Gamma$ that can be equipped with a visual
metric $d_\infty$ (we will review the relevant definitions momentarily). The question then arises how $(\partial_\infty \Gamma, d_\infty)$
relates to $(S, d)$.
As we will see in this section, under the assumption that 
$(S,d)$ is complete, we can naturally
identify $(S,d)$ and $(\partial_\infty \Gamma, d_\infty)$ by a quasisymmetric homeomorphism. Completeness of $(S,d)$  is
clearly necessary for such an identification as $(\partial_\infty \Gamma, d_\infty)$ is always complete (see \cite[Proposition~6.2]{BS00}).

For now we suppose  that $\{\X^n\}$ is a sequence of covers of a set $S$ with $\X^0=\{S\}$. As in Section~\ref{sec:tile-graph}, we can then define the   associated tile graph $\Gamma=\Gamma(\{\X^n\})$. Again we consider $\Ga$ as a metric space with the underlying set 
given by $\X:=\bigsqcup_{n\in \N_0} \X^n$ and the combinatorial distance  
as the metric on $\X$.  In particular, when we speak of a sequence 
 in $\Gamma$, we mean a sequence in the vertex set $\X$ of $\Gamma$, or equivalently, a sequence of tiles.
 
We now assume that $\Gamma$ is  Gromov hyperbolic. 
In order to define the boundary at infinity $\partial_\infty \Gamma$ of $\Gamma$, we say that a sequence $\{X_i\}$ in $\Gamma$ \emph{converges at
  infinity}\footnote{This is often called ``convergence \emph{to} infinity'' in the literature.} (in the sense of Gromov hyperbolic spaces) if 
 \begin{equation}\label{eq:convinfty}
 \lim_{i,j \to \infty} (X_i \cdot X_j) =\infty. 
 \end{equation}
 Note that, in this case, $|X_i|\to \infty$ as $i\to\infty$. 

If $\{X_i\}$ and $\{Y_i\}$ are two sequences in $\Ga$ converging at infinity,
 then we call them {\em equivalent}, written $\{X_i\}\sim \{Y_i\}$,  if 
\begin{equation}\label{eq:equiseqinfty}
\lim_{i\to \infty} (X_i \cdot Y_i) =\infty.
\end{equation}

It  easily follows from the Gromov hyperbolicity of $\Ga$ that $\sim$  defines an equivalence relation on the set of all sequences  $\{X_i\}$ 
in $\Ga$ that converge at infinity. We denote the equivalence class of such a sequence
$\{X_i\}$ under the relation $\sim$ by $[\{X_i\}]$. Then, by definition, the {\em boundary at infinity} 
 of $\Ga$ is the set of all such equivalences classes, i.e., 
\[
\partial_\infty \Gamma\coloneqq \{[\{X_i\}]: \{X_i\} \text { is a sequence in $\Ga$ 
converging at infinity} \}.
\]

We want to define a map from $S$ to $\partial_\infty \Gamma$ that will give a natural identification between these two sets. For this purpose, let $x\in S$ be arbitrary. Since  $\X^n$  is a cover of $S$, we can find $X^n\in \X^n$ with $x\in X^n$ for each $n\in \N_0$. It is easy to see that  $\{X^n\}$ is a geodesic ray in the tile graph
$\Gamma$. We call $\{X^n\}$ a \emph{natural geodesic} of
$x$. Note that, in contrast to the sequences in $\Ga$ considered earlier, the running index $n\in \N_0$ of the tiles in a natural geodesic $\{X^n\}$ also indicates the level $n$ of the tile~$X^n$. 

\begin{lemma}[Natural geodesics]
  \label{lem:nat_geo}
  For natural geodesics of points in $S$ the following statements are true: 
  \begin{enumerate}
  \smallskip
  \item
    \label{item:nat_geo1}
    Let $x\in S$ and $\{X^n\}$ be a natural geodesic of $x$. Then
    $\{X^n\}$ converges at infinity as in \eqref{eq:convinfty}.
  \smallskip
  \item
    \label{item:nat_geo2}
    If  $\{X^n\}$ and $\{Y^n\}$ are two natural geodesics of a  
   point $x\in S$, then these sequences are equivalent as in \eqref{eq:equiseqinfty}.
   \end{enumerate}
\end{lemma}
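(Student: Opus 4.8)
\textbf{Proof plan for Lemma~\ref{lem:nat_geo}.}
The key tool is Proposition~\ref{prop:mXY_XY}, which tells us that the Gromov product $(X\cdot Y)$ on the tile graph is comparable (up to a uniform additive error) to the combinatorial proximity $m(X,Y)$, and the latter in turn is controlled (using $m(x,y)\ge n$ for $x,y\in X^n$, see \eqref{eq:mxy_Xn}) by the ordinary proximity function $m_w$ on $S$. So the plan is to reduce both statements to elementary properties of $m_w(x,y)$.

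For part~\ref{item:nat_geo1}: let $x\in S$ and let $\{X^n\}$ be a natural geodesic of $x$, so $x\in X^n\cap X^m$ for all $n,m$. I would first observe that $X^n\cap X^m\ne\emptyset$ forces every point of $X^n\cap X^m$ (in particular $x$) to witness a large value of $m_w$; more precisely, if $p\in X^n$ and $q\in X^m$ with, say, $n\le m$, then $x$ lies in both the $n$-tile $X^n$ and (choosing an $n$-tile containing $x$) we can use $\min\{n,m\}=n$ together with the fact that $m_w(p,q)\ge m_w(p,x)\text{-type}$ estimates via condition~\ref{item:comb_visualiv} in Definition~\ref{def:comb_visual}. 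Concretely: by \eqref{eq:mxy_Xn} we have $m_w(x,x)=\infty$, and for $p\in X^n$, $q\in X^m$ with $n\le m$, applying \ref{item:comb_visualiv} twice (through the point $x$) gives $m_w(p,q)\ge \min\{m_w(p,x),m_w(x,q)\}-C_\comb\ge n - C_\comb$ since $m_w(p,x)\ge n$ and $m_w(x,q)\ge n$. Hence $m(X^n,X^m)\ge \min\{n,m\}-C_\comb$, and by Proposition~\ref{prop:mXY_XY} we get $(X^n\cdot X^m)\ge \min\{n,m\}-C_\comb-C\to\infty$ as $n,m\to\infty$. This establishes \eqref{eq:convinfty}.

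For part~\ref{item:nat_geo2}: if $\{X^n\}$ and $\{Y^n\}$ are both natural geodesics of the \emph{same} point $x$, then $x\in X^n\cap Y^n$ for every $n$, so $X^n\cap Y^n\ne\emptyset$ and by the identical argument as above (with $p\in X^n$, $q\in Y^n$, both at level $n$, routed through $x$) we obtain $m(X^n,Y^n)\ge n-C_\comb$, hence $(X^n\cdot Y^n)\ge n-C_\comb-C\to\infty$. Both sequences converge at infinity by part~\ref{item:nat_geo1}, so \eqref{eq:equiseqinfty} shows they are equivalent.

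The only mild subtlety — and the step I would be most careful about — is making sure the estimate $m_w(p,x)\ge n$ for $p\in X^n$ is applied correctly: it comes directly from \eqref{eq:mxy_Xn}, but one must note that $x\in X^n$ as well (which holds since $\{X^n\}$ is a natural geodesic of $x$), so that $p$ and $x$ both lie in the same $n$-tile $X^n$. Beyond that, everything is a routine chaining of the combinatorial inequalities, and no estimate on the metric $d$ of $S$ is actually needed for this lemma.
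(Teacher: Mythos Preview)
Your argument is correct under the combinatorially visual hypothesis, but it takes a substantial detour compared to the paper's proof, and in doing so it imports assumptions that the lemma as stated does not actually have available. At the point where Lemma~\ref{lem:nat_geo} is proved, the paper has deliberately stepped back to the bare setting ``$\{\X^n\}$ is a sequence of covers with $\X^0=\{S\}$ and $\Gamma$ is Gromov hyperbolic''; the quasi-visual (hence combinatorially visual) assumption is only reinstated \emph{after} the lemma. So your invocation of Proposition~\ref{prop:mXY_XY} and of condition~\ref{item:comb_visualiv} with the constant $C_\comb$ is, strictly speaking, not licensed. The paper explicitly relies on this generality later: in Section~\ref{sec:from-grom-hyperb} it says that $\Phi$ is well-defined by Lemma~\ref{lem:nat_geo} because ``the presented proof works for arbitrary sequences $\{\X^n\}$ of covers of $S$''.

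The paper's proof avoids all of this by computing the Gromov product directly from the definition of the tile graph, with no reference to $m_w$ or to Proposition~\ref{prop:mXY_XY}. Since $x\in X^n$ for every $n$, consecutive tiles $X^n,X^{n+1}$ are adjacent in $\Gamma$, so $\{X^n\}$ is literally a geodesic ray and $|X^n-X^k|=|n-k|$; plugging into \eqref{eq:def_Gromov} gives $(X^n\cdot X^k)=\min\{n,k\}$ exactly. For part~\ref{item:nat_geo2}, $X^n$ and $Y^n$ are same-level tiles that both contain $x$, hence $|X^n-Y^n|\le 1$ and $(X^n\cdot Y^n)\ge n-\tfrac12$. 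Two lines, no constants, no auxiliary propositions. Your route recovers the same conclusion with an additive loss of $C_\comb+C$, but the machinery you invoke (proximity function comparison, the Gromov-type inequality for $m_w$) is both heavier than needed and unavailable in the stated generality.
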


\begin{proof}
  \ref{item:nat_geo1}
  Let $\{X^n\}$ be a natural geodesic of a point $x\in S$. Then for all $n,k\in \N_0$ we have $\abs{X^n}= n$, $\abs{X^k} = k$, and $|X^n-X^k|=|n-k|$. 
  Hence  
  \[(X^n\cdot  X^{k}) =\tfrac12(n+k-|n-k|)= \min\{n,k\} \to \infty \]
as $n,k\to \infty$. It  follows that 
  $\{X^n\}$ converges at infinity.

  \smallskip
  \ref{item:nat_geo2}
  Let $\{X^n\}$ and  $\{Y^n\}$ be as in the statement. Then for all $n\in \N_0$ we have 
  $\abs{X^n}= \abs{Y^n} = n$,  and $\abs{X^n-Y^n}\leq 1$ since these  tiles intersect in $x$. Hence
  \begin{equation*}
    (X^n \cdot Y^n) \geq  n -1/2 \to \infty  \text{ as $n\to \infty$},
  \end{equation*}
  and so $\{X^n\} \sim \{Y^n\}$.  
\end{proof}

We can now define a map  from $S$ to $\partial_\infty \Gamma$ as follows. 
If $x\in S$ is arbitrary, then, by what we have seen, we can find a natural geodesic $\{X^n\}$ of
$x$ in $\Ga$. By Lemma~\ref{lem:nat_geo}~\ref{item:nat_geo1} the sequence
$\{X^n\}$ converges at infinity and so determines a point 
$ [\{X^n\}]$ in
$\partial_\infty \Gamma$.  By Lemma~\ref{lem:nat_geo}~\ref{item:nat_geo2} 
the equivalence class $[\{X^n\}]$ only depends on $x$ and not on the choice of the  natural geodesic $\{X^n\}$ of
$x$. In this way, we obtain a well-defined map 
\begin{equation}\label{eq:defnatid}
  \Phi\colon S \to \partial_\infty \Gamma, \quad x\mapsto [\{X^n\}].
\end{equation}

From now on until the end of this section, we suppose that the sequence $\{\X^n\}$ is a quasi-visual approximation of a bounded metric space $(S,d)$; in particular, the associated tile graph $\Gamma=
\Gamma(\{\X^n\})$ is Gromov hyperbolic by Theorems~\ref{thm:_comb_qv} and~\ref{thm:Gromov_hyp}. In this setting, we will call the map $\Phi$ from \eqref{eq:defnatid}
 the \emph{natural identification} between $S$ and $\partial_\infty \Gamma$, since, as we will see in 
Proposition~\ref{prop:nat_homeo}, the map $\Phi$ is a bijection when $(S,d)$ is complete. Later in this section, we will also show  that $\Phi$ is in
fact a quasisymmetry if we equip $ \partial_\infty \Gamma$ with a suitable {\em visual metric} (in the sense of Gromov hyperbolic spaces). 

 Before we turn to the  proof of these statements, we first show a few auxiliary facts relating  metric concepts on $S$ to the Gromov hyperbolic 
 geo\-metry of $\Gamma$. Here it is useful to first promote our given quasi-visual approximation 
 $\{\X^n\}$ to a visual approximation by changing the original metric 
 metric $d$ on $S$. Indeed, by 
 Theorem~\ref{thm:_comb_qv} we can find a metric $\varrho$ on $S$ such that $(S, \varrho)$ is still a bounded metric space and $\{\X^n\}$ is a visual approximation of $(S, \varrho)$ (of the same width) as in Definition~\ref{def:visual}. We denote by  
 $\Lambda>1$ the associated visual parameter as in this  definition.

In the following, metric notions (such as $\diam$) in $S$ are always
understood to be in terms of $\varrho$ unless otherwise indicated. Moreover, as in Section~\ref{sec:tile-graph}, we set $\X\coloneqq \bigsqcup_{n\in \N_0} \X^n$.

\begin{lemma}
  \label{lem:dXYLmXY}
  Let $X,Y\in \X$ be arbitrary. Then
  \begin{equation*}
    \diam(X\cup Y)
    \asymp
    \Lambda^{-m(X,Y)}
    \asymp
    \Lambda^{-(X\cdot Y)}. 
  \end{equation*}
  Here the constants $C(\asymp)$ are independent of the tiles $X$ and $Y$.
\end{lemma}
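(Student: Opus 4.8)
$\newline$The plan is to prove the two comparability statements in turn, starting from known facts already at our disposal. For the comparison $\Lambda^{-m(X,Y)} \asymp \Lambda^{-(X\cdot Y)}$, this is essentially a restatement of Proposition~\ref{prop:mXY_XY}: that proposition gives a constant $C\ge 0$ with $m(X,Y)-C \le (X\cdot Y) \le m(X,Y)+C$, and exponentiating (base $\Lambda>1$) turns this additive bound into the multiplicative bound $\Lambda^{-C}\Lambda^{-m(X,Y)} \le \Lambda^{-(X\cdot Y)} \le \Lambda^{C}\Lambda^{-m(X,Y)}$, which is exactly $\Lambda^{-m(X,Y)}\asymp \Lambda^{-(X\cdot Y)}$ with comparability constant $\Lambda^C$, uniform in $X,Y$. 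So this half is immediate.

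$\newline$The substantive half is $\diam(X\cup Y) \asymp \Lambda^{-m(X,Y)}$, where diameters are taken with respect to the visual metric $\varrho$ (with parameter $\Lambda$) that we fixed. For the lower bound: by definition $m(X,Y) = m(x_0,y_0)$ for some $x_0\in X$, $y_0\in Y$, and since $\varrho$ is a visual metric for $\{\X^n\}$ of width $w$, Lemma~\ref{lem:qvisual_visual} (or Corollary~\ref{cor:dxy_mxy}) gives $\varrho(x_0,y_0) \asymp \Lambda^{-m(x_0,y_0)} = \Lambda^{-m(X,Y)}$; hence $\diam(X\cup Y) \ge \varrho(x_0,y_0) \gtrsim \Lambda^{-m(X,Y)}$. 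For the upper bound, I would argue as follows. Write $n=|X|$, $k=|Y|$, $m=m(X,Y)$, and WLOG $n\le k$. Using condition~\ref{item:visual1} of Definition~\ref{def:visual}, $\diam(X) \asymp \Lambda^{-n}$ and $\diam(Y)\asymp\Lambda^{-k}\le \Lambda^{-n}$. Also, for any $x\in X$, $y\in Y$ we have $m(x,y)\ge m$, so by the visual-metric estimate $\varrho(x,y)\asymp \Lambda^{-m(x,y)} \lesssim \Lambda^{-m}$ — wait, this goes the wrong way since $m(x,y)$ could be much larger than $m$. Instead, fix the minimizing pair $x_0\in X$, $y_0\in Y$ with $m(x_0,y_0)=m$; then for arbitrary $x\in X$, $y\in Y$,
\begin{align*}
\varrho(x,y) &\le \diam(X) + \varrho(x_0,y_0) + \diam(Y)\\
&\lesssim \Lambda^{-n} + \Lambda^{-m} + \Lambda^{-k} \lesssim \Lambda^{-m},
\end{align*}
where in the last step I used the Claim from the proof of Proposition~\ref{prop:mXY_XY} (namely $m\le n+3C_{\comb}$, so $\Lambda^{-n}\lesssim \Lambda^{-m}$) together with $k\ge n$ hence $\Lambda^{-k}\le\Lambda^{-n}\lesssim\Lambda^{-m}$. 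Taking the supremum over $x\in X$, $y\in Y$ gives $\diam(X\cup Y)\lesssim \Lambda^{-m}=\Lambda^{-m(X,Y)}$, with constants depending only on the ambient data.

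$\newline$Combining the two displays yields $\diam(X\cup Y)\asymp\Lambda^{-m(X,Y)}$, and together with the exponentiated form of Proposition~\ref{prop:mXY_XY} we obtain the full chain $\diam(X\cup Y)\asymp\Lambda^{-m(X,Y)}\asymp\Lambda^{-(X\cdot Y)}$, all constants uniform in $X,Y$. The only mildly delicate point — and the one I would flag as the main obstacle — is making sure the upper bound $\diam(X\cup Y)\lesssim\Lambda^{-m(X,Y)}$ genuinely holds when $m(X,Y)$ is much smaller than $\min(|X|,|Y|)$ is false (it cannot be, by $m\ge\min(|X|,|Y|)$ when the tiles intersect... actually $m(X,Y)$ can be less than $\min(|X|,|Y|)$ in general), so the right control comes from the Claim $m\le n+3C_{\comb}$ in Proposition~\ref{prop:mXY_XY}'s proof, which ensures $\Lambda^{-n}\lesssim\Lambda^{-m}$; I would cite that Claim explicitly rather than re-deriving it. Everything else is bookkeeping with the triangle inequality and the defining properties of visual approximations.
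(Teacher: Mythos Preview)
Your proof is correct, but it takes a somewhat more roundabout route than the paper's. The paper observes first, via an elementary triangle-inequality argument, that $\diam(X\cup Y)\asymp D(X,Y)\coloneqq\sup\{\varrho(x,y):x\in X,\,y\in Y\}$ with constant $2$. Then it uses Corollary~\ref{cor:dxy_mxy} to get $D(X,Y)\asymp\sup\{\Lambda^{-m(x,y)}:x\in X,\,y\in Y\}$, and finishes by noting that this supremum \emph{equals} $\Lambda^{-\min\{m(x,y)\}}=\Lambda^{-m(X,Y)}$ directly from the definition~\eqref{eq:defmXY}. No appeal to levels of $X$ and $Y$, and no need to invoke the Claim $m\le n+3C_{\comb}$ buried inside the proof of Proposition~\ref{prop:mXY_XY}.

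Your approach works too: you bound $\diam(X)\asymp\Lambda^{-n}$ and $\diam(Y)\asymp\Lambda^{-k}$ separately and then use the level bound $m\le n+3C_{\comb}$ to absorb these into $\Lambda^{-m}$. This is fine, but it introduces an unnecessary dependence on a statement proved inside another proof, and it obscures the very clean reason why the upper bound holds (namely that the supremum of $\Lambda^{-m(x,y)}$ over cross-pairs is \emph{exactly} $\Lambda^{-m(X,Y)}$). One minor imprecision: your ``taking the supremum over $x\in X$, $y\in Y$'' yields $D(X,Y)$, not $\diam(X\cup Y)$; you should note that the same-side pairs are already controlled by $\diam(X),\diam(Y)\lesssim\Lambda^{-m}$, which you have established. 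The paper's elementary Claim $D(X,Y)\le\diam(X\cup Y)\le 2D(X,Y)$ handles this more cleanly and is worth isolating.
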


\begin{proof} We need the following elementary fact.
  
  \begin{claim}
      $\diam(X\cup Y) \asymp \sup\{\varrho(x,y) \colon x\in X,\,
  y\in Y\}$.
  \end{claim}

  Indeed, let us denote the right hand side by $D(X,Y)$. Then clearly
  \[ \diam(X\cup Y)= \max \{D(X,Y), \diam(X), \diam(Y)\}.
  \]
  Note that for $x,x'\in
  X$  and $y\in Y$ we have
  \begin{equation*}
    \varrho(x,x') \leq \varrho(x,y) + \varrho(x',y),  
  \end{equation*}
   which implies that 
   \[
   \diam(X) = \sup\{\varrho(x,x'):
  x,x'\in X\}\leq 2 D(X,Y).
  \] Similarly,  $\diam(Y) \leq 2
  D(X,Y)$, and it follows that
  \begin{equation*}
    D(X,Y) \leq \diam(X\cup Y) \leq 2 D(X,Y), 
  \end{equation*}
  proving the claim with $C(\asymp) = 2$.

  \smallskip
We now obtain the following estimates:
  \begin{align*}
    \diam(X\cup Y) 
    &\asymp
      \sup\{\varrho(x,y) \colon x\in X,\, y\in Y\}
      &&\text{by the Claim}
    \\
    &\asymp
      \sup\{\Lambda^{-m(x,y)} : x\in X,\, y\in Y\}
      &&\text{by Corollary~\ref{cor:dxy_mxy}}
    \\
    &=
      \Lambda^{-m(X,Y)}
      &&\text{by \eqref{eq:defmXY}}
    \\
    &\asymp 
      \Lambda^{-(X\cdot Y)}
      &&\text{by Proposition~\ref{prop:mXY_XY}.}
  \end{align*}
The statement follows.
\end{proof}
 The previous lemma immediately implies that  a sequence $\{X_i\}$ in $\Gamma$ converges at
  infinity (as in \eqref{eq:convinfty}) if and only if 
  \begin{equation}\label{eq:diamXiXj}
  \diam(X_i\cup X_j) \to 0 \text{ as } i,j\to \infty. 
\end{equation}
In this case,
\begin{equation}\label{eq:coninftydiam0}
  \diam(X_i) \to 0 \text{ as } i\to \infty.  
\end{equation}

For the rest of this section, we will make the additional assumption that the bounded metric space $(S,d)$  is complete. Since the identity map $\id_S\: (S, \varrho)\ra (S, d)$ is a quasisymmetry by Lemma~\ref{lem:qv_qs}, the space $(S, \varrho)$ is also bounded and complete by Lemma~\ref{lem:qs_properties}.  

\begin{lemma}
  \label{lem:conv_infty_conv_seq} Let 
  $\{X_i\}$ be  a sequence in $\Gamma$. Then we have the following equivalence:
  \begin{align*}
    &\text{$\{X_i\}$ converges at infinity}\\
 \Longleftrightarrow \quad 
 &\text{there is a point $x\in S$ such that }
    \diam (\{x\}\cup X_i)\to 0  \text{ as } i\to \infty. 
  \end{align*}
 Moreover, the point $x$ with this property is unique. 
\end{lemma}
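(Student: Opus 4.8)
The plan is to establish both implications using the dictionary provided by Lemma~\ref{lem:dXYLmXY} together with the characterizations \eqref{eq:diamXiXj} and \eqref{eq:coninftydiam0}, and to use completeness of $(S,\varrho)$ to produce the limit point. For the forward direction, suppose $\{X_i\}$ converges at infinity. By \eqref{eq:diamXiXj} we have $\diam(X_i\cup X_j)\to 0$ as $i,j\to\infty$, and by \eqref{eq:coninftydiam0} we have $\diam(X_i)\to 0$. Pick any point $x_i\in X_i$ for each $i$ (note $X_i\neq\emptyset$ since it is a tile and $\X^n$ is a cover of the nonempty set $S$). Then for $i,j$ large, $\varrho(x_i,x_j)\le\diam(X_i\cup X_j)\to 0$, so $\{x_i\}$ is a Cauchy sequence in the complete space $(S,\varrho)$; let $x\coloneqq\lim_i x_i$. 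Now for any $z\in X_i$ we estimate
\[
\varrho(x,z)\le \varrho(x,x_i)+\varrho(x_i,z)\le \varrho(x,x_i)+\diam(X_i),
\]
and taking the supremum over $z\in X_i$ gives $\diam(\{x\}\cup X_i)\le \varrho(x,x_i)+2\diam(X_i)\to 0$ as $i\to\infty$, which is the desired conclusion.

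For the reverse direction, suppose there is a point $x\in S$ with $\diam(\{x\}\cup X_i)\to 0$. Then for any $x_i\in X_i$ and $x_j\in X_j$ we have
\[
\varrho(x_i,x_j)\le \varrho(x_i,x)+\varrho(x,x_j)\le \diam(\{x\}\cup X_i)+\diam(\{x\}\cup X_j),
\]
and since moreover $\diam(X_i)\le\diam(\{x\}\cup X_i)$ and $\diam(X_j)\le\diam(\{x\}\cup X_j)$, we conclude $\diam(X_i\cup X_j)\to 0$ as $i,j\to\infty$. By Lemma~\ref{lem:dXYLmXY} this means $\Lambda^{-(X_i\cdot X_j)}\to 0$, i.e.\ $(X_i\cdot X_j)\to\infty$, so $\{X_i\}$ converges at infinity.

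For uniqueness, suppose $x$ and $x'$ both satisfy $\diam(\{x\}\cup X_i)\to 0$ and $\diam(\{x'\}\cup X_i)\to 0$. Fixing any $x_i\in X_i$, the triangle inequality gives $\varrho(x,x')\le \varrho(x,x_i)+\varrho(x_i,x')\le \diam(\{x\}\cup X_i)+\diam(\{x'\}\cup X_i)\to 0$, so $\varrho(x,x')=0$ and hence $x=x'$. I do not anticipate a serious obstacle here; the only point requiring care is to keep the role of $\varrho$ (rather than the original metric $d$) consistent throughout, since completeness and the visual-approximation estimates were transferred to $(S,\varrho)$ in the paragraph preceding the lemma, and to note that all displayed quantities are finite because $(S,\varrho)$ is bounded.
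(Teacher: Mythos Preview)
Your proof is correct and follows essentially the same approach as the paper's: pick points $x_i\in X_i$, use \eqref{eq:diamXiXj} to show the sequence is Cauchy, invoke completeness of $(S,\varrho)$ to obtain $x$, and then bound $\diam(\{x\}\cup X_i)$; the reverse direction and uniqueness are handled by the same triangle-inequality estimates. The only cosmetic differences are that the paper writes the slightly sharper bound $\diam(\{x\}\cup X_i)\le \varrho(x,x_i)+\diam(X_i)$ and, for the reverse direction, directly uses $\diam(X_i\cup X_j)\le \diam(\{x\}\cup X_i)+\diam(\{x\}\cup X_j)$ rather than routing through chosen points $x_i,x_j$.
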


The condition  $\diam (\{x\}\cup X_i)\to 0$ as $i\to \infty$ is equivalent to saying 
that we have convergence $X_i\to \{x\}$ as $i\to \infty$ with respect 
to {\em Hausdorff distance} of sets
(see \cite[Section~7.3.1]{BBI01} for the definition of this concept).

\begin{proof}
  ``$\Rightarrow$''
  Assume $\{X_i\}$ converges at infinity.
  We choose a point $x_i \in X_i$ for each $i\in \N_0$. Then  by
  \eqref{eq:diamXiXj} we have 
  \begin{equation*}
    \varrho(x_i, x_j) \leq \diam(X_i \cup X_j) \to 0
  \end{equation*}
  as $i,j\to \infty$. Hence $\{x_i\}$ is a Cauchy sequence. Since
  $(S,\varrho)$ is complete, this sequence has a limit $x\in S$. 
   
  Now \eqref{eq:coninftydiam0} implies that 
  \[ 
    \diam (\{x\}\cup X_i)\le \varrho(x, x_i)+\diam(X_i)\to 0
    \text{ as $i\to \infty$}.
  \] 

  \smallskip
  ``$\Leftarrow$''
  Let $x$ be as in the statement. Then we have 
  \begin{align*}
    \diam(X_i \cup X_j)
    \leq
    \diam(\{x\} \cup X_i) + \diam(\{x\} \cup X_j) \to 0,
  \end{align*}
  as $i,j\to \infty$. Thus $\{X_i\}$ converges at infinity by
  the discussion after the proof of  Lemma~\ref{lem:dXYLmXY} (see \eqref{eq:diamXiXj}). 

  \smallskip
 To prove the uniqueness statement, let  $x'\in S$ be another point with $\diam(\{x'\} \cup X_i) \to
  0$ as $i\to \infty$. Then 
  \[
    \varrho(x,x')\le \diam (\{x\}\cup X_i)+ \diam (\{x'\}\cup
    X_i) \to 0
    \text{ as $i\to \infty$}.
  \] 
  Hence $\varrho(x,x')=0$ and so $x=x'$.
\end{proof}

\begin{lemma}
  \label{lem:seq_infty_equiv}
  Suppose $\{X_i\}$ and $\{Y_i\}$ are two sequences in $\Gamma$
  converging at infinity. Let $x$ and $y$ be the associated points in $S$ for
   $\{X_i\}$ and $\{Y_i\}$, respectively, as in   Lemma~\ref{lem:conv_infty_conv_seq}. 
  Then the following statements are true:
  \begin{enumerate}
  \smallskip
  \item
    \label{item:diam_dxy}
    $ \displaystyle  
    \varrho(x,y) =
    \lim_{i\to \infty} \diam(X_i \cup Y_i),
    $
     \smallskip
  \item
    \label{item:XiYixiyi}
    $ \displaystyle
    \{X_i\} \sim \{Y_i\}$
   if and only if 
   $x = y$. 
  \end{enumerate}
\end{lemma}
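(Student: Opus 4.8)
The idea is to exploit the geometric meaning of the Gromov product via Lemma~\ref{lem:dXYLmXY}, together with the metric characterization of convergence at infinity and the uniqueness of limit points from Lemma~\ref{lem:conv_infty_conv_seq}. Throughout, metric notions in $S$ refer to the visual metric $\varrho$, as in the preceding discussion.

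\medskip
For part~\ref{item:diam_dxy}: fix points $x_i\in X_i$ and $y_i\in Y_i$. On one hand, since $\diam(\{x\}\cup X_i)\to 0$ and $\diam(\{y\}\cup Y_i)\to 0$ by Lemma~\ref{lem:conv_infty_conv_seq}, we get
\[
\diam(X_i\cup Y_i)\le \diam(\{x\}\cup X_i)+\varrho(x,y)+\diam(\{y\}\cup Y_i)\to \varrho(x,y).
\]
On the other hand, $x_i\to x$ and $y_i\to y$, so
\[
\varrho(x,y)=\lim_{i\to\infty}\varrho(x_i,y_i)\le \liminf_{i\to\infty}\diam(X_i\cup Y_i).
\]
Combining the two bounds gives the asserted limit. (Note that the limit exists as a consequence, not merely the liminf/limsup bounds.)

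\medskip
For part~\ref{item:XiYixiyi}: By Lemma~\ref{lem:dXYLmXY} we have $\diam(X_i\cup Y_i)\asymp \Lambda^{-(X_i\cdot Y_i)}$ with uniform comparability constants. Hence $\diam(X_i\cup Y_i)\to 0$ if and only if $(X_i\cdot Y_i)\to\infty$, and by definition the latter means precisely $\{X_i\}\sim\{Y_i\}$. Thus $\{X_i\}\sim\{Y_i\}$ if and only if $\lim_{i\to\infty}\diam(X_i\cup Y_i)=0$, which by part~\ref{item:diam_dxy} holds if and only if $\varrho(x,y)=0$, i.e.\ $x=y$. (The uniqueness clause of Lemma~\ref{lem:conv_infty_conv_seq} is what guarantees $x$ and $y$ are well-defined to begin with.)

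\medskip
The only slightly delicate point is making sure in part~\ref{item:diam_dxy} that the limit genuinely exists rather than just bounding $\limsup$ and $\liminf$; this is handled automatically since the upper bound $\limsup\le\varrho(x,y)$ and the lower bound $\liminf\ge\varrho(x,y)$ together force convergence to $\varrho(x,y)$. No further obstacle is expected: both parts reduce to the translation dictionary $\diam(X\cup Y)\asymp\Lambda^{-(X\cdot Y)}$ established in Lemma~\ref{lem:dXYLmXY} plus the triangle inequality.
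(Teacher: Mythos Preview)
Your proof is correct and follows essentially the same approach as the paper: both use the triangle-type estimate $\diam(X_i\cup Y_i)\le \diam(\{x\}\cup X_i)+\varrho(x,y)+\diam(\{y\}\cup Y_i)$ together with the reverse bound to squeeze the limit in part~\ref{item:diam_dxy}, and then invoke Lemma~\ref{lem:dXYLmXY} plus part~\ref{item:diam_dxy} for part~\ref{item:XiYixiyi}. The only cosmetic difference is that for the lower bound the paper writes the symmetric inequality $\varrho(x,y)\le \diam(\{x\}\cup X_i)+\diam(X_i\cup Y_i)+\diam(\{y\}\cup Y_i)$ directly, whereas you pick points $x_i\in X_i$, $y_i\in Y_i$ and use $\varrho(x_i,y_i)\to\varrho(x,y)$; these are equivalent.
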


\begin{proof}  \ref{item:diam_dxy} Clearly, for each $i\in \N_0$ we have 
\begin{align*}
\varrho(x,y)&\le  \diam(\{x\}\cup X_i) + \diam(X_i\cup Y_i) + \diam(\{y\}\cup Y_i), 
\\
\diam(X_i\cup Y_i)& \leq \diam(\{x\}\cup X_i) + \varrho(x,y) + \diam(\{y\}\cup Y_i).
\end{align*}
Letting $i\to \infty$ and using    that
  \[
    \lim_{i\to\infty} \diam(\{x\}\cup X_i)= \lim_{i\to\infty} \diam(\{y\}\cup Y_i)= 0
    \] 
by the choice of $x$ and $y$, we see that 
\[ 
\varrho(x,y)\le \liminf_{i\to \infty} \diam(X_i\cup Y_i)\le 
\limsup_{i\to \infty} \diam(X_i\cup Y_i)\le \varrho(x,y).
\]
 The statement follows.

    \smallskip
    \ref{item:XiYixiyi} By definition, $\{X_i\} \sim \{Y_i\}$ if and only if $(X_i\cdot Y_i)
  \to \infty$ as $i\to \infty$. By Lemma~\ref{lem:dXYLmXY} this is  equivalent to \[
  \diam(X_i \cup Y_i) \to 0
  \text { as $i\to \infty$}.
  \]
    By  \ref{item:diam_dxy} this in turn is equivalent to $\varrho(x,y)=0$ and the statement  follows.
\end{proof}

\begin{cor}\label{cor:seqpt} 
Let $x\in S$, $\{Y_i\}$ be a sequence in $\Gamma$ that converges at
infinity, and $\Phi \colon S \to \partial_\infty \Gamma$ be the map as in~\eqref{eq:defnatid}. Then we have the following equivalence: 
\begin{align*}
  [\{Y_i\}] = \Phi(x)
  \;\text{ if and only if }\;
  \diam (\{x\}\cup Y_i)\to 0  \text{ as } i\to \infty.  
\end{align*}
\end{cor}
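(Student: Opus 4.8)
The plan is to deduce Corollary~\ref{cor:seqpt} directly from Lemma~\ref{lem:conv_infty_conv_seq} and Lemma~\ref{lem:seq_infty_equiv}, using the definition of the natural identification map $\Phi$ from \eqref{eq:defnatid}. First I would fix a point $x\in S$ and a sequence $\{Y_i\}$ in $\Gamma$ that converges at infinity. For the implication ``$\diam(\{x\}\cup Y_i)\to 0$ implies $[\{Y_i\}] = \Phi(x)$'': choose a natural geodesic $\{X^n\}$ of $x$, so that by definition $\Phi(x) = [\{X^n\}]$ and, by Lemma~\ref{lem:nat_geo}~\ref{item:nat_geo1}, $\{X^n\}$ converges at infinity. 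Since $x\in X^n$ for every $n$ and $\diam(X^n)\to 0$ (because $\{X^n\}$ converges at infinity, by \eqref{eq:coninftydiam0}), we have $\diam(\{x\}\cup X^n)\le \diam(X^n)\to 0$, so $x$ is exactly the point in $S$ associated with the sequence $\{X^n\}$ in the sense of Lemma~\ref{lem:conv_infty_conv_seq}. By hypothesis, $x$ is also the point associated with $\{Y_i\}$. Lemma~\ref{lem:seq_infty_equiv}~\ref{item:XiYixiyi} then gives $\{X^n\}\sim\{Y_i\}$, hence $[\{Y_i\}] = [\{X^n\}] = \Phi(x)$.

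Conversely, suppose $[\{Y_i\}] = \Phi(x)$. Again let $\{X^n\}$ be a natural geodesic of $x$, so $\Phi(x) = [\{X^n\}]$ and the equality of equivalence classes means $\{Y_i\}\sim\{X^n\}$. As above, $x$ is the point in $S$ associated with $\{X^n\}$ via Lemma~\ref{lem:conv_infty_conv_seq}. Let $y\in S$ be the point associated with $\{Y_i\}$ (it exists because $\{Y_i\}$ converges at infinity, and it is unique by that lemma). By Lemma~\ref{lem:seq_infty_equiv}~\ref{item:XiYixiyi}, $\{Y_i\}\sim\{X^n\}$ forces $y = x$, and then the defining property of $y$ from Lemma~\ref{lem:conv_infty_conv_seq} is precisely $\diam(\{x\}\cup Y_i)\to 0$ as $i\to\infty$. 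This completes both directions.

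There is no real obstacle here; the corollary is essentially a bookkeeping consequence of the two preceding lemmas. The only minor subtlety is to make sure that the ``associated point'' of a natural geodesic $\{X^n\}$ of $x$ is $x$ itself — but this is immediate since $x\in X^n$ for all $n$ and $\diam(X^n)\to 0$, so $\diam(\{x\}\cup X^n)\to 0$, and uniqueness in Lemma~\ref{lem:conv_infty_conv_seq} pins it down. One should also note at the outset that $\Phi(x)$ is well-defined independently of the choice of natural geodesic, which is guaranteed by Lemma~\ref{lem:nat_geo}~\ref{item:nat_geo2}, so the argument does not depend on which natural geodesic of $x$ we pick. I would write the proof in four or five sentences, invoking Lemmas~\ref{lem:conv_infty_conv_seq} and~\ref{lem:seq_infty_equiv} and the definition \eqref{eq:defnatid} explicitly.
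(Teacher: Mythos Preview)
Your proof is correct and follows essentially the same approach as the paper: both arguments hinge on observing that $x$ is the unique point associated (in the sense of Lemma~\ref{lem:conv_infty_conv_seq}) with any natural geodesic $\{X^n\}$ of $x$, and then invoking Lemma~\ref{lem:seq_infty_equiv}~\ref{item:XiYixiyi} together with the uniqueness in Lemma~\ref{lem:conv_infty_conv_seq} to obtain the equivalence. The paper packages this slightly more compactly as a single chain of biconditionals rather than two separate implications, but the content is identical.
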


In other words, if $x\in S$ then
$\{Y_i\}$ is a sequence in the equivalence class $\Phi(x)$ (for
example, a natural geodesic of $x$) if and only $x$ is the
unique point associated with this sequence as in
Lemma~\ref{lem:conv_infty_conv_seq}. 
\begin{proof}
  Suppose first that $\{X^n\}$ is a natural geodesic of $x\in
  X$. Then $\{X^n\}$ converges at infinity by
  Lemma~\ref{lem:nat_geo}~\ref{item:nat_geo1}, and so by
  \eqref{eq:coninftydiam0} we have  $\diam(X^n)\to 0$ as $n\to
  \infty$. Since $x\in X^n$ for each $n\in \N_0$, we conclude
  that
  \begin{equation*}
    \diam (\{x\}\cup X^n)=\diam(X^n)\to 0 \text{ as $n\to \infty$.}
  \end{equation*}
  In particular, $x$ is the unique point in $S$ associated with $\{X^n\}$ as in Lemma~\ref{lem:conv_infty_conv_seq}.

 Now let $\{Y_i\}$ be an arbitrary sequence in $\Gamma$ that converges at infinity and $y\in S$ be the point associated with $\{Y_i\}$ as in Lemma~\ref{lem:conv_infty_conv_seq}, that is, $\diam(\{y\} \cup Y_i)\to 0$ as $i\to\infty$. By Lemma~\ref{lem:seq_infty_equiv}~\ref{item:XiYixiyi} and \eqref{eq:defnatid} we have
  \begin{equation*} x=y
    \;\Leftrightarrow\;
    \{X^n\} \sim \{Y_i\}
    \;\Leftrightarrow\;
    [\{Y_i\}] = [\{X^n\}] = \Phi(x).
  \end{equation*}
  At the same time, by the uniqueness part of Lemma~\ref{lem:conv_infty_conv_seq} we have
  \begin{equation*} x=y
    \;\Leftrightarrow\;
    \lim_{i\to \infty} \diam(\{x\} \cup Y_i)
    =0.
  \end{equation*}
 The statement follows.
\end{proof}

After these preparations, we are now ready to prove that  $\Phi$ is a bijection.

\begin{proposition}
  \label{prop:nat_homeo}
Suppose $(S, d)$ is a complete and bounded metric space. Then the  map  $\Phi \colon S \to \partial_\infty \Gamma$ given by \eqref{eq:defnatid} is a bijection. 
\end{proposition}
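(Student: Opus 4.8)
The plan is to establish injectivity and surjectivity of $\Phi$ separately, using the auxiliary results just proven, with completeness of $(S,d)$ (equivalently of $(S,\varrho)$) entering only in the surjectivity argument.

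For injectivity, I would take distinct points $x, y \in S$ and pick natural geodesics $\{X^n\}$ and $\{Y^n\}$ of $x$ and $y$, respectively. By Lemma~\ref{lem:conv_infty_conv_seq} (or directly, since $x \in X^n$ for all $n$ and $\diam(X^n) \to 0$ by \eqref{eq:coninftydiam0}), the point $x$ is the unique point associated with $\{X^n\}$ in the sense of that lemma, and likewise $y$ is the unique point associated with $\{Y^n\}$. Since $x \ne y$, Lemma~\ref{lem:seq_infty_equiv}~\ref{item:XiYixiyi} gives that $\{X^n\} \not\sim \{Y^n\}$, hence $[\{X^n\}] \ne [\{Y^n\}]$, i.e.\ $\Phi(x) \ne \Phi(y)$. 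Alternatively, this is exactly the content of Corollary~\ref{cor:seqpt}: if $\Phi(x) = \Phi(y)$ then a natural geodesic of $y$ lies in $\Phi(x)$, so $\diam(\{x\} \cup Y^n) \to 0$; combined with $\diam(\{y\} \cup Y^n) \to 0$ this forces $\varrho(x,y) = 0$.

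For surjectivity, I would take an arbitrary element $\xi \in \partial_\infty \Gamma$ and choose a representative sequence $\{X_i\}$ in $\Gamma$ converging at infinity with $[\{X_i\}] = \xi$. This is precisely where completeness is used: by the ``$\Rightarrow$'' direction of Lemma~\ref{lem:conv_infty_conv_seq}, there exists a (unique) point $x \in S$ with $\diam(\{x\} \cup X_i) \to 0$ as $i \to \infty$. Then Corollary~\ref{cor:seqpt} immediately gives $[\{X_i\}] = \Phi(x)$, i.e.\ $\xi = \Phi(x)$, so $\Phi$ is onto. Combining the two parts, $\Phi$ is a bijection.

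I do not expect any serious obstacle here: all the hard work has been front-loaded into Lemmas~\ref{lem:dXYLmXY}, \ref{lem:conv_infty_conv_seq}, \ref{lem:seq_infty_equiv}, and Corollary~\ref{cor:seqpt}, and the proposition is essentially a bookkeeping assembly of these. The one point that deserves a word of care is the passage from the original metric $d$ to the visual metric $\varrho$: completeness of $(S,d)$ transfers to completeness of $(S,\varrho)$ because $\id_S \colon (S,\varrho) \to (S,d)$ is a quasisymmetry (Lemma~\ref{lem:qv_qs}) and quasisymmetries preserve completeness (Lemma~\ref{lem:qs_properties}); all the boundary-at-infinity machinery is topological/combinatorial in $\Gamma$ and hence unaffected by which of the two (quasisymmetrically equivalent) metrics on $S$ we use. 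With this observation in place, the proof is just the two short arguments above.
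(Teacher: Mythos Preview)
Your proposal is correct and follows essentially the same approach as the paper: both arguments establish injectivity and surjectivity separately by combining Lemma~\ref{lem:conv_infty_conv_seq}, Lemma~\ref{lem:seq_infty_equiv}~\ref{item:XiYixiyi}, and Corollary~\ref{cor:seqpt}. Your surjectivity argument is marginally more direct (you invoke Corollary~\ref{cor:seqpt} immediately rather than passing through a natural geodesic and Lemma~\ref{lem:seq_infty_equiv}), but this is a cosmetic difference, and your remark about transferring completeness from $d$ to $\varrho$ matches the paper's earlier discussion in the section.
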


\begin{proof} To show injectivity, suppose $x,y\in S$ and $\Phi(x)=\Phi(y)$.
By the definition of $\Phi$, this means that if $\{X^n\}$ and  $\{Y^n\}$ are natural 
geodesics of $x$ and $y$, respectively, then $\{X^n\}\sim \{Y^n\}$. 
At the same time,  Corollary~\ref{cor:seqpt} implies that $x$ and $y$ are the points associated with $\{X^n\}$ and $\{Y^n\}$ as in Lemma~\ref{lem:conv_infty_conv_seq}, respectively. Hence $x=y$ by Lemma~\ref{lem:seq_infty_equiv}~\ref{item:XiYixiyi}, and the injectivity of $\Phi$ follows.

 To show surjectivity, consider an arbitrary point in $\partial_\infty \Ga$ given by the equivalence class $[\{X_i\}]$ of a sequence $\{X_i\}$ in $\Ga$ converging at infinity. Let $x\in S$ be the point  associated with   $\{X_i\}$ as in Lemma~\ref{lem:conv_infty_conv_seq}  and $\{Y^n\}$ be a natural geodesic 
 of $x$. Then Corollary~\ref{cor:seqpt} implies that the point associated with the sequence $\{Y^n\}$ is also given by $x$. By Lemma~\ref{lem:seq_infty_equiv}~\ref{item:XiYixiyi}, we then have $\{X_i\}\sim \{Y^n\}$; so the definition of $\Phi$ 
 gives
 \[ 
 \Phi(x)=[ \{Y^n\}]=[\{X_i\}].
 \]
 The surjectivity of $\Phi$ follows. 
\end{proof}

The last proposition justifies why in our setting we call the map 
$\Phi\: S\ra \partial_\infty \Ga$ the natural identification of 
$S$ and $\partial_\infty \Ga$. We want to study this identification also 
 from a metric point of view. For this we have to  review the concept of  
 a {\em visual metric} on $\partial_\infty \Ga$. This concept is different, but, as we will see in  Proposition~\ref{prop:vis_Gromov_hyp},  closely related to the notion of a visual metric from Definition~\ref{def:vis_metric}.

For two points $X^\infty, Y^\infty\in
\partial_\infty \Gamma$ we define their {\em Gromov product} by setting
\begin{equation}
  \label{eq:def_Gromov_infty}
  (X^\infty\cdot Y^\infty)\coloneqq
  \sup\big\{\liminf_{i\to \infty} (X_i \cdot Y_i) : \{X_i\} \in X^\infty,\, \{Y_i\} \in Y^\infty\big\}
  \in [0,\infty]. 
\end{equation}

Recall that  a point in $\partial_\infty \Gamma$ is an equivalence class of sequences in $\Ga$ converging at infinity. In   \eqref{eq:def_Gromov_infty}, we take the supremum  over all sequences $\{X_i\}$ and $\{Y_i\}$ that are contained in (or, more intuitively, represent) the given points 
$X^\infty$ and $Y^\infty$ in  $\partial_\infty \Gamma$, respectively. 
The Gromov hyperbolicity of $\Ga$ implies that, up to a fixed additive constant, 
the particular choice of these sequences  does not matter here. More precisely (see  \cite[Remarks~III.3.17 (5), p.~433]{BH99}),
there exists a constant $c_0\ge 0$ independent of $X^\infty,\ Y^\infty
\in \partial_\infty \Gamma$ such that for all  $\{X_i\} \in X^\infty$ and 
$\{Y_i\} \in Y^\infty$ we have 
\begin{equation}\label{eq:Grprodinfty}
 (X^\infty\cdot Y^\infty)-c_0\le \liminf_{i\to \infty} (X_i \cdot Y_i)\le 
 (X^\infty\cdot Y^\infty).
\end{equation}
In particular, this implies  that $(X^\infty\cdot Y^\infty)=\infty$ if and only if 
$X^\infty=Y^\infty$.

A metric $d_\infty$ on $\partial_\infty \Gamma$ is called {\em visual} (in the  sense of Gromov hyperbolic spaces) if there is a constant $\Lambda_\infty>1$ such that 
\begin{equation}\label{eq:vismetrGr}
d_\infty(X^\infty,Y^\infty)
  \asymp
  \Lambda_\infty^{-(X^\infty\cdot Y^\infty)}
\end{equation}
for all $X^\infty, Y^\infty\in \partial_\infty \Gamma$ with $C(\asymp)$ independent of  $X^\infty$  and $Y^\infty$. Here we use our previous  convention  that 
$\Lambda_\infty^{-\infty}=0$. We call the constant  $\Lambda_\infty$ the \emph{visual parameter} of $d_\infty$.

The next statement shows   that visual metrics on $S$ and $\partial_\infty
\Gamma$ are in exact correspondence under the natural identification
 $\Phi\: S\ra \partial_\infty \Ga$.

\begin{proposition}
  \label{prop:vis_Gromov_hyp}
  Suppose  $(S,\varrho)$ is  a  complete and bounded  metric space and $\{\X^n\}$ is  a visual
  approximation of $(S,\varrho)$. Then the following statements are true for the associated tile graph $\Gamma=\Gamma(\{\X^n\})$:
  \begin{enumerate}
  \smallskip
  \item
    \label{item:vis_Gromov_hyp1}
    The natural identification  $\Phi\colon (S,\varrho) \to (\partial_\infty \Gamma,
    d_\infty)$ is a snowflake equivalence (see Section~\ref{sec:quasisymmetries}) for each metric $d_\infty$ on $\partial_\infty\Gamma$ that is visual  (in the sense of Gromov hyperbolic spaces).
  \smallskip  
  \item
    \label{item:vis_Gromov_hyp2}
    Setting
    \begin{equation*}
      d_\infty(X^\infty, Y^\infty) \coloneqq \varrho(\Phi^{-1}(X^\infty), \Phi^{-1}(Y^\infty))
    \end{equation*}
    for  $X^\infty, Y^\infty \in \partial_\infty \Gamma$, defines a visual metric on
    $\partial_\infty \Gamma$.
  \smallskip  
  \item
    \label{item:vis_Gromov_hyp3}
    Conversely, let $d_\infty$ be  a visual metric on
    $\partial_\infty \Gamma$ (in the sense of Gromov hyperbolic spaces). 
    Then setting 
    \begin{equation*}
      \widetilde{\varrho}(x,y) \coloneqq d_\infty(\Phi(x),  \Phi(y)),
    \end{equation*}
    for $x,y\in S$, defines a metric on $S$  that is visual for $\{\X^n\}$ (in the
    sense of Definition~\ref{def:vis_metric}). 
  \end{enumerate}
\end{proposition}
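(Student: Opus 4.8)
The heart of the matter is a single comparison: for $x,y\in S$ with $X^\infty=\Phi(x)$ and $Y^\infty=\Phi(y)$,
\[
\varrho(x,y)\asymp \Lambda^{-(X^\infty\cdot Y^\infty)},
\]
where $\Lambda>1$ is the visual parameter of $\{\X^n\}$ for $(S,\varrho)$ and the implicit constant is independent of $x$ and $y$. To prove it, take natural geodesics $\{X^n\}$ and $\{Y^n\}$ of $x$ and $y$ (see the discussion preceding \eqref{eq:defnatid}); these represent $X^\infty$ and $Y^\infty$, and by Corollary~\ref{cor:seqpt} they are the sequences associated with $x$ and $y$ as in Lemma~\ref{lem:conv_infty_conv_seq}. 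Then Lemma~\ref{lem:seq_infty_equiv}~\ref{item:diam_dxy} gives $\varrho(x,y)=\lim_{n\to\infty}\diam(X^n\cup Y^n)$, while Lemma~\ref{lem:dXYLmXY} gives $\diam(X^n\cup Y^n)\asymp\Lambda^{-(X^n\cdot Y^n)}$ with constants independent of $n$. Passing to the limit (using that $t\mapsto\Lambda^{-t}$ is decreasing and continuous) yields $\varrho(x,y)\asymp\limsup_{n\to\infty}\Lambda^{-(X^n\cdot Y^n)}=\Lambda^{-\liminf_{n\to\infty}(X^n\cdot Y^n)}$, and \eqref{eq:Grprodinfty} bounds $\liminf_{n\to\infty}(X^n\cdot Y^n)$ between $(X^\infty\cdot Y^\infty)-c_0$ and $(X^\infty\cdot Y^\infty)$, so the additive constant $c_0$ is absorbed into a multiplicative one. (The case $x=y$ is trivial, both sides being $0$.)

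Part~\ref{item:vis_Gromov_hyp1} is then immediate. Since $d_\infty$ is visual with some visual parameter $\Lambda_\infty>1$, \eqref{eq:vismetrGr} gives $d_\infty(X^\infty,Y^\infty)\asymp\Lambda_\infty^{-(X^\infty\cdot Y^\infty)}$. Putting $\alpha\coloneqq\log(\Lambda_\infty)/\log(\Lambda)>0$, so that $\Lambda_\infty^{-t}=(\Lambda^{-t})^{\alpha}$, the displayed estimate gives $d_\infty(\Phi(x),\Phi(y))\asymp\varrho(x,y)^{\alpha}$ for all $x,y\in S$. As $\Phi$ is a bijection by Proposition~\ref{prop:nat_homeo}, this is exactly the statement that $\Phi$ is a snowflake equivalence.

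For part~\ref{item:vis_Gromov_hyp2}, the formula $d_\infty(X^\infty,Y^\infty)\coloneqq\varrho(\Phi^{-1}(X^\infty),\Phi^{-1}(Y^\infty))$ defines a metric on $\partial_\infty\Gamma$ because $\Phi$ is a bijection (Proposition~\ref{prop:nat_homeo}); writing $x=\Phi^{-1}(X^\infty)$ and $y=\Phi^{-1}(Y^\infty)$, the displayed estimate gives $d_\infty(X^\infty,Y^\infty)=\varrho(x,y)\asymp\Lambda^{-(X^\infty\cdot Y^\infty)}$, so $d_\infty$ is visual with visual parameter $\Lambda$. For part~\ref{item:vis_Gromov_hyp3}, the formula $\widetilde\varrho(x,y)\coloneqq d_\infty(\Phi(x),\Phi(y))$ likewise defines a metric on $S$ since $\Phi$ is a bijection, and by part~\ref{item:vis_Gromov_hyp1} we have $\widetilde\varrho(x,y)\asymp\varrho(x,y)^{\alpha}$, i.e., $\id_S\colon(S,\varrho)\to(S,\widetilde\varrho)$ is a snowflake equivalence. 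Hence Proposition~\ref{prop:visual_metric_approximation} applies and shows that $\{\X^n\}$ is a visual approximation of width $w$ for $(S,\widetilde\varrho)$, that is, $\widetilde\varrho$ is visual for $\{\X^n\}$ in the sense of Definition~\ref{def:vis_metric}. (Alternatively, combining $\widetilde\varrho(x,y)\asymp\Lambda_\infty^{-(X^\infty\cdot Y^\infty)}$ with the displayed estimate and Corollary~\ref{cor:dxy_mxy} gives $\widetilde\varrho(x,y)\asymp\Lambda_\infty^{-m(x,y)}$ for the $U_w$-proximity function $m$, after which Corollary~\ref{cor:visul_charac} yields the same conclusion.)

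The step requiring the most care is the core comparison: one must keep the comparability constants from Lemma~\ref{lem:dXYLmXY} uniform over tiles, correctly interchange $\limsup$ with $t\mapsto\Lambda^{-t}$, and convert the additive discrepancy $c_0$ between $(X^\infty\cdot Y^\infty)$ and $\liminf_{n\to\infty}(X^n\cdot Y^n)$ into a bounded multiplicative factor. Once this estimate is established, all three parts follow from the bijectivity of $\Phi$ and the characterizations of visual approximations already proved in Sections~\ref{sec:visu-appr}--\ref{sec:comb-visu-appr}.
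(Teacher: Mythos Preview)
Your proof is correct and follows essentially the same approach as the paper: establish the core estimate $\varrho(x,y)\asymp\Lambda^{-(X^\infty\cdot Y^\infty)}$ by combining Lemma~\ref{lem:seq_infty_equiv}~\ref{item:diam_dxy}, Lemma~\ref{lem:dXYLmXY}, and \eqref{eq:Grprodinfty}, then read off all three parts from this together with Proposition~\ref{prop:nat_homeo} and Proposition~\ref{prop:visual_metric_approximation}. The only cosmetic difference is that your exponent $\alpha=\log(\Lambda_\infty)/\log(\Lambda)$ is the reciprocal of the paper's, reflecting that you write the snowflake relation as $d_\infty\asymp\varrho^{\alpha}$ rather than $\varrho\asymp d_\infty^{\alpha}$; both are equivalent formulations of snowflake equivalence.
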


  As we will see, the visual parameter $\Lambda_\infty$ of $d_\infty$ in \ref{item:vis_Gromov_hyp2} is the
same as the visual parameter $\Lambda$ of $\varrho$, i.e.,  $\Lambda_\infty= \Lambda$. Similarly, in
\ref{item:vis_Gromov_hyp3}, the width of $\widetilde{\varrho}$
equals the one of $\varrho$, and again we have $\Lambda_\infty= \widetilde{\Lambda}$ for the visual parameters $\widetilde{\Lambda}$ and $\Lambda_\infty$ associated with  
 $\widetilde{\varrho}$ and
$d_\infty$, respectively.

\begin{proof}[Proof of Proposition~\ref{prop:vis_Gromov_hyp}]
  \ref{item:vis_Gromov_hyp1}
  Let $d_\infty$ be a visual metric on $\partial_\infty \Gamma$
  with visual parameter $\Lambda_\infty>1$,  and, as before,  let $\Lambda>1$ be  the
  visual parameter of the visual approximation $\{\X^n\}$ for $(S,\varrho)$ as in Definition~\ref{def:visual}. We set   \[\alpha \coloneqq  \log (\Lambda) /\log (\Lambda_\infty) >0.\] Then $\Lambda = \Lambda_\infty^\alpha$.

Now let $x,y\in S$ be arbitrary and define $X^\infty\coloneqq  \Phi(x)$ 
  and 
  $Y^\infty \coloneqq\Phi(y)$. Choosing $\{X_i\} \in X^\infty$ and $\{Y_i\}\in Y^\infty$,  we have
  \begin{equation}
\diam (\{x\}\cup X_i)\to 0 \text{ and } \diam (\{y\}\cup Y_i) \text{ as } i\to \infty
\end{equation}
 by Corollary~\ref{cor:seqpt}. It follows that 
 \begin{align}\label{eq:metric_estimates}
  \varrho(x,y)
  &=
  \lim_{i\to \infty} \diam(X_i \cup Y_i)
  &&\text{by
    Lemma~\ref{lem:seq_infty_equiv}~\ref{item:diam_dxy}}
    \\
  &\asymp
    \limsup_{i\to \infty} \Lambda^{-(X_i \cdot Y_i)}
  &&\text{by Lemma~\ref{lem:dXYLmXY}}\notag
  \\
  &\asymp
    \Lambda^{-(X^\infty\cdot Y^\infty)} &&\text{by \eqref{eq:Grprodinfty} } \notag
    \\
    &=
   \Lambda_\infty^{-\alpha(X^\infty\cdot Y^\infty)} \notag\\
 &\asymp
    d_\infty(X^\infty,Y^\infty)^\alpha \notag \\
    &= d_\infty(\Phi(x), \Phi(y))^\alpha.\notag
\end{align}
Thus $\Phi\colon (S,\varrho) \to (\partial_\infty \Gamma,
d_\infty)$ is a snowflake equivalence.

\smallskip
\ref{item:vis_Gromov_hyp2}  Since $\Phi\colon S\to \partial_\infty \Gamma$ is a bijection (Proposition~\ref{prop:nat_homeo}), $\Phi^{-1}$ is well defined and so we can define  $d_\infty$ as in the statement. 
Obviously, $d_\infty$ is a metric on  $\partial_\infty \Gamma$,  and for  $X^\infty,Y^\infty\in \partial_\infty \Gamma$ we have 
\[
 d_\infty(X^\infty,Y^\infty) = \varrho(x,y),
 \]
where $x\coloneqq \Phi^{-1}(X^\infty),\, y\coloneqq   \Phi^{-1}(Y^\infty)\in S$. Now, using the estimates in \eqref{eq:metric_estimates}, we see that  
 \[
  d_\infty(X^\infty,Y^\infty) = \varrho(x,y) \asymp
    \Lambda^{-(X^\infty\cdot Y^\infty)}. 
 \]
This shows  that  $d_\infty$ is a visual metric on $\partial_\infty \Gamma$
with visual parameter $\Lambda_\infty = \Lambda$.  

\smallskip
\ref{item:vis_Gromov_hyp3}
Let $d_\infty$ be a visual metric on $\partial_\infty \Gamma$
with visual parameter $\Lambda_\infty>1$
and suppose the visual approximation  $\{\X^n\}$ for $(S,\varrho)$ has width $w$
(and visual parameter $\Lambda>1$) as in 
Definition~\ref{def:visual}. Set 
$\widetilde{\varrho}(x,y) \coloneqq  d_\infty(\Phi(x),\Phi(y))$ for
$x,y\in S$. Then, $\widetilde{\varrho}$ is a metric on $S$, and  by the estimates in  \eqref{eq:metric_estimates}, we have  \begin{equation*}
  \widetilde{\varrho}(x,y) 
  =
  d_\infty(\Phi(x), \Phi(y))
  \asymp
  \varrho(x,y)^{1/\alpha}
\end{equation*}
for $x,y\in S$ with $\alpha =  \log (\Lambda) /\log (\Lambda_\infty)$.
This means that $\id_S \colon (S,\varrho) \to (S,\widetilde{\varrho})$ is a snowflake equivalence. Proposition~\ref{prop:visual_metric_approximation} implies that $\{\X^n\}$ is a visual approximation for $(S,\widetilde{\varrho})$ with visual parameter $\widetilde{\Lambda}= \Lambda^{1/\alpha} = \Lambda_\infty$ and width $\widetilde{w}=w$. The statement follows. \end{proof}

We can now show that the natural identification is a quasisymmetry.

\begin{proposition}
  \label{prop:qv_visual_Gromov_hyp}
  Let $(S,d)$ be a complete and bounded metric space, $\{\X^n\}$ be a
  quasi-visual
  approximation of $(S, d)$, and   $d_\infty$ be  a visual metric on $\partial_\infty\Gamma$, where $\Gamma=\Gamma(\{\X^n))$. Then the natural identification $\Phi\colon (S,d) \ra 
  (\partial_\infty\Gamma, d_\infty)$ is a quasisymmetry.
\end{proposition}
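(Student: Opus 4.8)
The plan is to reduce the statement to the case of a visual approximation and then invoke Proposition~\ref{prop:vis_Gromov_hyp}. Recall that we are given a quasi-visual approximation $\{\X^n\}$ of the complete bounded metric space $(S,d)$. By Theorem~\ref{thm:_comb_qv} (specifically the implication \ref{item:thm_comb_qv1}$\,\Rightarrow\,$\ref{item:thm_comb_qv2}) there is a metric $\varrho$ on $S$ such that $(S,\varrho)$ is bounded and $\{\X^n\}$ is a \emph{visual} approximation of $(S,\varrho)$. Moreover, by Theorem~\ref{thm:qv-qs} the identity map $\id_S\colon (S,d)\to (S,\varrho)$ is a quasisymmetry; hence by Lemma~\ref{lem:qs_properties} the space $(S,\varrho)$ is also complete (as well as bounded). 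Thus $\varrho$ satisfies all the hypotheses of Proposition~\ref{prop:vis_Gromov_hyp}.

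First I would note that the tile graph $\Gamma=\Gamma(\{\X^n\})$ and the natural identification map $\Phi\colon S\to \partial_\infty\Gamma$ from~\eqref{eq:defnatid} depend only on the cover sequence $\{\X^n\}$ and not on any metric on $S$; in particular the same map $\Phi$ serves for both $(S,d)$ and $(S,\varrho)$. Applying Proposition~\ref{prop:vis_Gromov_hyp}~\ref{item:vis_Gromov_hyp1} to the visual approximation $\{\X^n\}$ of $(S,\varrho)$, we conclude that $\Phi\colon (S,\varrho)\to (\partial_\infty\Gamma, d_\infty)$ is a snowflake equivalence for the given visual metric $d_\infty$ on $\partial_\infty\Gamma$. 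In particular, $\Phi$ is a quasisymmetry from $(S,\varrho)$ onto $(\partial_\infty\Gamma, d_\infty)$, since every snowflake equivalence is a quasisymmetry.

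Finally I would compose: $\Phi\colon (S,d)\to (\partial_\infty\Gamma, d_\infty)$ factors as $\Phi\circ \id_S$, where $\id_S\colon (S,d)\to (S,\varrho)$ is a quasisymmetry and $\Phi\colon (S,\varrho)\to (\partial_\infty\Gamma, d_\infty)$ is a quasisymmetry. Since the class of quasisymmetries is closed under composition, $\Phi\colon (S,d)\to (\partial_\infty\Gamma, d_\infty)$ is a quasisymmetry, as claimed. There is no real obstacle here: the content has all been established in the preceding sections, and the proof is simply the assembly of Theorem~\ref{thm:_comb_qv}, Theorem~\ref{thm:qv-qs}, Lemma~\ref{lem:qs_properties}, and Proposition~\ref{prop:vis_Gromov_hyp}~\ref{item:vis_Gromov_hyp1}, together with the fact that quasisymmetries compose. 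The only point requiring a moment's care is to confirm that the hypotheses of Proposition~\ref{prop:vis_Gromov_hyp} are genuinely met by $(S,\varrho)$—namely completeness and boundedness—which is exactly what the combination of Theorem~\ref{thm:qv-qs} and Lemma~\ref{lem:qs_properties} provides.
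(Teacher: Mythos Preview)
Your proof is correct and follows essentially the same route as the paper's: reduce to a visual metric $\varrho$ via Theorem~\ref{thm:_comb_qv}, use Theorem~\ref{thm:qv-qs} to get that $\id_S\colon (S,d)\to (S,\varrho)$ is a quasisymmetry, apply Proposition~\ref{prop:vis_Gromov_hyp}~\ref{item:vis_Gromov_hyp1} to $(S,\varrho)$, and compose. Your explicit check that $(S,\varrho)$ is complete via Lemma~\ref{lem:qs_properties} is a detail the paper handles earlier as a standing assumption in Section~\ref{sec:ident-bound-at}, but including it is harmless and makes the argument self-contained.
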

In particular, it follows that  $\Phi$ is homeomorphism.
\begin{proof}   Theorem~\ref{thm:_comb_qv} implies that there is a metric $\varrho$ on $S$ with respect
  to which $\{\X^n\}$ is visual. Then by
  Theorem~\ref{thm:qv-qs} the map 
  \begin{equation*}
    \id_S\colon (S,d) \ra  (S,\varrho)
  \end{equation*}
  is a quasisymmetry and by
  Proposition~\ref{prop:vis_Gromov_hyp}~\ref{item:vis_Gromov_hyp1} the map
  \begin{equation*}
    \Phi\colon (S,\varrho) \to (\partial_\infty \Gamma,d_\infty)
  \end{equation*}
  is a snowflake equivalence, and hence also a quasisymmetry. Therefore, $\Phi\colon (S,d) \to (\partial_\infty \Gamma,d_\infty)$ is  a quasisymmetry as it is  the
  composition of the previous  two maps. The statement follows. 
\end{proof}

\section{From Gromov-hyperbolicity to quasi-visual approximations}
\label{sec:from-grom-hyperb}
In the previous section, we discussed how to naturally identify a given bounded metric space with the boundary at infinity of the (Gromov hyperbolic) tile graph associated with a quasi-visual approximation of the space.  Here we discuss the reverse direction. Namely, we consider a sequence $\{\X^n\}$ of covers of a bounded metric space $(S,d)$, where $\X^0= \{S\}$,
and define the tile 
graph $\Gamma=\Gamma(\{\X^n\})$ as in Section~\ref{sec:tile-graph}. We assume that 
$\Gamma$ is Gromov hyperbolic. In general, it is not true that
this implies that $\{\X^n\}$ is a quasi-visual approximation of $(S,d)$. However, from  
suitable combinatorial neighborhoods of tiles in the tile graph $\Gamma$ one can produce sets  that form  a quasi-visual approximation (under additional natural assumptions on the space $S$). 

More precisely, since $\Gamma$ is assumed to be Gromov hyperbolic, the boundary at infinity $\partial_\infty\Gamma$ of $\Gamma$ is well-defined. We assume it is equipped with a visual metric in the sense of Gromov hyperbolic spaces, and we fix such a metric $d_\infty$ on $\partial_\infty\Gamma$ from now on. As in the last section, we consider the map $\Phi \colon S \to \partial_\infty \Gamma$ as in \eqref{eq:defnatid},
which sends each $x\in S$ to a natural geodesic $\{X^n\}$ of $x$ in $\Gamma$ (i.e., $X^n$ satisfy $x\in X^n\in \X^n$ for each $n\in \N_0$). We note that the map $\Phi$ is well-defined by Lemma~\ref{lem:nat_geo} (the presented proof works for arbitrary  sequences $\{\X^n\}$ of covers of $S$).

We will consider two different regularity conditions on the map $\Phi$:
\begin{enumerate}[label=(\arabic*)]
\smallskip
  \item 
  $\Phi\colon (S,d) \to (\partial_\infty \Gamma, d_\infty)$ is
  a quasisymmetry;
  \smallskip
  \item 
  $\Phi\colon (S,d) \to (\partial_\infty \Gamma, d_\infty)$ is a 
  snowflake equivalence.
\end{enumerate}
\smallskip\noindent
Now $(\partial_\infty \Gamma, d_\infty)$  is always complete see \cite[Proposition~6.2]{BS00}). So for $\Phi$ to satisfy either condition, by Lemma~\ref{lem:qs_properties} it is necessary that $(S,d)$ is complete as well.  At the same time, being
uniformly perfect is necessary (and sufficient) for $(S,d)$ to have a quasi-visual approximation by Corollary~\ref{cor:qv-are-uni-perfect}. For these reasons, we will require that the bounded metric space  $(S,d)$ is complete and uniformly perfect.

In general, the original sequence of covers $\{\X^n\}$ is not necessarily a quasi-visual approximation of $(S,d)$ under the above
assumptions (see the discussion in Section~\ref{sec:necess-neighb-clust}).
To remedy this, we will need to substitute the tiles in $\X^n$ by their appropriate
neighborhoods in the tile graph $\Gamma$. 

Let $r\in \N_0$. The
\emph{neighborhood cluster of radius $r$ of a tile $X\in \X$}  is given by
\begin{equation}\label{eq:cluster}
  V_r(X)
  \coloneqq
  \bigcup \{Y\in \X : \abs{X-Y}\leq r\}.
\end{equation}
Here and below, we use the notation from Section~\ref{sec:tile-graph}; that is, $\X$ denotes the formal disjoint union of the sets $\X^n$, and $\abs{X-Y}$ denotes the combinatorial distance between $X,Y\in \X$ in $\Gamma$. Note that, in contrast to the neighborhoods
$U_w(X)$ defined in \eqref{eq:def_UwX^n}, the tiles in $V_r(X)$
are not necessarily of the same level as $X$. 
For each $n\in\N_0$ we now define
\begin{equation}\label{eq: Vn-nbhd}
  \V^n
  =
  \V^n_r(\{\X^n\})
  \coloneqq
  \{V_r(X^n) : X^n\in \X^n\}. 
\end{equation}
We denote by $\V_r$, or simply $\V$ when $r$ is fixed, the formal disjoint union of all sets $\V^n$ as above. It is immediate from the construction that each $\V^n$ is a  cover of $S$ and that $\V^0 = \{S\}$.

\begin{theorem}
  \label{thm:Gromov_implies_qv}
  Let $(S,d)$ be a bounded metric space that is complete and uniformly perfect,  let  $\{\X^n\}$ be a sequence of
  covers of $S$ with $\X^0= \{S\}$ such that the tile graph $\Gamma=\Gamma(\{\X^n\})$ is Gromov hyperbolic, and let  
    $\{\V^n\}$ be the sequence of covers of $S$ defined by \eqref{eq: Vn-nbhd} for some fixed radius $r\in \N_0$. 
   
   Then for every sufficiently large $r$ the following statements are true for the map $\Phi \colon S \to \partial_\infty \Gamma$ as in \eqref{eq:defnatid}: 
  \begin{enumerate}
  \smallskip
  \item \label{item:Gr_hyp_qv1}
    If $\Phi$ is a quasisymmetry, then $\{\V^n\}$ is  a quasi-visual approximation of width $w=1$ 
    for $(S,d)$. 
  \smallskip  
  \item \label{item:Gr_hyp_qv2} 
   If $\Phi$ is a snowflake equivalence, then $\{\V^n\}$ is a visual approximation of width $w=1$ for
    $(S,d)$. 
  \end{enumerate}
\end{theorem}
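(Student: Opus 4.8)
The plan is to reduce both statements to the corresponding facts about the \emph{original} tile graph via the fact that neighborhood clusters of radius $r$ are, on the level of the tile graph, "the same" as tiles up to bounded combinatorial distance, and then transport the visual/quasi-visual structure back along the natural identification $\Phi$ using Proposition~\ref{prop:vis_Gromov_hyp} and Theorem~\ref{thm:qv-qs}. The key observation is that the sequence $\{\V^n\}$ has its own tile graph $\Gamma' = \Gamma(\{\V^n\})$, and for $r$ sufficiently large (depending on the hyperbolicity constant of $\Gamma$), the map $V_r(X^n)\mapsto X^n$ is a quasi-isometry from $\Gamma'$ to $\Gamma$ that induces a bi-Lipschitz identification (hence the identity) on the boundaries at infinity. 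Concretely, two clusters $V_r(X^n)$ and $V_r(Y^n)$ intersect iff $|X^n - Y^n| \le 2r$ in $\Gamma$, so the incidence relation for $\{\V^n\}$ reads off combinatorial proximity in $\Gamma$ up to the additive constant $2r$; this is exactly the kind of perturbation under which Gromov products, and hence visual metrics on the boundary, change only by bounded additive/multiplicative amounts.

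First I would establish that $\{\V^n\}$ is a combinatorially visual approximation of width $w=1$ of the set $S$. For this I verify the four conditions of Definition~\ref{def:comb_visual} for the $U_1$-proximity function $m_1^{\V}$ associated with $\{\V^n\}$. The crucial step is Proposition~\ref{prop:mXY_XY} applied to $\Gamma$: it gives $m(X,Y) \asymp (X\cdot Y)$ up to an additive constant (here $m$ is the extended proximity function on $\X$ for \emph{some} width for which $\{\X^n\}$ is combinatorially visual — but wait, we are not assuming that). I need to be careful: we are \emph{not} assuming $\{\X^n\}$ is combinatorially visual, only that $\Gamma$ is Gromov hyperbolic. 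So the right route is: define $m^{\V}_1(x,y)$ directly, show that $U_1$ of a cluster $V_r(X^n)$ captures exactly $\{Y\in\X : |X^n - Y| \le 2r+1\}$ (or a comparable set), and therefore $m^{\V}_1(x,y)$ is, up to an additive constant depending only on $r$, equal to $\sup\{n : \exists X^n,Y^n\in\X^n,\ x\in X^n,\ y\in Y^n,\ |X^n - Y^n| \le 2r+1\}$. Since natural geodesics of $x$ and $y$ in $\Gamma$ are geodesic rays, this quantity is comparable (again up to an additive constant controlled by $r$ and the hyperbolicity constant $\delta$) to the Gromov product $(\Phi(x)\cdot\Phi(y))$ in $\partial_\infty\Gamma$. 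This is the heart of the argument and the step I expect to be the main obstacle: pinning down that the "combinatorial proximity at scale $2r+1$ along natural geodesics" of $x,y$ equals $(\Phi(x)\cdot\Phi(y)) + O(r+\delta)$, using that two natural geodesics for $x$ (resp.\ $y$) fellow-travel and that fellow-traveling geodesic rays have Gromov products agreeing with the boundary Gromov product up to $O(\delta)$.

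With that identity in hand, both parts follow quickly. For part~(ii): if $\Phi$ is a snowflake equivalence, then $d(x,y)\asymp d_\infty(\Phi(x),\Phi(y))^{1/\alpha} \asymp \Lambda_\infty^{-(\Phi(x)\cdot\Phi(y))/\alpha} \asymp \Lambda^{-m^{\V}_1(x,y)}$ for a suitable $\Lambda>1$ and an additive error absorbed into $C(\asymp)$; since I have already checked $\{\V^n\}$ is combinatorially visual of width $1$, Corollary~\ref{cor:visul_charac} then immediately yields that $\{\V^n\}$ is a visual approximation of width $1$ for $(S,d)$ with visual parameter $\Lambda$. For part~(i): if $\Phi$ is merely a quasisymmetry, I run the same computation using the metric $\varrho$ on $S$ obtained from $d_\infty$ via $\varrho(x,y):=d_\infty(\Phi(x),\Phi(y))$; by Proposition~\ref{prop:vis_Gromov_hyp}\ref{item:vis_Gromov_hyp3}-type reasoning (or directly via the combinatorial-visual criterion just established) $\{\V^n\}$ is a visual approximation for $(S,\varrho)$, hence a quasi-visual approximation for $(S,\varrho)$ by Lemma~\ref{lem:visual-is-also-quasi}; since $\Phi\colon(S,d)\to(\partial_\infty\Gamma,d_\infty)$ is a quasisymmetry, the map $\id_S\colon(S,d)\to(S,\varrho)$ is a quasisymmetry, and Theorem~\ref{thm:qv-qs} transports the quasi-visual approximation $\{\V^n\}$ from $(S,\varrho)$ to $(S,d)$.

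Two smaller points I would handle along the way. One: the width is genuinely $w=1$ rather than $w=0$ because clusters can meet "diagonally" (through a tile of a neighboring level), so a chain of length $1$ among clusters of level $n$ is the natural analogue of intersection; this is why the statement asserts width $1$ and not $0$, mirroring Proposition~\ref{prop:vis_width1_ex} and Remark~\ref{rem:w1}. Two: I must confirm that each $\V^n$ is a cover of $S$ with sets of positive diameter and that $\V^0=\{S\}$ — the first two are immediate since $X^n\subset V_r(X^n)$ and $\{\X^n\}$ covers $S$, and $V_r(S)=S$ because $S$ is the unique $0$-tile and is the only tile within combinatorial distance $0$ of itself at level $0$... actually $V_r(S)$ may contain lower... there are no negative levels, so $V_r(S)=\bigcup\{Y : |S-Y|\le r\}$, which for $r\ge 1$ is all of $S$ anyway; in any case $V_r(S)=S$ since every tile is a subset of $S$, so $\V^0=\{S\}$ as required. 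Finally, I note in passing that "sufficiently large $r$" means $r$ exceeding a threshold determined by the Gromov hyperbolicity constant of $\Gamma$, which is what makes the quasi-isometry $\Gamma'\to\Gamma$ a boundary-preserving map with controlled distortion of Gromov products.
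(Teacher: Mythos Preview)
Your overall strategy matches the paper's: define $\varrho(x,y)=d_\infty(\Phi(x),\Phi(y))$, show $\{\V^n\}$ is a visual approximation of width $1$ for $(S,\varrho)$, and then transport to $(S,d)$ via Theorem~\ref{thm:qv-qs} (for a quasisymmetric $\Phi$) or Proposition~\ref{prop:visual_metric_approximation} (for a snowflake $\Phi$). The paper carries out the middle step by directly verifying the two conditions of Definition~\ref{def:visual} in Lemmas~\ref{lem:Ud_Ln} and~\ref{lem:Gromov_visualii}; you instead route through the combinatorially visual criterion and the identity $m_1^{\V}(x,y)=(\Phi(x)\cdot\Phi(y))+O(1)$. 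These are two packagings of the same computations with Gromov products along natural geodesics, and your formulation is arguably cleaner conceptually.

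There is one genuine gap in your outline. The identity $m_1^{\V}(x,y)\approx(\Phi(x)\cdot\Phi(y))$ gives conditions~\ref{item:comb_visuali}, \ref{item:comb_visualiii}, \ref{item:comb_visualiv} of Definition~\ref{def:comb_visual}, but \emph{not} condition~\ref{item:comb_visualii}: you must exhibit, inside each $V^n$, two points $x,y$ with $(\Phi(x)\cdot\Phi(y))\le n+C$, i.e., $\varrho(x,y)\gtrsim\Lambda^{-n}$. This is exactly the lower diameter bound for clusters and is where uniform perfectness enters (the paper uses it in Lemma~\ref{lem:Ud_Ln}): pick $x\in X^n$, use uniform perfectness of $(S,\varrho)$ to find $y$ with $\varrho(x,y)\asymp\Lambda^{-n}$, and then show $y\in V_r(X^n)$ by bounding $|X^n-Y^n|$ via the hyperbolicity inequality---which forces $r$ above a threshold depending on $C_\Gamma$. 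Without this step your application of Corollary~\ref{cor:visul_charac} is incomplete. A smaller point: the equivalence ``$V_r(X^n)\cap V_r(Y^n)\ne\emptyset\Leftrightarrow|X^n-Y^n|\le 2r$'' is only an implication in one direction; intersection gives $|X^n-Y^n|\le c(r)$ for a larger constant (since the intersecting subtiles may sit at different levels in $[n-r,n+r]$), but this only affects additive constants and is harmless. The paper's Lemma~\ref{lem:Gromov_visualii} also contains the clever observation that for any $x\in V_r(X^n)$ one can choose a natural geodesic $\{\widetilde X^i\}$ of $x$ with $V_r(\widetilde X^n)\subset U_1(V_r(X^n))$; you will need this (or an equivalent) to pass from the $U_1$-separation hypothesis on the \emph{given} clusters $V,V'$ to a combinatorial separation of tiles on the natural geodesics of $x$ and $y$.
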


From now on until the end of this section, we assume the hypotheses of Theorem~\ref{thm:Gromov_implies_qv}.  Since 
$\Gamma=\Gamma(\{\X^n\})$ is Gromov hyperbolic, there exists a constant  $C_\Gamma\ge 0$ such that 
\begin{equation}\label{eq: grom-ineq}
  (X\cdot Y) \geq \min\{(X\cdot Z), (Z\cdot Y)\} - C_\Gamma
\end{equation}
for all $X,Y,Z\in \X$. In addition, let $\Lambda>1$ be the visual parameter of the fixed visual metric $d_\infty$ on $\partial_\infty
\Gamma$. 
We define 
\begin{equation*}
  \varrho(x,y)
  \coloneqq
  d_\infty(\Phi(x), \Phi(y))
\end{equation*}
for $x,y\in S$, which is clearly a metric on $S$. Then
\begin{equation*}
  \varrho(x,y)
  \asymp
  \Lambda^{-(X^\infty \cdot Y^\infty)},
\end{equation*}
where $X^\infty\coloneqq  \Phi(x)\in \partial_\infty
\Gamma$ and  $Y^\infty\coloneqq  \Phi(y) \in \partial_\infty
\Gamma$. In particular, $(S,\varrho)$ is bounded. Furthermore, since $\Phi\colon(S,\varrho)
  \to (\partial_\infty \Gamma, d_\infty)$ is an isometry and $\Phi\colon (S,d) \to (\partial_\infty \Gamma, d_\infty)$ is assumed to be (at least) a quasisymmetry, by Lemma~\ref{lem:qs_properties} the space $(S,\varrho)$ is complete and uniformly perfect. 

Now the following statement is true.
 
\begin{proposition}
  \label{prop:Gromov_visual}
  For each sufficiently large radius $r\in \N_0$, the sequence $\{\V^n\}$ forms a visual approximation of width $w=1$ for $(S,\varrho)$.
\end{proposition}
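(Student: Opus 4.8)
The goal is to verify the four conditions in Definition~\ref{def:visual} for the sequence $\{\V^n\}$ with respect to the metric $\varrho(x,y)=d_\infty(\Phi(x),\Phi(y))$, taking the width $w=1$ and an appropriate visual parameter (which will turn out to be $\Lambda$, the visual parameter of $d_\infty$). The underlying principle is that the neighborhood cluster $V_r(X^n)$ of a tile $X^n\in\X^n$ should be a ``quasi-ball'' around $\Phi^{-1}$ of any point it contains, with radius comparable to $\Lambda^{-n}$, and the bookkeeping is entirely through the Gromov product on $\Gamma$ and the identity $\varrho(x,y)\asymp\Lambda^{-(X^\infty\cdot Y^\infty)}$.

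\textbf{First steps: translating clusters into Gromov-product estimates.} I would begin by recording the elementary combinatorial facts about $\abs{X-Y}$ and the Gromov product $(X\cdot Y)=\tfrac12(\abs X+\abs Y-\abs{X-Y})$ on a tile graph: if $X\in\X^n$ and $Y\in\X^k$ with $X\cap Y\ne\emptyset$, then $\abs{X-Y}\le 1$ and $(X\cdot Y)\ge \min\{n,k\}-\tfrac12$; and along any geodesic the levels change by at most one. The key lemma to establish is: \emph{there are constants $a,b\ge 0$ (depending only on $r$ and $C_\Gamma$) such that for $X^n\in\X^n$ and any $x\in V_r(X^n)$, every natural geodesic $\{Z^k\}$ of $x$ satisfies $(Z^k\cdot X^n)\ge n-a$ for all $k$, hence for the boundary point $\Phi(x)$ we get $(\Phi(x)\cdot X^n)\ge n-a$ (using that $X^n$ lies within bounded distance of a geodesic ray to $\Phi(x)$); and conversely if $(\Phi(x)\cdot X^n)\ge n+b$ then $x\in V_r(X^n)$.} This is a standard ``shadow versus ball'' comparison in Gromov hyperbolic geometry: a point of $\partial_\infty\Gamma$ is in the shadow of a tile $X^n$ at scale $r$ precisely when its Gromov product with $X^n$ is $n$ up to an additive error controlled by $r$ and $C_\Gamma$, and one uses \eqref{eq: grom-ineq} together with the thin-triangle/geodesic-stability properties implicit in the Gromov product formalism. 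Throughout I would freely invoke the estimate $\varrho(x,y)\asymp\Lambda^{-(\Phi(x)\cdot\Phi(y))}$ recorded just above the proposition, as well as \eqref{eq:Grprodinfty} relating the boundary Gromov product to $\liminf$ of Gromov products along representing sequences (natural geodesics).

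\textbf{Verifying the four conditions.} With the shadow lemma in hand: For condition~\ref{item:visual1}, fix $X^n\in\X^n$ and pick $x\in X^n$; then for all $y\in V_r(X^n)$ one has $(\Phi(x)\cdot\Phi(y))\ge n-a'$ (combining the two parts of the lemma and the triangle-like inequality \eqref{eq: grom-ineq}), so $\varrho(x,y)\lesssim\Lambda^{-n}$ and hence $\diam_\varrho(V_r(X^n))\lesssim\Lambda^{-n}$. For the lower bound on the diameter, here is where \emph{uniform perfectness of $(S,\varrho)$} enters: $V_r(X^n)$ contains the ball $B_\varrho(x,c\Lambda^{-n})$ for a uniform $c>0$ (the ``ball $\subset$ shadow'' half of the lemma, since a point $y$ with $\varrho(x,y)<c\Lambda^{-n}$ has $(\Phi(x)\cdot\Phi(y))>n+b$ for suitable $c$), and uniform perfectness then forces $\diam_\varrho(B_\varrho(x,c\Lambda^{-n}))\gtrsim\Lambda^{-n}$; so $\diam_\varrho(V_r(X^n))\asymp\Lambda^{-n}$, which is \ref{item:visual1} (and also gives \ref{item:visual1} at $n=0$ since $(S,\varrho)$ is bounded with positive diameter). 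For condition~\ref{item:visual2} with $w=1$: suppose $U_1(V_r(X^n))\cap U_1(V_r(Y^n))=\emptyset$ in the cover $\V^n$; translating the chain condition in $\V^n$ back into $\X$ shows that if $x\in V_r(X^n)$ and $y\in V_r(Y^n)$ then $(\Phi(x)\cdot\Phi(y))\le n+C''$ for a uniform constant (otherwise the two clusters, suitably enlarged, would meet — this is again a shadow-overlap estimate using \eqref{eq: grom-ineq} and the chosen largeness of $r$); hence $\varrho(x,y)\gtrsim\Lambda^{-n}$, and taking infima gives $\dist_\varrho(V_r(X^n),V_r(Y^n))\gtrsim\Lambda^{-n}$. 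Condition~\ref{item:visual2} at $n=0$ is vacuous, and $\V^0=\{S\}$ by construction.

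\textbf{The main obstacle.} The delicate point — and the reason the statement only asserts success for \emph{sufficiently large} $r$ — is calibrating $r$ against $C_\Gamma$ in the shadow lemma so that both inclusions (``ball $\subset$ cluster $\subset$ shadow'' and the separation direction) hold with the \emph{same} threshold scale $n$ up to bounded additive error, and simultaneously so that the width-$1$ neighborhood operation $U_1$ on $\V^n$ does not blow the error past what the Gromov inequality can absorb. Concretely, one must choose $r$ larger than an explicit multiple of $C_\Gamma$ (roughly $r\gtrsim C_\Gamma$) so that a tile at combinatorial distance $\le r$ from $X^n$ still records the same boundary scale, and so that a chain of three width-$1$ steps through clusters (as in the length-$2w+1=3$ chain used throughout the paper) stays within a geodesic-stability neighborhood. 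Once $r$ is fixed in this way, all the additive constants are uniform and the four displayed estimates assemble into the assertion that $\{\V^n\}$ is a visual approximation of width $1$ for $(S,\varrho)$ with visual parameter $\Lambda$; combined with $\V^0=\{S\}$ this completes the proof.
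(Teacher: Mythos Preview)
Your approach is correct and is essentially the same as the paper's: the paper splits the verification into two lemmas (Lemmas~\ref{lem:Ud_Ln} and~\ref{lem:Gromov_visualii}) that establish the diameter estimate $\diam_\varrho(V_r(X^n))\asymp\Lambda^{-n}$ and the width-$1$ separation condition, respectively, by exactly the Gromov-product manipulations you describe---using uniform perfectness of $(S,\varrho)$ for the lower diameter bound and the threshold $r>2C_\Gamma$ for separation (via the passage from $x\in V_r(X^n)$ to the cluster $V_r(\widetilde{X}^n)\subset U_1(V_r(X^n))$ centered on a natural geodesic of $x$). The only organizational difference is that the paper carries out the estimates directly rather than first packaging them as a ``shadow lemma''.
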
 
Note that $S$ is equipped with the metric $\varrho$ here and not the original metric $d$. 

Before we turn to the proof of this proposition, we first point out how it implies 
 Theorem~\ref{thm:Gromov_implies_qv}.

\begin{proof}[Proof of Theorem~\ref{thm:Gromov_implies_qv}]
  Let the radius $r\in \N_0$ be sufficiently large so that $\{\V^n\}$ is a 
  visual approximation of width $w=1$ for $(S,\varrho)$ as in
  Proposition~\ref{prop:Gromov_visual}. 
  Recall also that $\Phi\colon(S,\varrho)
  \to (\partial_\infty \Gamma, d_\infty)$ is an isometry by the definition of the metric $\varrho$. 

  \smallskip
   Assume first that $\Phi\colon (S,d) \to (\partial_\infty
  \Gamma, d_\infty)$ is a quasisymmetry. Then
  \begin{align} \label{eq: id-as-comp}
    \id_S\colon 
    (S,d)
    \xrightarrow{\Phi}
    (\partial_\infty \Gamma, d_\infty)
    \xrightarrow{\Phi^{-1}}
    (S,\varrho)
  \end{align}
  is a quasisymmetry as well. Since $\{\V^n\}$ is a quasi-visual approximation of width $w=1$ for $(S,\varrho)$ by Lemma~\ref{lem:visual-is-also-quasi}, it follows from Theorem~\ref{thm:qv-qs} that $\{\V^n\}$ is also a quasi-visual approximation of width $w=1$ for $(S,d)$. This shows \ref{item:Gr_hyp_qv1}.

  \smallskip
  Now assume that that $\Phi\colon (S,d) \to (\partial_\infty
  \Gamma, d_\infty)$ is a snowflake equivalence. Then the map in \eqref{eq: id-as-comp} 
  is also a snowflake equivalence. The desired statement \ref{item:Gr_hyp_qv2} now follows directly from
   Proposition~\ref{prop:visual_metric_approximation}, and the proof of the proposition
is complete.
\end{proof}

It remains to prove Proposition~\ref{prop:Gromov_visual}. To this end, we will verify the two conditions
in Definition~\ref{def:visual} in the ensuing two lemmas, where all metric notions in $S$ are with respect to the metric $\varrho$.

\begin{lemma}
  \label{lem:Ud_Ln}
  For each sufficiently large (fixed)  $r\in \N_0$, we have
  \begin{equation*}
    \diam(V^n)
    \asymp
    \Lambda^{-n}
  \end{equation*}
  for all $n\in \N_0$ and $V^n\in \V^n= \V^n_r(\{\X^n\})$. 
\end{lemma}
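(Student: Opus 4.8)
The plan is to show the estimate $\diam(V^n) \asymp \Lambda^{-n}$ by relating the neighborhood cluster $V_r(X^n)$ to the Gromov product, and then translating this to the metric $\varrho$ via $\varrho(x,y) \asymp \Lambda^{-(X^\infty\cdot Y^\infty)}$.

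First I would establish the \emph{upper bound}. Fix $n\in\N_0$ and $X^n\in\X^n$, and consider arbitrary points $x,y\in V^n = V_r(X^n)$. By definition of the neighborhood cluster, there are tiles $X,Y\in\X$ with $x\in X$, $y\in Y$ and $\abs{X^n-X}\le r$, $\abs{X^n-Y}\le r$. The key observation is that this forces both $(X^n\cdot X)$ and $(X^n\cdot Y)$ to be close to $n$: since $X^n$ has level $n$ and a path of length $\le r$ to $X$ can decrease the level by at most $r$, we get $\abs{X}\ge n-r$, and $(X^n\cdot X) = \tfrac12(n + \abs{X} - \abs{X^n-X}) \ge \tfrac12(n + (n-r) - r) = n-r$; the bound $(X^n\cdot X)\le n$ is automatic. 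Now pick natural geodesics for $x$ and $y$; they pass through $X$ and $Y$ essentially (or at least through $n$-tiles containing $x$, $y$, which intersect $X$, $Y$). Using the coarse compatibility of the Gromov product on $\Gamma$ with that on $\partial_\infty\Gamma$ (equation \eqref{eq:Grprodinfty}) together with the hyperbolic inequality \eqref{eq: grom-ineq}, one deduces $(X^\infty\cdot Y^\infty) \ge n - C'$ for a constant $C'$ depending on $r$ and $C_\Gamma$ but not on $n$, $x$, $y$. Hence $\varrho(x,y)\asymp\Lambda^{-(X^\infty\cdot Y^\infty)}\lesssim \Lambda^{-n}$, and taking the supremum gives $\diam(V^n)\lesssim \Lambda^{-n}$.

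For the \emph{lower bound}, I would use that $(S,\varrho)$ is uniformly perfect (recorded just before the statement, via Lemma~\ref{lem:qs_properties}): there is a point $x\in X^n\subset V^n$ and a point $y$ with $\varrho(x,y)\asymp\Lambda^{-n}$ and $y$ not too far from $x$; more precisely, one wants to produce $y\in V^n$ with $\varrho(x,y)\gtrsim\Lambda^{-n}$. The natural way is: by uniform perfectness and the upper bound already proved, find $y\in S$ with $c\Lambda^{-n}\le \varrho(x,y)\le C\Lambda^{-n}$ for suitable constants; then $(X^\infty\cdot Y^\infty)\le n + C''$ for a uniform $C''$, which combined with the coarse comparison between the Gromov products on $\Gamma$ and on $\partial_\infty\Gamma$ and with the definition of the Gromov product as $\tfrac12(\abs{X'}+\abs{Y'}-\abs{X'-Y'})$ forces, for $n$-tiles $X',Y'$ containing $x$, $y$ respectively, the combinatorial distance $\abs{X'-Y'}$ to be bounded by a constant depending only on $C''$ and $C_\Gamma$. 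Choosing $r$ at least this constant places $Y'$ (hence $y$) inside $V_r(X')$; and since $X'\cap X^n\ni x$, the tiles $X^n$ and $X'$ are within combinatorial distance $1$, so $V_r(X')\subset V_{r+1}(X^n)$. This does not quite land $y$ in $V_r(X^n)$ itself, so the clean fix is to prove the whole statement with $V^n = \V^n_r$ for all sufficiently large $r$, absorbing the off-by-one into the choice of $r$ — which is exactly the form in which Proposition~\ref{prop:Gromov_visual} is quantified (``for each sufficiently large $r$''). Thus $y\in V^n$ and $\diam(V^n)\ge\varrho(x,y)\gtrsim\Lambda^{-n}$.

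The main obstacle I expect is bookkeeping the additive constants when passing between three different ``Gromov product'' notions: the combinatorial one $(X\cdot Y)$ on vertices of $\Gamma$ as in \eqref{eq:def_Gromov}, its extension, and the boundary Gromov product $(X^\infty\cdot Y^\infty)$ from \eqref{eq:def_Gromov_infty}, where the comparison \eqref{eq:Grprodinfty} only holds up to the fixed constant $c_0$, and where one must also control how much the Gromov product can drop along a path of length $\le r$ in $\Gamma$. Each of these introduces an additive error that is uniform in $n$ but depends on $r$ (and on $C_\Gamma$), and one must check that these errors only affect the comparability constant $C(\asymp)$ and never the exponent of $\Lambda$. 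A secondary subtlety is the interplay between the width parameter $w=1$ of the asserted visual approximation and the radius $r$: the neighborhoods $U_1(V^n)$ that appear in the \emph{separation} condition (Lemma~\ref{lem:Ud_Ln}'s companion lemma, not this one) will again only be controlled for $r$ large, which is why the statement is phrased for sufficiently large $r$; for the diameter estimate alone, however, any $r\ge$ (the constant extracted above) suffices, and no genuine topological difficulty arises.
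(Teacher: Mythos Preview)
Your approach matches the paper's: both directions go via the Gromov product and its relation to $\varrho$, with uniform perfectness supplying the point $y$ for the lower bound. However, there is a genuine directional error in your lower-bound argument.

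You write that $c\Lambda^{-n}\le \varrho(x,y)\le C\Lambda^{-n}$ gives $(X^\infty\cdot Y^\infty)\le n + C''$, and that this upper bound on the Gromov product forces $\abs{X'-Y'}$ to be bounded. This is backwards. Since $\varrho(x,y)\asymp\Lambda^{-(X^\infty\cdot Y^\infty)}$, the inequality $\varrho(x,y)\ge c\Lambda^{-n}$ yields $(X^\infty\cdot Y^\infty)\le n+C''$, while $\varrho(x,y)\le C\Lambda^{-n}$ yields $(X^\infty\cdot Y^\infty)\ge n-C''$. For $n$-tiles $X^n,Y^n$ you have $(X^n\cdot Y^n)=n-\tfrac12\abs{X^n-Y^n}$, so bounding $\abs{X^n-Y^n}$ from \emph{above} requires a \emph{lower} bound on $(X^n\cdot Y^n)$, hence on $(X^\infty\cdot Y^\infty)$. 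The inequality you quoted gives only the trivial $\abs{X^n-Y^n}\ge -2C''$. The fix is immediate since you already have both bounds on $\varrho(x,y)$: use the upper bound $\varrho(x,y)\le C\Lambda^{-n}$ to get $(X^\infty\cdot Y^\infty)\ge n-C''$, then the hyperbolic inequality \eqref{eq: grom-ineq} with $(X^n\cdot X^{n+k})=(Y^n\cdot Y^{n+k})=n$ and \eqref{eq:Grprodinfty} gives $(X^n\cdot Y^n)\ge n-C$, i.e.\ $\abs{X^n-Y^n}\le 2C$. This is exactly what the paper does.

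Your off-by-one worry is also unnecessary: since $x\in X^n$, simply choose the natural geodesic of $x$ to pass through $X^n$ itself (so $X'=X^n$), and the bound on $\abs{X^n-Y^n}$ places $y\in Y^n\subset V_r(X^n)$ directly once $r\ge 2C$.
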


This means $\{\V^n\}$ satisfies condition~\ref{item:visual1} in
Definition~\ref{def:visual} for $(S,\varrho)$.

\begin{proof}
  Let $n\in \N_0$ and $V^n=V_r(X^n)\in \V^n$ be arbitrary, where $X^n\in \X^n$ and the radius $r\in \N_0$ is to be fixed later.

  We choose $x\in X^n$. Since $S$ is uniformly perfect, there is $y\in S$
  with
  \begin{equation*}
    \varrho(x,y)
    \asymp
    \Lambda^{-n},
  \end{equation*}
  where $C(\asymp)$ depends on the uniform perfectness constant of
  $(S,\varrho)$ and its (finite) diameter. On the other hand, we have 
  \begin{equation*}
    \varrho(x,y)
    \asymp
    \Lambda^{-(X^\infty\cdot Y^\infty)},
  \end{equation*}
  where $X^\infty\coloneqq \Phi(x) \in \partial_\infty\Gamma$ and  $Y^\infty\coloneqq \Phi(y)\in \partial_\infty\Gamma$. 
  Let $\{X^i\}$ and $\{Y^i\}$ be natural geodesics   for $x$ and
  $y$ in $\Gamma$, respectively. By \eqref{eq:Grprodinfty} we know that
  \begin{equation*}
    \Lambda^{-(X^\infty\cdot Y^\infty)}
    \asymp
    \limsup_{i\to \infty} \Lambda^{-(X^i\cdot Y^i)}.
  \end{equation*} It follows that 
  \begin{equation}\label{eq: gr-prod-bounded}
    (X^i\cdot Y^i)
    \ge
    n -C_1
  \end{equation}
  for all sufficiently large $i\in \N_0$ and a uniform constant $C_1\ge 0$. 

  Without loss of generality, we may assume that $X^n$ is the $n$-tile in our chosen natural geodesic $\{X^i\}$ of $x$. We want to show that $|X^n-Y^n|$ is uniformly bounded from above, where $Y^n$ is the $n$-tile in the chosen natural geodesic $\{Y^i\}$ of $y$. 
  
  Indeed, using \eqref{eq: grom-ineq}, for each $k\in \N_0$ we see that
  \begin{align*}
    (X^n \cdot Y^n)
    \geq
    \min\{(X^n \cdot X^{n+k}),\, (X^{n+k}\cdot Y^{n+k}),\,
    (Y^{n+k}\cdot Y^n)\} - 2 C_\Gamma. 
  \end{align*}
  Note that $(X^n\cdot X^{n+k}) = (Y^n\cdot Y^{n+k}) =n$. Substituting this into the previous inequality and using \eqref{eq: gr-prod-bounded} we obtain  
  \begin{align*}
    (X^n\cdot Y^n)
      \geq
      n- C
  \end{align*}
  for the uniform constant $C\coloneqq C_1+2C_\Gamma>0$. Since
  \begin{align*}
    (X^n\cdot Y^n)
    =
    \frac{1}{2}(\abs{X^n} + \abs{Y^n} - \abs{X^n-Y^n})
    =
    n- \frac{1}{2} \abs{X^n-Y^n},
  \end{align*}
  we conclude that 
  \begin{equation*}
    \abs{X^n-Y^n}\leq 2C. 
  \end{equation*}

  It follows that whenever the radius $r$ satisfies $r\geq\ceil{2C}$, we have $y\in Y^n\subset
  V^n=V_r(X^n)$, and thus
  \begin{equation*}
    \diam(V^n)\geq \varrho(x,y) \gtrsim \Lambda^{-n}. 
  \end{equation*}
  We have shown the desired lower bound on $\diam(V^n)$. 
  
  \smallskip
  To show the respective upper bound, let $z\in V^n=V_r(X^n)$ be arbitrary. Then
  there is $Z\in \X$ with $z\in Z$ that satisfies 
  $\abs{X^n- Z}
  \leq r$ and so $n-r\le \abs{Z} \le  n+r$. 
  
  Let
  $\{Z^i\}$ be a natural geodesic of $z$ and set $Z^\infty:=\Phi(z)=[\{Z^i\}]$. We may assume that the tile $Z$ is part of this natural geodesic or, equivalently, that $Z^i=Z$ for $i=|Z|$. Then for each $k\in \N_0$ we have 
  \[
  \abs{Z - Z^{n+k}}= \big||Z|-|Z^{n+k}|\big|\le \big||Z|-n\big|+k\le r+k,
  \]
  and so 
  \begin{equation*}
    \abs{X^{n+k}- Z^{n+k}}
    \leq
    \abs{X^{n+k} - X^n} + \abs{X^n - Z} + \abs{Z - Z^{n+k}}
    \leq
    k + r + r+k.
  \end{equation*}
  It follows that 
  \begin{align*}
    (X^{n+k} \cdot Z^{n+k})
    &=
    \frac{1}{2}(\abs{X^{n+k}} + \abs{Z^{n+k}} - \abs{X^{n+k} -
      Z^{n+k}})
    \\
    &\geq
    \frac{1}{2}( n+ k + n+k - 2k -2r) = n -r,   
  \end{align*}
 which implies
  \begin{align*}
    \varrho(x,z)
        \asymp
  \Lambda^{-(X^\infty \cdot Z^\infty)}
    \asymp
    \limsup_{k\to \infty}
    \Lambda^{-(X^{n+k} \cdot Z^{n+k})}
    \lesssim
    \Lambda^{-n}.
  \end{align*}
  Since $z\in V^n$ was arbitrary, we conclude that $\diam(V^n) \lesssim \Lambda^{-n}$,
  finishing the proof of the lemma.
\end{proof}

\begin{lemma}
  \label{lem:Gromov_visualii}
  For each  sufficiently large (fixed) $r\in \N_0$ the implication
  \begin{equation*}
    U_1(V) \cap U_1(V') = \emptyset
    \, \Rightarrow \, 
    \dist(V, V') \gtrsim \Lambda^{-n}.
  \end{equation*}
  is true for all $n\in \N_0$ and $V, V'\in\V^n =
  \V^n_r(\{\X^n\})$. 
\end{lemma}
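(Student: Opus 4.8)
The plan is to verify the second condition in Definition~\ref{def:visual} for $\{\V^n\}$ with respect to $\varrho$, i.e.\ that $U_1(V)\cap U_1(V')=\emptyset$ implies $\dist(V,V')\gtrsim \Lambda^{-n}$, by proving the contrapositive: there is a uniform constant $\kappa>0$ such that if $n\in\N_0$, $P,Q\in\X^n$, $x\in V\coloneqq V_r(P)$, $y\in V'\coloneqq V_r(Q)$, and $\varrho(x,y)<\kappa\Lambda^{-n}$, then $U_1(V)\cap U_1(V')\neq\emptyset$. Taking the infimum over $x\in V$, $y\in V'$ then gives $\dist(V,V')\ge\kappa\Lambda^{-n}$ whenever $U_1(V)\cap U_1(V')=\emptyset$, and combined with Lemma~\ref{lem:Ud_Ln} this proves Proposition~\ref{prop:Gromov_visual} (hence Theorem~\ref{thm:Gromov_implies_qv}). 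Throughout, $r\in\N_0$ is taken large enough for Lemma~\ref{lem:Ud_Ln} \emph{and} with $r\ge 2C_\Gamma$, where $C_\Gamma\ge 0$ is the hyperbolicity constant from \eqref{eq: grom-ineq}.

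First I would descend to level $n$ along natural geodesics. Since $x\in V_r(P)$, pick a tile $A$ with $x\in A$ and $\abs{P-A}\le r$, and choose a natural geodesic $\{X^i\}$ of $x$ with $X^{\abs{A}}=A$; set $X^\infty\coloneqq\Phi(x)=[\{X^i\}]$. Because each edge of $\Gamma$ changes the level by at most $1$, we have $\big|\,\abs{A}-n\,\big|\le\abs{P-A}\le r$, so $\abs{X^n-A}=\big|\,\abs{A}-n\,\big|\le r$ and hence $\abs{P-X^n}\le 2r$. Doing the same for $y$ yields $\{Y^i\}$, $Y^\infty\coloneqq\Phi(y)$, and $\abs{Q-Y^n}\le 2r$. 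Next I translate metric smallness into a combinatorial bound: from $\varrho(x,y)\asymp\Lambda^{-(X^\infty\cdot Y^\infty)}$ (valid since $\varrho(\cdot,\cdot)=d_\infty(\Phi(\cdot),\Phi(\cdot))$ and $d_\infty$ is visual), choosing $\kappa$ small enough forces $(X^\infty\cdot Y^\infty)>n+c_0$ with $c_0$ as in \eqref{eq:Grprodinfty}; that inequality then gives $(X^i\cdot Y^i)\ge n$ for all large $i$. Applying \eqref{eq: grom-ineq} twice (inserting $X^i$ and then $Y^i$) and using $(X^n\cdot X^i)=(Y^n\cdot Y^i)=n$ for $i\ge n$, we obtain $(X^n\cdot Y^n)\ge n-2C_\Gamma$, that is, $\abs{X^n-Y^n}\le 4C_\Gamma\le 2r$.

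Now the four tiles $P,X^n,Y^n,Q\in\X^n$ have consecutive combinatorial distances at most $2r$, so the corresponding clusters $V_r(P),V_r(X^n),V_r(Y^n),V_r(Q)\in\V^n$ satisfy $V_r(P)\cap V_r(X^n)\ni x$, $V_r(Y^n)\cap V_r(Q)\ni y$, and $V_r(X^n)\cap V_r(Y^n)\ne\emptyset$ — for the middle link, if $X^n\ne Y^n$ take a $\Gamma$-geodesic $X^n=T_0,T_1,\dots,T_d=Y^n$ with $d\le 2r$ (each $T_j$ nonempty, being part of a path in $\Gamma$) and note that $T_{\lfloor d/2\rfloor}$ lies within distance $r$ of both endpoints, hence $T_{\lfloor d/2\rfloor}\subset V_r(X^n)\cap V_r(Y^n)$. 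This exhibits a chain $V_r(P),V_r(X^n),V_r(Y^n),V_r(Q)$ of length $3=2w+1$ with $w=1$ from $V$ to $V'$, which is exactly what it means for $U_1(V)\cap U_1(V')$ to be nonempty. I expect the main obstacle to be the bookkeeping of step two and three: one must pass from the cluster \emph{centers} $P,Q$ to the level-$n$ tiles $X^n,Y^n$ produced by natural geodesics through \emph{arbitrary} points of the clusters, keep all the resulting combinatorial distances uniformly controlled, and recognize that a chain of length exactly $3$ suffices precisely because the width is $w=1$ — the condition $r\ge 2C_\Gamma$ being exactly what guarantees that the middle link $V_r(X^n)\cap V_r(Y^n)$ is nonempty.
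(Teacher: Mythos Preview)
Your proof is correct and follows essentially the same approach as the paper's: both arguments descend from cluster points $x,y$ to level-$n$ tiles $X^n,Y^n$ along natural geodesics, then use the Gromov hyperbolicity inequality (twice, together with $(X^n\cdot X^i)=(Y^n\cdot Y^i)=n$) to relate $(X^n\cdot Y^n)$ to $(X^\infty\cdot Y^\infty)\asymp -\log_\Lambda\varrho(x,y)$, the key observation being that $\abs{X^n-Y^n}\le 2r$ is equivalent to $V_r(X^n)\cap V_r(Y^n)\ne\emptyset$. The only difference is cosmetic: the paper runs the implication directly (assuming $U_1(V)\cap U_1(V')=\emptyset$, hence $V_r(\widetilde X^n)\cap V_r(\widetilde Y^n)=\emptyset$, hence $\abs{\widetilde X^n-\widetilde Y^n}\ge 2r$, hence $(\widetilde X^{n+k}\cdot\widetilde Y^{n+k})\le n$, hence $\varrho(x,y)\gtrsim\Lambda^{-n}$), whereas you argue the contrapositive and explicitly exhibit the length-$3$ chain; accordingly the paper needs $r>2C_\Gamma$ while your version works already with $r\ge 2C_\Gamma$.
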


This means $\{\V^n\}$ satisfies condition~\ref{item:visual2}
in Definition~\ref{def:visual} with width $w=1$. 
 The proof will show that  $r > 2C_\Gamma$ is sufficient for the implication to be true. 

\begin{proof}
  Let $n\in \N_0$ and $V,V'\in \V^n= \V^n_r(\{\X^n\})$ be arbitrary, where the radius $r\in \N_0$ is to be fixed later.

  Then $V = V_r(X^n)$ for some
  $X^n\in \X^n$. Now let $x\in V$ be arbitrary. Then there is $\widetilde{X}\in \X$ with $x\in \widetilde{X}$ and $\abs{X^n-\widetilde{X}}\leq r$, and so $\widetilde{X}$ satisfies 
  \begin{equation*}
     n-r \leq \abs{\widetilde{X}} \leq n+ r.
  \end{equation*}
  Let $\{\widetilde{X}^i\}$ be a natural geodesic of $x$. We
  may assume it includes $\widetilde{X}$, meaning that
  $\widetilde{X}^\ell = \widetilde{X}$, where $\ell \coloneqq
  \abs{\widetilde{X}}$ is the level of $\widetilde{X}$. 
  
 Now  consider $\widetilde{V}\coloneqq
  V_r(\widetilde{X}^n)$. Since $\abs{\widetilde{X}^n -
    \widetilde{X}} = \abs{n-\ell}\leq r$, we have
  $\widetilde{X}\subset \widetilde{V}$, and so $x\in V\cap \widetilde{V} \neq
  \emptyset$. It follows that
  \begin{equation*}
    \widetilde{V}\subset U_1(V).
  \end{equation*}

  Similarly, let $y\in V'$ be arbitrary. By a similar reasoning as for $x$, 
   there is a natural
  geodesic $\{\widetilde{Y}^i\}$ of $y$ such that
  $\widetilde{V}' \coloneqq V_r(\widetilde{Y}^n)$ satisfies
  \begin{equation*}
    \widetilde{V}'\subset U_1(V').
  \end{equation*}

  After these preliminaries, let us now assume that the given sets $V,V'\in \V^n$ satisfy the condition $U_1(V)
  \cap U_1(V') = \emptyset$. The above considerations then imply that $\widetilde{V} \cap \widetilde{V}' = \emptyset$, and so $
    \abs{\widetilde{X}^n - \widetilde{Y}^n} \geq 2r$.
  It follows that
  \begin{equation*}
    (\widetilde{X}^n \cdot \widetilde{Y}^n)
    = \frac{1}{2} (\abs{\widetilde{X}^n} + \abs{\widetilde{Y}^n}
    - \abs{\widetilde{X}^n - \widetilde{Y}^n})
    \leq
    \frac{1}{2}( n+ n -2r) = n - r.
  \end{equation*}
  Using \eqref{eq: grom-ineq}, we
 conclude that  for all $k \in \N_0$,
  \begin{align*}
    n-r
    &\geq
      (\widetilde{X}^n \cdot \widetilde{Y}^n)
      \\
    &\geq
    \min\{(\widetilde{X}^n \cdot \widetilde{X}^{n+k}),
    (\widetilde{X}^{n+k}\cdot \widetilde{Y}^{n+k}),
      (\widetilde{Y}^{n+k}\cdot \widetilde{Y}^n)\} - 2C_\Gamma
    \\
    &=
      \min\{n,
      (\widetilde{X}^{n+k}\cdot \widetilde{Y}^{n+k}), n\}
      -2C_\Gamma, 
  \end{align*}
  and so
  \begin{equation}\label{eq:nrCGa}
  n - r +2 C_\Gamma \geq  \min\{n,
      (\widetilde{X}^{n+k}\cdot \widetilde{Y}^{n+k})\}. 
  \end{equation}
  
  Now suppose that $r> 2C_\Gamma$. Then for the  left hand side in
  \eqref{eq:nrCGa} we have  $n-r+2C_\Gamma<
  n$. Therefore, the minimum on the right hand side is attained by 
  $(\widetilde{X}^{n+k}\cdot \widetilde{Y}^{n+k})$, which implies that 
  \begin{equation*}
    n
    \geq
    (\widetilde{X}^{n+k}\cdot \widetilde{Y}^{n+k}). 
  \end{equation*}
  By \eqref{eq:Grprodinfty}, we then have
  \[
    \varrho(x,y) 
    \asymp \Lambda^{-(X^\infty \cdot Y^\infty)} \asymp 
    \limsup_{k\to \infty}
    \Lambda^{-(\widetilde{X}^{n+k}\cdot
    \widetilde{Y}^{n+k})}
    \geq
    \Lambda^{-n}, 
  \]
  where $X^\infty\coloneqq \Phi(x)=[\{\widetilde{X}^i\}] \in \partial_\infty\Gamma$ and $Y^\infty\coloneqq \Phi(y)=[\{\widetilde{Y}^i\}]\in \partial_\infty\Gamma$. 
  
  Since the points $x\in V$ and $y\in V'$
  were arbitrary, we conclude that $\dist(V, V') \gtrsim
  \Lambda^{-n}$, which finishes the proof of the lemma.
\end{proof}

Proposition~\ref{prop:Gromov_visual} is now an immediate consequence of Lemmas~\ref{lem:Ud_Ln} and~\ref{lem:Gromov_visualii}.

\subsection{Comparing the tile graphs for \texorpdfstring{$\{\X^n\}$ and $\{\V^n\}$}{\{Xⁿ\} and \{Vⁿ\}}}
\label{sec:comparing-GXGV}  
Our starting point in this section was a sequence $\{\X^n\}$ of covers of a bounded metric space $(S,d)$ from
which we constructed a new sequence of covers $\{\V^n\}$. The
question arises how the associated tile graphs $\Gamma_{\X}\coloneqq \Gamma(\{\X^n\})$ and $\Gamma_{\V}\coloneqq
\Gamma(\{\V^n\})$ relate to each other (as metric spaces). We denote the corresponding combinatorial distances by $\abs{X-Y}_{\X}$
and $\abs{V-V'}_{\V}$, respectively (for $X,Y\in \X$ and
$V,V'\in \V$). Here $r\in \N_0$ is fixed, and we write $V(X)=
V_r(X)$ in the following to simplify notation. 

We have a natural surjective map 
\begin{equation}\label{eq: graph-map}
\Psi\colon \Gamma_\X \to \Gamma_\V, \quad X\in \X\mapsto V(X)\in \V.
\end{equation} We claim that this map is in fact a {\em quasi-isometry} (actually a \emph{rough similarity}), which is shown in the next lemma (for a discussion of these concepts, see \cite[Section~2]{BS00}).

\begin{lemma}
  \label{lem:GXGV_qi}
  The surjective map $\Psi\colon \Gamma_\X \to \Gamma_\V$ given in \eqref{eq: graph-map} satisfies the inequality
  \begin{equation*}
    \frac{1}{2r+1}\abs{X-Y}_{\X} 
    \leq
    \abs{V(X) - V(Y)}_{\V}
    \leq
    \frac{1}{2r+1} \abs{X-Y}_{\X} +1
  \end{equation*}
  for all $X,Y\in \X$.
\end{lemma}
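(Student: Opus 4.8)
The plan is to prove the two inequalities separately, using that adjacency in $\Gamma_\X$ is governed by the incidence relation \eqref{eq:incidence} while adjacency in $\Gamma_\V$ is governed by the same relation applied to the sets $V(X)=V_r(X)$ together with level shifts.

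First I would establish the upper bound $\abs{V(X)-V(Y)}_\V \leq \frac{1}{2r+1}\abs{X-Y}_\X + 1$. Take a geodesic $X=X_0,X_1,\dots,X_N=Y$ in $\Gamma_\X$ with $N=\abs{X-Y}_\X$. Along consecutive tiles the levels change by at most $1$, and consecutive tiles intersect. The idea is to subdivide this path into blocks of $2r+1$ consecutive edges: set $N_0 \coloneqq \lceil N/(2r+1)\rceil$ and choose indices $0 = i_0 < i_1 < \dots < i_{N_0} = N$ with $i_{j+1}-i_j \leq 2r+1$. I claim that $V(X_{i_j})$ and $V(X_{i_{j+1}})$ are either equal or adjacent in $\Gamma_\V$. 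Indeed, since $X_{i_j}$ and $X_{i_{j+1}}$ are joined by a subpath of length $\leq 2r+1$ in $\Gamma_\X$, we have $\abs{X_{i_j}-X_{i_{j+1}}}_\X \leq 2r+1$; but $\abs{X_{i_j}-X_{i_{j+1}}}_\X \le 2r$ would give a tile $Z$ with $\abs{X_{i_{j+1}}-Z}_\X \le r$ along that path equal to $X_{i_j}$... more carefully: any tile $Z$ on this subpath within combinatorial distance $r$ of $X_{i_j}$ lies in $V_r(X_{i_j})$, and one within distance $r$ of $X_{i_{j+1}}$ lies in $V_r(X_{i_{j+1}})$; choosing $Z$ at a cut point of the subpath (which has length $\le 2r+1$, so such a $Z$ within distance $r$ of both endpoints exists, or the endpoints themselves are within distance $r$ of each other) gives a common point $Z \subset V(X_{i_j}) \cap V(X_{i_{j+1}})$, hence intersection. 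One also needs $\big|\abs{V(X_{i_j})} - \abs{V(X_{i_{j+1}})}\big| = \big|\abs{X_{i_j}} - \abs{X_{i_{j+1}}}\big| \leq i_{j+1}-i_j \leq 2r+1$; this is too weak for the incidence relation \eqref{eq:incidence} which requires level difference $\le 1$. So instead I would choose the cut indices more carefully to also control level differences, or argue that consecutive $V(X_{i_j})$ can be connected by a bounded path in $\Gamma_\V$ and absorb this into the constant — but the stated inequality has no room for an extra constant beyond the $+1$. The cleanest fix: choose $N_0 = \lceil N/(2r+1) \rceil$ and note $\abs{V(X)-V(Y)}_\V \le N_0 \le N/(2r+1) + 1$, provided each consecutive pair $V(X_{i_j}), V(X_{i_{j+1}})$ is at $\Gamma_\V$-distance $\le 1$; the level-difference issue is handled because along a geodesic in $\Gamma_\X$ the level is $1$-Lipschitz, so by shifting cut points by at most $1$ we can arrange $\big|\abs{X_{i_j}}-\abs{X_{i_{j+1}}}\big| \le 1$ while keeping block lengths $\le 2r+1$ — this is a standard bookkeeping argument.

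Next the lower bound $\frac{1}{2r+1}\abs{X-Y}_\X \le \abs{V(X)-V(Y)}_\V$. Here I would take a geodesic $V(X) = W_0, W_1, \dots, W_M = V(Y)$ in $\Gamma_\V$ with $M = \abs{V(X)-V(Y)}_\V$, where each $W_j = V(Z_j)$ for some $Z_j \in \X$ (with $Z_0 = X$, $Z_M = Y$). Consecutive $W_j, W_{j+1}$ intersect and have $\abs{Z_j}$-levels differing by at most $1$. If $V(Z_j) \cap V(Z_{j+1}) \ne \emptyset$, there are tiles $Z_j', Z_{j+1}'$ with $\abs{Z_j - Z_j'}_\X \le r$, $\abs{Z_{j+1}-Z_{j+1}'}_\X \le r$, and $Z_j' \cap Z_{j+1}' \ne \emptyset$; since the levels of $Z_j'$ and $Z_{j+1}'$ differ by at most $2r + 1$, one can connect $Z_j'$ to $Z_{j+1}'$ through at most $2r+1$ edges in $\Gamma_\X$ (intersecting tiles at nearby levels — here one uses that the levels are close and the tiles meet, chaining down to a common level). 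Altogether $\abs{Z_j - Z_{j+1}}_\X \le r + (2r+1) + r$... which again overshoots; the right accounting gives $\abs{Z_j - Z_{j+1}}_\X \le 2r+1$ after a careful choice, so $\abs{X-Y}_\X = \abs{Z_0 - Z_M}_\X \le \sum_{j=0}^{M-1} \abs{Z_j - Z_{j+1}}_\X \le M(2r+1) = (2r+1)\abs{V(X)-V(Y)}_\V$, which rearranges to the desired bound.

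The main obstacle I anticipate is the precise combinatorial bookkeeping: making sure that the level-difference constraint in \eqref{eq:incidence} (difference $\le 1$, not just bounded) is respected when constructing paths in one tile graph from geodesics in the other, and pinning down exactly why the block size $2r+1$ (and not $2r$ or $2r+2$) is the correct constant. This requires a careful analysis of how $V_r(X)$ behaves: two neighborhood clusters $V_r(X), V_r(Y)$ are adjacent in $\Gamma_\V$ iff they intersect and $\big|\abs{X}-\abs{Y}\big|\le 1$, and the key geometric fact is that $V_r(X)\cap V_r(Y)\ne\emptyset$ is equivalent to $\abs{X-Y}_\X \le 2r+1$ among tiles whose levels differ by at most $1$ — establishing this equivalence cleanly is the crux, and everything else is summation.
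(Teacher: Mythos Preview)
Your overall strategy matches the paper's exactly: both directions are handled by decomposing a geodesic in one graph into blocks and reading off a path in the other graph. For the upper bound the paper takes the cut points $X_{(2r+1)k}$ along a $\Gamma_\X$-geodesic and sets $V_k\coloneqq V(X_{(2r+1)k})$; for the lower bound it takes a $\Gamma_\V$-geodesic $V_i=V(X_i)$ and asserts $\abs{X_{i-1}-X_i}_\X\le 2r+1$ from $V_{i-1}\cap V_i\ne\emptyset$. So your plan is not a different route.

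However, the concern you flag is not just bookkeeping --- it is a genuine obstruction, and the paper's proof shares it. Adjacency in $\Gamma_\V$ requires both $V_r(X)\cap V_r(Y)\ne\emptyset$ \emph{and} $\big|\abs{X}-\abs{Y}\big|\le 1$. The paper verifies only the intersection condition. In fact the upper bound as stated is false: take $S=[0,1]$ with dyadic covers, $r=1$, $X=[0,1]\in\X^0$, $Y=[0,1/8]\in\X^3$. Then $\abs{X-Y}_\X=3$, while $\abs{V(X)-V(Y)}_\V\ge\big|\abs{X}-\abs{Y}\big|=3$, so the claimed bound $\abs{V(X)-V(Y)}_\V\le\tfrac{1}{3}\cdot 3+1=2$ fails. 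More generally, $\abs{V(X)-V(Y)}_\V\ge\big|\abs{X}-\abs{Y}\big|$ always, so the upper bound would force $\big|\abs{X}-\abs{Y}\big|\le\tfrac{2r+1}{2r}$, which is absurd for $r\ge 1$ whenever levels differ by at least~$2$. Your proposed fix of ``shifting cut points by at most $1$'' cannot rescue the stated constant.

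For the lower bound, the paper's bare assertion $\abs{X_{i-1}-X_i}_\X\le 2r+1$ is likewise not justified: from a common point $p\in Y_1\cap Y_2$ with $\abs{X_{i-1}-Y_1}_\X\le r$ and $\abs{X_i-Y_2}_\X\le r$ one only gets $\abs{X_{i-1}-X_i}_\X\le 2r+\big|\abs{Y_1}-\abs{Y_2}\big|$ by chaining through tiles containing $p$, and the level gap $\big|\abs{Y_1}-\abs{Y_2}\big|$ can be as large as $2r+1$. Your estimate of order $4r$ is the honest one. The corollary (that $\Psi$ is a quasi-isometry, hence preserves Gromov hyperbolicity) survives with these weaker constants, but the sharp form stated in the lemma does not.
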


\begin{proof} Let $X,Y\in \X$ be arbitrary. 
  We prove the inequality on the left first. To this end, let $V_0 =
  V(X),\,  V_1,\,  \dots,\, V_N= V(Y)$ be a path of length $N\coloneqq
   \abs{V(X) - V(Y)}_{\V}
  \geq 0$ 
  in $\Gamma_{\V}$. By construction, for each $i$ there is $X_i\in \X$ such that $V_i= V(X_i)\in
  \V$, where $X_0=X$ and $X_N=Y$. Furthermore, we have $\abs{X_{i-1}- X_{i}}_{\X}\leq 2r+1$ for each $i=1,\dots,N$, since $V_{i-1}\cap V_{i}\neq \emptyset$. It follows that 
  \begin{equation*}
    \abs{X-Y}_{\X}
    \leq
    \sum_{i=1}^{N} \abs{X_{i-1} -X_{i}}_{\X}
    =
    (2r+1)N.
  \end{equation*}
  The desired inequality follows.

  \smallskip
  We will now show the inequality on the right. To this end, let $X_0 =
  X,\,  X_1,\,  \dots,\, X_N =Y$ be a path of length
  $N\coloneqq |X-Y|_{\X}\geq 0$ in $\Gamma_{\X}$. We  set $X_i\coloneqq Y$ for all $i>N$. Note  that
  \begin{align*}
    X_r \subset V(X_0),\ X_{r+1} \subset V(X_{2r+1}),  \text{ and }  X_r \cap X_{r+1} \neq \emptyset, 
  \end{align*}
  which means that
  $V(X_0) \cap V(X_{2r+1}) \neq \emptyset$. Similarly, when setting $V_k:=V(X_{(2r+1)k})$, we obtain $V_k \cap V_{k+1} \neq \emptyset$ for all $k\in \N_0$. Since, by construction, $V_k=V(Y)$ for $k:=\ceil*{\frac{N}{2r+1}}$, we have constructed a path $V_0=V(X)$, $V_1$, $\dots$, $V_k=V(Y)$ of length $k$ in $\Gamma_\V$. This implies
  that 
  \begin{align*}
    \abs{V(X)- V(Y)}_{\V} \leq k = \ceil*{\frac{N}{2r+1}} 
    \le \frac{1}{2r+1}\abs{X-Y}_{\X} +1. 
  \end{align*}
  This establishes the desired inequality and concludes the proof. 
\end{proof}

We obtain the following consequence.

\begin{cor}
The tile graph $\Gamma_{\X}$ is Gromov hyperbolic if and only if $\Gamma_{\V}$ is Gromov hyperbolic. Moreover, in this case the map $\Psi\colon \Gamma_\X\to \Gamma_\V$, given by $X\mapsto V(X)$, induces a bijection  $\Psi_\infty \colon
\partial_{\infty} \Gamma_{\X}\to \partial_\infty \Gamma_{\V}$, given by $[\{X_i\}]\mapsto [\{V(X_i)\}]$. Furthermore, if $d_{\infty,\X}$ and $d_{\infty,\V}$ denote visual metrics on $\partial_\infty \Gamma_{\X}$  and $\partial_\infty \Gamma_{\V}$ (in the sense of Gromov hyperbolic spaces), respectively, then the map
\[ \Psi_\infty \colon
(\partial_{\infty}\Gamma_{\X},d_{\infty,\X}) \to (\partial_\infty \Gamma_{\V}, d_{\infty,\V})\]
is a snowflake equivalence. 
\end{cor}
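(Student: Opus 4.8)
The plan is to derive everything from Lemma~\ref{lem:GXGV_qi}, which already shows that $\Psi\colon \Gamma_\X\to\Gamma_\V$ is a rough similarity with multiplicative constant $\lambda\coloneqq 1/(2r+1)$ and bounded additive error. First I would invoke the standard fact that Gromov hyperbolicity is a quasi-isometry invariant (see \cite{BH99, BS}): since $\Psi$ is a quasi-isometry by Lemma~\ref{lem:GXGV_qi} (it is coarsely surjective — in fact surjective — and distorts distances by bounded multiplicative and additive amounts), $\Gamma_\X$ is Gromov hyperbolic if and only if $\Gamma_\V$ is. This gives the first sentence of the corollary.

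Next, assuming both graphs are Gromov hyperbolic, I would recall that a quasi-isometry between Gromov hyperbolic spaces induces a well-defined bijection on boundaries at infinity (see \cite[Section~III.3]{BH99}). Concretely, if $\{X_i\}$ converges at infinity in $\Gamma_\X$, then from the inequality in Lemma~\ref{lem:GXGV_qi} applied with the Gromov product (recall $(X\cdot Y)=\tfrac12(|X|+|Y|-|X-Y|_\X)$, and note $|\Psi(X)|=|X|$ since $\Psi$ preserves levels), one gets $(\Psi(X_i)\cdot\Psi(X_j))_\V \geq \lambda\,(X_i\cdot X_j)_\X - c$ for a uniform constant $c$, so $\{V(X_i)\}$ converges at infinity in $\Gamma_\V$; a symmetric estimate using the right-hand inequality shows the map respects the equivalence relation $\sim$ and is injective, while surjectivity of $\Psi$ (every $V\in\V^n$ is $V(X^n)$ for some $X^n$) gives surjectivity of $\Psi_\infty$. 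Hence $\Psi_\infty\colon\partial_\infty\Gamma_\X\to\partial_\infty\Gamma_\V$, $[\{X_i\}]\mapsto[\{V(X_i)\}]$, is a well-defined bijection.

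For the final assertion, the key point is that Lemma~\ref{lem:GXGV_qi} says $\Psi$ is not merely a quasi-isometry but a \emph{rough similarity}: $|\,V(X)-V(Y)\,|_\V = \lambda\,|X-Y|_\X + O(1)$ with the \emph{same} multiplicative factor $\lambda$ on both sides (no reciprocal constants), so the Gromov products satisfy $(\Psi(X)\cdot\Psi(Y))_\V = \lambda\,(X\cdot Y)_\X + O(1)$ uniformly. Passing to the boundary via \eqref{eq:Grprodinfty}, the boundary Gromov products transform the same way: $(\Psi_\infty(\xi)\cdot\Psi_\infty(\eta))_\V = \lambda\,(\xi\cdot\eta)_\X + O(1)$ for all $\xi,\eta\in\partial_\infty\Gamma_\X$. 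Feeding this into the defining relations $d_{\infty,\X}(\xi,\eta)\asymp\Lambda_\X^{-(\xi\cdot\eta)_\X}$ and $d_{\infty,\V}(\Psi_\infty(\xi),\Psi_\infty(\eta))\asymp\Lambda_\V^{-(\Psi_\infty(\xi)\cdot\Psi_\infty(\eta))_\V}$ for the chosen visual metrics (with visual parameters $\Lambda_\X,\Lambda_\V>1$), one obtains
\begin{equation*}
 d_{\infty,\V}(\Psi_\infty(\xi),\Psi_\infty(\eta))
 \asymp \Lambda_\V^{-\lambda(\xi\cdot\eta)_\X}
 \asymp d_{\infty,\X}(\xi,\eta)^{\alpha},
\end{equation*}
where $\alpha\coloneqq \lambda\log(\Lambda_\V)/\log(\Lambda_\X)>0$. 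This is exactly the statement that $\Psi_\infty$ is a snowflake equivalence.

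The only genuinely delicate step is the transition from the coarse additive control of Gromov products on the graphs to the corresponding control on the boundary — i.e.\ verifying that $\liminf$ and $\sup$ in the definition \eqref{eq:def_Gromov_infty} of the boundary Gromov product interact well with the affine estimate $(\Psi(X)\cdot\Psi(Y))_\V = \lambda(X\cdot Y)_\X + O(1)$. This is handled by the standard machinery for Gromov hyperbolic spaces (the approximate additivity of $(\cdot\cdot\cdot)$ along sequences converging at infinity, and inequality \eqref{eq:Grprodinfty}), so the main obstacle is really bookkeeping rather than a conceptual difficulty; one must be careful that the multiplicative constant $\lambda$ survives the passage to the boundary unchanged, which it does precisely because Lemma~\ref{lem:GXGV_qi} is a rough-similarity statement and not just a quasi-isometry statement.
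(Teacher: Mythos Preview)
Your approach is the paper's: derive everything from Lemma~\ref{lem:GXGV_qi} together with the standard theory of Gromov hyperbolic spaces. The paper simply cites \cite[Th\'eor\`eme~5.12]{GH} for the invariance of hyperbolicity under quasi-isometries (between geodesic spaces) and \cite[Proposition~6.3 and Theorem~6.5]{BS00} for the induced boundary bijection and the snowflake equivalence; you unpack these citations, and your emphasis that the snowflake conclusion needs $\Psi$ to be a rough \emph{similarity} (not just a quasi-isometry) is exactly the point.

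There is, however, a slip in your explicit Gromov-product computation. You simultaneously invoke $|\Psi(X)|=|X|$ (level preservation) and $|\Psi(X)-\Psi(Y)|_{\V}=\lambda\,|X-Y|_{\X}+O(1)$ with $\lambda=1/(2r+1)$. Plugging both into the definition of the Gromov product yields
\[
(\Psi(X)\cdot\Psi(Y))_{\V}
=\tfrac12\bigl(|X|+|Y|\bigr)-\tfrac{\lambda}{2}\,|X-Y|_{\X}+O(1),
\]
which differs from $\lambda\,(X\cdot Y)_{\X}$ by the unbounded quantity $\tfrac{1-\lambda}{2}(|X|+|Y|)$; so the two-sided estimate $(\Psi(X)\cdot\Psi(Y))_{\V}=\lambda\,(X\cdot Y)_{\X}+O(1)$ does \emph{not} follow from these two inputs together. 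The general principle ``$\lambda$-rough similarity $\Rightarrow$ Gromov products scale by $\lambda$'' is correct, but it applies the rough similarity \emph{also} to the distance to the basepoint, giving $|\Psi(X)|_{\V}=\lambda|X|+O(1)$ rather than $|X|$. Use that version of the computation throughout, or cite \cite[Theorem~6.5]{BS00} directly as the paper does. (That the two computations of $|\Psi(X)|_{\V}$ disagree when $r\ge 1$ is itself worth pausing over.)
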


\begin{proof}  The first claim follows from the fact that quasi-isometries between geodesic metric spaces preserve Gromov hyperbolicity
(see \cite[Th\'eor\`eme~5.12, p.~88]{GH}). Formally, to apply this result we have to add edges (of length $1$) between adjacent vertices in both spaces $\Gamma_\X$ and $\Gamma_\V$ to make them geodesic. 

The two remaining claims follow from \cite[Proposition~6.3 and Theorem~6.5]{BS00}.
\end{proof}

\subsection{An example: necessity of neighborhood clusters in Theorem~\ref{thm:Gromov_implies_qv}}
\label{sec:necess-neighb-clust}
In this subsection we present an example of a sequence $\{\X^n\}$ of covers of
a bounded metric space $(S,d)$, where $\X^0=\{S\}$, with the following properties:
\begin{enumerate}[label=(\arabic*)]
\smallskip
\item
  The associated tile graph $\Gamma=\Gamma(\{\X^n\})$ is
  Gromov hyperbolic.
  \smallskip
\item
  The map $\Phi\colon (S,d) \to (\partial_\infty
  \Gamma, d_\infty)$ as defined in \eqref{eq:defnatid} is a quasisymmetry for each  visual
  metric $d_\infty$ on $\partial_\infty \Gamma$ (in the sense of Gromov hyperbolic spaces).
  \smallskip
\item
  $\{\X^n\}$ is not a quasi-visual approximation of $(S,d)$. 
\end{enumerate}
\smallskip\noindent
This shows that in Theorem~\ref{thm:Gromov_implies_qv} 
it is
indeed necessary to take
neighborhood clusters as in \eqref{eq:cluster} to obtain  a
(quasi-)visual approximation (of width $w=1$) for $(S,d)$.

To discuss an example with these properties, let  $S=[0,1]$ be the closed unit interval. We  equip it with the standard
  Euclidean metric given by $d(x,y)\coloneqq \abs{x-y}$ for 
  $x,y\in S$. 
  We denote by $\I^n$ the set of all closed  dyadic 
  intervals of level $n\in \N_0$, meaning all sets of the form
  $[(j-1)/2^n, j/2^n]$ for $j=1,\dots, 2^n$. It is immediate
  to see  that $\{\I^n\}$ is a visual approximation (of width $w=0$)  for $(S,d)$.

  We now define a new sequence $\{\X^n\}$ of covers of $S$ by inserting a sufficiently fine cover between every two consecutive elements of $\{\I^n\}$.  More precisely, we define
  \begin{equation*}
    \X^{2k} \coloneqq \I^k \quad \text{ and }
    \quad
    \X^{2k+1} \coloneqq \I^{2k},
  \end{equation*}
  for $k\in \N_0$. Note that $\X^0=\I^0=\{S\}$.

  \smallskip
  {\it Claim~1: }
  $\{\X^n\}$ is not a quasi-visual approximation of $S$.
  \smallskip

By construction, each set $Y \in
  \X^{2k+1} = \I^{2k}$ for $k\in \N_0$  is contained in a (single) set $X\in \X^{2k}= \I^{k}$. Since  
  \[
  \diam(X)/\diam(Y)=2^{-k}/2^{-2k}=2^k,
  \]
  we conclude that $\{\X^n\}$ does not satisfy
  condition~\ref{item:qv_approx3} in
  Definition~\ref{def:qv_approx}, and hence is not a quasi-visual
  approximation of $S$.

  \smallskip

As in Section~\ref{sec:tile-graph}, we consider the tile graphs $\Gamma_{\I}\coloneq\Gamma(\{\I^n\})$ and
  $\Gamma_{\X}\coloneq\Gamma(\{\X^n\})$ with the vertex sets $\I \coloneq \bigsqcup_{n\in
    \N_0} \I^n$ and $\X\coloneq\bigsqcup_{n\in \N_0}\X^n$, respectively. For $I,I'\in \I$ we denote by $\abs{I - I'}_{\I}$ the combinatorial distance in $\Gamma_\I$,  and for $X,X'\in \X$ by 
  $\abs{X-X'}_{\X}$ the combinatorial distance in $\Gamma_\X$. It is now a rather standard fact (which also follows from Theorem~\ref{thm:Gromov_hyp} in conjunction
  with Theorem~\ref{thm:_comb_qv}) that the tile graph 
  $\Gamma_{\I}$ is Gromov hyperbolic. 
    
  \smallskip
  {\it Claim~2: }
  $\Gamma_{\X}$ is Gromov hyperbolic.
  \smallskip

To see this, note that since $\I^k=\X^{2k}$ for each $k\in\N_0$, we have a canonical embedding \[f\colon \Gamma_\I\to \Gamma_\X, \quad I\in \I^k\mapsto I\in\X^{2k}.\] It is immediate that the image $f(\I)=\bigsqcup_{k\in \N_0} \X^{2k}$ is \emph{cobounded} in $\X$: namely, for each $X\in \X$ there exists $I=\iota(X)\in \I$ such that 
  \begin{equation}
    \label{eq:GI_cobounded}
    \abs{f(I) - X}_{\X} \leq 1.
  \end{equation}
  Indeed, if $X\in \X^{2k}=\I^k$ for some $k\in \N_0$, then we set $\iota(X)\coloneq X\in \I^k$. Otherwise, $X\in \X^{2k+1}$ for some $k\in \N_0$ and $X$ is contained in a (single) set $\iota(X)\in \I^k=\X^{2k}$. Inequality~\eqref{eq:GI_cobounded} then holds with the chosen $I=\iota(X)$ in both cases.

Next, we claim that $f$ satisfies
  \begin{equation}
    \label{eq:psi_qi}
    \abs{I - I'}_{\I}
    \leq
    \abs{f(I)- f(I')}_{\X}
    \leq
    2 \abs{I-I'}_{\I} 
  \end{equation}
  for all $I,I'\in \I$. To show the lower bound, let 
  \[X_0=f(I),\, X_1,\,  \dots,\,  X_N=f(I')\] be a path of length $N\coloneq \abs{f(I)- f(I')}_{\X}\geq 0$ in $\Gamma_\X$. It is easy to see that then \[ I=\iota(X_0),\, \iota(X_1),\,  \dots,\,  I'=\iota(X_N)\]
  is a path in $\Gamma_\I$, which implies the desired inequality. 
  
  To show the upper bound, note that we have the implication 
  \[|I-I'|_\I=1 \quad \Rightarrow \quad |f(I)-f(I')|_\X\leq 2.\]
  Indeed, $|f(I)-f(I')|_\X=1$ when $I,I'$ are of the same level, and $|f(I)-f(I')|_\X=2$ otherwise. The desired estimate now easily follows from the triangle inequality.

  Since the embedding $f\colon \Gamma_\I\to \Gamma_\X$ satisfies \eqref{eq:GI_cobounded} and 
  \eqref{eq:psi_qi}, it is a quasi-isometry. As  a  quasi-isometry between  geodesic metric spaces preserves Gromov 
  hyperbolicity (see \cite[Th\'{e}or\`{e}me~5.12, p.\ 88]{GH}), we again 
  conclude that the tile graph $\Gamma_{\X}$ is also Gromov hyperbolic.

\smallskip

  Now let $\Phi_{\X}\colon S \to
  \partial_\infty \Gamma_{\X}$ be the map as in \eqref{eq:defnatid}. Recall that Lemma~\ref{lem:nat_geo} uses only the Gromov
  hyperbolicty of $\Gamma_{\X}$ and so  $\Phi_{\X}$ is
  well-defined.

  \smallskip
  
  {\it Claim~3:} The map $\Phi_{\X}\colon (S, d) \to
  (\partial_\infty \Gamma_{\X}, d_{\infty,\X})$ is a quasisymmetry for each  visual
  metric~$d_{\infty,\X}$ on $\partial_\infty \Gamma_
  X$ (in the sense of Gromov hyperbolic spaces).
  
  \smallskip
 To see this, first note that the quasi-isometry $f\colon \Gamma_\I\to \Gamma_\X$ from the proof of Claim~2 induces  a bijection
  \[f_\infty \colon \partial_\infty \Gamma_{\I} \to
  \partial_\infty \Gamma_{\X}, \quad [\{I_i\}]\mapsto [\{f(I_i)\}]\]
  (see \cite[Proposition~6.3]{BS00}). Moreover, if $d_{\infty, \I}$ and $d_{\infty, \X}$ denote visual metrics on $\partial_\infty \Gamma_{\I}$ and $\partial_\infty \Gamma_{\X}$, respectively, then the map $f_\infty \colon (\partial_\infty \Gamma_{\I}, d_{\infty, \I}) \to
  (\partial_\infty \Gamma_{\X}, d_{\infty, \X})$ is a quasisymmetry (see
  \cite[Theorem~6.5]{BS00}).

  Now let $x\in S$ be arbitrary and $\{X^n\}$ be a natural geodesic of $x$ 
  in $\Gamma_{\X}$, that is, $x\in X^n\in
  \X^n$ for all $n\in \N_0$. Then $\{X^n\}$ converges at infinity (by  Lemma~\ref{lem:nat_geo}), and thus the subsequence $\{X^{2n}\}$ converges at infinity as well and is equivalent to $\{X^n\}$. Moreover,  
  the sequence $\{I^n\}$ with 
  $I^n\coloneq X^{2n}\in \I^{n}=\X^{2n}$ for $n\in \N_0$ is a natural geodesic of
  $x$ in $\Gamma_{\I}$. It follows that 
  \[
    \Phi_{\X}(x)
    =
    [\{X^n\}]
    =
    [\{X^{2n}\}]
    =
    [\{f(I^n)\}]
    =
    f_\infty([\{I^n\}])
    =
    (f_\infty \circ \Phi_{\I})(x), 
  \]
  where $\Phi_{\I} \colon S\to
  \partial_\infty \Gamma_{\I}$ denotes the natural identification between
  $S$ and $\partial_\infty \Gamma_{\I}$. Since $x$ was arbitrary, we conclude that $\Phi_{\X} =
  f_\infty \circ \Phi_{\I}$. The statement now follows from Proposition~\ref{prop:qv_visual_Gromov_hyp},

\smallskip
The three claims above show that the sequence $\{\X^n\}$ has all the
desired properties as listed at the beginning of this subsection. In particular, this example fails to satisfy  condition~\ref{item:qv_approx3} in
Definition~\ref{def:qv_approx}. If we modify  this construction, it is
possible to violate conditions~\ref{item:qv_approx1} and~\ref{item:qv_approx2} in Definition~\ref{def:qv_approx} as well by inserting suitable layers $\X^{2k+1}$.  

In the example presented, it is not
hard to see that $r=1$ suffices in
Theorem~\ref{thm:Gromov_implies_qv}. However, by inserting more
than one layer (between each $\I^n$ and $\I^{n+1}$), we may
arrange that $r$ is required to be arbitrarily large. Finally, a
similar construction of inserting layers of small sets works in
greater generality (when we start from some quasi-visual approximation of a given bounded metric space).

\section{Dynamical quasi-visual approximations of Julia sets}\label{sec:semihyp}

In this section we consider quasi-visual approximations for  Julia sets of rational maps on the Riemann sphere $\CDach\coloneqq \C\cup\{\infty\}$.  These   sets are always uniformly perfect (see \cite{Hi} and \cite{MdR}), and hence have a quasi-visual approximation by Corollary~\ref{cor:qv-are-uni-perfect}.  However, in general such a quasi-visual approximation is not related to the underlying dynamics. So the question arises 
how to produce quasi-visual approximations of Julia sets  in a  natural dynamical way. 
We will see that  a certain class of  rational maps is relevant here,  namely  {\em semi-hyperbolic rational maps}. More specifically,  we  will show that the Julia set of a rational map $g$ admits a quasi-visual approximation that is {\em dynamical} as defined in Section~\ref{subsec:dynqvisual} 
if and only if it is semi-hyperbolic (see Corollary~\ref{cor:charsemi} and Theorems~\ref{thm:semi_hyp_admits_approx} and~\ref{thm:Adconverse} for the precise formulation of the two 
implications involved here). In order to  discuss this, we first have to recall some general background. 

\subsection{Rational maps and dynamically generated covers of Julia sets}

In the following, the Riemann sphere $\CDach$ will be equipped with the {\em spherical metric}. This is a length metric on 
$\CDach$ whose restriction to $\C$ is given by the length element 
\[
ds=\frac{2|dz|}{1+|z|^2},
\]
where $z$ is a variable in $\C$.  We use the subscript $\sigma$ to indicate that metric concepts are with respect to this metric. 

A {\em region} $U\sub \CDach$ is an open and connected set. 
A holomorphic map $f\: U\ra V$ between two regions $U,V\sub \CDach$ 
 is called {\em proper} if $f^{-1}(K)\sub U$ is  compact whenever $K\sub V$ is a compact set. If $f$ is proper, then   every value 
in $V$ is attained the same (finite) number of times, counting
multiplicities (see \cite[Theorem~4.24]{Fo}). 
We call this number the {\em degree of~$f$ (in $U$)}, denoted by
$\deg(f)\in \N$. So if we denote by $\deg(f,z)$ the {\em local degree 
of $f$ at $z\in U$}, then we have 
\[
\sum_{z\in f^{-1}(w)} \deg(f,z)=\deg(f)
\]
for each $w\in V$. In particular, this applies  when $f$ is a non-constant rational 
map and  $U=V=\CDach$, in which case  $\deg(f)$ agrees with the  {\em topological degree} of $f$.  

 For the remainder of this subsection   $g\:\CDach \ra \CDach$ will be  a rational map with $\deg(g)\ge 2$.  As in Section~\ref{subsec:dynqvisual} we denote by  $g^n$ for $n\in \N_0$ the {$n$-th iterate of $g$},  where $g^0\coloneqq \id_{\CDach}$. We write  $\Jul(g)$ for  the {\em Julia set} of $g$ (see \cite[Definition~4.2]{Mi06} for the definition, and \cite[Section~4]{Mi06} for general background in complex dynamics).
It is a standard fact that  $\Jul(g)\sub \CDach$ is a non-empty compact set  without isolated points \cite[Lemma~4.8 and Corollary~4.14]{Mi06}. As we remarked earlier, $\Jul(g)$ is actually  uniformly perfect (see \cite{Hi} and \cite{MdR}).  It is also {\em completely invariant} in the sense that   
\begin{equation}\label{eq: Jul_inv}
  g^{-1}(\Jul(g))
  =
  \Jul(g)=g(\Jul(g))  
\end{equation}
(see \cite[Lemma~4.3]{Mi06}).

If $W\sub \CDach$ is  a region  and $\widetilde W$ is  a component of 
$g^{-1}(W)$, then $\widetilde W$ is also a subregion of $\CDach$ and the map 
$g|_{\widetilde W}\: \widetilde W\ra W$ is
proper. In particular, $g(\widetilde W)= W$ (see 
\cite[Lemma~A.8]{BM}, where this statement is formulated in greater generality).

Now suppose  $\V^1$ is a finite  family of  subregions  of $\CDach$ that cover 
the Julia set $\Jul(g)$ of $g$.
We want to use the dynamics of $g$ under iteration to define an associated sequence  of finite covers of $\Jul(g)$. Here sets in 
$\V^1$ that do not meet $\Jul(g)$ are irrelevant. So we will use the term {\em admissible} for such a cover $\V^1$ if 
\[
V^1\cap \Jul(g)\ne \emptyset \text { for all $V^1\in \V^1$.}
\]
Given an admissible cover $\V^1$ of $\Jul(g)$ and $n\in \N$,   we  denote by $\V^n$ the \emph{pull-back of $\V^1$ under $g^{n-1}$}, that is,
\begin{equation}\label{eq: pullback_Vn}
\V^n:=\{\text{components $V^n$ of $g^{-(n-1)}(V^1)$}:\, V^1\in \V^1\}.
\end{equation}
If $V^n$ is such a component, then $g^{n-1}(V^n)=V^1\in \V^1$
by what we have seen above. 

Since $\Jul(g)$ is completely invariant, 
it  easily follows that if 
 $\V^1$ is an admissible finite cover of $\Jul(g)$, then for each $n\in \N$ the family $\V^n$ is also an admissible finite cover of $\Jul(g)$ by  subregions of $\CDach$. 

 Now the following fact is true.
 
\smallskip

 {\em Claim.} We have 
 \begin{equation}\label{eq:dyngV}
 g^k(V^{n+k})\in \V^n \text{ for all $n\in \N$, $k\in \N_0$, and $V^{n+k}\in 
 \V^{n+k}$.}
\end{equation}

\smallskip
Indeed, if $V^{n+k}\in \V^{n+k}$, then $V^{n+k}$ is a component of $g^{-(n+k-1)}(V^1)$ for some $V^1\in \V^1$. In particular,
$g^k(V^{n+k})$ is a connected set that is sent into  $V^1$
by $g^{n-1}$.
 Hence $g^k(V^{n+k})\sub V^n$ for some $V^n\in \V^n$. This in turn implies that the region $V^{n+k}$ is contained in a component
   $\widetilde {V}^{n+k}$ of $g^{-k}(V^n)$. Then 
 $g^k(\widetilde {V}^{n+k})=V^n$,  and so 
 $g^{n+k-1}(\widetilde {V}^{n+k})=g^{n-1}(V^n)=V^1$.
 Since $V^{n+k}\sub \widetilde {V}^{n+k}$ and $V^{n+k}$ is an entire component of $g^{-(n+k-1)}(V^1)$,  this is only possible 
 if $V^{n+k}=\widetilde {V}^{n+k}$.  Consequently, 
 $g^k(V^{n+k})= g^k(\widetilde{V}^{n+k})=V^n\in \V^n$, as claimed.

 \smallskip
 
 We now define 
\begin{equation}\label{eq: Xn_induced_by_Vn}
\X^n\coloneqq\{V^n\cap \Jul(g): V^n\in \V^n\}
\end{equation}
for $n\in \N$. According to this definition, if  $X^n\in \X^n$, then we can choose an element $V^n\in \V^n$ with $X^n=V^n\cap \Jul(g)$. We call each such a set $V^n$ an  
 {\em ambient} of $X^n$. Finally, we set $\X^0\coloneqq \{\Jul(g)\}$.

Note that  $\{\X^n\}$ is a sequence of finite covers of $\Jul(g)$ by non-empty relatively open subsets of $\Jul(g)$. We refer to $\{\X^n\}$ as the sequence of covers of $\Jul(g)$ {\em generated by $g$ and $\V^1$}.  
It follows immediately from  \eqref{eq:dyngV} and the complete invariance  of $\Jul(g)$ 
 that the sequence $\{\X^n\}$ is {\em dynamical} (see Section~\ref{subsec:dynqvisual}) in the sense that 
 \begin{equation}\label{eq:dyngX}
 g^k(X^{n+k})\in \X^n \text{ for all $n\in \N$, $k\in \N_0$, and $X^{n+k}\in 
 \X^{n+k}$.}
\end{equation}

Now the question naturally arises  whether this sequence $\{\X^n\}$  of covers of $\Jul(g)$ is a quasi-visual approximation of the Julia set $\Jul(g)$ equipped with the spherical metric $\sigma$. We will give an answer in the next subsection.

\subsection{Semi-hyperbolic rational maps and dynamical quasi-visual approximations of Julia sets} \hfill

We first need to discuss more standard terminology from complex dynamics.
Let $g\:\CDach \ra \CDach$ be  again a rational map with $\deg(g)\ge 2$.

A point  $c\in \CDach$ is called a {\em critical point} of $g$ if $g$ is not a local homeomorphism near $c$. 
A point $p \in  \CDach $ is  {\em periodic (under $g$)}
if  there exists $n\in \N$ such that $g^n(p)=p$. In this case,
the minimal number $n\in \N$ for which $g^n(p)=p$ is called the {\em period of $p$} and $(g^n)'(p)$ is referred to as  the {\em multiplier} of the {\em periodic cycle} $\{p,\, g(p), \dots,\, g^{n-1}(p)\}$. This definition of the multiplier only makes sense if the point $\infty$ does not belong  to the periodic cycle; if it does, the definition has to be adjusted (see \cite[p.~45]{Mi06}).  We say that 
$g$ has a {\em parabolic cycle} if there exists a periodic cycle of $g$ whose multiplier is a root of unity. 

Finally, a point $p\in \CDach$ is called {\em non-recurrent} if 
\[ 
p\not \in  \overline{\{g^n(p): n\in \N\}}.
\]

We can now define the special class of rational maps that will play a crucial role in this section. 

\begin{definition}[Semi-hyperbolic rational maps]\label{def:semi_hyp} A rational map $g\colon \CDach\to \CDach$ with $\deg(g)\ge 2$ is called \emph{semi-hyperbolic} if
$g$ has no parabolic cycles and every critical point of $g$ in $\Jul(g)$ is non-recurrent. 
 \end{definition}

 These maps admit various characterizations. 
 
\begin{proposition} \label{prop:semihyp} Let $g\:\CDach\ra \CDach$ be a rational map with 
$\deg(g)\ge 2$. Then the following conditions are equivalent:
 \begin{enumerate}
 \smallskip
  \item
    \label{item:sem1}
   $g$ is semi-hyperbolic.
\smallskip
    \item
    \label{item:sem2} There exist constants $r_0>0$ and $p\in \N$ such that 
    for all $w_0\in \Jul(g)$ and  $n\in \N$  we have 
    $\deg(g^n|_{U^n})\le p$
    for each component $U^n$ of $g^{-n}(B_\sigma(w_0, r_0))$.
 \smallskip   
  \item
    \label{item:sem3} There exist constants  $r_0>0$, $p\in \N$, $C\ge 1$, and $\lambda\in (0,1)$  such that for all $w_0\in \Jul(g)$ and $n\in \N$ we have  
    \[ \deg(g^n|_{U^n})\le p \text{ and } \diam_\sigma(U^n)<C\lambda^n\]
    for each component $U^n$ of $g^{-n}(B_\sigma(w_0, r_0))$. 
  \end{enumerate}
\end{proposition}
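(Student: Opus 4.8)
The equivalence of \ref{item:sem1}, \ref{item:sem2}, and \ref{item:sem3} is a classical result going back to Carleson--Jones--Yoccoz; I would present a proof that establishes the cycle of implications $\ref{item:sem3}\Rightarrow\ref{item:sem2}\Rightarrow\ref{item:sem1}\Rightarrow\ref{item:sem3}$, since \ref{item:sem3} is the strongest-looking condition and \ref{item:sem2} is the one most directly usable downstream. The implication $\ref{item:sem3}\Rightarrow\ref{item:sem2}$ is immediate: drop the diameter bound. So the real content is in the other two implications.

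\textbf{Step 1: $\ref{item:sem2}\Rightarrow\ref{item:sem1}$.} I would argue by contraposition. Suppose $g$ is \emph{not} semi-hyperbolic; then either $g$ has a parabolic cycle, or some critical point $c\in\Jul(g)$ is recurrent. In the parabolic case, pick a point $w_0$ on the parabolic cycle (or near it, in the basin) lying in $\Jul(g)$; iterated inverse branches of a small ball $B_\sigma(w_0,r_0)$ accumulate at the parabolic point, and because the local model of a parabolic fixed point of multiplicity $\ge 1$ forces inverse orbits to ``pile up,'' the pulled-back components $U^n$ eventually engulf arbitrarily many preimages of critical points of $g$ near the parabolic point; more precisely, one shows that $\deg(g^n|_{U^n})\to\infty$ as $n\to\infty$. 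In the recurrent-critical-point case, take $w_0=c$: since $c$ is in the $\omega$-limit set of its own forward orbit, for every $r_0>0$ and every $N$ there is $n$ such that the orbit segment $c,g(c),\dots,g^{n-1}(c)$ returns to $B_\sigma(c,r_0)$ at least $N$ times, and each return contributes a factor to the local degree of the inverse branch of $g^n$ along the backward orbit of $c$; hence again $\deg(g^n|_{U^n})$ is unbounded. Either way, \ref{item:sem2} fails. This step is where I expect the main obstacle to lie: making the parabolic estimate quantitative requires the Fatou coordinate / local normal form for parabolic germs, and the recurrent case requires carefully tracking how many critical points accumulate along a given backward orbit; both are standard but technical, and the paper will likely cite Carleson--Jones--Yoccoz or Ma\~n\'e for these facts rather than reproving them.

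\textbf{Step 2: $\ref{item:sem1}\Rightarrow\ref{item:sem3}$.} Assuming $g$ semi-hyperbolic, I would first invoke Ma\~n\'e's theorem: every point of $\Jul(g)$ that is not a parabolic periodic point and not in the $\omega$-limit set of a recurrent critical point has arbitrarily small neighborhoods whose $g^n$-preimage components have uniformly bounded degree — and semi-hyperbolicity guarantees these exceptional points don't exist, yielding uniform bounds $r_0>0$ and $p\in\N$ as in \ref{item:sem2}, valid at \emph{every} $w_0\in\Jul(g)$ by a compactness argument over $\Jul(g)$. This gives the degree bound in \ref{item:sem3}; it remains to produce the exponential shrinking $\diam_\sigma(U^n)<C\lambda^n$. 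For this I would use the Koebe distortion theorem in the following form: since $g^n|_{U^n}$ has degree $\le p$ onto a ball $B_\sigma(w_0,r_0)$, one can pass to a further pull-back of a slightly smaller ball $B_\sigma(w_0,r_0/2)$ and write $g^n|_{U^n}$ as a bounded-degree branched cover; composing with the $p$-th root map to kill the critical points, the resulting map is univalent on a definite-size disk, so Koebe gives that the $U^n$ have diameters comparable (up to a factor depending only on $p$) to $|(g^n)'|^{-1}$ at well-chosen points, \emph{and} — crucially — that on the slightly smaller ball the inverse branches form a normal family of maps omitting a definite part of $\Jul(g)$; no limit of such a normal family can be constant (a constant limit would contradict the fact that $\Jul(g)$ is perfect and that $g$ is expanding in a weak ``telescoping'' sense), and this forces $\sup_{U^n\subset g^{-n}(B_\sigma(w_0,r_0/2))}\diam_\sigma(U^n)\to 0$; a standard telescoping/submultiplicativity argument then upgrades ``tends to zero'' to ``exponential decay'' with uniform $C$ and $\lambda$. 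Here the normal-families input is exactly what Lemmas~\ref{lem:Zalc} and~\ref{lem:GBd} are set up to provide, so I would quote them. Finally, one passes from the smaller ball $r_0/2$ back to $r_0$ (after relabeling $r_0$) to get the statement as written. The bookkeeping in this last step — reconciling the two radii and absorbing everything into uniform constants — is routine but fiddly, and I'd relegate the distortion estimate itself to Lemma~\ref{lem:dist1}.

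\textbf{Summary of the cycle.} Putting the three implications together: $\ref{item:sem3}\Rightarrow\ref{item:sem2}$ trivially, $\ref{item:sem2}\Rightarrow\ref{item:sem1}$ by the contrapositive parabolic/recurrence analysis, and $\ref{item:sem1}\Rightarrow\ref{item:sem3}$ via Ma\~n\'e's bounded-degree theorem plus Koebe distortion and a normal-families argument. The hard part is the quantitative local analysis at parabolic cycles and along recurrent critical orbits in Step~1; everything in Step~2 is a combination of known tools (Ma\~n\'e, Koebe, Zalcman-type normal family lemmas) that the paper has already isolated as auxiliary results.
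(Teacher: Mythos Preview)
The paper does not prove this proposition at all: immediately after the statement it writes ``Essentially, this is well-known and goes back to \cite{Ma93}. The argument there is rather involved. A shorter proof (that is somewhat difficult to read though) was given in \cite{CJY94} for polynomials, but with small modifications the argument there is also valid for general rational maps.'' So Proposition~\ref{prop:semihyp} is taken as a black box from the literature, and there is no in-paper proof to compare your proposal against. Your sketch is a reasonable outline of what the cited references actually do, and in that sense it is consistent with the paper's treatment.

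One point worth correcting: you propose to quote Lemmas~\ref{lem:Zalc}, \ref{lem:GBd}, and \ref{lem:dist1} inside Step~2 of your argument. In the paper the logical flow is the other way around. Proposition~\ref{prop:semihyp} is used (via Lemma~\ref{lem:pullbnice}) as an \emph{input} to the proof of Theorem~\ref{thm:semi_hyp_admits_approx}, where Lemma~\ref{lem:dist1} supplies the distortion control; and Lemmas~\ref{lem:Zalc} and~\ref{lem:GBd} are deployed in the proof of Theorem~\ref{thm:Adconverse}, which establishes that a dynamical quasi-visual approximation forces condition~\ref{item:sem2}. None of these auxiliary lemmas are invoked to prove Proposition~\ref{prop:semihyp} itself. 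So while the normal-families ideas you describe are indeed in the spirit of the Ma\~n\'e/CJY arguments, do not cite those specific lemmas here --- in the paper's architecture they sit downstream of Proposition~\ref{prop:semihyp}, not upstream.
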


Essentially, this is well-known and goes back to \cite{Ma93}.
The argument there is rather involved. A  shorter   proof (that is somewhat difficult to read though) was given in 
\cite{CJY94} for polynomials, but  with small modifications the argument there is also valid   for general rational 
maps. 

We will  prove the following statement. 

\begin{theorem}\label{thm:semi_hyp_admits_approx}
Let $g\colon \CDach \to \CDach$ be a semi-hyperbolic rational map,
$\V^1$ be an admissible finite cover of $\Jul(g)$ by   subregions of $\CDach$, and 
$\{\X^n\}$ be the associated sequence of covers of $\Jul(g)$ generated by $g$ and  $\V^1$. 

Then if $\V^1$ is sufficiently fine, the sequence $\{\X^n\}$ is  a dynamical  quasi-visual approximation of width~$w=1$ for $(\Jul(g),\sigma)$. 
\end{theorem}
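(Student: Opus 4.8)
The strategy is to verify directly the four conditions \ref{item:qv_approx1}--\ref{item:qv_approx4} of Definition~\ref{def:qv_approx} for the sequence $\{\X^n\}$ (the dynamical property \eqref{eq:dyngX} has already been established). The dynamical property together with Proposition~\ref{prop:semihyp} will be the two main inputs. We choose $\V^1$ so fine that each $V^1\in \V^1$ has spherical diameter less than $r_0/2$, where $r_0>0$ is the constant from Proposition~\ref{prop:semihyp}\,\ref{item:sem3}; then each $V^1$ is contained in a ball $B_\sigma(w_0, r_0)$ with $w_0\in \Jul(g)$, so that every $V^n\in \V^n$ is contained in a component of $g^{-(n-1)}(B_\sigma(w_0,r_0))$ for a suitable $w_0\in \Jul(g)$. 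Combined with \ref{item:sem3} this yields the uniform bound $\diam_\sigma(V^n) \le C\lambda^{n-1}$, hence $\diam_\sigma(X^n)\le C\lambda^{n-1}$, for all $X^n\in\X^n$ with a fixed $\lambda\in(0,1)$; this gives the shrinking of upper-level tiles for free. In fact by \eqref{eq:dyngX}, if $X^{n+k}\cap X^n\ne\emptyset$, then applying $g^{n-1}$ we get intersecting tiles $g^{n-1}(X^{n+k})\in\X^{k+1}$ and $g^{n-1}(X^n)=\Jul(g)$ (if $n\ge1$, after also observing $g^{n-1}(X^n)$ is an element of $\X^1$ and intersecting tiles map to intersecting tiles), so the problem of controlling diameter ratios of intersecting tiles is reduced to controlling, for fixed low levels, how diameters of preimage tiles behave -- but this needs the distortion estimate, which is the crux.

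\textbf{The key lemma and main obstacle.} The heart of the matter is condition \ref{item:qv_approx1} (and, essentially equivalently via \eqref{eq:dyngX}, \ref{item:qv_approx3}): if $X,Y\in\X^n$ with $X\cap Y\ne\emptyset$, then $\diam_\sigma(X)\asymp\diam_\sigma(Y)$ with a constant independent of $n$. The natural approach is to use the technical distortion estimate (Lemma~\ref{lem:dist1}, stated in the excerpt to be proved in the appendix) to control the distortion of the branches of $g^{-(n-1)}$: since each branch has degree at most $p$ on a definite scale (by Proposition~\ref{prop:semihyp}\,\ref{item:sem2}) and since $g^{n-1}$ maps $X$ and $Y$ to sets $\widehat X,\widehat Y\in\X^1$ of comparable diameter that intersect, one pulls the comparability back through $g^{-(n-1)}$. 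The obstruction -- and this is where the semi-hyperbolicity is genuinely used -- is that $g^{n-1}$ need not be a local homeomorphism near the relevant preimage of a critical value, so Koebe-type distortion does not apply naively; the bounded-degree hypothesis is precisely what salvages the estimate, at the cost of replacing bi-Lipschitz control by Hölder control, which is exactly the form of Lemma~\ref{lem:dist1} and the mechanism behind Lemma~\ref{lem:dgxy}. Quantitatively one expects: if $\varphi$ is a branch of $g^{-(n-1)}$ of degree $\le p$ defined on a ball of radius $\asymp r_0$ around a point of $\Jul(g)$, then on a slightly smaller ball, $\diam_\sigma(\varphi(A))/\diam_\sigma(\varphi(B))$ is controlled by $(\diam_\sigma A/\diam_\sigma B)$ raised to a power depending only on $p$ (a "quasi-round" or bounded-turning estimate for bounded-degree branched covers). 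Verifying \ref{item:qv_approx1} and \ref{item:qv_approx3} by this route will be the main labor.

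\textbf{The remaining conditions.} Condition \ref{item:qv_approx4} -- uniform exponential shrinking of a tile $Y\in\X^{n+k_0}$ meeting $X\in\X^n$ -- follows once \ref{item:qv_approx1} and \ref{item:qv_approx3} are in hand, by combining the diameter bound $\diam_\sigma(Y)\le C\lambda^{n+k_0-1}$ with the lower bound $\diam_\sigma(X)\gtrsim \lambda_0^{\,n}$ for some $\lambda_0\in(0,1)$ that itself comes from iterating condition \ref{item:qv_approx3} backwards (there is a uniform lower gauge on how much the diameter can shrink per level, since $g$ has bounded degree and the spherical metric is comparable to a smooth metric -- equivalently, one can invoke \eqref{eq:qv_approx8} once \ref{item:qv_approx3} is known and add the dynamical uniformity). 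Choosing $k_0$ large then makes the ratio less than $1$ uniformly; alternatively one establishes the equivalent form \eqref{eq:qv_approx4p} directly from the diameter bounds. Finally, condition \ref{item:qv_approx2} (with width $w=1$): if $U_1(X)\cap U_1(Y)=\emptyset$ for $X,Y\in\X^n$, one must show $\dist_\sigma(X,Y)\gtrsim\diam_\sigma(X)$. The width $1$ is exactly what is needed so that applying $g^{n-1}$ produces tiles $\widehat X=g^{n-1}(X)$, $\widehat Y=g^{n-1}(Y)$ in $\X^1$ which, by \eqref{eq:dyngX} and the fact that intersecting tiles stay intersecting, satisfy $\widehat X\cap \widehat Y=\emptyset$ (the width $1$ neighbourhoods absorb the possible non-injectivity of $g^{n-1}$ on the chain, so disjointness downstairs is inherited); then $\dist_\sigma(\widehat X,\widehat Y)\ge\delta_0>0$ for a constant $\delta_0$ depending only on the finite cover $\X^1$, and one pulls this lower bound back through the bounded-degree branch of $g^{-(n-1)}$ using the same quasi-round distortion estimate as above (a lower bound on the image of two separated sets, controlled below by $\diam_\sigma(X)$ to a fixed power -- again Hölder, not Lipschitz, but that is enough since a positive power of a quantity bounded below stays bounded below relative to the diameter). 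Assembling the four conditions with the stated width $w=1$ then gives that $\{\X^n\}$, which is already dynamical by \eqref{eq:dyngX}, is a dynamical quasi-visual approximation of $(\Jul(g),\sigma)$, completing the proof; by Remark~\ref{rem:X^0auto} the addition of $\X^0=\{\Jul(g)\}$ causes no difficulty since $\X^1$ is finite and its elements have positive diameter.
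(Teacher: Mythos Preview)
Your outline correctly identifies the overall plan and the role of the distortion lemma (Lemma~\ref{lem:dist1}), and your treatment of conditions~\ref{item:qv_approx1} and~\ref{item:qv_approx3} is essentially right. However, the arguments for~\ref{item:qv_approx2} and~\ref{item:qv_approx4} contain genuine gaps.

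For~\ref{item:qv_approx2}, your claim that $U_1(X)\cap U_1(Y)=\emptyset$ at level~$n$ implies $g^{n-1}(X)\cap g^{n-1}(Y)=\emptyset$ at level~$1$ is false: the map $g^{n-1}$ is highly non-injective, so disjoint (even far-apart) $n$-tiles can have identical images in $\X^1$. The dynamical property only gives the forward implication ``intersecting $\Rightarrow$ intersecting images'', whose contrapositive goes the other way. The paper's argument is quite different: choose $x\in X$, $y\in Y$ with $\sigma(x,y)\le 2\dist_\sigma(X,Y)$ and a spherical geodesic $\gamma$ between them, and show that $g^{n-1}(\gamma)$ must escape the ball $B_\sigma(g^{n-1}(x),\eps_0 r_0)$, where $2\eps_0 r_0$ is below the Lebesgue number of~$\V^1$. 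If it did not escape, the image would lie in a single $V^1\in\V^1$, so $\gamma$ would lie in a single component $\widehat V^n$ of $g^{-(n-1)}(V^1)$, and $\widehat X^n\coloneqq \widehat V^n\cap\Jul(g)\in\X^n$ would meet both $X$ and $Y$, contradicting $U_1(X)\cap U_1(Y)=\emptyset$. This is precisely where width~$1$ enters. The first exit point $u$ of $\gamma$ then satisfies $\sigma(g^{n-1}(x),g^{n-1}(u))=\eps_0 r_0$, and Lemma~\ref{lem:dist1} applied with $A=V^n$ and $A'=\{x,u\}$ yields $\diam_\sigma(X)\lesssim\sigma(x,u)\le\sigma(x,y)\le 2\dist_\sigma(X,Y)$.

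For~\ref{item:qv_approx4}, combining the absolute upper bound $\diam_\sigma(Y)\le C\lambda^{n+k_0}$ with the lower bound $\diam_\sigma(X)\ge c\tau^n$ coming from~\eqref{eq:qv_approx8} does not give a uniform relative bound: one necessarily has $\tau\le\lambda$ (else the two bounds are incompatible for large~$n$), so $\diam_\sigma(Y)/\diam_\sigma(X)\le (C/c)(\lambda/\tau)^n\lambda^{k_0}$, which for $\tau<\lambda$ is unbounded in~$n$ regardless of~$k_0$. The paper instead compares $Y$ and $X$ \emph{directly} via the distortion lemma applied to $g^{n-1}$: one has $g^{n-1}(X^n)\in\X^1$ with $\diam_\sigma(g^{n-1}(X^n))\ge\eps_0 r_0$, while an ambient $\widehat V^{n+k_0}$ of $Y$ satisfies $g^{n-1}(\widehat V^{n+k_0})\in\V^{k_0+1}$, whose diameter can be made $\le\delta_1 r_0$ for any prescribed $\delta_1>0$ by choosing $k_0$ large (Lemma~\ref{lem:pullbnice}). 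Then Lemma~\ref{lem:dist1} with $A=\widehat V^{n+k_0}$ and $A'=X^n$ gives $\diam_\sigma(Y)/\diam_\sigma(X)\le\eta(\delta_1)$, uniformly in~$n$.
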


The condition that $\V^1$ is {\em sufficiently fine}  simply means that for the given map 
$g$ there exists $\delta_0>0$ such that the conclusion of the theorem is true 
whenever we have $\diam_\sigma(V^1)<\delta_0$ for all $V^1\in \V^1$. 

A proof of Theorem~\ref{thm:semi_hyp_admits_approx} will be given in Section~\ref{subsec:thm:semiapprox}. 
We do not know whether the theorem in this generality is also true for width $w=0$. It can be established with $w=0$ at least for
\emph{postcritically-finite} rational maps by using suitable  Markov partitions from \cite{CGZ22}.

For general (semi-hyperbolic) rational maps $g$, for example, when $\Jul(g)$ is totally disconnected, it is not so obvious how to define a natural dynamical sequence 
$\{\X^n\}$ of covers of $\Jul(g)$ from a given cover $\X^1$  in order to obtain a  quasi-visual approximation. 
This is the reason why we used the sets  $\V^n$ 
 of ambients.
 
 The situation improves if we impose a mild topological assumption on the Julia set, namely if we assume that $\Jul(g)$ is locally connected.  This  is true for various classes of rational maps, for example, for semi-hyperbolic rational maps $g$ with  connected Julia sets \cite{Mi11}.
 
In this case, 
 one can  start with a finite cover $\X^1$  of $\Jul(g)$ by connected and
relatively open (non-empty) subsets of $\Jul(g)$. For each $n\in \N$ one 
can then directly define the {\em pull-back $\X^n$ of $\X^1$ under $g^{n-1}$} as 
\[\X^n:=\{\text{components $X^n$ of $g^{-(n-1)}(X^1)$}:\, X^1\in \X^1\}.\]
In  this way, when the initial cover $\X^1$ is sufficiently fine, one can show that $\{\X^n\}$  is a quasi-visual approximation of $(\Jul(g),\sigma)$ with the dynamical property ~\eqref{eq:dyngX}, but we  will not pursue the details of this argument here. 

An appropriate reverse implication in Theorem~\ref{thm:semi_hyp_admits_approx} is also  true. Actually, we will prove the following statement.  

\begin{theorem}\label{thm:Adconverse}
Let $g\colon \CDach \to \CDach$ be  a rational map with $\deg(g)\ge 2$. If there exists 
 a dynamical quasi-visual approximation $\{ \X^n\}$ of $(\Jul(g),\sigma)$, then 
$g$ is semi-hyperbolic.  
\end{theorem}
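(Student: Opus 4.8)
\textbf{Proof strategy for Theorem~\ref{thm:Adconverse}.}

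The plan is to use the characterization of semi-hyperbolicity given in Proposition~\ref{prop:semihyp}\,\ref{item:sem2}: it suffices to produce constants $r_0>0$ and $p\in\N$ so that for every $w_0\in\Jul(g)$, every $n\in\N$, and every component $U^n$ of $g^{-n}(B_\sigma(w_0,r_0))$ one has $\deg(g^n|_{U^n})\le p$. So suppose $\{\X^n\}$ is a dynamical quasi-visual approximation of width $w$ for $(\Jul(g),\sigma)$. The first idea is that the dynamical property $g^k(X^{n+k})\in\X^n$ of \eqref{eq:dyngXgen_k}, together with the fact that $g$ is a branched covering of $\CDach$, forces the local degrees of the iterates $g^k$ along the tiles to be bounded: if $g^k|_{X^{n+k}}\colon X^{n+k}\to X^n$ had unbounded local degree as $k\to\infty$, this would conflict with the metric control in the quasi-visual approximation—specifically with the equicontinuity estimate of Lemma~\ref{lem:dgxy}, applied to the dynamical quasi-visual approximation, which bounds $\sigma(g^n(x),g^n(y))$ in terms of the ratio $\sigma(x,y)/\diam(Z^{n+1})$ raised to a fixed power $\nu$. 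High local degree would allow points that are comparatively far apart (relative to the local scale $\diam(Z^{n+1})$) to be mapped arbitrarily close together, violating \eqref{eq:equicont} quantitatively, or conversely would compress the image scales in a way incompatible with condition~\ref{item:qv_approx3}.

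More concretely, here is the order in which I would carry out the steps. First, fix a small $r_0>0$ to be chosen later and a point $w_0\in\Jul(g)$. Since $\{\X^n\}$ is a cover of $\Jul(g)$, pick for each $n$ a tile $X^n\in\X^n$ containing $w_0$; by Lemma~\ref{lem:U2w1_qround} the set $U_{2w+1}(X^n)$ is a quasi-ball around $w_0$ in $\Jul(g)$ with uniform constants, i.e.\ sits between two concentric (relative) balls of radii $r_0'\diam(X^n)$ and $R_0'\diam(X^n)$. By Corollary~\ref{cor:qv_tiles_shrink} the scales $\diam(X^n)$ shrink uniformly, so after reindexing we can arrange that, for the given $r_0$, a suitable level-$n_0$ tile neighborhood sits inside $B_\sigma(w_0,r_0)\cap\Jul(g)$ while a level-$(n_0-c)$ tile neighborhood contains it. Next, by the dynamical property, for each $k$ the map $g^k$ sends each level-$(n_0+k)$ tile $X^{n_0+k}$ to a level-$n_0$ tile, and more importantly it sends the whole combinatorial neighborhood $U_{2w+1}(X^{n_0+k})$ into $U_{2w+1}(g^k(X^{n_0+k}))$ (because intersecting tiles map to intersecting tiles under $g^k$). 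Now consider a component $U$ of $g^{-k}$ of a small metric ball around the image point: it is covered by finitely many of these tile-neighborhoods, and the number of preimages of a generic point of $\Jul(g)$ under $g^k|_U$ equals $\deg(g^k|_U)$. I would count preimages tile-neighborhood by tile-neighborhood: each level-$(n_0+k)$ tile $X^{n_0+k}$ with $g^k(X^{n_0+k})=X^{n_0}$ contributes, and the claim is that the number of such tiles mapping onto a fixed $X^{n_0}$ is uniformly bounded. This uniform bound comes from combining (a) condition~\ref{item:qv_approx2}: distinct level-$(n_0+k)$ tiles with disjoint width-$w$ neighborhoods are $\gtrsim\diam$-separated; (b) the image ball has bounded geometry; and (c) the pull-back scales are comparable via \eqref{eq:qv_approx8} and \eqref{eq:qv_approx4p}—so all preimage tiles live inside a region of controlled size whose relative diameter to the tiles is bounded, and a uniformly perfect, doubling-type counting argument (or, directly, the injectivity of $g^k$ on each small enough piece away from critical points, packaged through the degree) caps their number.

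The main obstacle, I expect, is turning the combinatorial/metric boundedness into an honest bound on the \emph{topological} degree $\deg(g^k|_{U^k})$ of the iterate on a component of a spherical ball preimage, rather than just a bound on the number of tiles. The subtlety is that a tile $X^{n_0+k}$ can cover a given target tile $X^{n_0}$ with multiplicity (because $g^k|_{X^{n_0+k}}$ need not be injective), so one must also control these local multiplicities, not merely the number of distinct preimage tiles. To handle this I would pass to the ambient regions or, better, use the equicontinuity estimate \eqref{eq:equicont} as a quantitative substitute for injectivity: if $g^k$ had local degree $\to\infty$ on tiles of a fixed level, then near a branch point of high order $g^k$ would map a relatively large neighborhood (a fixed fraction of $\diam(X^{n_0+k})$, hence comparable to $\diam(Z^{n_0+k+1})$) onto a set of diameter comparable to $\diam(X^{n_0})$, and conversely image points that should be separated at scale $\asymp\diam(X^{n_0})$ would pull back to points closer than any fixed multiple of $\diam(X^{n_0+k})$—contradicting \eqref{eq:equicont} once $k$ is large, since the exponent $\nu\in(0,1]$ is fixed and $C=C(R)$ is fixed once $R$ is fixed. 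So the degree along tiles is bounded, say by $p_0$; then a component $U^k$ of $g^{-k}(B_\sigma(w_0,r_0))$ is covered by at most $N$ tiles (uniform $N$, by the packing argument above), and $\deg(g^k|_{U^k})\le N\cdot p_0=:p$. Applying this with $n$ in place of $k$ (the reindexing shift being absorbed into the finitely many initial levels, which only affect $p$ and $r_0$ by bounded amounts) gives exactly condition~\ref{item:sem2} of Proposition~\ref{prop:semihyp}, and hence $g$ is semi-hyperbolic. One final point to be careful about: the approximation is only required to cover $\Jul(g)$, so all the diameter and distance comparisons above are internal to $\Jul(g)$ with the spherical metric; lifting them to the ambient sphere (needed to talk about components of $g^{-k}(B_\sigma(w_0,r_0))$ and their degrees) uses that $\Jul(g)$ is uniformly perfect and that, for small $r_0$, the relevant ball preimages are comparable in scale to the tiles meeting them—this is where a careful but routine application of Lemma~\ref{lem:metricball} (in its dynamical incarnation) closes the gap.
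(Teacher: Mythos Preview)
Your overall target---verify condition~\ref{item:sem2} of Proposition~\ref{prop:semihyp}---is right, and you correctly identify Lemma~\ref{lem:dgxy} as the crucial analytic input coming from the dynamical quasi-visual approximation. But the way you try to convert \eqref{eq:equicont} into a degree bound does not work, and this is the central gap.

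The problem is that \eqref{eq:equicont} is an \emph{upper} bound on image distances: if $x,y$ are close at scale $\diam(Z^{n+1})$, then $g^n(x),g^n(y)$ are close. High local degree is perfectly compatible with this. If $g^k$ has local degree $d$ at $z_0$, then in suitable coordinates $g^k(z)-g^k(z_0)\approx c(z-z_0)^d$, so nearby source points have images that are \emph{even closer}; nothing in \eqref{eq:equicont} is violated. Your ``converse'' reading (separated image points pull back to arbitrarily close source points) would require a \emph{lower} bound $\sigma(g^n(x),g^n(y))\gtrsim(\sigma(x,y)/\diam)^{1/\nu}$, which you do not have. The counting/packing step has the same defect one level up: to cover a preimage component $U^k\subset\CDach$ by boundedly many tiles you implicitly need $\diam_\sigma(U^k)\lesssim\diam_\sigma(X^{n_0+k})$, but large local degree is exactly what makes preimage components large, so the argument is circular.

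The paper breaks this circularity with a normal-families argument. One considers the rescaled iterates $f(z)=(g^n\circ T_{z_0})(\rho_0 z)$ with $\rho_0=\diam_\sigma(X^{n+1})$, and uses \eqref{eq:equicont} to verify a Zalcman-type normality criterion (Lemma~\ref{lem:Zalc}): the equicontinuity you already isolated shows these rescaled maps are uniformly H\"older on preimages of $\Jul(g)$, hence form a normal family. The dynamical property $g^n(X^{n+1})\in\X^1$ forces all locally uniform limits to be non-constant. Compactness of such a family then yields, uniformly, both a radius $r_0$ for which the relevant preimage component sits in a bounded disk and a uniform multiplicity bound (Lemma~\ref{lem:GBd}). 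Translating back gives condition~\ref{item:sem2}. So the missing idea is not a sharper tile count but this rescale-and-take-limits compactness step; once you have it, the degree and diameter bounds come together rather than having to be bootstrapped from one another.
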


A proof of this theorem will be given in Section~\ref{subsec:ProofThmCon}.  

If we combine Theorems~\ref{thm:semi_hyp_admits_approx} and~\ref{thm:Adconverse}, then we obtain the following immediate consequence.

\begin{cor}\label{cor:charsemi} Let $g\colon \CDach \to \CDach$ be  a rational map with $\deg(g)\ge 2$. 
  Then $g$ is semi-hyperbolic if and only if $(\Jul(g),\sigma)$ admits a dynamical  quasi-visual approximation $\{ \X^n\}$.   
\end{cor}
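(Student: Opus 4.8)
The plan is to obtain Corollary~\ref{cor:charsemi} as a direct logical consequence of Theorems~\ref{thm:semi_hyp_admits_approx} and~\ref{thm:Adconverse}, so the work consists only of packaging the two implications and making sure the statements line up. First I would note that the forward direction is immediate: if $g$ is semi-hyperbolic, then Theorem~\ref{thm:semi_hyp_admits_approx} produces a \emph{sufficiently fine} admissible finite cover $\V^1$ of $\Jul(g)$ by subregions of $\CDach$, and the associated sequence $\{\X^n\}$ of covers of $\Jul(g)$ generated by $g$ and $\V^1$ (as constructed via \eqref{eq: pullback_Vn} and \eqref{eq: Xn_induced_by_Vn}) is a dynamical quasi-visual approximation of width $w=1$ for $(\Jul(g),\sigma)$. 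In particular, $(\Jul(g),\sigma)$ admits \emph{some} dynamical quasi-visual approximation, which is exactly what the corollary asserts in this direction (discarding the width information, which is not part of the corollary's statement).

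For the reverse direction, I would simply invoke Theorem~\ref{thm:Adconverse}: if $(\Jul(g),\sigma)$ admits any dynamical quasi-visual approximation $\{\X^n\}$ (of any width, with the dynamical property \eqref{eq:dyngX}), then $g$ is semi-hyperbolic. The only small point to verify is that the notion of ``dynamical quasi-visual approximation'' used in the statement of the corollary is the same in both theorems; this is guaranteed since both refer to Definition~\ref{def:qv_approx} together with the dynamical condition from Section~\ref{subsec:dynqvisual}, specialized to the rational map $g$ acting on its Julia set with the spherical metric. There is no genuine obstacle here — the two theorems were designed precisely to be the two halves of this equivalence — so the proof is a one-line deduction.

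Concretely, I would write: \emph{Proof.} If $g$ is semi-hyperbolic, then by Theorem~\ref{thm:semi_hyp_admits_approx}, for any sufficiently fine admissible finite cover $\V^1$ of $\Jul(g)$ by subregions of $\CDach$, the associated sequence of covers $\{\X^n\}$ generated by $g$ and $\V^1$ is a dynamical quasi-visual approximation of width $w=1$ for $(\Jul(g),\sigma)$; in particular, such a dynamical quasi-visual approximation exists. Conversely, if $(\Jul(g),\sigma)$ admits a dynamical quasi-visual approximation $\{\X^n\}$, then $g$ is semi-hyperbolic by Theorem~\ref{thm:Adconverse}. This establishes the equivalence. $\qed$

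The only ``hard part'' is really upstream of this corollary — namely the proofs of the two constituent theorems — and nothing new needs to be done at the level of Corollary~\ref{cor:charsemi} itself. If one wanted to be slightly more careful, one could also remark that the corollary as stated does not claim anything about the width of the approximation, so it suffices that Theorem~\ref{thm:semi_hyp_admits_approx} yields width $w=1$ and that Theorem~\ref{thm:Adconverse} accepts an approximation of arbitrary width; both conditions are met, so the logical loop closes cleanly.
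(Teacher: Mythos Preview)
Your proposal is correct and matches the paper's approach exactly: the paper states the corollary as an immediate consequence of Theorems~\ref{thm:semi_hyp_admits_approx} and~\ref{thm:Adconverse} without giving a separate proof, and your argument simply spells out this combination. There is nothing to add.
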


 \subsection{Auxiliary lemmas}\label{subsec: aux_lemmas} In order to prove Theorem~\ref{thm:semi_hyp_admits_approx},
 we need some  auxiliary facts, in particular a lemma about the control of the distortion of sets under proper holomorphic maps of uniformly bounded degree (see Lemma~\ref{lem:dist1}). 
 The proof of this statement is rather technical, and we will 
  present the details in the appendix.

In order to state the lemma, we use the term  {\em distortion function}  for   an increasing  function 
$\eta\: (0,\infty) \ra (0, \infty)$ such that $\eta(t)\to 0$ as $t\to 0^+$. 
We extend the definition of such a function to $0$ and $\infty$ by setting 
$\eta(0)=0$ and $\eta(\infty)=\infty$. This will allow us to include some trivial cases in the formulation of estimates.

Now the following fact is true. 

\begin{lemma} \label{lem:dist1} For each  $\eps>0$ and $p\in \N$ there exists a distortion function 
$\eta=\eta_{\eps,p}$ with  the following property: Let $\widetilde W\sub \CDach$
 be a simply connected  region with 
$\diam_\sigma(\widetilde W)<\pi/2$, $W\coloneqq B_\sigma(w_0, r)$, where $w_0\in \CDach$ and $0<r<\pi/4$,  
and $f\: \widetilde W\ra W$ be a proper holomorphic map with $\deg(f)\le p$.
Suppose $z_0\in \widetilde{W}$ satisfies $w_0=f(z_0)$ 
and  $A,A'\sub \widetilde W$ are sets such that:
\begin{enumerate}
\smallskip
\item\label{item:lem_dist1}  $z_0\in A\cap A'$ and  
$f(A)\cup f(A')\sub B_\sigma(w_0,r/2)$,
\smallskip
\item\label{item:lem_dist2}  $A$ is connected and  $\diam_\sigma (f(A'))\ge \eps r$. 
\end{enumerate}
\smallskip\noindent
Then we have 
\[
\frac{\diam_\sigma(A)}{\diam_\sigma(A')}\le \eta(\diam_\sigma(f(A))/r).
\]
\end{lemma}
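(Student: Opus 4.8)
The statement is a distortion estimate for proper holomorphic maps of bounded degree: if $f\colon \widetilde W\to W$ is such a map with $W$ a round ball, and $A,A'$ are sets through the center preimage $z_0$ with $A$ connected and $f(A')$ not too small, then $\diam(A)$ is controlled by $\diam(A')$ in terms of $\diam(f(A))/r$. The natural strategy is to factor $f$ through a Riemann map and reduce to two ingredients: (1) the Koebe-type distortion theory for \emph{univalent} maps, and (2) the fact that a degree-$\le p$ proper map between simply connected regions is, up to a uniform loss, ``almost univalent'' on scales where it doesn't fold. I would work in normalized charts: since $\diam_\sigma(\widetilde W)<\pi/2$ and $r<\pi/4$, both $\widetilde W$ and $W$ avoid a definite spherical disk, so by a M\"obius change of coordinates (with spherically bounded distortion) we may assume $\widetilde W,W\subset \C$ and that the spherical metric is comparable to the Euclidean metric on the relevant sets, with uniform constants. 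This reduces everything to a Euclidean statement.

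\textbf{Key steps.} First I would uniformize: let $\psi\colon \D\to\widetilde W$ be a Riemann map with $\psi(0)=z_0$, and consider $h\coloneqq f\circ\psi\colon \D\to W$, a proper holomorphic map of degree $\le p$ with $h(0)=w_0$; so $h$ is a Blaschke-type product of degree $\le p$ composed with a conformal map of $W$ to (essentially) a disk. Pull $A,A'$ back to $\widetilde A\coloneqq\psi^{-1}(A)$, $\widetilde A'\coloneqq\psi^{-1}(A')$ in $\D$, both containing $0$. The point is to show that on the component of $h^{-1}(B_\sigma(w_0,r/2))$ containing $0$, the map $h$ behaves like a power map $z\mapsto z^k$ composed with bounded-distortion univalent maps; quantitatively, one gets comparability $|h(z)-h(0)|\asymp_p \operatorname{dist}(z,\partial D)^{?}$ — more precisely I would extract that there is a $k\le p$ with $|h(z)-w_0|\asymp |z|^k$ on a definite subdisk, with constants depending only on $p$. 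Then I translate the hypothesis $\diam_\sigma(f(A'))\ge\eps r$ into a lower bound on $\operatorname{diam}(\widetilde A')$: since $0\in\widetilde A'$ and $h(\widetilde A')$ has Euclidean diameter $\gtrsim\eps r$ while staying in $B(w_0, r/2)$, the local behaviour $|h(z)-w_0|\asymp|z|^k$ forces $\operatorname{diam}(\widetilde A')\gtrsim_{\eps,p} 1$ (a definite fraction of the disk). Finally, for the connected set $A$: $\widetilde A\ni 0$ is connected, and $h(\widetilde A)=f(A)$ has Euclidean diameter $\asymp\diam_\sigma(f(A))$, so again by $|h(z)-w_0|\asymp|z|^k$ one gets $\operatorname{diam}(\widetilde A)\lesssim_p (\diam_\sigma(f(A))/r)^{1/k}\le (\diam_\sigma(f(A))/r)^{1/p}$ (using that the quantity is $\le 1$, so a smaller exponent makes it larger — one has to be careful about the direction of the inequality here, but monotonicity of $t\mapsto t^{1/k}$ on $[0,1]$ and $t^{1/p}\ge t^{1/k}$ saves it). Combining the two bounds with the Koebe distortion theorem applied to the univalent map $\psi$ (to pass from Euclidean diameters in $\D$ back to diameters of $A,A'$ in $\widetilde W$, using that $\psi$ has bounded distortion near $0$ — here connectedness of $A$ and $z_0\in A$ is used to keep $\widetilde A$ near $0$), I obtain
\[
\frac{\diam_\sigma(A)}{\diam_\sigma(A')}\ \le\ C(\eps,p)\,\frac{\operatorname{diam}(\widetilde A)}{\operatorname{diam}(\widetilde A')}\ \le\ C'(\eps,p)\Bigl(\frac{\diam_\sigma(f(A))}{r}\Bigr)^{1/p},
\]
which defines the required distortion function $\eta_{\eps,p}(t)\coloneqq C'(\eps,p)\,t^{1/p}$ (extended by $\eta(0)=0$, $\eta(\infty)=\infty$), increasing and tending to $0$ at $0^+$.

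\textbf{Main obstacle.} The delicate point is the quantitative ``almost-univalence'' of a degree-$\le p$ proper holomorphic map on the central component of a preimage of a subball — i.e.\ establishing the comparison $|h(z)-w_0|\asymp_p|z|^k$ (or a suitable substitute) with constants depending \emph{only} on $p$, uniformly over all such configurations. This requires care: $h$ may have critical points accumulating near $0$, the region $\widetilde W$ is arbitrary (only simply connected, not smooth), and one must control the component of $h^{-1}(B_\sigma(w_0,r/2))$ rather than all of $\D$. I would handle this by a normal-families / compactness argument after normalization — the space of degree-$\le p$ proper maps $\D\to \D$ fixing $0$ is compact in a suitable sense — or alternatively by an explicit Blaschke-product factorization $h=B\circ(\text{conformal})$ and direct estimates on $B$. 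This is precisely the technical core that the authors defer to the appendix, and it is where the real work lies; the rest is bookkeeping with the Koebe distortion theorem and the uniform comparability of spherical and Euclidean metrics on sets of controlled spherical diameter avoiding a definite disk.
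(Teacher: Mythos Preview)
Your overall strategy---uniformize, reduce to a proper map $h\colon\D\to\D$ of bounded degree, prove the distortion estimate there, and transfer back via Koebe---is essentially the paper's, but two points deserve correction.

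First, the paper works throughout in the \emph{hyperbolic} metrics $\rho_{\widetilde W}$ and $\rho_W$ rather than pulling back to Euclidean coordinates. This is cleaner: the Riemann maps become isometries (so no Koebe bookkeeping), Schwarz--Pick gives $\hdiam_W(f(A'))\le\hdiam_{\widetilde W}(A')$ in one line (this is the entire lower bound for $A'$, with no dependence on $p$), and a single lemma compares hyperbolic and spherical diameters on a fixed hyperbolic ball in $\widetilde W$. In particular the paper never needs $\widetilde A'$ to sit in a compact part of $\D$; it bounds $\diam_\sigma(A')$ from below directly, via a case split according to whether $A'$ escapes the reference hyperbolic ball.

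Second, your intermediate claim $|h(z)-w_0|\asymp_p|z|^k$ is false as a uniform two-sided estimate: for $h(z)=z\,\frac{z-a}{1-\bar a z}$ with $|a|$ near $1$ one has $|h(z)|\approx|z|$ near $0$ but $h(a)=0$, so no single $k$ works. The two directions are genuinely different. The upper bound $|h(z)|\le|z|$ is just Schwarz (degree irrelevant), and this alone gives $\hdiam_{\widetilde W}(A')\gtrsim\eps$. The other direction is \emph{not} a pointwise lower bound but a covering statement: if $|h(z)|\le r$ then some Blaschke factor satisfies $|\varphi_{k_0}(z)|\le r^{1/d}$, so $h^{-1}(\overline B_\D(0,R))$ is covered by $d\le p$ hyperbolic disks of radius $\asymp R^{1/d}$; hence a \emph{connected} set $A\ni 0$ with $h(A)\subset\overline B_\D(0,R)$ has hyperbolic diameter $\le 2pR^{1/p}$. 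This is exactly the Blaschke-factorization argument you gesture at, and it is the only place connectedness of $A$ and the degree bound are used. With these two adjustments your outline matches the paper's proof.
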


As we mentioned, we will give a proof of this lemma in the appendix, but we can point out  some immediate consequences that will be important for us later on.
In the setting of the lemma we have $\diam_\sigma(f(A))\le r$ and so 
\[
 \frac{\diam_\sigma(A)}{\diam_\sigma(A')}\le  \eta(1)\eqqcolon C(\eps, p).
\] 
If $A'$ is also connected and $A$ satisfies $\diam_\sigma(f(A))\ge 
\eps r$, then the roles of $A$ and $A'$ are symmetric and we conclude that 
\[
 \diam_\sigma(A) \asymp \diam_\sigma(A')
 \] 
 with 
 $C(\asymp)=C(\eps, p)$.

Before we turn to the proof of Theorem~\ref{thm:semi_hyp_admits_approx}, we need one  more auxiliary fact that summarizes the relevant properties of semi-hyperbolic rational maps in a form convenient for us.

\begin{lemma} \label{lem:pullbnice} Let $g\: \CDach \ra \CDach $ be a semi-hyperbolic rational map. Then there exist $r_0\in (0,\pi/4)$ and $p\in \N$ with the following properties:

\begin{enumerate}
\smallskip
\item If $n\in \N_0$, $r\in (0,r_0]$, and $w_0\in \Jul(g)$, then each component $U^n$ of $g^{-n}(B_\sigma(w_0,r))$ is a simply connected  region with $\diam_\sigma (U^n)< \pi/2$ and  $\deg(g^n|_{U^n})\le p$. 
\smallskip
\item For each  $\eps>0$ there exists  $n_0\in \N$ independent of $w_0$  and $r$ such that for the components $U^n$  above we have 
$\diam_\sigma(U^n)<\eps$ whenever
 $n\ge n_0$. 
 
 Moreover, if $r\in (0,r_0]$ is sufficiently small dependent on 
 $\eps$, but independent of $w_0$, then $\diam_\sigma(U^n)<\eps$ for all 
 $n\in \N$. 
\end{enumerate}
\end{lemma}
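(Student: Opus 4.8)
\textbf{Plan of proof for Lemma~\ref{lem:pullbnice}.}

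The plan is to derive both parts from the equivalence in Proposition~\ref{prop:semihyp} together with standard facts about holomorphic branches of inverse iterates, which I would record explicitly before starting. Recall that in Proposition~\ref{prop:semihyp}~\ref{item:sem3} we are handed constants $r_1>0$, $p_0\in\N$, $C_0\ge1$, $\lambda_0\in(0,1)$ such that for all $w_0\in\Jul(g)$ and $n\in\N$, every component $U^n$ of $g^{-n}(B_\sigma(w_0,r_1))$ satisfies $\deg(g^n|_{U^n})\le p_0$ and $\diam_\sigma(U^n)<C_0\lambda_0^n$. We should note that the degree bound for $n=0$ is trivial (the map is the identity of one component, $B_\sigma(w_0,r)$ itself). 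I would set $p\coloneqq p_0$ and then shrink $r_1$ to obtain the required $r_0\in(0,\pi/4)$ as described below.

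For part~(i), fix $r_0\in(0,\min\{r_1,\pi/4\}]$ to be further constrained, and fix $n\in\N_0$, $r\in(0,r_0]$, $w_0\in\Jul(g)$, and a component $U^n$ of $g^{-n}(B_\sigma(w_0,r))$. The degree bound $\deg(g^n|_{U^n})\le p$ is immediate from the choice of $r_0\le r_1$ and monotonicity of components under shrinking the ball (a component of $g^{-n}(B_\sigma(w_0,r))$ is contained in a component of $g^{-n}(B_\sigma(w_0,r_1))$, and the local degree cannot increase on a subdomain). For the diameter bound $\diam_\sigma(U^n)<\pi/2$ I would first handle $n=0$ ($U^0=B_\sigma(w_0,r)$ has diameter $\le 2r<\pi/2$), then for $n\ge1$ invoke $\diam_\sigma(U^n)<C_0\lambda_0^n$, choosing $r_0$ (equivalently bounding $n$ from below via $r_1$, or simply using that $\lambda_0^n\le\lambda_0$) so that $C_0\lambda_0<\pi/2$; if $C_0\lambda_0\ge\pi/2$ one instead observes $C_0\lambda_0^n\to0$ and there is some fixed $N$ with $C_0\lambda_0^N<\pi/2$, and for $n<N$ one shrinks $r_0$ directly using that $g^n$ is uniformly continuous and $g^{-n}$ of a small enough ball has all components of small diameter (this uses compactness of $\CDach$ and that the finitely many iterates $g,\dots,g^{N-1}$ are open maps; more cleanly, apply Lemma~\ref{lem:dist1} or a Koebe-type estimate on each branch). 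The simple connectivity of $U^n$ is the standard fact that preimage components of a simply connected region under a proper holomorphic map of a surface whose only possible obstruction is critical values — concretely, $B_\sigma(w_0,r)$ is a simply connected subregion of $\CDach$ avoiding at least one point (since $r<\pi/4$ the ball omits a nonempty open set, so it is conformally a disk), and a component of its full preimage under the proper map $g^n\colon (g^n)^{-1}(B_\sigma(w_0,r))\to B_\sigma(w_0,r)$ is simply connected by the Riemann–Hurwitz formula: a proper degree-$d$ branched cover of the disk by a connected surface with $b$ critical points has Euler characteristic $d-b\le 1$ if and only if it is a disk, and here genus zero forces $\chi\le1$, hence $\chi=1$ (using that the only closed surface of nonnegative Euler characteristic embeddable as a subregion of $\CDach$ and covering the disk is again a disk). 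I would spell this out as: each component of $g^{-n}(W)$ for $W$ a Jordan domain is a Jordan-type simply connected domain — this is \cite[Lemma~A.8 and surrounding discussion]{BM}-style material, so I can cite it.

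For part~(ii), given $\eps>0$: from $\diam_\sigma(U^n)<C_0\lambda_0^n$ for $n\ge1$, simply pick $n_0\in\N$ with $C_0\lambda_0^{n_0}<\eps$; this $n_0$ depends only on $\eps$ and the ambient constants, not on $w_0$ or $r$. For the ``moreover'' clause, I need $\diam_\sigma(U^n)<\eps$ for \emph{all} $n\in\N$ once $r$ is small enough (depending only on $\eps$). For $n\ge n_0$ this already holds regardless of $r\le r_0$ by the previous sentence. For $1\le n< n_0$ — only finitely many iterates — I would use that each branch of $g^{-n}$ over a small ball has controlled diameter: more precisely, by part~(i) each such $U^n$ is simply connected with $\deg(g^n|_{U^n})\le p$ and $g^n(U^n)=B_\sigma(w_0,r)$, so Lemma~\ref{lem:dist1} (applied with $f=g^n|_{U^n}$, $W=B_\sigma(w_0,r)$, and $A=A'=U^n$, or rather a suitable pair of nested balls inside $W$) gives $\diam_\sigma(U^n)\le\eta_{p}(\text{something}\to0)$ as $r\to0$, uniformly in $w_0$ and in the finitely many values of $n$; choosing $r$ small enough to make this $<\eps$ for each $n\in\{1,\dots,n_0-1\}$ and taking the minimum finishes it. The main obstacle I anticipate is making the $n<n_0$ (finitely many iterates) uniform-in-$w_0$ diameter control genuinely rigorous without circularity — the clean route is to invoke Lemma~\ref{lem:dist1} with the uniform degree bound from part~(i), so the real work is just bookkeeping of which constants depend on what. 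I would therefore prove part~(i) first in full, then reuse its degree and simple-connectivity conclusions as the hypotheses needed to apply Lemma~\ref{lem:dist1} in part~(ii).
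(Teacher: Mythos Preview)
Your treatment of the degree and diameter bounds is essentially the same as the paper's (both simply invoke Proposition~\ref{prop:semihyp}), and your compactness argument for the finitely many small iterates is fine. The genuine gap is in your simple connectivity argument. Riemann--Hurwitz gives $\chi(U^n)=d-b$ with $b$ the total branching of $g^n|_{U^n}$, and planarity gives $\chi(U^n)\le 1$, but nothing you wrote forces $\chi(U^n)\ge 1$, i.e., $b\le d-1$. That inequality is \emph{false} in general: a degree-$d$ proper map onto a disk can have $b>d-1$ (for example, as soon as the disk contains two critical values of $g^n$), and then $U^n$ is multiply connected. Your appeal to \cite[Lemma~A.8]{BM} does not help; that lemma only gives properness and surjectivity of the restriction, not simple connectivity.

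The paper supplies the missing idea: first shrink $r_0$ so that every ball $B_\sigma(w_0,r_0)$ \emph{and} every preimage component $U^k$ under any iterate of $g$ contains at most one critical value of $g$ (possible because the critical values of $g$ form a finite set and the diameters of the $U^k$ tend to $0$ uniformly by Proposition~\ref{prop:semihyp}~\ref{item:sem3}). Then argue by induction on $n$, applying Riemann--Hurwitz one step at a time to $g\colon U^n\to U^{n-1}$: since $U^{n-1}$ is simply connected and contains at most one critical value of $g$, the branching satisfies $b\le d-1$ and hence $\chi(U^n)=1$. Trying to apply Riemann--Hurwitz directly to $g^n$ fails because $g^n$ may have many critical values in $B_\sigma(w_0,r)$ even when $g$ has at most one in each $U^{n-1}$. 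Note also that your proposed fallback via Lemma~\ref{lem:dist1} for small $n$ is circular as written: that lemma assumes $\widetilde W$ is simply connected with $\diam_\sigma(\widetilde W)<\pi/2$, which is exactly what you are trying to establish.
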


\begin{proof} The statements about the degrees and diameters easily follow
from Proposition~\ref{prop:semihyp}~\ref{item:sem2} and we will skip the details. To see that 
the components $U^n$ are simply connected, one may assume that 
$r_0>0$ is so small that for all $w_0\in \Jul(g)$ the ball $B_\sigma(w_0, r_0)$ and all of its 
preimage components
under any iterate of $g$ contain at most one critical value of $g$. 

It is now  a standard fact  that if $f\: U\ra V$ is a proper holomorphic map between subregions $U$ and $V$ of $\CDach$ such that $V$ is simply connected and contains at most one critical value of $f$, then
$U$ is simply connected as well (this can easily be derived from the Riemann--Hurwitz formula, for example; for this formula, see~\cite[Theorem~A3.4]{Hu06}).  One uses this together with an induction argument  on $n$ to show that indeed all components $U^n$ of $g^{-n}(B_\sigma(w_0,r))$ for $w_0\in \Jul(g)$ and $r\in (0,r_0]$ are simply connected. Again we skip the details. 
\end{proof}

\subsection{Proof of Theorem~\ref{thm:semi_hyp_admits_approx}} 
\label{subsec:thm:semiapprox}
We can now prove the first  main result in this section.

Suppose $r_0\in(0,\pi/4)$ and $p\in \N$  are constants as in Lem\-ma~\ref{lem:pullbnice}
for the given semi-hyperbolic rational map $g$. 
Then the following condition is true:
if $n\in \N_0$, $w_0\in \Jul(g)$, $W\coloneqq 
B_\sigma(w_0, r_0)$, and $\widetilde W$ is a component of $g^{-n}(W)$, then 
$\widetilde W$ is a simply connected region with $\diam_\sigma(\widetilde W)<\pi/2$. 

Moreover,  $g^n|_{\widetilde W}\:  \widetilde W\ra W$ is a proper holomorphic map with $1\le \deg(g^n|_{\widetilde W})\le p$.  Hence all the basic assumptions in Lemma~\ref{lem:dist1} are satisfied for the map $g^n|_{\widetilde W}\:  \widetilde W\ra W$; so we can always apply the distortion estimates provided by this lemma for suitable sets $A$ and $A'$.

By Lemma~\ref{lem:pullbnice} we can find $0<\delta_0\le r_0$ such that if 
$\V^1$ is a  finite admissible cover of $\Jul(g)$ by  subregions of $\CDach$  
such that 
$\diam_\sigma(V^1)<\delta_0$  for all $V^1\in \V^1$,
then
\begin{equation}\label{eq:smallcovg}
\diam_\sigma(V)<r_0/4
\text{ for all $V\in \V^n$, $n\in \N$.}
\end{equation} 
Here, $\V^n$ denotes the pull-back of $\V^1$ under $g^{n-1}$ (see \eqref{eq: pullback_Vn}).
Note that this implies  
\begin{align}\label{eq:smallcovg2}
V\sub & \, B_\sigma(w_0, r_0/2)\sub B_\sigma(w_0, r_0)
 \text{} \\ 
&\text{ for all $n\in \N$,  $V\in \V^n$, and $w_0\in V$. 
}\notag
\end{align} 
 In the following, $\V^1$ is such a cover of $\Jul(g)$.

Let $\{\X^n\}$ be the sequence of covers of $\Jul(g)$ generated by $g$ and $\V^1$ (see \eqref{eq: Xn_induced_by_Vn}). 
There are finitely many elements in the sets $\X^1$ and $\X^2$ and they all have positive diameter, since they are non-empty and $\Jul(g)$ does not have 
isolated points (see \cite[Corollary~4.14]{Mi06}).  Hence we can find $0<\eps_0<1/3$ such that 
\begin{equation} \label{eq:diamXlar}
 \diam_\sigma(X)\ge \eps_0 r_0 \text{ 
for all $X\in \X^1\cup \X^2$.}
\end{equation}
Moreover, by making $\eps_0$ smaller if necessary, we may also assume that 
$2\eps_0 r_0$ is smaller than the {\em Lebesgue number} 
of the cover $\V^1$ (see \cite[Theorem~1-32]{HY}). This implies that 
\begin{align}\label{eq:Lebno}
\text{for all } 
&w_0\in \Jul(g) 
\text{ there exists $V^1\in \V^1$ such }\text{that}\\ 
\text{}&
\text{ $B_\sigma(w_0,\eps_0 r_0)\sub V^1$.}\notag
\end{align}

\smallskip
{\em Claim~1}. We have $\diam_\sigma(X^n)\asymp 
\diam_\sigma(V^n)$ for all $n\in \N$, $X^n\in \X^n$, where $V^n\in \V^n$ is any ambient of $X^n$, that is, $X^n=V^n\cap \Jul(g)$. 

\smallskip 
In this statement and its ensuing proof, all comparability constants  only  depend on the  $p$ and $\eps_0$.  

\smallskip
To prove the claim, let $n\in \N$ and $X^n\in \X^n$ be arbitrary, and $V^n\in 
\V^n$ be an ambient of $X^n$.  Then $X^n=
V^n\cap \Jul(g)\sub V^n$ and so $\diam_\sigma(X^n)\le 
\diam_\sigma(V^n)$. 

To obtain an inequality in the other direction, we choose a point $z_0\in X^n\sub \Jul(g)$ and define $w_0\coloneqq g^{n-1}(z_0)$,  $X^1\coloneqq g^{n-1}(X^n)$ and 
$V^1\coloneqq g^{n-1}(V^n)$.  Then $V^1\in \V^1$, $X^1=V^1\cap \Jul(g)\in \X^1$, and $w_0\in X^1$.  
 
Now let $W\coloneqq B_\sigma(w_0, r_0)$ and 
 $\widetilde W$ be the component of $g^{-(n-1)}(W)$ that contains $z_0$. Recall from \eqref{eq:smallcovg2} that $V^1\subset
 W$. 
  Then $z_0\in X^n\sub V^n\sub \widetilde W$, because the set $V^n\sub g^{-(n-1)}(V^1)\sub g^{-(n-1)}(W)$ is connected and  contains $z_0$. 
   
As we discussed previously, we can apply Lemma~\ref{lem:dist1}  to the map
\[ 
 f\coloneqq g^{n-1}|_{\widetilde W}\: \widetilde W\ra W.
 \] 
 We want to   do this with the sets $A\coloneqq V^n\sub  \widetilde W$ and $A'\coloneqq X^n\sub  \widetilde W$. Note that  
 \[
 \diam_\sigma(g^{n-1}(X^n))=\diam(X^1)\ge \eps_0r_0
 \]
 by \eqref{eq:diamXlar} and 
 \[
 g^{n-1}(X^n)\cup g^{n-1}(V^n)=V^1\sub B_\sigma(w_0, r_0/2)
 \]
 by \eqref{eq:smallcovg2}; thus the assumptions \ref{item:lem_dist1} and \ref{item:lem_dist2}
in  Lemma~\ref{lem:dist1} are satisfied for the chosen sets $A$ and $A'$.   
This lemma and \eqref{eq:smallcovg} now show that 
\[ 
\frac{\diam_\sigma(V^n)}{\diam_\sigma(X^n)}\le \eta(\diam_\sigma(V^1)/r_0)\le  \eta(1/4). 
\]
Here $\eta$ only depends on $p$ and $\eps_0$, but not on $n$ and $V^n$.
This shows that $\diam_\sigma(V^n)\lesssim \diam_\sigma(X^n)$ with a uniform constant $C(\lesssim)$ and the claim follows.

\smallskip
We now check the conditions  in Definition~\ref{def:qv_approx} for $\{\X^n\}$ to be a quasi-visual approximation of width $w\coloneqq1$ for $(\Jul(g), \sigma)$.  Since $\X^0=\{\Jul(g)\}$ and $\X^1$ is a finite cover of $\Jul(g)$ consisting of sets of positive diameter, it is sufficient to verify conditions \ref{item:qv_approx1}--\ref{item:qv_approx4} only for  $n\in \N$ and we do not have to worry about $n=0$ (see Remark~\ref{rem:X^0auto}).

Similar to the proof of Claim~1, the ensuing arguments will rely on specific applications of 
Lemma~\ref{lem:dist1}. 
 In all cases, we will choose a point $z_0\in \Jul(g)$, and for  suitable $n\in \N$ define
\begin{align}\label{eq:standdef}
w_0&\coloneqq g^{n-1}(z_0),\   W\coloneqq B_\sigma(w_0, r_0),  \\
\widetilde W&\sub \CDach \text{  to be the component of $g^{-(n-1)}(W)$ containing $z_0$,}\notag \\
f&\coloneqq  g^{n-1}|_{\widetilde W}\: \widetilde W\ra W.  \notag
\end{align} 
As before, $\eta$ will be the distortion function provided by this lemma for $p$ and $\eps_0$, and all  implicit constants will depend only on these two parameters.

 \smallskip
 \ref{item:qv_approx1} Let  $n\in \N$, and $X^n,Y^n\in \X^n$ with $X^n\cap Y^n \neq \emptyset$ be given, and  let   $V^n$, $\widehat V^n\in \V^n$ be  ambients of $X^n$, $Y^n$, respectively. We can then 
 choose a point $z_0\in X^n\cap Y^n\sub \Jul(g)$, and  define
 $w_0$, $W$, $\widetilde W$ as in \eqref{eq:standdef}. 
 
 Then $z_0\in V^n$ and $w_0 \in g^{n-1}(V^n)\in \V^1$. 
 This  implies that 
 \[ g^{n-1}(V^n)\sub B_\sigma(w_0, r_0/2)\subset W\] by \eqref{eq:smallcovg2} and so $V^n\sub \widetilde W$, because $V^n$ is connected. Moreover, $g^{n-1}(X^n)\in \X^1$ 
 and so by \eqref{eq:diamXlar} we have 
 \[
 \diam_\sigma(g^{n-1}(V^n))\ge \diam_\sigma(g^{n-1}(X^n))\ge \eps_0r_0.
 \]
Similarly, $\widehat{V}^n \sub \widetilde W$, $g^{n-1}(\widehat{V}^n)\sub B_\sigma(w_0, r_0/2)$,  
and 
$\diam_\sigma(g^{n-1}(\widehat{V}^n))\ge \eps_0 r_0$. 
 
 Then Claim~1,  Lemma~\ref{lem:dist1} and its  subsequent remarks (applied to $f=g^{n-1}|_{\widetilde W}$, $A=V^n$ and $A'=\widehat{V}^n$) imply
 that 
 \[
\diam_\sigma(X^n) \asymp \diam_\sigma(V^n)\asymp  \diam_\sigma(\widehat{V}^n)  \asymp \diam_\sigma(Y^n)
 \] 
 with  uniform constants $C(\asymp)$. Condition~\ref{item:qv_approx1} follows.

\smallskip
\ref{item:qv_approx2} To show the condition with  width $w=1$,
suppose  $n\in \N$,  $X^n,Y^n\in \X^n$ and $U_w(X^n)\cap U_w(Y^n) = \emptyset$. Let $V^n\in \V^n$ be an ambient 
of $X^n$. We can find points $x\in X^n$ and $y\in Y^n$ such 
\[
\sigma(x,y) \le 2\dist_\sigma(X^n, Y^n). 
\]  
Let  $\ga\sub \CDach$ be a spherical geodesic  joining $x$ and $y$. Then 
\[
\diam_\sigma(\ga)= \sigma(x,y)\le 2\dist_\sigma(X^n, Y^n). 
\]  
Define $x'\coloneqq g^{n-1}(x)\in \Jul(g)$,  $y'\coloneqq g^{n-1}(y)\in \Jul(g)$, and  $\ga'\coloneqq g^{n-1}(\ga)$. Note that 
$\ga'$ is a path starting at $x'$ and ending at $y'$. 
 
\smallskip 
{\em Claim~2.} $\ga'$ is not contained in  the ball $B_\sigma(x', \eps_0 r_0)$.  

\smallskip 
To establish  this claim,  we argue by contradiction and assume  that 
$\ga' \sub B_\sigma(x', \eps_0 r_0)$. Then by \eqref{eq:Lebno} we can find 
$\widehat{V}^1\in \V^1$ such that $\ga'\sub \widehat{V}^1\in \V^1$.
Now let  $\widehat{V}^n\in \V^n$ be the connected component of 
$g^{-(n-1)}(\widehat{V}^1)$ that contains $x$. Note that there is such a component, because $g^{n-1}(x)=x'\in \widehat{V}^1$. 
Since $\ga$ contains $x$,  is connected,  and 
\[ g^{n-1}(\ga)=\ga'\sub B_\sigma(x', \eps_0 r_0)\sub \widehat{V}^1,
\]
we have $\ga\sub \widehat{V}^n$. So if $\widehat{X}^n\coloneqq
\widehat{V}^n\cap \Jul(g)\in \X^n$, then $x,y\in \widehat{X}^n$. 
In particular, $x\in X^n\cap \widehat{X}^n$ and $y\in Y^n\cap \widehat{X}^n$. 
Hence $  X^n\cap \widehat{X}^n\ne \emptyset$ and 
$  Y^n\cap \widehat{X}^n\ne \emptyset$. This is a contradiction to our assumption that $U_w(X^n)\cap U_w(Y^n) = \emptyset$ for $w=1$.
The claim follows. 

\smallskip
By  Claim~2 we can find a  first point $u\in \ga$  such that 
$u'\coloneqq g^{n-1}(u)\not \in B_\sigma(x', \eps_0 r_0)$ when traveling from $x$ to $y$ along $\ga$. 
Then necessarily 
$\sigma(u', x')= \eps_0 r_0$ and for the subarc $\alpha$ of $\ga$ between 
$x$ and $u$ we have $g^{n-1}(\alpha)\sub \overline{B}_\sigma(x', \eps_0 r_0) $. 

 In order to apply 
 Lemma~\ref{lem:dist1}, we let $z_0\coloneqq x\in X^n =V^n \cap \Jul(g)$, and  define $w_0$, $W$, $\widetilde W$ as in \eqref{eq:standdef}. 
 Then $w_0=g^{n-1}(z_0)=x'\in g^{n-1}(V^n)\in \V^1$; so 
 $\diam_\sigma(g^{n-1}(V^n))\le r_0/4$ by \eqref{eq:smallcovg} and  $g^{n-1}(V^n)\sub B_\sigma(w_0, r_0/2)\sub W$ by \eqref{eq:smallcovg2}. The last inclusion implies that $V^n\sub \widetilde W$.

 Moreover,  by the choices above we have 
 \[g^{n-1}(\{x,u\})\sub g^{n-1}(\alpha)\sub  \overline{B}_\sigma(w_0, \eps_0 r_0) \sub 
  B_\sigma(w_0, r_0/2) \subset W.\]
  Since $\alpha$ is connected and contains $z_0=x$, this implies that 
  $\alpha$ is contained in  $\widetilde W$. In particular, $u\in \widetilde W$. 
  Note also that 
  \[
  \diam_\sigma(g^{n-1}(\{x,u\}))=\sigma(g^{n-1}(x), g^{n-1}(u))=\sigma(x',u')= \eps_0 r_0.
  \]

Lemma~\ref{lem:dist1} (applied to $f=g^{n-1}|_{\widetilde W}$, $A=V^n$, and $A'=\{x,u\}$) 
now shows that 
\begin{align*}
\frac{\diam_\sigma(X^n)}{\dist_\sigma(X^n, Y^n)}&\le 2 \frac{\diam_\sigma(V^n)}{\diam_\sigma(\ga)} \le 2 \frac{\diam_\sigma(V^n)}{\sigma(x,u)}\\
&\le 2 \eta(\diam_\sigma(g^{n-1}(V^n))/r_0)\le 2 \eta(1/4).
\end{align*}
Condition~\ref{item:qv_approx2}  follows.

\smallskip
\ref{item:qv_approx3} The proof is analogous to \ref{item:qv_approx1}. Namely, let   $n\in \N$,  $X^n\in  \X^n$, and $Y^{n+1}\in  \X^{n+1}$ with $X^n\cap Y^{n+1}\neq \emptyset$ be given. Suppose that 
$V^n\in \V^n$ and $\widehat{V}^{n+1}\in \V^{n+1}$ are ambients of $X^n$ and $Y^{n+1}$, respectively. We  choose a point $z_0\in X^n\cap Y^{n+1}$ and define $w_0$, $W$, $\widetilde W$ as in \eqref{eq:standdef}.
Then  $w_0\in g^{n-1}(V^n) \in \V^1$ and $w_0\in g^{n-1}(\widehat{V}^{n+1})\in \V^2$ and so
  \[g^{n-1}(V^n)\cup g^{n-1}(\widehat{V}^{n+1})\sub B_\sigma(w_0, r_0/2)\subset W\] by 
  \eqref{eq:smallcovg2}. In particular, this implies that $V^n\cup \widehat{V}^{n+1}\sub \widetilde W$. At the same time, we have $g^{n-1}(X^n) \in \X^1$ and $g^{n-1}(Y^{n+1})\in \X^2$, and thus
 \[
 \diam_\sigma(g^{n-1}(V^n))\ge  \eps_0r_0 \text{ and }
\diam_\sigma(g^{n-1}(\widehat{V}^{n+1}))\ge \eps_0 r_0 
\]
by \eqref{eq:diamXlar}.
 Then Claim~1,  Lemma~\ref{lem:dist1} and its  subsequent remarks (applied to $f=g^{n-1}|_{\widetilde W}$, $A=V^n$, and $A'=\widehat{V}^{n+1}$) show that  
 \[
\diam_\sigma(X^n) \asymp \diam_\sigma(V^n)\asymp  \diam_\sigma(\widehat{V}^{n+1})  \asymp \diam_\sigma(Y^{n+1})
 \] 
 with  uniform constants $C(\asymp)$. Condition~\ref{item:qv_approx3} follows.

\smallskip
\ref{item:qv_approx4} We can find $\delta_1\in (0,1/4)$ such that 
$\eta(\delta_1)\le 1/2$. Lemma~\ref{lem:pullbnice} in turn shows that we can find $k_0\in \N$ such that 
\[
\diam_\sigma(V^k)\le \delta_1r_0<r_0/4\text{ whenever $k\in \N$, $V^k\in \V^k$, and $k\ge k_0$.}
\]
 We will show that \ref{item:qv_approx4} holds with this constant $k_0$ and $\lambda=1/2$. 

To see this,  let  $n\in \N$, $X^n\in \X^n$ and $Y^{n+k_0}\in \X^{n+k_0}$ 
     with $X^n\cap Y^{n+k_0}\neq \emptyset$ be arbitrary.  We  choose a point $z_0\in X^n\cap Y^{n+k_0}\sub \Jul(g)$ and define $w_0$, $W$, $\widetilde W$ as in \eqref{eq:standdef}. 
Let   
$V^n$ be an ambient of $X^n$ and $\widehat{V}^{n+k_0}$ be an ambient of $Y^{n+k_0}$. Then $z_0\in V^n\cap \widehat{V}^{n+k_0}$.

We also have $w_0=g^{n-1}(z_0)\in g^{n-1}(V^n)\in \V^1$ and so 
\[g^{n-1}(X^n)\subset g^{n-1}(V^n)\sub B_\sigma(w_0, r_0/2) \subset W\] by \eqref{eq:smallcovg2}. This implies that $X^n\sub V^n\sub \widetilde W$, since $V^n$ is a connected set containing $z_0$.  Similarly, $z_0\in \widehat{V}^{n+k_0}$ and  \[g^{n-1}(\widehat{V}^{n+k_0})\sub B_\sigma(w_0, r_0/2)\subset W\] by \eqref{eq:smallcovg2}, which implies  $\widehat{V}^{n+k_0}\sub \widetilde W$.  Moreover, $g^{n-1}(X^n)\in \X^1$  and so by \eqref{eq:diamXlar} we have
\[
 \diam_{\sigma}(g^{n-1}(X^n)) \ge \eps_0 r_0.
\]
 Finally, $g^{n-1}(\widehat{V}^{n+k_0})\in \V^{k_0+1}$ and so $\diam_\sigma(g^{n-1}(\widehat{V}^{n+k_0}))\leq \delta_1r_0$ by the choice of $k_0$.

 Lemma~\ref{lem:dist1} (applied to $f=g^{n-1}|_{\widetilde W}$, $A=\widehat{V}^{n+k_0}$, and $A'=X^n$) now shows  that 
\begin{align*}
\frac{\diam_\sigma(Y^{n+k_0})}{\diam_\sigma(X^n)}
&\le \frac{\diam_\sigma(\widehat{V}^{n+k_0})}{\diam_\sigma(X^n)}\\
&\le 
\eta(\diam(g^{n-1}(\widehat{V}^{n+k_0}))/r_0)\le \eta(\delta_1)\le 1/2.
\end{align*}
Condition~\ref{item:qv_approx4} follows. 

As the argument above shows, the respective implicit and explicit constants for the inequalities in conditions~\ref{item:qv_approx1}–\ref{item:qv_approx4} are independent of the level $n$ and the tiles $X,Y$. With this, the proof of Theorem~\ref{thm:semi_hyp_admits_approx} is complete.

\begin{remark} \label{re:ambientqva} Claim~1  and a small adjustment in  the argument for \ref{item:qv_approx2} in the proof above  show that the sequence $\{\V^n\}$ of ambients actually satisfies metric conditions analogous to those of a quasi-visual approximation of a space.
\end{remark}

\subsection{Proof of Theorem~\ref{thm:Adconverse}}\label{subsec:ProofThmCon} 
First, we will briefly review some facts  about  
{\em normal families} of meromorphic functions in the complex plane (for general background on this topic, see \cite{Sch93}). 
Let $\{f_n\}$ be  a sequence of meromorphic functions $f_n\: \C\ra \CDach$. We say that this sequence converges {\em locally uniformly on $\C$} to a limit function $f\: \C\ra \CDach$, written 
\[ f_n\ra f \text{ locally uniformly on $\C$,}
\]
if for all $z_0\in \C$ there exists a neighborhood $U$ of $z_0$ in 
$\C$ such that we have uniform convergence $f_n\ra f$ on $U$, 
where we use the spherical metric in the target; so more precisely, we require that for all $\eps>0$ there exists $N\in \N$ such that 
\[
\sigma(f_n(z), f(z))<\eps
\]
for all $n\in \N$ with $n\ge N$ and all $z\in U$. 

It is well known that locally uniform convergence $f_n\ra f$ on $\C$  is equivalent to uniform convergence 
$f_n\ra f$ on each compact subset of $\C$. Moreover, since the  functions $f_n$ are meromorphic, the limit function $f$ is a meromorphic function as well, possibly equal to a constant $c\in \CDach$.

Let $\mathcal{F}$ be a family of meromorphic functions on $\C$. We say that 
the family is {\em normal} if every sequence in $\mathcal{F}$ has a subsequence that converges locally uniformly on $\C$ to some limit function (not necessarily contained in 
$\mathcal{F}$). 

We will need the following normality criterion. 

\begin{lemma}\label{lem:Zalc}
Let $\mathcal{F}$ be a family of meromorphic functions on $\C$ and  
$K\sub \CDach$ be a set that contains at least four points. Suppose for all $R>0$ there exists a distortion function $\eta_R$ 
such that 
\[
\sigma(f(u), f(v))\le \eta_R(|u-v|)
\]
whenever 
$f\in \mathcal{F}$ and $u,v\in D(0,R)\cap f^{-1}(K)$.  Then  $\mathcal{F}$ is a normal family.
\end{lemma}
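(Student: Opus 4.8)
The statement is a version of the Zalcman–Montel normality criterion adapted to our distortion hypothesis, and the natural route is to invoke the \emph{Zalcman rescaling lemma}. Recall that this lemma asserts: a family $\mathcal F$ of meromorphic functions on a domain fails to be normal at a point $z_0$ if and only if there exist points $z_n\to z_0$, positive numbers $\varrho_n\to 0^+$, and functions $f_n\in\mathcal F$ such that the rescaled functions
\[
g_n(\zeta)\coloneqq f_n(z_n+\varrho_n\zeta)
\]
converge locally uniformly on $\C$ to a non-constant meromorphic function $g\colon\C\to\CDach$ (in fact one can arrange $g$ to have bounded spherical derivative, so $g$ is of order at most two, but we will not need this refinement). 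So I would argue by contradiction: assume $\mathcal F$ is not normal, extract such a sequence $\{g_n\}$ and its non-constant limit $g$, and derive a contradiction with the distortion hypothesis.

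\textbf{Key steps.} First, since $\mathcal F$ is assumed non-normal, fix a point $z_0\in\C$ at which normality fails and apply the Zalcman lemma to obtain $z_n\to z_0$, $\varrho_n\to 0^+$, $f_n\in\mathcal F$ with $g_n(\zeta)=f_n(z_n+\varrho_n\zeta)\to g$ locally uniformly on $\C$, where $g$ is non-constant meromorphic. Second, I would use the hypothesis to show that $g$ omits every point of the set $K$ on $\C$; more precisely, that $g^{-1}(K)$ contains at most one point, or even better that $g$ takes no value in $K$ at all (after discarding a possible single bad point). To see this, fix $R>0$ and suppose $g$ attained two values in $K$ on the disk $D(0,R)$, say $g(\zeta_1),g(\zeta_2)\in K$ with $\zeta_1\ne\zeta_2$ and $g(\zeta_1)\ne g(\zeta_2)$ (non-constancy of $g$ lets us choose such points after enlarging $R$; one has to be a little careful here to ensure the two $K$-values are genuinely distinct as points of $\CDach$, using that $g$ is non-constant and that $K$ has at least four points—this is where that hypothesis enters). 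By Hurwitz-type reasoning, the locally uniform convergence $g_n\to g$ produces, for large $n$, points $u_n'\to\zeta_1$ and $v_n'\to\zeta_2$ with $g_n(u_n')\in K$ and $g_n(v_n')\in K$: indeed a point where $g$ hits a value $w\in K$ gives, via the argument principle applied to $g_n-w$ versus $g-w$, a nearby zero of $g_n-w$ unless $g\equiv w$, which is excluded. Pulling back through the rescaling, set $u_n\coloneqq z_n+\varrho_n u_n'$ and $v_n\coloneqq z_n+\varrho_n v_n'$; then $u_n,v_n\to z_0$, so for large $n$ they lie in $D(0,R_0)$ for some fixed $R_0$, and $f_n(u_n),f_n(v_n)\in K$. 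Third, apply the distortion hypothesis with this $R_0$: we get
\[
\sigma(f_n(u_n),f_n(v_n))\le \eta_{R_0}(|u_n-v_n|)=\eta_{R_0}\bigl(\varrho_n\,|u_n'-v_n'|\bigr).
\]
As $n\to\infty$ we have $\varrho_n|u_n'-v_n'|\to 0$ (since $\varrho_n\to0$ and $|u_n'-v_n'|\to|\zeta_1-\zeta_2|$ is bounded), so the right side tends to $0$ by the defining property of a distortion function. But the left side tends to $\sigma(g(\zeta_1),g(\zeta_2))>0$ by locally uniform (hence $\sigma$-uniform on compacta) convergence, a contradiction. Therefore $g$ can attain at most one value of $K$, i.e.\ $g$ omits at least three points of $\CDach$; by Montel's theorem (or simply the little Picard theorem applied to the non-constant meromorphic function $g$ on $\C$) this forces $g$ to be constant, contradicting the Zalcman lemma. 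Hence $\mathcal F$ is normal.

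\textbf{Main obstacle.} The delicate point is the second step: passing from ``$g$ is non-constant'' to ``$g$ hits two \emph{distinct} points of $K$ at two distinct arguments,'' and then transferring this to the $f_n$ via Hurwitz. One must handle the possibility that $g$ is identically equal to a single value $w_0$—excluded by non-constancy—and also the bookkeeping of spherical versus Euclidean metrics when $g$ or the $g_n$ pass through $\infty$ (this is why everything is phrased with the spherical metric $\sigma$ in the target, and why the Zalcman lemma in its meromorphic form is the right tool). The hypothesis that $\#K\ge 4$ is exactly what guarantees that a non-constant $g$, which can omit at most two values on $\C$ without being forced constant, cannot avoid hitting $K$ at enough distinct points to run the contradiction—so one should make sure the counting is done so that $\#K\ge 4$ (rather than $\ge 3$) is genuinely used, e.g.\ to absorb one value possibly omitted by $g$ and one value possibly taken only at a single point. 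Once this combinatorial/analytic point is set up correctly, the rest is the routine limiting argument sketched above.
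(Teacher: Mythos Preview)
Your proposal is correct and follows essentially the same route as the paper: argue by contradiction, apply Zalcman's rescaling lemma to obtain a non-constant meromorphic limit $g$, use Picard's theorem together with $\#K\ge 4$ to guarantee that $g$ attains two distinct values of $K$, transfer these back to the $f_n$ via Hurwitz, and contradict the distortion hypothesis since $|u_n-v_n|\to 0$ while $\sigma(f_n(u_n),f_n(v_n))$ stays bounded away from zero. The only difference is cosmetic---the paper invokes Picard first and then runs the contradiction, whereas you run the contradiction first and then invoke Picard---and your Hurwitz step in fact gives $g_n(u_n')=g(\zeta_1)\in K$ exactly (not merely $g_n(u_n')\in K$), which is what you need and use.
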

Here and below, we  denote by  
\begin{equation}\label{eq:defEucldisk}
D(a,r)\coloneqq\{z\in \C: |z-a|<r\}
\end{equation}
the open Euclidean disk of radius $r>0$ centered at $a\in \C$. We will give the proof of the lemma in Section~\ref{subsec:mormfam} of the appendix. 

In passing,  we remark that the requirement that $K$ contains 
at least four points is sharp. Indeed, the family $\mathcal{F}$ consisting of the functions $f_n(z)=e^{nz}$, $n\in \N$, (trivially) satisfies the condition in the lemma  with $K=\{0,1,\infty\}$ for any distortion function, but it is not normal. 

We also need the following facts about  normal families.

\begin{lemma} \label{lem:GBd} Let  $\mathcal{F}$ be a normal family of meromorphic functions on 
$\C$ such  that
each locally uniform limit of a sequence of functions in  $\mathcal{F}$ is non-constant.   Then the following statements are true:

 \begin{enumerate}
 \smallskip
 \item
    \label{item:norm1} There exists
a radius  $r_0>0$ with the following property: if $f\in \mathcal{F}$ and $V$ is the component of $f^{-1}(B_\sigma(f(0), r_0))$ containing $0$, then 
$V\sub D(0,1)$.
\smallskip
  \item
    \label{item:norm2} For each $R>0$ there exists $d=d(R)\in \N$ such that 
\begin{equation}\label{eq:degreebd}
    \sum_{z\in f^{-1}(w)\cap D(0,R)} \deg(f, z)\le d
\end{equation}
for all $w\in \CDach$ and  $f\in \mathcal{F}$.
  \end{enumerate}
\end{lemma}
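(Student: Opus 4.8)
\textbf{Plan of proof for Lemma~\ref{lem:GBd}.}

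The two statements are both instances of the compactness that normality provides, combined with the rigidity that comes from ruling out constant limits. I would argue each part by contradiction, extracting a locally uniformly convergent subsequence and deriving a contradiction with the non-constancy of the limit.

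For part~\ref{item:norm1}, suppose no such $r_0$ exists. Then for each $k\in\N$ there is $f_k\in\mathcal{F}$ such that the component $V_k$ of $f_k^{-1}(B_\sigma(f_k(0),1/k))$ containing $0$ is not contained in $D(0,1)$; hence $V_k$ meets $\partial D(0,1)$, and since $V_k$ is connected and contains $0$, it must contain a point $z_k$ with $|z_k|=1$ (or we can at least find a point of $V_k$ on the unit circle by connectedness, picking a sub-path of $V_k$ from $0$ to a point outside $\overline{D(0,1)}$). By normality, after passing to a subsequence we may assume $f_k\to f$ locally uniformly on $\C$ with $f$ non-constant by hypothesis, and also $z_k\to z_*$ with $|z_*|=1$. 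Now the key point: any path inside $V_k$ from $0$ to $z_k$ maps under $f_k$ into $B_\sigma(f_k(0),1/k)$, so in particular $\sigma(f_k(0),f_k(z_k))<1/k\to 0$. Passing to the limit using uniform convergence on $\overline{D(0,1)}$ gives $f(0)=f(z_*)$ — this alone is not yet a contradiction. To get one, I would instead use that the \emph{whole} component $V_k$ is squeezed: one shows that $V_k$ cannot shrink to a point either, because $f_k$ restricted to $V_k$ is an open map, so $V_k$ has definite ``size'' in a sense controlled by the derivative of the limit $f$ near $0$ (which is nonzero, or at worst $f$ has an isolated critical point at $0$ of finite order). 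The cleanest route: since $f$ is non-constant and meromorphic, $f$ is a local branched cover near $0$, so there is $\rho>0$ and $s>0$ such that the component of $f^{-1}(B_\sigma(f(0),s))$ containing $0$ is compactly contained in $D(0,1/2)$. By Hurwitz-type stability of components under locally uniform convergence (or directly: for $k$ large, $f_k$ is uniformly close to $f$ on $\overline{D(0,1)}$, so $f_k^{-1}(B_\sigma(f_k(0),s/2))\cap D(0,1)$ has its $0$-component contained in $D(0,3/4)$), we get that for all large $k$ the $0$-component of $f_k^{-1}(B_\sigma(f_k(0),s/2))$ lies in $D(0,3/4)\subset D(0,1)$. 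Taking $r_0 = s/2$ for this tail and shrinking to handle the finitely many remaining $f_k$ — wait, $\mathcal{F}$ is infinite, so one cannot ``handle finitely many''; instead I phrase the contradiction directly: the existence of the bad sequence $f_k$ with arbitrarily small radii $1/k$ contradicts the uniform lower bound $s/2$ just obtained for the tail. This gives part~\ref{item:norm1}.

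For part~\ref{item:norm2}, fix $R>0$ and suppose the conclusion fails: there are $f_k\in\mathcal{F}$ and $w_k\in\CDach$ with $\sum_{z\in f_k^{-1}(w_k)\cap D(0,R)}\deg(f_k,z)\ge k$. Pass to a subsequence so that $f_k\to f$ locally uniformly on $\C$ with $f$ non-constant, and $w_k\to w_*\in\CDach$. Choose $R'>R$. The function $f$ is meromorphic and non-constant, so the number of solutions (with multiplicity) of $f(z)=w_*$ in $\overline{D(0,R')}$ is finite, say $\le M$, \emph{provided} $w_*$ is not a value with infinitely many preimages — but a non-constant meromorphic function takes each value only discretely, so on the compact set $\overline{D(0,R')}$ there are at most finitely many, hence $\le M$ for some $M=M(f,R')$. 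Moreover one can choose $R'$ so that $f(z)\ne w_*$ on the circle $|z|=R'$ (possible since the preimage is discrete, so we may perturb $R'$). By the argument principle / Rouché for meromorphic functions, for $k$ large the number of solutions (with multiplicity) of $f_k(z)=w_k$ inside $D(0,R')$ equals that of $f(z)=w_*$, which is $\le M$; here one uses uniform convergence $f_k\to f$ on the annulus near $|z|=R'$ together with the fact that $|f-w_*|$ (in the spherical sense, or after a Möbius change of coordinate moving $w_*$ to $0$ so one counts zeros) is bounded away from $0$ on that circle. Since $D(0,R)\subset D(0,R')$, this contradicts $\sum\ge k$ for $k>M$. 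Hence such $d(R)$ exists.

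\textbf{Main obstacle.} The technical heart in both parts is the \emph{stability of preimage components and preimage counts under locally uniform convergence}, i.e., a Hurwitz-type / argument-principle argument carried out on $\CDach$ with the spherical metric rather than on $\C$. The care needed is: (i) making sure the relevant value $w_*$ (or base point $f(0)$) is not exceptional for the limit $f$, which is automatic by discreteness of preimages of a non-constant meromorphic function, and (ii) converting ``$f_k$ close to $f$ in spherical distance'' into a genuine Rouché estimate — this is handled by precomposing/postcomposing with a Möbius transformation to move the value of interest to a point where the Euclidean argument principle applies, and using that $\mathcal{F}$'s limit functions are non-constant so that $f$ really is a branched cover near the points in question. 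Once this stability is in hand, both statements follow by the contradiction-and-subsequence scheme above. I would also remark that hypothesis that \emph{every} locally uniform limit is non-constant is exactly what prevents the degenerate scenarios (a constant limit would allow components to blow up or preimage counts to explode).
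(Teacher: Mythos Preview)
Your argument for part~\ref{item:norm2} is essentially the same as the paper's: both argue by contradiction, extract a convergent subsequence $f_k\to f$ with $w_k\to w_*$, and invoke a Hurwitz/Rouch\'e count on a slightly larger disk to bound the number of $w_k$-preimages of $f_k$ by the (finite) number of $w_*$-preimages of $f$.

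For part~\ref{item:norm1} your approach is correct but takes a more circuitous route than the paper. You correctly observe that $f(0)=f(z_*)$ at a single point $z_*$ with $|z_*|=1$ is not yet a contradiction, and then pivot to a ``stability of preimage components'' argument: bound the $0$-component of $f^{-1}(B_\sigma(f(0),s))$ inside $D(0,1/2)$ and transfer this to $f_k$ for large $k$. This works, but the transfer step (your ``Hurwitz-type stability'') needs the kind of boundary argument you only gesture at. The paper's proof avoids this entirely with a cleaner observation: since each $V_n$ is connected, contains $0$, and reaches outside $D(0,1)$, it must meet \emph{every} circle $|z|=r$ for $0<r<1$. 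Fixing such an $r$, pick $u_n\in V_n$ with $|u_n|=r$, pass to a limit $u_n\to u$, and conclude $f(u)=f(0)$ (since $\sigma(f_n(u_n),f_n(0))<1/n$). Because $r\in(0,1)$ was arbitrary, $f$ takes the value $f(0)$ on every circle of radius $<1$, hence on a set with an accumulation point, and the Uniqueness Principle forces $f$ constant---the desired contradiction. This sidesteps any component-stability machinery and makes the non-constancy hypothesis do all the work directly.
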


Recall that  $\deg(f,z)$ denotes the local degree of $f$ at $z$. Again,  we will give the proof in Section~\ref{subsec:mormfam} of the appendix.

The biholomorphisms $T$ of $\CDach$ that preserve the spherical metric are precisely the M\"obius transformations  that can be written in the form 
\begin{equation}\label{eq:rotsph}
T(z)=\frac{az+b}{-\overline{b}z+\overline{a}}
\end{equation}
with $a,b\in \C$ and $|a|^2+|b|^2>0$. These maps correspond to rotations of $\CDach$ as identified with the unit sphere in $\R^3$ under stereographic projection. Now for each $z_0\in \CDach$ 
we choose such a map $T$ with  $T(0)=z_0$. More specifically, for $z_0\in \C$ we define 
\[
T_{z_0}(z)\coloneqq \frac{z+z_0}{1-\overline{z_0}z}, 
\]
and we set $T_\infty(z) \coloneqq 1/z$. Then for each $z_0\in \CDach$  the M\"obius transformation $T_{z_0}$  is an isometry of $(\CDach, \sigma)$ with 
$T_{z_0}(0)=z_0$.

Now let  $g\colon \CDach\to\CDach$  be  a rational map with $\deg(g)\ge 2$, and suppose  that $\{\X^n\}$ is  a quasi-visual approximation of $(\Jul(g), \sigma)$ that is dynamical and so satisfies  \eqref{eq:dyngX}. 
Note that we do not assume that the covers $\X^n$ 
of $\Jul(g)$ for $n\in\N$ are finite.

We now use the function $g$ and its iterates to define a normal family.

\begin{lemma}\label{lem:FamG}  Let $\mathcal{G}$ consist of all meromorphic functions $f\:\C\ra \CDach$ of the form 
\begin{equation}\label{eq:deffG}
 z\in \C\mapsto    f(z)=(g^n\circ T_{z_0})(\rho_0 z), 
\end{equation}
where $n\in \N$ and $z_0\in X^{n+1}\in \X^{n+1}$ are arbitrary,
and $\rho_0\coloneqq \diam_\sigma(X^{n+1})$. Then 
$\mathcal{G}$ is a normal family. 

Moreover, each locally uniform  limit on $\C$ 
of a sequence of functions in  $\mathcal{G}$ is non-constant. 
\end{lemma}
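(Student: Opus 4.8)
\textbf{Proof proposal for Lemma~\ref{lem:FamG}.}

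The plan is to verify the hypotheses of the normality criterion in Lemma~\ref{lem:Zalc} with the test set $K=\Jul(g)$, which contains infinitely many points (in particular at least four), and then to rule out constant limits using the degree bound in Lemma~\ref{lem:GBd}~\ref{item:norm2} together with the fact that $g$, and hence each iterate $g^n$, is a non-constant rational map. First I would fix $R>0$ and produce a distortion function $\eta_R$ controlling $\sigma(f(u),f(v))$ for $f\in\mathcal{G}$ and $u,v\in D(0,R)\cap f^{-1}(\Jul(g))$. The key observation is that, writing $f(z)=(g^n\circ T_{z_0})(\rho_0 z)$ as in \eqref{eq:deffG}, the point $z_0$ lies in $X^{n+1}\in\X^{n+1}$ and $\rho_0=\diam_\sigma(X^{n+1})$; if $u\in D(0,R)$ then $T_{z_0}(\rho_0 u)$ lies in the ball $B_\sigma\bigl(z_0, R\rho_0\bigr)$ since $T_{z_0}$ is a spherical isometry with $T_{z_0}(0)=z_0$. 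Moreover, if in addition $f(u)\in\Jul(g)$, then (using the complete invariance \eqref{eq: Jul_inv} and that $T_{z_0}(0)=z_0\in X^{n+1}\subset\Jul(g)$) both endpoints $T_{z_0}(\rho_0 u)$ and $z_0$ of this ``excursion'' project into $\Jul(g)$, so the distortion estimate \eqref{eq:equicont} of Lemma~\ref{lem:dgxy} (applied with the quasi-visual approximation $\{\X^n\}$, the tile $Z^{n+1}=X^{n+1}$, base point $z_0$, and radius parameter $R$) gives
\[
\sigma\bigl(g^n(T_{z_0}(\rho_0 u)),\, g^n(z_0)\bigr)
\le C(R)\cdot\Bigl(\frac{\sigma(T_{z_0}(\rho_0 u), z_0)}{\diam_\sigma(X^{n+1})}\Bigr)^{\nu}
\le C(R)\cdot R^{\nu}\cdot \sigma(T_{z_0}(\rho_0 u), z_0)^{?}
\]
— more precisely $\sigma(f(u),g^n(z_0))\le C(R)(\sigma(T_{z_0}(\rho_0 u),z_0)/\rho_0)^\nu$, and since $\sigma(T_{z_0}(\rho_0 u),z_0)\le \rho_0\,\sigma(0,u)\lesssim \rho_0\,|u|$ for $|u|\le R$ (the spherical and Euclidean metrics being comparable on $D(0,R)$), we get $\sigma(f(u),g^n(z_0))\le C'(R)|u|^\nu$. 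Applying the same bound to $v$ and using the triangle inequality yields $\sigma(f(u),f(v))\le 2C'(R)\max\{|u|,|v|\}^\nu$. A short argument replacing $\max\{|u|,|v|\}^\nu$ by a genuine distortion function in $|u-v|$ is needed here: one notes that if $|u-v|$ is not small then the estimate $\sigma(f(u),f(v))\le\pi$ is trivial, while if $|u-v|$ is small one can route through an intermediate point, or simply observe that after a spherical rotation one may assume the relevant tile is near $0$ and redo the estimate centered appropriately. This gives $\eta_R$ and hence normality of $\mathcal{G}$ by Lemma~\ref{lem:Zalc}.

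It remains to show that every locally uniform limit $f$ on $\C$ of a sequence $\{f_k\}\subset\mathcal{G}$ is non-constant. Suppose $f_k\to f$ locally uniformly with $f\equiv c\in\CDach$. Write $f_k(z)=(g^{n_k}\circ T_{z_k})(\rho_k z)$ with $z_k\in X^{n_k+1}\in\X^{n_k+1}$ and $\rho_k=\diam_\sigma(X^{n_k+1})$. The decisive point is that the $(n_k+1)$-tile $X^{n_k+1}$ is, up to the fixed affine rescaling $z\mapsto T_{z_k}(\rho_k z)$, of ``unit size'': by condition~\ref{item:qv_approx1} and Lemma~\ref{lem:U2w1_qround} the rescaled preimage of $X^{n_k+1}$ contains a definite-size neighborhood of $0$ and is contained in a ball of definite radius, uniformly in $k$. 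Hence there exist points $a_k,b_k\in \C$ with $|a_k|,|b_k|\le \mathrm{const}$ and $|a_k-b_k|\ge \mathrm{const}>0$ such that $g^{n_k}(T_{z_k}(\rho_k a_k))$ and $g^{n_k}(T_{z_k}(\rho_k b_k))$ are the images of two points in $X^{n_k+1}\subset\Jul(g)$ realizing (comparably) the diameter $\rho_k$ of $X^{n_k+1}$ at level $n_k+1$; applying Lemma~\ref{lem:qv_metric} and \eqref{eq:qv_approx8} at the image level $n_k$, one sees $\sigma(f_k(a_k),f_k(b_k))$ is bounded below by a positive constant independent of $k$. Passing to a subsequence so that $a_k\to a$, $b_k\to b$ with $a\ne b$, local uniform convergence forces $\sigma(f(a),f(b))>0$, contradicting $f\equiv c$.

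The main obstacle I anticipate is the bookkeeping in the previous paragraph: one must extract from the quasi-visual axioms a \emph{uniform, two-sided} control on the rescaled tile $\rho_k^{-1}T_{z_k}^{-1}(X^{n_k+1})$ — both that it is not too large (so that $a,b$ stay in a fixed compact set where the limit $f$ is controlled) and that it is not too small (so that two points realizing its diameter stay a definite Euclidean distance apart after rescaling). Condition~\ref{item:qv_approx1}, Lemma~\ref{lem:U2w1_qround}, and Lemma~\ref{lem:qv_metric} together supply exactly this, but care is required because $\{\X^n\}$ is not assumed to have finite covers, so all constants must be traced to the ambient parameters of the quasi-visual approximation and not to any finiteness. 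A secondary technical point is converting the Hölder-type bound in $\max\{|u|,|v|\}$ into a bona fide distortion function of $|u-v|$ as required by Lemma~\ref{lem:Zalc}; this is routine but must be stated carefully. Once these are in place, normality follows from Lemma~\ref{lem:Zalc} and non-constancy of limits from the diameter comparison, completing the proof.
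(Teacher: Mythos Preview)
Your normality argument takes an unnecessary detour. Lemma~\ref{lem:dgxy} bounds $\sigma(g^n(x),g^n(y))$ in terms of $\sigma(x,y)/\rho_0$ for \emph{any} $x,y\in B_\sigma(z_0,R'\rho_0)\cap\Jul(g)$, not just for $y=z_0$. Apply it directly with $x=T_{z_0}(\rho_0 u)$ and $y=T_{z_0}(\rho_0 v)$ (both lie in $\Jul(g)$ by complete invariance, since $u,v\in f^{-1}(\Jul(g))$); because $T_{z_0}$ is a spherical isometry you get $\sigma(x,y)/\rho_0\asymp|u-v|$, hence $\sigma(f(u),f(v))\le\widetilde C(R)\,|u-v|^\nu$. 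This is exactly the input Lemma~\ref{lem:Zalc} needs, with no further massaging. Your route through $g^n(z_0)$ and the triangle inequality only yields control in $|u|^\nu+|v|^\nu$, which is not a distortion function of $|u-v|$, and your suggested fixes are more circuitous than simply invoking the lemma as written.

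For non-constancy there is a genuine gap. You pick points $p_k,q_k\in X^{n_k+1}$ realizing $\diam_\sigma(X^{n_k+1})$ and then try to bound $\sigma(g^{n_k}(p_k),g^{n_k}(q_k))$ from below via Lemma~\ref{lem:qv_metric} and \eqref{eq:qv_approx8}. But nothing in the quasi-visual axioms prevents $g^{n_k}$ from mapping these two particular points close together (or even to the same point): the only dynamical inequality you have is \eqref{eq:dynmw}, which gives a \emph{lower} bound on $m_w(g^{n_k}(p_k),g^{n_k}(q_k))$ and hence an \emph{upper} bound on the image distance---the wrong direction. The paper argues from the image side instead: by the dynamical property \eqref{eq:dyngX} one has $g^{n_k}(X^{n_k+1})\in\X^1$, and condition~\ref{item:qv_approx3} of Definition~\ref{def:qv_approx} (together with $\X^0=\{\Jul(g)\}$) forces $\diam_\sigma(g^{n_k}(X^{n_k+1}))\ge 3c_0>0$. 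One then chooses $z_1\in X^{n_k+1}$ with $\sigma(g^{n_k}(z_0),g^{n_k}(z_1))\ge c_0$; its rescaled preimage $w_k=T_{z_0}^{-1}(z_1)/\rho_k$ satisfies $|w_k|\lesssim 1$ (since $\sigma(z_0,z_1)\le\rho_k$) and $\sigma(f_k(0),f_k(w_k))\ge c_0$, and a subsequential limit yields $\sigma(f(0),f(w_\infty))\ge c_0$. The paper also disposes separately of the easy case where the levels $n_k$ stay bounded along a subsequence, which you do not address.
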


Of course, $n$, $z_0\in X^{n+1}\in \X^{n+1}$, and $\rho_0$  in \eqref{eq:deffG} are different  from function to function. In particular, Lemma~\ref{lem:GBd} applies for the family 
$\mathcal{G}$. This will be important later on. 

\begin{proof} To establish the first statement,  we want to apply Lemma~\ref{lem:Zalc} with $K=\Jul(g)$ (which is an infinite set). 

Fix $R>0$ and let $f\in\mathcal{G}$ and $u,v\in D(0,R)\cap f^{-1}(K) $ be arbitrary.  Then there exist $n\in \N$, $X^{n+1}\in \X^{n+1}$, and $z_0\in X^{n+1}$ such that  $f$ has a representation as in \eqref{eq:deffG}. Note that $\rho_0=\diam_\sigma(X^{n+1})
\le \diam_\sigma(\CDach)=\pi$. It follows that 
on $D(0, \rho_0R)$ the Euclidean metric and the spherical metric are comparable up to a multiplicative constant $C(R)>0$ only depending on $R$.

Let $x\coloneqq T_{z_0}(\rho_0u)$
and $y\coloneqq T_{z_0}(\rho_0v)$. Since $T_{z_0}$ is an isometry with respect to the spherical metric and $T_{z_0}(0)=z_0$, it follows that 
\[
\sigma(z_0, x)=\sigma(0, \rho_0 u) \asymp \rho_0 |u|< R\rho_0,
\]
where $C(\asymp)=C(R)$. Hence  there exists a constant $R'\coloneq R\cdot C(R)>0$ such that $x\in B_\sigma(z_0, R'\rho_0)$. Similarly, $y\in B_\sigma(z_0, R'\rho_0)$.

Lemma~\ref{lem:dgxy} then shows that there are uniform  constants $\nu>0$ and  $C'=C'(R')=C'(R)>0$
such that
\begin{equation}\label{eq:fuvdist}
 \sigma(f(u), f(v))=\sigma(g^n(x), g^n(y))\le C'\cdot \biggl(
 \frac{\sigma(x,y)}{\rho_0}\biggr)^\nu.  
\end{equation}

Moreover, using again that $T_{z_0}$ is an isometry of $(\CDach, \sigma)$, we have 
\[
\sigma(x,y)/\rho_0=\sigma(\rho_0u, \rho_0 v)/\rho_0\asymp |\rho_0u-\rho_0v|/\rho_0=|u-v|
\]
with $C(\asymp)=C(R)$. This and \eqref{eq:fuvdist} show that there exists a  constant $\widetilde{C}=\widetilde{C}(R)>0$  (independent of $f$, $u$, and $v$) such that 
\[
\sigma(f(u), f(v))\le \widetilde{C}\cdot|u-v|^{\nu}.
\]
Now Lemma~\ref{lem:Zalc} implies that $\mathcal{G}$ is a normal family.

\smallskip

To establish the second statement, first note that if $n\in \N$ and 
$X^{n+1}\in \X^{n+1}$, then $g^{n}(X^{n+1})\in \X^1$ by~\eqref{eq:dyngX}, and so 
$\diam_\sigma(g^{n}(X^{n+1}))\gtrsim \diam_\sigma(\CDach)$ by condition~\ref{item:qv_approx3} in Definition~\ref{def:qv_approx} since $\X^0=\{\CDach\}$. In particular, if $z_0\in X^{n+1}$,
then we can find a point $z_1\in X^{n+1}$ such that 
\begin{equation*}
\sigma(z_0, z_1)\le \diam_\sigma(X^{n+1}) \text{ and }
\sigma(g^{n}(z_0), g^n(z_1))\ge \frac 13 \diam_\sigma(g^{n}(X^{n+1}))\ge c_0,
\end{equation*}
where $c_0>0$ is a uniform constant. 

Consequently, for $f$ as in \eqref{eq:deffG} and $w\coloneq T_{z_0}^{-1}(z_1)/\diam_\sigma(X^{n+1})$ we have
\begin{equation}\label{eq:fklargeim}
\sigma(0, w\cdot \diam_\sigma(X^{n+1}))\le \diam_\sigma(X^{n+1}) \; \text{ and }\; 
\sigma(f(0), f(w))\ge c_0.
\end{equation}

Now suppose $\{f_k\}$ is a sequence in $\mathcal{G}$ and $f_k\ra f$ locally uniformly on $\C$, where  $f\:\C\ra \CDach$ is some function.
Each $f_k$ can be written in the form \eqref{eq:deffG}. 

Let us first assume 
that the associated levels $n=n_k$ of the functions $f_k$ stay bounded along some subsequence. Then along this subsequence the 
corresponding  factors $\rho_0=\rho_{0,k}$  stay  uniformly bounded away from $0$ as follows from
condition~\ref{item:qv_approx3} in Definition~\ref{def:qv_approx}. It is then easy to see that the limit function $f$ has the form $f(z)=g^m(T(rz))$ for $z\in \C$ with 
$m\in \N$, $r>0$, and $T$  a M\"obius transformation as in   
\eqref{eq:rotsph}.  Hence $f$ is   non-constant.

So we may assume that for the associated levels $n_k$ we have 
$n_k\to \infty$ as $k\to \infty$. 
 Then by Lemma~\ref{lem:sub_shrink} we  may also assume that all the tiles $X^{n+1}=X^{n_k+1}$ involved in the definition 
of the functions $f_k$ are so small that on 
$\overline{B}_\sigma(0, \diam_\sigma(X^{n+1}))$ the spherical metric and the Euclidean metric are comparable up to a fixed uniform factor, say $<3$.
Then $z/\diam_\sigma(X^{n+1})\in D(0,3)$ for each 
$z\in \overline{B}_\sigma(0,\diam_\sigma(X^{n+1}))$.

If we now translate \eqref{eq:fklargeim} to the functions $f_k$, this implies  that for each $k\in \N$  there exists $w_k\in \C$  with $|w_k|<3$   such that $\sigma(f_k(0), f_k(w_k))\ge c_0$, where 
 $c_0>0$  is 
independent of $k$. Passing to subsequences if necessary, we may assume that
as $k\to \infty$ we have $w_k\to w_\infty\in \C$. Since $f_k\to f$ locally uniformly on $\C$, this implies that $f_k(0)\to f(0)$ and $f_k(w_k)\to  f(w_\infty)$. Then we have 
\[\sigma(f(0), f(w_\infty) )=\lim_{k\to \infty}
\sigma(f_k(0), f_k(w_k))\ge c_0>0.
\]
This shows that the limit function $f$ is non-constant, finishing the proof of the lemma.
\end{proof}

We are now ready to prove the main statement of this subsection. 

\begin{proof}[Proof of Theorem~\ref{thm:Adconverse}] Suppose that $\{\X^n\}$ is a dynamical quasi-visual approximation of $(\Jul(g),\sigma)$ for a rational map $g\colon \CDach\to\CDach$ with $\deg(g)\geq 2$, and let $\mathcal{G}$ be the normal family of meromorphic functions on $\C$ as in Lemma~\ref{lem:FamG}. To show that $g$ is semi-hyperbolic, it suffices to verify  condition~\ref{item:sem2} in Proposition~\ref{prop:semihyp}.   For $r_0>0$ in this condition we choose the radius for the family $\mathcal{G}$ as provided by 
Lemma~\ref{lem:GBd}~\ref{item:norm1}.

Now 
let  $w_0\in \Jul(g)$ be arbitrary and $U$ be a component of 
$g^{-n}(B_\sigma(w_0, r_0))$ for some $n\in \N$.  Then 
$g^n(U)=B_\sigma(w_0, r_0)$ and so  we can find 
$z_0\in  U$ with  $g^n(z_0)=w_0$. Since $w_0\in \Jul(g)$, we also have $z_0\in \Jul(g)$ by \eqref{eq: Jul_inv},  and so there exists 
$X^{n+1}\in \X^{n+1}$ with $z_0\in  X^{n+1}$.

Set  $\rho_0\coloneqq\diam_\sigma(X^{n+1})$, and let $f\colon \C\to \CDach$ be defined as in \eqref{eq:deffG}. Then $f\in \mathcal{G}$ and so by the choice of 
$r_0$ we have $V\sub D(0,1)$ for the component $V$ of $f^{-1}(B_\sigma(f(0),r_0))$ 
containing $0$. Note that $f(0)=g^n(z_0)=w_0$ and so 
$B_\sigma(f(0),r_0)=B_\sigma(w_0, r_0)$. 

The definition of $f$ shows that 
$T_{z_0}(\{\rho_0 u: u\in V \})=U.
$
This in turn implies  that the function $g^n$ attains each value in $B_\sigma(w_0, r_0)$ on $U$---in particular, the value $w_0=f(0)$---as often as $f$ attains this value on $V\sub D(0,1)$ (counting multiplicities). 
It follows that 
\begin{align*}
\deg(g^n|_U)&= \sum_{z\in g^{-n}(w_0)\cap U}\deg(g^n, z) 
\\&=\sum_{u\in f^{-1}(w_0)\cap V}\deg(f, u)\le
\sum_{u\in  f^{-1}(w_0)\cap D(0,1)}\deg(f, u).
\end{align*}
Now by Lemma~\ref{lem:GBd}~\ref{item:norm2} the last quantity is uniformly bounded from above independently of $f$, say by $p\in \N$.
We see that  condition~\ref{item:sem2} in Proposition~\ref{prop:semihyp} is indeed satisfied, and the statement follows. 
\end{proof}

\begin{remark} \label{rem:relHP} The idea to use a normal family argument  
to prove that a rational map is semi-hyperbolic is due to
Ha\"{i}ssinsky and Pilgrim.  
Lemma~\ref{lem:Zalc} and Lemma~\ref{lem:GBd}~\ref{item:norm2}
were essentially  extracted from  their work (see \cite[pp.~96--97]{HP09},
where  similar statements were used without being  explicitly formulated).

The general assumption that a given bounded metric space with a self-map has a dynamical quasi-visual approximation is in fact weaker than the requirement that the map is \emph{metric
CXC} as in the  general framework of \cite{HP09}. More precisely, given a metric CXC system, there exists a dynamical quasi-visual approximation (of width 1) for the associated \emph{repellor}. We discuss this in the setting of rational maps, though the analogous argument applies in the general setting (when replacing the Julia set with the repellor of the corresponding CXC system).

Let $g\colon \CDach\to \CDach$ be a rational map, $\V^1$ be an admissible finite cover of the Julia set $\Jul(g)$ by subregions of $\CDach$, and $\{\X^n\}$ be the associated dynamical sequence of covers of $\Jul(g)$ generated by $g$ and $\V^1$. Suppose that the tile graph $\Gamma\coloneq \Gamma(\{\X^n\})$ is Gromov hyperbolic and that the natural identification $\Phi\colon (\Jul(g), \sigma)\to (\partial_\infty\Gamma, d_\infty )$ is a quasisymmetry for some visual metric $d_\infty$ on $\partial_\infty\Gamma$. This is true whenever $g$ is metric CXC on its Julia set by \cite[Theorem~3.5.1]{HP09}. 

Our Theorem~\ref{thm:Gromov_implies_qv} implies that the sequence of covers $\{\mathbf{K}^n\}$ formed by the neighborhood clusters of tiles in $\{\X^n\}$ for some radius $r\in \N_0$ is a quasi-visual approximation of width $w=1$ for $(\Jul(g), \sigma)$. One can also check that $\{\mathbf{K}^n\}$ has the dynamical property~\eqref{eq:dyngXgen_1} for sufficiently large $n$. Accordingly, our Theorem~\ref{thm:Adconverse}
is stronger than the corresponding statement 
established by Ha\"{i}ssinsky--Pilgrim \cite[Proposition~4.2.9]{HP09}.

Moreover, the class of self-maps on bounded metric spaces admitting dynamical quasi-visual approximations is strictly larger than the one arising from metric CXC systems, because in our setting ``folding'' is allowed. For example, every invariant Jordan curve $\mathcal{C}$ for an \emph{expanding Thurston map} $f$ as in \cite[Theorem~15.1]{BM} admits a dynamical quasi-visual approximation. The restriction of $f$ to $\mathcal{C}$ may have folding and, consequently, does not have to be open. On the other hand, CXC systems are assumed to be finite branched covering maps on the underlying repellors and are always open maps (see \cite[Section~2.2 and Lemma~2.1.2]{HP09}).

\end{remark}

 \section{Appendix}

The purpose of this appendix is to provide the details for the proofs  of some facts used in Section~\ref{sec:semihyp}. 

 \subsection{Distortion properties of proper holomorphic maps}\label{subsec:AppA} In this subsection we will establish   Lemma~\ref{lem:dist1}.
In this lemma  sets  are measured in the spherical metric $\sigma$ on $\CDach$, but it is 
 difficult to obtain the desired distortion estimates with this metric directly. 
 Accordingly, we switch to the hyperbolic metric on suitable regions. We first review
 some relevant terminology and facts. 
 
 We say that a simply connected region $W\sub \CDach$ is {\em hyperbolic}
 if its complement in   $\CDach$ consists of at least two points. Then 
by the Riemann mapping theorem there exists a biholomorphism 
\[
\varphi\: W\ra \D\coloneqq \{u\in \C: |u|<1\}.
\]
 The hyperbolic metric  on $\D$ is given by the length element 
\[
ds=\frac{2|du|}{1-|u|^2},
\]
where $u$ is a variable in $\D$. 
The hyperbolic metric on $W$ is now the pull-back of the hyperbolic 
metric on $\D$ by the biholomorphism $\varphi\: W\ra \D$.  It has length element 
\[
ds=\frac{2|\varphi'(z)|}{1-|\varphi(z)|^2}|dz|,
\]
where $z$ is a variable in $W$. It is a standard fact that this only depends on $W$ and not on the choice of the  biholomorphism $\varphi\: W\ra \D$ (see \cite[\S2]{Mi06}).

In the following, we denote the hyperbolic metric on a simply connected hyperbolic region $W \sub \CDach$  by $\rho_W$. The subscript $W$ in metric notations will indicate 
 that they are with respect to $\rho_W$. For example, $B_W(a,r)$ denotes 
     the hyperbolic ball in $W$ of radius $r>0$  centered at $a\in W$. For emphasis 
     we write $\hdiam_W(A)$ for the diameter of a set $A\sub W$ with respect to the hyperbolic metric $\rho_W$. 

In the formulation of the following lemmas and their proofs  we will use the notion of a  distortion function as introduced before the statement of   
Lemma~\ref{lem:dist1}.   Note that if $\alpha\: (0,\infty) \ra (0, \infty)$ is any function such that 
$\alpha$ is bounded on each interval $(0,t_0]$ for $t_0>0$ and  
$\alpha(t)\to 0$ as $t\to 0^+$, then there exists a distortion function $\eta$ such that $\alpha(t)\le \eta(t)$ for all $t>0$. Indeed, we can simply define 
$\eta(t) \coloneqq\sup_{t'\in (0,t]}\alpha(t')$ for $t>0$. This fact is useful in the construction of distortion functions and we will use it repeatedly in the following without explicitly referring to it.

 \begin{lemma}\label{lem:blaschke} Let $p\in \N$ and  $h\: \D\ra \D$ be a 
 finite Blaschke product  of degree
 $1\le \deg(h)\le p$.  Then there exists a distortion function
 $\eta=\eta_p$ only depending on $p$ with the following property:
 if $A\sub \D$ is a connected set with $0\in A$ and $h(A)\sub \overline{B}_\D(0,R)$ for some $R>0$, then $A\sub \overline{B}_\D(0,\eta(R))$.
 \end{lemma}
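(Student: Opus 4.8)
\textbf{Proof proposal for Lemma~\ref{lem:blaschke}.}

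The plan is to reduce the statement to the Schwarz--Pick lemma together with a compactness argument over the (finite-dimensional, but not compact) space of Blaschke products of degree at most $p$. First I would recall that a finite Blaschke product $h\colon \D\to\D$ is a proper holomorphic map, so it is $1$-Lipschitz for the hyperbolic metric $\rho_\D$ by the Schwarz--Pick lemma; in fact, since $h$ need not be an automorphism, it is a strict contraction away from the trivial case $\deg(h)=1$. Thus if $A\subset\D$ is connected with $0\in A$ and $h(A)\subset\overline{B}_\D(0,R)$, then every $a\in A$ satisfies $\rho_\D(h(a),h(0))\le 2R$ (using that the $\rho_\D$-diameter of $\overline{B}_\D(0,R)$ is at most $2R$). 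The issue is that this bound involves $h(0)$, not $0$, and $\rho_\D(h(a),h(0))$ only controls $\rho_\D(a,0)$ if we have a \emph{lower} bound on how much $h$ expands, i.e.\ an upper bound on the contraction factor near the relevant points. Such a bound is exactly what fails uniformly as $h$ degenerates (e.g.\ $h(z)=z^p$ contracts arbitrarily strongly near $0$), so a naive Schwarz--Pick estimate is not enough and one genuinely needs the connectedness of $A$ and $0\in A$.

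The key step is therefore the following quantitative statement, which I would prove by normal-families/compactness: for each $p\in\N$ and each $R>0$ there is a constant $\eta_p(R)<\infty$ such that for every Blaschke product $h$ with $1\le\deg(h)\le p$ and every $a\in\D$,
\[
  \rho_\D(h(a),h(0))\le 2R \ \text{ and $[0,a]_\D\subset h^{-1}(\overline{B}_\D(h(0),2R))$}\ \Longrightarrow\ \rho_\D(0,a)\le \eta_p(R),
\]
where $[0,a]_\D$ denotes the hyperbolic geodesic segment. Apply this with $A$ in place of $[0,a]_\D$: since $A$ is connected, contains $0$ and $a$, and $h(A)\subset\overline{B}_\D(0,R)\subset \overline{B}_\D(h(0),2R)$, the hypothesis is met, giving $A\subset\overline{B}_\D(0,\eta_p(R))$. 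To prove the displayed implication, suppose it fails: then for some fixed $R$ and $p$ there are Blaschke products $h_k$ of degree $\le p$ and points $a_k\in\D$ with $\rho_\D(0,a_k)\to\infty$ but $\rho_\D(h_k(a_k),h_k(0))\le 2R$ and the whole geodesic $[0,a_k]_\D$ mapped into $\overline{B}_\D(h_k(0),2R)$. Precompose each $h_k$ with a hyperbolic isometry (an automorphism of $\D$) sending a suitable point of $[0,a_k]_\D$ to $0$; the degrees are preserved, and the family of Blaschke products of degree $\le p$ normalized so that $0$ is not too deep inside is normal, so after passing to a subsequence $h_k\circ(\text{isometry})$ converges locally uniformly to a limit $h_\infty$. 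One checks $h_\infty$ is a Blaschke product of degree $\le p$ (hence non-constant and proper), using e.g.\ that the number of preimages of a generic value is upper semicontinuous under locally uniform convergence of proper maps and is bounded by $p$. Along the rescaled geodesics one gets an arc of hyperbolic length $\to\infty$ whose $h_\infty$-image stays in a ball of radius $2R$; but a proper holomorphic self-map of $\D$ cannot map an unbounded-diameter set into a bounded set, since its fibers are finite. This contradiction proves the claim.

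Finally I would package this: replacing $\eta_p$ by $\eta_p(R)\coloneqq\sup_{R'\in(0,R]}(\text{the constant above})$ makes it non-decreasing, and a separate check of the limit $R\to 0^+$---again by compactness, or directly from $h(A)\to\{h(0)\}$ forcing $A\to\{0\}$ for a proper map of bounded degree---shows $\eta_p(R)\to 0$ as $R\to 0^+$, so $\eta_p$ is a genuine distortion function. The main obstacle is the compactness step: one must be careful that Blaschke products of degree $\le p$ do \emph{not} form a compact family (zeros can escape to $\partial\D$, and the degree can drop in the limit), so the normalization by precomposing with automorphisms---and the verification that the limit is still a non-constant proper map of degree $\le p$---must be done with some care. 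Everything else is a routine application of Schwarz--Pick and the fact that proper holomorphic self-maps of $\D$ have finite fibers.
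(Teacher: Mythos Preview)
Your compactness strategy is a valid alternative, but the write-up has a genuine gap. You state the key implication for the hyperbolic geodesic $[0,a]_\D$ and then say ``apply this with $A$ in place of $[0,a]_\D$''; however, your contradiction argument leans on that geodesic structure (``along the rescaled geodesics''), while conversely nothing in the hypotheses puts $[0,a]_\D$ inside $h^{-1}\big(\overline{B}_\D(h(0),2R)\big)$---only $A$ lies there. The precomposition with automorphisms is where the difficulty actually hides: after recentring at a point of the geodesic you no longer know that $|g_k(0)|$ stays bounded away from $1$, which is exactly what would force the limit to be a non-constant Blaschke product, and your phrase ``normalized so that $0$ is not too deep inside'' does not supply this. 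The clean fix is to drop the precomposition entirely. Since $0\in A_k$ you already have $|h_k(0)|\le r(R)<1$; every locally uniform limit of Blaschke products of degree $\le p$ is either a Blaschke product of some degree $\le p$ or a unimodular constant, and the bound on $|h_k(0)|$ rules out the latter. Thus a subsequential limit $h_\infty$ is proper, $h_\infty^{-1}\big(\overline{B}_\D(0,R)\big)$ is compact in $\D$, and uniform convergence on a fixed circle $\{|z|=s'\}$ with $s'<1$ shows $|h_k|>r(R)$ there for large $k$; connectedness of $A_k\ni 0$ then traps $A_k$ inside $\{|z|<s'\}$, contradicting $\rho_\D(0,a_k)\to\infty$.

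The paper's proof is shorter and constructive, and uses an idea you are missing: factor $h=e^{i\theta}\prod_{k=1}^d\varphi_k$ with each $\varphi_k$ a disk automorphism. If $|h(z_0)|\le r$ then some factor satisfies $|\varphi_{k_0}(z_0)|\le r^{1/d}\le r^{1/p}$, so $h^{-1}\big(\overline{B}_\D(0,R)\big)\subset\bigcup_{k=1}^d\varphi_k^{-1}\big(\overline{B}_\D(0,R')\big)$ with $R'=R'(R,p)\to 0$ as $R\to 0$. Since each $\varphi_k$ is a hyperbolic isometry, this is a cover by at most $p$ hyperbolic balls of diameter $2R'$, and a connected subset containing $0$ therefore has hyperbolic diameter $\le 2pR'$. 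This gives an explicit $\eta_p$ and avoids normal families entirely; your route works once repaired, but costs the quantitative bound.
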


 Here and in the ensuing proof $\overline{B}_\D(a, R)$ denotes the closed hyperbolic ball of radius $R>0$ in $\D$ centered at $a\in \D$ in accordance with our earlier convention. Recall also that a \emph{finite Blaschke product} of degree $d\in \N$ is a holomorphic self-map of the unit disk $\D$ given by a product of $d$ M\"obius transformations of $\D$ (see, for example, \cite[Problem~7-b]{Mi06}).

 \begin{proof} According to the definition of a Blaschke product, the map $h$ has the form 
 \[
 h(z)=e^{i\theta}\prod_{k=1}^d\frac{z-a_k}{1-\overline{a_k}z}, \quad z\in \D,
 \]
 where $1\le d\le p$, $\theta\in [0,2\pi)$,  and $a_1,\dots, a_d\in \D$.
For $k\in \{1, \dots, d\}$ define  
\[
\varphi_k(z)\coloneqq \frac{z-a_k}{1-\overline{a_k}z},  \quad z\in \D. 
\]

Now suppose $z_0\in \D$ and $h(z_0)\in \overline{B}_\D(0,R).$ Then $|h(z_0)|\le r$
where $r=r(R)\in (0,1)$ and $r(R)\to 0$ as $R\to 0$.    
It follows that  $|\varphi_{k_0}(z_0)|\le r^{1/d}\le r^{1/p}\in (0,1)$ for some $k_0\in \{1, \dots, d\}$, or equivalently, $\varphi_{k_0}(z_0)\in \overline{B}_\D(0, R')$, where $R'=R'(r,p)=R'(R,p)>0$ and $R'(R,p)\to 0$ as $R\to 0$. 
We conclude that  
\[
A\sub h^{-1}(h(A))\sub h^{-1}\big( \overline{B}_\D(0,R)\big)\sub \bigcup_{k=1}^d \varphi_k^{-1}\big( \overline{B}_\D(0,R')\big).
\]
Now $ \overline{B}_\D(0, R')$ has hyperbolic diameter  $2R'$.  
Since each M\"obius transformation $\varphi_k$ is a hyperbolic
isometry (see \cite[Lemma~2.7]{Mi06}), the previous
inclusion shows that the connected set $A$ can be covered by
$d\le p$ sets of hyperbolic diameter $2R'$. This implies
 that $A$ has hyperbolic diameter $\le 2dR'\le 2pR'$. Since 
$0\in A$, we see that $A\subset  \overline{B}_\D(0, 2pR')$,  and, because $R'=R'(R,p)\to 0$ as $R\to 0$, the  statement easily follows. \end{proof}

We translate the previous statement to a distortion property for proper maps between  simply connected hyperbolic regions. 

\begin{lemma}\label{lem:dist_estimates}
Let $p\in \N$, and   $W,\, \widetilde W\sub \CDach$
 be  simply connected hyperbolic regions.   
Suppose $f\: \widetilde W\ra W$ is a proper holomorphic map with $\deg(f)\le p$.
Then the following statements are true:
\begin{enumerate}
\smallskip
\item  \label{item:dist1} $\hdiam_W(f(A))\le \hdiam_{\widetilde W}(A)$ for each set 
$A\sub 
\widetilde W$.  
\smallskip
\item  \label{item:dist2} There exists a distortion function $\eta=\eta_p$ only depending on $p$  such that 
\[
\hdiam_{\widetilde W}(A) \le \eta\big(\hdiam_W(f(A))\big)
\]
for each  connected set 
$A\sub 
\widetilde W$. 
\end{enumerate}
\end{lemma}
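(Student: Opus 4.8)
\textbf{Proof plan for Lemma~\ref{lem:dist_estimates}.}

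The plan is to reduce everything to the unit disk via Riemann maps and then invoke Lemma~\ref{lem:blaschke}. First I would fix biholomorphisms $\psi\colon W\to \D$ and $\widetilde\psi\colon \widetilde W\to \D$. Since $f\colon \widetilde W\to W$ is a proper holomorphic map of degree $d=\deg(f)\le p$, the composition $h\coloneqq \psi\circ f\circ \widetilde\psi^{-1}\colon \D\to \D$ is a proper holomorphic self-map of $\D$, hence a finite Blaschke product of degree $d$ (a standard fact; see \cite[Problem~7-b]{Mi06}). Because $\psi$ and $\widetilde\psi$ are, by the very definition of the hyperbolic metric, isometries from $(W,\rho_W)$ and $(\widetilde W,\rho_{\widetilde W})$ onto $(\D,\rho_\D)$ respectively, the statements to be proved are equivalent to the corresponding statements for $h$ with hyperbolic diameters in $\D$.

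For part~\ref{item:dist1}: by the Schwarz–Pick lemma, the Blaschke product $h\colon\D\to\D$ is distance-nonincreasing for the hyperbolic metric on $\D$, so $\rho_\D(h(u),h(v))\le\rho_\D(u,v)$ for all $u,v\in\D$. Taking the supremum over $u,v$ in a set $\widetilde\psi(A)$ gives $\hdiam_\D(h(\widetilde\psi(A)))\le\hdiam_\D(\widetilde\psi(A))$, which translates back to $\hdiam_W(f(A))\le\hdiam_{\widetilde W}(A)$. For part~\ref{item:dist2}: let $A\subset\widetilde W$ be connected and write $B\coloneqq\widetilde\psi(A)\subset\D$, a connected set, with $h(B)=\psi(f(A))$. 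Set $D\coloneqq\hdiam_\D(h(B))=\hdiam_W(f(A))$. Pick any point $u_0\in B$; then $h(B)\subset\overline{B}_\D(h(u_0),D)$. Postcomposing $h$ with the hyperbolic isometry of $\D$ sending $h(u_0)$ to $0$ and precomposing with the isometry sending $0$ to $u_0$ (both of which are M\"obius transformations of $\D$, so the resulting map is again a Blaschke product of the same degree $\le p$) puts us exactly in the situation of Lemma~\ref{lem:blaschke} with $R=D$: a Blaschke product $\tilde h$ of degree $\le p$, a connected set $\tilde B\ni 0$ with $\tilde h(\tilde B)\subset\overline{B}_\D(0,D)$. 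That lemma gives $\tilde B\subset\overline{B}_\D(0,\eta_p(D))$, hence $\hdiam_\D(\tilde B)\le 2\eta_p(D)$, i.e.\ $\hdiam_{\widetilde W}(A)=\hdiam_\D(B)\le 2\eta_p(D)$. Replacing $\eta_p$ by $t\mapsto 2\eta_p(t)$ (still a distortion function) gives the claim, with the distortion function depending only on $p$.

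The only genuine subtlety is making sure the reduction is clean: that $\psi\circ f\circ\widetilde\psi^{-1}$ really is a proper self-map of $\D$ of degree exactly $d$ (properness is preserved under pre/postcomposition with biholomorphisms, and degree is preserved), and that conjugating a Blaschke product by M\"obius automorphisms of $\D$ yields a Blaschke product of the same degree (immediate from the product representation, since automorphisms of $\D$ are themselves degree-one Blaschke factors). I expect these to be routine; the main ``obstacle'' is really just bookkeeping with the normalizing isometries so that the point $0\in\widetilde\psi(A)$ hypothesis of Lemma~\ref{lem:blaschke} is met, which is handled by the pre/postcomposition described above.
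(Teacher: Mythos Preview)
Your proposal is correct and follows essentially the same approach as the paper: uniformize via Riemann maps to reduce to a Blaschke product on $\D$, use Schwarz--Pick for part~\ref{item:dist1}, and invoke Lemma~\ref{lem:blaschke} for part~\ref{item:dist2}. The only cosmetic difference is that the paper builds the normalization $\widetilde\varphi(a)=\varphi(f(a))=0$ directly into the choice of Riemann maps (rather than post/precomposing afterward) and, by letting the base point $a\in A$ vary, obtains the bound $\eta_p(R)$ without your extra factor of~$2$.
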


The proof will show that in  \ref{item:dist2} we can take the same distortion function $\eta=\eta_p$ as in Lemma~\ref{lem:blaschke}.

\begin{proof}
  Statement  \ref{item:dist1} is an  immediate consequence 
  of the Schwarz--Pick lemma (see \cite[Theorem~2.11]{Mi06})
   and is true for all holomorphic maps $f\: \widetilde W\ra W$. 

To prove   \ref{item:dist2}, we may assume that   
\[
R\coloneqq 
 \hdiam_W(f(A))\in (0,\infty).
 \] Now let $a\in A$ be arbitrary. We  can then  choose two biholomorphisms
 \[   \widetilde \varphi \colon \widetilde W \to \D \text{ and }\varphi \colon  W \to \D 
  \text{ with }  
 \widetilde{\varphi} (a) = \varphi (f(a))=0.\]
 The composition $h\coloneqq  \varphi \circ f\circ\widetilde
   \varphi ^{-1}$ is a proper holomorphic self-map of $\D$ of
   (finite) degree $1\le d\coloneqq \deg(h)\le p$, and is  thus  a Blaschke product of degree $d$ (see \cite[Problem~15-c]{Mi06}). 
    
    Define $A'\coloneqq \widetilde \varphi(A)\subset \D$. Then $0=\widetilde \varphi(a)\in A'$ and 
    $A'$ is connected.  
    Moreover, we have
    $h(A')= \varphi (f(A))\subset \D$ and so $0= \varphi (f(a))\in h(A')$.      
   Since $ \varphi $ is a hyperbolic isometry, it follows that 
   \[R=\hdiam_W(f(A))=\hdiam_\D(h(A')),\] and so
  $h(A')\sub \overline{B}_\D(0,R)$. We conclude 
   $A'\sub \overline{B}_\D(0, \eta(R))$ with the function $\eta=\eta_p$ 
 as in Lemma~\ref{lem:blaschke}. Since $\widetilde  \varphi $ is also a 
 hyperbolic isometry, this in turn implies that 
 \[
 A={\widetilde  \varphi }^{-1}(A')\sub {\widetilde  \varphi }^{-1}(\overline{B}_\D(0,  \eta (R)))=\overline{B}_{\widetilde W}(a, \eta(R)).
 \] 
Now $a\in A$ was arbitrary, and it follows that $\hdiam_{\widetilde W} (A)\le 
 \eta(R)$, as desired. 
\end{proof}

  The following, essentially  well-known lemma states that  the  spherical metric $\sigma$ and the hyperbolic metric $\rho_W$ on a simply connected hyperbolic region 
   $W\subset \CDach$ are comparable when restricted to a
   relatively compact  subset of $W$. In order to obtain uniform comparability constants in our estimates, we 
   have to assume that the underlying region is not too large or rather that its complement in $\CDach$ is not too small. 
   A suitable assumption in this respect is that $W$ is contained in a hemisphere, that is,  a spherical disk of radius $\pi/2$. This in turn  is guaranteed by the condition $\diam_\sigma(W)<\pi/2$.  This  explains the corresponding assumptions on $W$ and $\widetilde W$ in Lemma~\ref{lem:dist1} and in the following statement.

\begin{lemma}\label{lem:hyp_vs_sph} Let  $W\subset \CDach$ be a simply connected hyperbolic region with   $\diam_\sigma(W) < \pi/2$, and
 $B\coloneqq B_W(w_0, R_0)$, where $w_0\in W$ and $R_0>0$.  Then the following statements are true:

\begin{enumerate}  
\smallskip
\item \label{item:hyp_vs_sph1} 
$\displaystyle \frac{\diam_\sigma(A)}{\diam_\sigma(B)} \asymp \hdiam_W(A)$
for all sets $A\sub  B$ with a constant $C(\asymp)=C(R_0)$.

\smallskip
\item \label{item:hyp_vs_sph2} If  $A\sub W$ is a set with $w_0\in A$  and $\hdiam_W(A)\ge \eps$, then 
 $\diam_\sigma(A)\gtrsim \diam_\sigma(B)$ with a constant $C(\gtrsim)=C(\eps, R_0)$. 
\end{enumerate} 
\end{lemma}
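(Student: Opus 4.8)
The plan is to compare the hyperbolic metric $\rho_W$ with the spherical metric $\sigma$ on a relatively compact piece of $W$ by means of classical Koebe-type distortion bounds. Since the isometry group of $(\CDach,\sigma)$ acts transitively, after a rotation we may assume $w_0=0$; then $0\in W$ and $\diam_\sigma(W)<\pi/2$ force $W\subset\overline{B}_\sigma(0,\diam_\sigma(W))\subset\overline{B}_\sigma(0,\pi/2)$, and for the chosen normalization of $\sigma$ this last ball is the closed Euclidean unit disk, so $W\subset\D$. On $\overline{\D}$ the spherical and Euclidean metrics are comparable with universal constants, namely $|z-z'|\le\sigma(z,z')\le 2|z-z'|$, because closed spherical balls of radius $\le\pi/2$ are geodesically convex. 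Writing $\delta(z)\coloneqq\dist(z,\C\setminus W)$, the inclusion $D(z,\delta(z))\subset W$ and the Koebe one-quarter theorem give, along every curve $\gamma$ in $W$, the identity $\length_\sigma(\gamma)=\int_\gamma m\,d\rho_W$ with $m(z)\asymp\delta(z)$ for a universal comparability constant; note also $\delta(z)\le 1-|z|\le 1$ on $W$.

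The core step is to show that $\delta$ is essentially constant on a hyperbolic ball of bounded radius. Let $\gamma$ be a $\rho_W$-unit-speed geodesic emanating from $0$. Since $\delta$ is $1$-Lipschitz for the Euclidean metric and $|\gamma'|_{\mathrm{eucl}}=1/\lambda_W^{\mathrm{eucl}}(\gamma)\le 2\delta(\gamma)$, the function $s\mapsto\delta(\gamma(s))$ satisfies $|(\log\delta(\gamma(\cdot)))'|\le 2$, so $e^{-2s}\delta(0)\le\delta(\gamma(s))\le e^{2s}\delta(0)$. Hence $\delta(z)\asymp\delta(0)$ on $\overline{B}_W(0,3R_0)$, with comparability constant depending only on $R_0$, and therefore $m(z)\asymp\delta(0)$ there as well. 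Once the estimates below are established they may be specialized to $A=B$, and together with the trivial bounds $R_0\le\hdiam_W(B)\le 2R_0$ this gives $\diam_\sigma(B)\asymp\delta(0)$ with a constant depending only on $R_0$.

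For \ref{item:hyp_vs_sph1}, fix $x,y\in B=B_W(w_0,R_0)$. Since hyperbolic balls are hyperbolically convex, the hyperbolic geodesic $[x,y]_W$ stays in $B$, and integrating the comparison of length elements gives the upper bound $\sigma(x,y)\le\length_\sigma([x,y]_W)\asymp_{R_0}\delta(0)\,\rho_W(x,y)$. For the matching lower bound, consider the spherical geodesic $\eta$ from $x$ to $y$. If $\eta\subset\overline{B}_W(0,2R_0)$, then $\sigma(x,y)=\length_\sigma(\eta)\gtrsim_{R_0}\delta(0)\,\length_{\rho_W}(\eta)\ge\delta(0)\,\rho_W(x,y)$. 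If $\eta$ leaves $\overline{B}_W(0,2R_0)$, let $\eta_0$ be the initial sub-arc of $\eta$ from $x$ up to its first point on $\partial B_W(0,2R_0)$; then $\length_{\rho_W}(\eta_0)\ge 2R_0-R_0=R_0$, so $\sigma(x,y)\ge\length_\sigma(\eta_0)\gtrsim_{R_0}\delta(0)\,R_0\gtrsim_{R_0}\delta(0)\,\rho_W(x,y)$, using $\rho_W(x,y)\le\hdiam_W(B)\le 2R_0$. Thus $\sigma(x,y)\asymp_{R_0}\delta(0)\,\rho_W(x,y)$ for all $x,y\in B$; taking suprema over $A\subset B$ and dividing by $\diam_\sigma(B)\asymp_{R_0}\delta(0)$ yields \ref{item:hyp_vs_sph1}.

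For \ref{item:hyp_vs_sph2}, suppose $w_0=0\in A$ and $\hdiam_W(A)\ge\eps$; then there is $p\in A$ with $\rho_W(0,p)\ge\eps/4$. Since $D(0,\delta(0))\subset W$, monotonicity of the hyperbolic metric gives: if $p\in D(0,\delta(0))$ then $\eps/4\le\rho_W(0,p)\le\rho_{D(0,\delta(0))}(0,p)=\log\frac{1+t}{1-t}$ with $t=|p|/\delta(0)$, forcing $t\ge\frac{e^{\eps/4}-1}{e^{\eps/4}+1}$, while if $p\notin D(0,\delta(0))$ then $|p|\ge\delta(0)$; either way $|p|\gtrsim_\eps\delta(0)$, so $\diam_\sigma(A)\ge\sigma(w_0,p)\ge|p|\gtrsim_\eps\delta(0)\asymp_{R_0}\diam_\sigma(B)$. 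The step I expect to be the main obstacle is the lower bound in \ref{item:hyp_vs_sph1}: the spherical geodesic joining two points of $B$ need not remain inside $W$, so one cannot integrate the length-element comparison along it globally, which is exactly why the dichotomy at the exit point from the enlarged ball $\overline{B}_W(0,2R_0)$ is required; a secondary, bookkeeping-type point is to keep track throughout that every comparability constant depends only on $R_0$ (respectively only on $\eps$ and $R_0$) and not on the region $W$ itself.
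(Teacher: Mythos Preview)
Your proof is correct and takes a genuinely different route from the paper's. The paper pulls everything back to $\D$ via a Riemann map $\varphi:\D\to W$ with $\varphi(0)=w_0$ and then invokes a ready-made spherical Koebe distortion theorem (cited from \cite{BM}) to obtain directly that $\sigma(z,w)\asymp\varphi^\sharp(0)\,\rho_W(z,w)$ for all $z,w\in B$; the quantity $\varphi^\sharp(0)$ then plays the role of your $\delta(0)$. For part~\ref{item:hyp_vs_sph2} the paper argues by a dichotomy on whether $A\subset B$ (then use part~\ref{item:hyp_vs_sph1}) or not (then a spherical geodesic from $w_0$ to a point of $A\setminus B$ must exit a small spherical ball contained in $B$).

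Your argument is more self-contained: you never leave $W$, using instead the classical density estimate $\lambda_W\asymp 1/\delta$ from the Koebe one-quarter theorem together with the Harnack-type inequality $|(\log\delta\circ\gamma)'|\le 2$ along hyperbolic geodesics to show $\delta$ is essentially constant on bounded hyperbolic balls, and then integrate length elements along geodesics. The exit-time dichotomy you run for the lower bound in~\ref{item:hyp_vs_sph1} is exactly the right fix for the issue you flag, and the observation that a spherical geodesic cannot reach $\partial W$ without first crossing $\partial B_W(0,2R_0)$ (since $\rho_W(0,\cdot)\to\infty$ near $\partial W$) makes $\eta_0$ well defined. Your treatment of~\ref{item:hyp_vs_sph2} via monotonicity of the hyperbolic metric on $D(0,\delta(0))\subset W$ is also different from the paper's and arguably cleaner. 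What the paper's approach buys is brevity once the cited distortion theorem is available; what yours buys is that it needs nothing beyond Koebe one-quarter and elementary metric comparisons, and it makes the dependence of constants on $R_0$ (and on $\eps$ in part~\ref{item:hyp_vs_sph2}) completely transparent.
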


\begin{proof} Essentially, the statements follow from a spherical version 
of the Koebe distortion theorem that was explicitly formulated in 
\cite[Theorem~A.1]{BM}. To apply this result in our setting, we choose a biholomorphism $\varphi\: \D\ra W$ with $\varphi(0)=w_0$.
The assumption on the image domain   in  \cite[Theorem~A.1]{BM} is  then satisfied, because for our image domain $W$ we have $\diam_\sigma(W) < \pi/2$ and so $W$ is contained in a hemisphere 
of $\CDach$. Moreover, \cite[Theorem~A.1]{BM} is stated for the chordal metric on $\CDach$, but this is immaterial for our purposes, since the chordal metric and the spherical metric (as used by us) are comparable with universal constants.

Corresponding to the hyperbolic radius $R_0$ there exists 
a spherical radius $r_0=r_0(R_0)>0$ such that 
\[
B_\sigma(0, r_0)=B_\D(0, R_0)\subset \D.
\]
On $B_\D(0, R_0)$ the spherical and hyperbolic metrics are comparable 
and so
\[
\sigma(u,v)\asymp \rho_\D(u,v)
\]
for all $u,v\in B_\D(0, R_0)$. Here and in the following,  all comparability constants $C(\asymp)$ depend only on $r_0$, or equivalently on $R_0$, unless otherwise indicated. 
Since  $\varphi$ is a hyperbolic isometry, we have 
\[
B = B_W(w_0,R_0)=\varphi(B_\D(0, R_0))=  \varphi(B_\sigma(0, r_0)).
\]

We denote by 
\[
 \varphi^\sharp(z)\coloneqq \frac{1+|z|^2}{1+| \varphi(z)|^2}| \varphi'(z)|
\]
the spherical derivative of $ \varphi$ at $z\in \D$ (interpreted as a suitable limit if $\varphi(z)=\infty$). 
Then it follows from  \cite[Theorem A.1]{BM} that for all $u,v\in B_\sigma(0,r_0)=B_\D(0, R_0)$  we have
\begin{align*}
\sigma(\varphi(u), \varphi(v))&\asymp  \varphi^\sharp(0) \sigma(u,v)\\
&\asymp 
 \varphi^\sharp(0) \rho_\D(u,v)= \varphi^\sharp(0) \rho_W(\varphi(u), \varphi(v)).
\end{align*}

Translated to points in $B=B_W(w_0,R_0)=\varphi(B_\D(0,R_0))$ the previous relation implies  that 
\[
\sigma(z, w)\asymp  \varphi^\sharp(0) \rho_W(z,w)
\]
for all $z,w\in B$. 
Since $ \hdiam_W(B)=2R_0$, it follows that 
\[
\diam_\sigma(B)\asymp \varphi^\sharp(0), 
\]
and, putting this all together, we see that
\begin{equation}\label{eq:compsphhyp}
\sigma(z, w)\asymp  \diam_\sigma(B) \rho_W(z,w)
\end{equation}
for all $z,w\in B$. In particular, if $A$ is any subset of $B$, then
\[
\diam_\sigma(A)
\asymp  \diam_\sigma(B) \hdiam_W(A).
\]
Statement \ref{item:hyp_vs_sph1} follows. 

\smallskip
To prove \ref{item:hyp_vs_sph2}, consider a set $A\sub W$ with $w_0\in A$ and  $\hdiam_W(A)\ge \eps$.   If $A\sub B$, then 
 \ref{item:hyp_vs_sph1} implies that 
\[
 \diam_\sigma(A)\gtrsim \eps \diam_\sigma(B),
  \]
 and so the statement is true in this case. 
 
 If $A$ is not contained in $B$, then we can find a point $w_1\in A\setminus B$. 
It follows from 
\eqref{eq:compsphhyp} that there exists a constant $c_0=c_0(R_0)>0$ 
such that 
\[
B'\coloneqq B_\sigma(w_0, c_0 \diam_\sigma(B) )\sub B.
\]
Now we travel on a spherical geodesic $\ga$ from  $w_0\in A\cap B$ to $w_1\in A \setminus B$.
Since $\ga$ leaves $B$ it also leaves  $B'$ and we conclude that 
\[
\diam_\sigma(A)\ge \sigma(w_0,w_1)=\diam_\sigma(\ga)\ge c_0 \diam_\sigma(B) \gtrsim  \diam_\sigma(B).
\]
It follows that the statement is also true in this case (even with a constant independent of $\eps$). 
\end{proof}

 We can now prove our distortion lemma.

 \begin{proof}[Proof of Lemma~\ref{lem:dist1}] The basic idea of the proof is that Lemma~\ref{lem:dist_estimates} gives us good distortion control for the hyperbolic metric in source and target of the map $f$  and we can relate this to the spherical metric by Lemma~\ref{lem:hyp_vs_sph}.

In the given setting,  there exists a universal constant $R_0>0$ such that 
\[
B_\sigma(w_0, r/2)\sub B\coloneqq B_W(w_0, R_0)\sub 
  W=B_\sigma(w_0, r).
  \]
Then $\diam_\sigma(B)\asymp r$ with $C(\asymp)=2$, and by assumption \ref{item:lem_dist1} we have
 \[ f(A) \cup f(A')\sub B_\sigma(w_0, r/2)\sub B=B_W(w_0, R_0) \sub 
  W=B_\sigma(w_0, r).\] 
Since $\diam_\sigma(W)=2r<\pi/2$, Lemma~\ref{lem:hyp_vs_sph}~\ref{item:hyp_vs_sph1} implies that
 \begin{align*}
 \hdiam_W(f(A))&\asymp \diam_\sigma(f(A))/r \text { and }\\
 \hdiam_W(f(A'))&\asymp \diam_\sigma(f(A'))/r
 \end{align*}
 with the same universal constants $C(\asymp)=C(R_0)\eqqcolon C_0\ge 1$. We then conclude from Lemma~\ref{lem:dist_estimates}~\ref{item:dist1} and assumption~\ref{item:lem_dist2}
 that 
 \begin{equation} \label{eq:hdimA'}
 \hdiam_{\widetilde W} (A')\ge \hdiam_W(f(A'))\ge c_0 \diam_\sigma(f(A'))/r \ge c_0\eps, 
  \end{equation}
  where $c_0 \coloneqq 1/C_0 $.
 On the other hand, since $A$ is connected, by Lemma~\ref{lem:dist_estimates}~\ref{item:dist2}  we have
  \begin{align*}
 \hdiam_{\widetilde W} (A)&\le  \eta( \hdiam_W(f(A)))\\
 &\le
  \eta( c_0\diam_\sigma(f(A))/r)\le  \eta(c_0)\eqqcolon R_1, 
 \end{align*}
where   $\eta$ is a distortion function only depending on $p$. In particular,  $R_1$ only depends on $p$, and we have 
 $A\sub \widetilde {B}\coloneqq \overline{B}_{\widetilde W}(z_0, R_1)$, because $z_0\in A$ by  assumption~\ref{item:lem_dist1}. 
 Then Lemma~\ref{lem:hyp_vs_sph}~\ref{item:hyp_vs_sph1} implies that 
 \[
 \hdiam_{\widetilde W} (A)\asymp \diam_\sigma(A)/ \diam_\sigma(\widetilde {B})
 \]
 with $C(\asymp)=C(R_1)=C(p)$. 
 
 Moreover, we have $z_0\in A'$ and 
 $\hdiam_{\widetilde W} (A')\ge c_0 \eps$ by \eqref{eq:hdimA'}. 
 According to Lemma~\ref{lem:hyp_vs_sph}~\ref{item:hyp_vs_sph2} this  implies that 
 \[
 \diam_{\sigma} (A')\gtrsim \diam_\sigma(\widetilde {B})
 \]
 with $C(\gtrsim)=C(c_0\eps, R_1)=C(\eps, p)$.

  Collecting the above relations we see   that 
 \[
 \frac{\diam_\sigma(A)}{\diam_\sigma(A')}\lesssim  \frac{\diam_{\sigma}(A)}{\diam_{\sigma}(\widetilde {B})}\asymp \hdiam_{\widetilde {W}}(A) \le \eta(c_0 \diam_\sigma(f(A))/r).
 \]
 Since here the implicit constants only depend on $\eps$ and $p$, the constant $c_0$ is universal  and  the distortion function $\eta$ only depends  on $p$,    the statement  follows. 
 \end{proof}

\subsection{Some facts about normal families}\label{subsec:mormfam}
The following statement is a version of a well-known 
theorem due to Hurwitz.

\begin{lemma}\label{lem:Hurwitz}
Let $f\: \C\ra \CDach$ and $f_n\:\C\ra \CDach$ for $n\in \N$ be meromorphic functions. Suppose that $f$ is non-constant and that $f_n\ra f$ locally uniformly on $\C$.  Then the following statements are true:
\begin{enumerate}
\smallskip
    \item \label{i:Hur1}  Let  $w_0\in \CDach$ and suppose $D\sub \C$ is an open Euclidean disk such that $f(z)\ne w_0$ for all $z\in \partial D$. Then there exist 
    $\delta  >0$ and $N\in \N$ such that  
    \[
    \sum_{z\in f_n^{-1}(w)\cap D}\deg(f_n,z)= \sum_{z\in f^{-1}(w_0)\cap D}\deg(f,z)<\infty
    \]
for all $w\in B_\sigma(w_0,\delta)$ and  $n\in \N$ with $n\ge N$.

\smallskip
  \item \label{i:Hur2} If $a_0\in \C$, then there exist $N\in \N$ and points 
   $a_n\in \C$  defined for $n\in \N$ with  $n\ge N$  such that $f_n(a_n)=f(a_0)$   and  $a_n\to a_0$ as $n\to \infty$. 
\end{enumerate}
\end{lemma}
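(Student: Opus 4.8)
The plan is to run the classical contour--integral proof of Hurwitz's theorem, taking care that $f$ and the $f_n$ are meromorphic (hence may have poles in the relevant disks). For part \ref{i:Hur1}, first post--compose everything with a rotation $T$ of $\CDach$ as in \eqref{eq:rotsph} with $T(w_0)=0$. Since $T$ is an isometry of $(\CDach,\sigma)$ that is uniformly continuous, $g_n:=T\circ f_n$ converges locally uniformly on $\C$ to the non--constant meromorphic function $g:=T\circ f$, and it suffices to prove the assertion for $g$, $g_n$, and $\hat w:=T(w)$ in place of $f$, $f_n$, $w$. Because $g$ is non--constant, $g^{-1}(0)$ is discrete; since $g^{-1}(0)\cap\partial D=\emptyset$ and $\overline D$ is compact, the set $g^{-1}(0)\cap D=\{p_1,\dots,p_k\}$ is finite (possibly empty). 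For $1\le j\le k$ choose open disks $D_j\ni p_j$ with pairwise disjoint closures $\overline{D_j}\subset D$, small enough that $g$ is holomorphic (finite) on each $\overline{D_j}$, that $p_j$ is the only zero of $g$ in $\overline{D_j}$, and --- since $g(\partial D_j)$ is a compact subset of $\C\setminus\{0\}$ --- that there is $\delta_j>0$ with
\[
\frac{1}{2\pi i}\oint_{\partial D_j}\frac{g'(z)}{g(z)-\hat w}\,dz=\deg(g,p_j)\quad\text{for all }\hat w\in D(0,\delta_j),
\]
the integer on the left being continuous, hence locally constant, in $\hat w$ near $0$. On the compact set $A:=\overline D\setminus\bigcup_{j}D_j$ the function $g$ omits $0$, so after shrinking $\delta$ we may assume $g(A)\cap B_\sigma(0,2\delta)=\emptyset$ and $B_\sigma(0,\delta)\subset D(0,\delta_j)$ for all $j$.

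Now let $n$ be large. By uniform convergence on $\overline D$ we have $\sigma(g_n(z),0)\ge\delta$ for all $z\in A$, so $g_n$ has no $\hat w$--preimage in $A$ whenever $\hat w\in B_\sigma(0,\delta)$. On a neighborhood of each $\overline{D_j}$ the map $g$ is holomorphic with relatively compact image in $\C$, so for $n$ large $g_n$ is holomorphic there as well, and $g_n\to g$ and $g_n'\to g'$ uniformly on $\partial D_j$; since $g$ stays away from $0$ on $\partial D_j$, for $n$ large the integer
\[
\frac{1}{2\pi i}\oint_{\partial D_j}\frac{g_n'(z)}{g_n(z)-\hat w}\,dz
\]
is defined and locally constant in $\hat w$ over $D(0,\delta_j)$, and it converges to $\deg(g,p_j)$ as $n\to\infty$; being integer--valued it equals $\deg(g,p_j)$ for $n$ large and every $\hat w\in B_\sigma(0,\delta)$. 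By the argument principle applied to the \emph{holomorphic} map $g_n$ on $\overline{D_j}$, this integer equals $\sum_{z\in g_n^{-1}(\hat w)\cap D_j}\deg(g_n,z)$. Summing over $j$ and using the absence of $\hat w$--preimages in $A$ yields
\[
\sum_{z\in g_n^{-1}(\hat w)\cap D}\deg(g_n,z)=\sum_{j=1}^k\deg(g,p_j)=\sum_{z\in g^{-1}(0)\cap D}\deg(g,z)<\infty
\]
for all $\hat w\in B_\sigma(0,\delta)$, which is the claimed identity after undoing $T$. (When $k=0$ one has $A=\overline D$ and both sides are $0$.)

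For part \ref{i:Hur2}, set $w_0:=f(a_0)$. Since $f$ is non--constant, $f^{-1}(w_0)$ is discrete, so there is $r_0>0$ with $f^{-1}(w_0)\cap\overline{D(a_0,r_0)}=\{a_0\}$. For each $m\in\N$ put $D_m:=D(a_0,\min\{1/m,r_0\})$; then $f(z)\ne w_0$ on $\partial D_m$, so part \ref{i:Hur1} provides $N_m\in\N$ with
\[
\sum_{z\in f_n^{-1}(w_0)\cap D_m}\deg(f_n,z)=\deg(f,a_0)\ge 1\quad\text{for all }n\ge N_m,
\]
in particular $f_n^{-1}(w_0)\cap D(a_0,1/m)\ne\emptyset$. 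We may assume $N_1\le N_2\le\cdots$. Put $N:=N_1$, and for $n\ge N$ let $m(n):=\max\{m:N_m\le n\}$ and pick $a_n\in f_n^{-1}(w_0)\cap D(a_0,1/m(n))$. Then $f_n(a_n)=f(a_0)$ and $|a_n-a_0|<1/m(n)\to 0$ since $m(n)\to\infty$ as $n\to\infty$.

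The main obstacle is precisely the presence of poles: one cannot apply the argument principle directly on $\partial D$, since $g$ and the $g_n$ may have poles inside $D$, so the zero--counting must be localized to the small disks $D_j$ around the $w_0$--preimages, where all maps in play are holomorphic and the standard Rouch\'e/argument--principle bookkeeping goes through unchanged; the uniformity in $w$ then comes for free because the relevant winding numbers are locally constant in $w$.
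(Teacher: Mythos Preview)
Your proof is correct and follows essentially the same approach as the paper's: both localize to small disks $D_j$ around the finitely many $w_0$-preimages in $D$ (where all maps involved are holomorphic), control the complement $A=\overline{D}\setminus\bigcup_j D_j$ by uniform convergence, and then count preimages disk by disk. The only real difference is that the paper uses Rouch\'e's theorem directly (comparing $f$ with $f_n-w$ on $\partial D_j$), whereas you phrase the same bookkeeping via the argument principle and continuity of the winding-number integral in $\hat w$ and $n$; your part~\ref{i:Hur2} is identical in spirit to the paper's.
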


Since we were unable to find the statement in precisely this form in the literature, we will sketch the proof. We recall that $D(a,r)$ denotes
the open Euclidean disk of radius $r>0$ centered at $a\in \C$.

\begin{proof} 

\smallskip
 \ref{i:Hur1} We may assume that $w_0=0$. Let $z_1, \dots, z_k$ be all the points in $D$ where  $f$ attains the value $w_0=0$ 
 (note that the number of such  points is finite, 
 because  otherwise $f$ would be a  constant function 
 by the Uniqueness Principle (see
 \cite[p.~156]{Ga}). 
 We can find small open  Euclidean disks  $D_1, \dots, D_k\sub D$ centered at $z_1, \dots, z_k$, respectively, that are pairwise disjoint and 
 such that $|f|$ is small, but nonzero on each set $D_j\setminus\{z_j\}$,
 $j=1,\dots, k$. We may also assume that $f$ has no poles in the set 
 $\overline{D_1}\cup\dots \cup\overline{D}_k$. Since $f(z)\ne 0$ for $z\in \partial D$ by our hypotheses, 
 we can choose $\delta>0$ so small that  $|f(z)|>10\delta$ for $z\in 
 E\coloneqq \overline{D}\setminus (D_1\cup \dots \cup D_k)$.

 Now for sufficiently large $n$, say $n\ge N$, we may assume that 
 $f_n$ is so close to 
 $f$ that $f_n$ has no poles in
 $\overline{D_1}\cup\dots \cup\overline{D}_k$ and we have 
 $|f_n(z)-f(z)|<\delta$ for $z\in \overline{D_1}\cup\dots \cup\overline{D}_k$ and   $|f_n(z)|>9\delta$ for $z\in E$.
 
In particular, then $f_n$ does not attain any value  $w\in 
B_\sigma(0, \delta)\sub D(0,2\delta) $ on $E$. On the other hand, if $w\in B_\sigma(0, \delta)\sub D(0,2\delta)$ and  $z\in \partial D_j$,  then we have 
\[
|f_n(z)-f(z)-w|\le|f_n(z)-f(z)|+|w|  \le 3\delta <|f(z)|. 
\]
By Rouch\'e's Theorem (see \cite[p.~229]{Ga}), the functions  $f$ and $f+(f_n-f-w)=f_n-w$ have the same number of zeros in $D_j$,   counting multiplicities. In other
words, $f$ attains the value $w_0=0$ as often on $D_j$ as $f_n$  attains the value $w$ there.

 It follows that for $n\ge N$, 
\begin{align*}
 \sum_{z\in f_n^{-1}(w)\cap D}\deg(f_n,z)&=
 \sum_{j=1}^k\sum_{z\in f_n^{-1}(w)\cap D_j}\deg(f_n,z)
 =\sum_{j=1}^k\sum_{z\in f^{-1}(w_0)\cap D_j}\deg(f,z)\\
 &=\sum_{j=1}^k\deg(f,z_k) =\sum_{z\in f^{-1}(w_0)\cap D}\deg(f,z). 
\end{align*}
The statement follows. 

\smallskip
\ref{i:Hur2} Let  $w_0\coloneqq f(a_0)$. Since $f$ is non-constant, 
we can find a sequence of radii $r_k>0$ with the following property:
 $r_k\ra 0$ as $k\to \infty$ and if  $D_k\coloneqq  D(a_0, r_k)$, then 
 $f(z)\ne w_0$ for $z\in \partial D_k$ and $k\in \N$. Now by \ref{i:Hur1}, 
 for each $k\in \N$ and for each sufficiently large $n\in \N$ the function $f_n$ attains the value $w_0=f(a_0)$ in $D_k$. 
The statement easily follows. 
 \end{proof}

 We are now ready to establish   Lemmas~\ref{lem:Zalc} and~\ref{lem:GBd} from Section~\ref{subsec:ProofThmCon}.

 \begin{proof}[Proof of Lemma~\ref{lem:Zalc}] We argue by contradiction and assume that $\mathcal{F}$ is not a normal family. Then by Zalcman's lemma
 (see \cite[pp.~100--101]{Sch93} and \cite[p.~320]{Ga}) for each 
$n\in \N$ there exists  a point $z_n\in \C$, a number $r_n>0$, and a function  $f_n\in\mathcal{F}$ such that as $n\to \infty$ we have 
 $z_n\to z_\infty\in \C$, $r_n\to 
0$,  and 
\[
h_n(z)\coloneqq f_n(z_n+r_nz)\to h(z)
\]
locally uniformly for $z\in \C$ (with respect to the spherical metric in the target of the maps). Here $h$ is a non-constant meromorphic function on $\C$. 

By Picard's Theorem (see \cite[p.~57]{Sch93}) the function $h$ can omit at most two values in $\CDach$. Since $K$ contains at least four points, $h$ must attain two distinct values $w, w'\in K$. It follows that there exist points $a,a'\in \C$ such that 
$h(a)=w$ and $h(a')=w'$.  Then by Lemma~\ref{lem:Hurwitz}~\ref{i:Hur2},  for each sufficiently large $n$ the function $h_n$ will also attain the values $w$ and $w'$
at points near $a$ and $a'$, respectively. So by passing to a subsequence of $\{h_n\}$ if necessary, we may assume that  there exist sequences
$\{a_n\}$ and $\{a_n'\}$ in $\C$ such that $a_n\ra a$ and $a_n'\ra a'$ as $n\to \infty$ and such that $h_n(a_n)=w$ and $h_n(a_n')=w'$ for all $n\in \N$.

Define $u_n\coloneqq z_n+r_n a_n$ and $v_n\coloneqq z_n+r_n a'_n$.
Then $f_n(u_n)=h_n(a_n)=w\in K$ and $f_n(v_n)=h_n(a'_n)=w'\in K$. Moreover, $u_n\ra z_\infty$ and 
$v_n\ra z_\infty$ as $n\to \infty$. So if we choose 
$R>0$ large enough, then 
$u_n, v_n\in D(0, R)\cap f_n^{-1}(K)$ for all $n\in \N$. 

Our hypotheses imply that there exists a distortion function $\eta_R$  such that 
\[
0<\sigma(w,w')=\sigma(f_n(u_n), f_n(v_n))\le \eta_R (|u_n-v_n|)
\]
for all $n\in \N$. Since $u_n\ra z_\infty$ and $v_n\ra z_\infty$, the
right-hand side of the last inequality tends to $0$ as $n\to \infty$. This is absurd and the statement follows. 
\end{proof}

\begin{proof}[Proof of Lemma~\ref{lem:GBd}] 
  \ref{item:norm1} We argue by contradiction and assume the statement is not true. Then for each $n\in \N$ there exist a function
$f_n\in \mathcal{F}$ and a point $z_n\in\C$ with $|z_n|\ge 1$ such that $z_n$ belongs to the component $V_n$ of $f_n^{-1}(B_\sigma(f_n(0), 1/n))$ containing $0$. 

 By passing to a subsequence we may assume that 
$f_n\ra f$ locally uniformly on $\C$, where $f$  is a non-constant meromorphic function on $\C$. To reach a contradiction it suffices to show that for each $r\in (0,1)$ there exists a point $u\in \C$ with $|u|=r$ and $f(u)=f(0)$, because then $f$ is constant by the Uniqueness Principle.

To find such a point, recall that each component $V_n$ is connected and contains both $0$ and $z_n$ with $|z_n|\geq 1$. Hence, for each fixed $r\in(0,1)$, each $V_n$ must also contain a point $u_n$
with $|u_n|=r$.  Passing to a subsequence, we may assume that $u_n\to u\in \C$ as $n\to\infty$. Then $|u|=r$.  

Since $f_n\to f$ locally uniformly on $\C$, we have  $f_n(u_n)\to f(u)$ as  $n\to \infty$. On the other hand, 
we also have $f_n(u_n)\in f_n(V_n)=B_\sigma(f_n(0), 1/n)$. Since $f_n(0)\ra f(0)$,  this implies that $f_n(u_n)\to f(0)$ as $n\to \infty$. This shows that $f(u)=\lim_{n\to \infty} f_n(u_n)=f(0)$,  and we obtain the desired contradiction. 

\smallskip
 \ref{item:norm2}
We again argue by contradiction and assume that there exists $R>0$ such that the quantity in \eqref{eq:degreebd} is not uniformly bounded. Then for each $n\in \N$  there exist a function $f_n\in \mathcal{F}$ and a value $w_n\in \CDach$ such that 
\[
 \sum_{z\in f_n^{-1}(w_n)\cap D(0,R)} \deg(f_n, z)\ge n.
\]
Our hypotheses imply that by passing to a suitable subsequence, we may assume that as $n\to \infty$ we have $w_n\to w_0$  and $f_n\to f$ locally uniformly on $\C$, where $f$ is a non-constant meromorphic function.
Then we easily obtain a contradiction from the statement in Lemma~\ref{lem:Hurwitz}~\ref{i:Hur1}. 
\end{proof}

\end{document}